\global\let\figforTeXisloaded=\relax\fi
\def\ctr@ln@m#1{\ifx#1\undefined\else%
    \immediate\write16{*** Fig4TeX WARNING : \string#1 already defined.}\fi}
\def\ctr@ld@f#1#2{\ctr@ln@m#2#1#2}
\def\ctr@ln@w#1#2{\ctr@ln@m#2\csname#1\endcsname#2}
{\catcode`\/=0 \catcode`/\=12 /ctr@ld@f/gdef/BS@{\}}
\ctr@ld@f\def\ctr@lcsn@m#1{\expandafter\ifx\csname#1\endcsname\relax\else%
    \immediate\write16{*** Fig4TeX WARNING : \BS@\expandafter\string#1\space already defined.}\fi}
\ctr@ld@f\edef\colonc@tcode{\the\catcode`\:}
\ctr@ld@f\edef\semicolonc@tcode{\the\catcode`\;}
\ctr@ld@f\def\t@stc@tcodech@nge{{\let\c@tcodech@nged=\z@%
    \ifnum\colonc@tcode=\the\catcode`\:\else\let\c@tcodech@nged=\@ne\fi%
    \ifnum\semicolonc@tcode=\the\catcode`\;\else\let\c@tcodech@nged=\@ne\fi%
    \ifx\c@tcodech@nged\@ne%
    \immediate\write16{}
    \immediate\write16{!!!=============================================================!!!}
    \immediate\write16{ Fig4TeX WARNING:}
    \immediate\write16{ The category code of some characters has been changed, which will}
    \immediate\write16{ result in an error (message "Runaway argument?").}
    \immediate\write16{ This probably comes from another package that changed the category}
    \immediate\write16{ code after Fig4TeX was loaded. If that proves to be exact, the}
    \immediate\write16{ solution is to exchange the loading commands on top of your file}
    \immediate\write16{ so that Fig4TeX is loaded last. For example, in LaTeX, we should}
    \immediate\write16{ say :}
    \immediate\write16{\BS@ usepackage[french]{babel}}
    \immediate\write16{\BS@ usepackage{fig4tex}}
    \immediate\write16{!!!=============================================================!!!}
    \immediate\write16{}
    \fi}}
\ctr@ld@f\def\FigforTeX{F\kern-.05em i\kern-.05em g\kern-.1em\raise-.14em\hbox{4}\kern-.19em\TeX}
\ctr@ld@f\def\W@rnmesoldA#1{\W@rnmesold}
\ctr@ld@f\def\W@rnmesoldAB#1(#2){\W@rnmesold}
\ctr@ld@f\def\W@rnmesold{%
    \immediate\write16{}
    \immediate\write16{!!!=============================================================!!!}
    \immediate\write16{ Fig4TeX WARNING:}
    \immediate\write16{ The file to be compiled is not compatible with the current version}
    \immediate\write16{ of Fig4TeX. To fix that, upgrade the source file (mainly change \BS@ ps*}
    \immediate\write16{ macros by \BS@ fig* macros), or use fig4tex184.tex instead (\BS@ input fig4tex184}
    \immediate\write16{ or \BS@ usepackage{fig4tex184}).}
    \immediate\write16{!!!=============================================================!!!}
    \immediate\write16{}}
\ctr@ln@m\psbeginfig\let\psbeginfig\W@rnmesoldA
\ctr@ln@m\psset\let\psset\W@rnmesoldAB
\ctr@ln@m\pssetdefault\let\pssetdefault\W@rnmesoldAB
\ctr@ln@m\pssetupdate\let\pssetupdate\W@rnmesoldA
\ctr@ln@w{newdimen}\epsil@n\epsil@n=0.00005pt
\ctr@ln@w{newdimen}\Cepsil@n\Cepsil@n=0.005pt
\ctr@ln@w{newdimen}\dcq@\dcq@=254pt
\ctr@ln@w{newdimen}\PI@\PI@=3.141592pt
\ctr@ln@w{newdimen}\DemiPI@deg\DemiPI@deg=90pt
\ctr@ln@w{newdimen}\PI@deg\PI@deg=180pt
\ctr@ln@w{newdimen}\DePI@deg\DePI@deg=360pt
\ctr@ld@f\chardef\t@n=10
\ctr@ld@f\chardef\c@nt=100
\ctr@ld@f\chardef\@lxxiv=74
\ctr@ld@f\chardef\@xci=91
\ctr@ld@f\mathchardef\@nMnCQn=9949
\ctr@ld@f\chardef\@vi=6
\ctr@ld@f\chardef\@xxx=30
\ctr@ld@f\chardef\@lvi=56
\ctr@ld@f\chardef\@@lxxi=71
\ctr@ld@f\chardef\@lxxxv=85
\ctr@ld@f\mathchardef\@@mmmmlxviii=4068
\ctr@ld@f\mathchardef\@ccclx=360
\ctr@ld@f\mathchardef\@dccxx=720
\ctr@ln@w{newcount}\p@rtent \ctr@ln@w{newcount}\f@ctech \ctr@ln@w{newcount}\result@tent
\ctr@ln@w{newdimen}\v@lmin \ctr@ln@w{newdimen}\v@lmax \ctr@ln@w{newdimen}\v@leur
\ctr@ln@w{newdimen}\result@t\ctr@ln@w{newdimen}\result@@t
\ctr@ln@w{newdimen}\mili@u \ctr@ln@w{newdimen}\c@rre \ctr@ln@w{newdimen}\delt@
\ctr@ld@f\def\degT@rd{0.017453 }  
\ctr@ld@f\def\rdT@deg{57.295779 } 
\ctr@ln@m\v@leurseule
{\catcode`p=12 \catcode`t=12 \gdef\v@leurseule#1pt{#1}}
\ctr@ld@f\def\repdecn@mb#1{\expandafter\v@leurseule\the#1\space}
\ctr@ld@f\def\arct@n#1(#2,#3){{\v@lmin=#2\v@lmax=#3%
    \maxim@m{\mili@u}{-\v@lmin}{\v@lmin}\maxim@m{\c@rre}{-\v@lmax}{\v@lmax}%
    \delt@=\mili@u\m@ech\mili@u%
    \ifdim\c@rre>\@nMnCQn\mili@u\divide\v@lmax\tw@\c@lATAN\v@leur(\z@,\v@lmax)
    \else%
    \maxim@m{\mili@u}{-\v@lmin}{\v@lmin}\maxim@m{\c@rre}{-\v@lmax}{\v@lmax}%
    \m@ech\c@rre%
    \ifdim\mili@u>\@nMnCQn\c@rre\divide\v@lmin\tw@
    \maxim@m{\mili@u}{-\v@lmin}{\v@lmin}\c@lATAN\v@leur(\mili@u,\z@)%
    \else\c@lATAN\v@leur(\delt@,\v@lmax)\fi\fi%
    \ifdim\v@lmin<\z@\v@leur=-\v@leur\ifdim\v@lmax<\z@\advance\v@leur-\PI@%
    \else\advance\v@leur\PI@\fi\fi%
    \global\result@t=\v@leur}#1=\result@t}
\ctr@ld@f\def\m@ech#1{\ifdim#1>1.646pt\divide\mili@u\t@n\divide\c@rre\t@n\m@ech#1\fi}
\ctr@ld@f\def\c@lATAN#1(#2,#3){{\v@lmin=#2\v@lmax=#3\v@leur=\z@\delt@=\tw@ pt%
    \un@iter{0.785398}{\v@lmax<}%
    \un@iter{0.463648}{\v@lmax<}%
    \un@iter{0.244979}{\v@lmax<}%
    \un@iter{0.124355}{\v@lmax<}%
    \un@iter{0.062419}{\v@lmax<}%
    \un@iter{0.031240}{\v@lmax<}%
    \un@iter{0.015624}{\v@lmax<}%
    \un@iter{0.007812}{\v@lmax<}%
    \un@iter{0.003906}{\v@lmax<}%
    \un@iter{0.001953}{\v@lmax<}%
    \un@iter{0.000976}{\v@lmax<}%
    \un@iter{0.000488}{\v@lmax<}%
    \un@iter{0.000244}{\v@lmax<}%
    \un@iter{0.000122}{\v@lmax<}%
    \un@iter{0.000061}{\v@lmax<}%
    \un@iter{0.000030}{\v@lmax<}%
    \un@iter{0.000015}{\v@lmax<}%
    \global\result@t=\v@leur}#1=\result@t}
\ctr@ld@f\def\un@iter#1#2{%
    \divide\delt@\tw@\edef\dpmn@{\repdecn@mb{\delt@}}%
    \mili@u=\v@lmin%
    \ifdim#2\z@%
      \advance\v@lmin-\dpmn@\v@lmax\advance\v@lmax\dpmn@\mili@u%
      \advance\v@leur-#1pt%
    \else%
      \advance\v@lmin\dpmn@\v@lmax\advance\v@lmax-\dpmn@\mili@u%
      \advance\v@leur#1pt%
    \fi}
\ctr@ld@f\def\c@ssin#1#2#3{\expandafter\ifx\csname COS@\number#3\endcsname\relax\c@lCS{#3pt}%
    \expandafter\xdef\csname COS@\number#3\endcsname{\repdecn@mb\result@t}%
    \expandafter\xdef\csname SIN@\number#3\endcsname{\repdecn@mb\result@@t}\fi%
    \edef#1{\csname COS@\number#3\endcsname}\edef#2{\csname SIN@\number#3\endcsname}}
\ctr@ld@f\def\c@lCS#1{{\mili@u=#1\p@rtent=\@ne%
    \relax\ifdim\mili@u<\z@\red@ng<-\else\red@ng>+\fi\f@ctech=\p@rtent%
    \relax\ifdim\mili@u<\z@\mili@u=-\mili@u\f@ctech=-\f@ctech\fi\c@@lCS}}
\ctr@ld@f\def\c@@lCS{\v@lmin=\mili@u\c@rre=-\mili@u\advance\c@rre\DemiPI@deg\v@lmax=\c@rre%
    \mili@u\@@lxxi\mili@u\divide\mili@u\@@mmmmlxviii%
    \edef\v@larg{\repdecn@mb{\mili@u}}\mili@u=-\v@larg\mili@u%
    \edef\v@lmxde{\repdecn@mb{\mili@u}}%
    \c@rre\@@lxxi\c@rre\divide\c@rre\@@mmmmlxviii%
    \edef\v@largC{\repdecn@mb{\c@rre}}\c@rre=-\v@largC\c@rre%
    \edef\v@lmxdeC{\repdecn@mb{\c@rre}}%
    \fctc@s\mili@u\v@lmin\global\result@t\p@rtent\v@leur%
    \let\t@mp=\v@larg\let\v@larg=\v@largC\let\v@largC=\t@mp%
    \let\t@mp=\v@lmxde\let\v@lmxde=\v@lmxdeC\let\v@lmxdeC=\t@mp%
    \fctc@s\c@rre\v@lmax\global\result@@t\f@ctech\v@leur}
\ctr@ld@f\def\fctc@s#1#2{\v@leur=#1\relax\ifdim#2<\@lxxxv\p@\cosser@h\else\sinser@t\fi}
\ctr@ld@f\def\cosser@h{\advance\v@leur\@lvi\p@\divide\v@leur\@lvi%
    \v@leur=\v@lmxde\v@leur\advance\v@leur\@xxx\p@%
    \v@leur=\v@lmxde\v@leur\advance\v@leur\@ccclx\p@%
    \v@leur=\v@lmxde\v@leur\advance\v@leur\@dccxx\p@\divide\v@leur\@dccxx}
\ctr@ld@f\def\sinser@t{\v@leur=\v@lmxdeC\p@\advance\v@leur\@vi\p@%
    \v@leur=\v@largC\v@leur\divide\v@leur\@vi}
\ctr@ld@f\def\red@ng#1#2{\relax\ifdim\mili@u#1#2\DemiPI@deg\advance\mili@u#2-\PI@deg%
    \p@rtent=-\p@rtent\red@ng#1#2\fi}
\ctr@ld@f\def\pr@c@lCS#1#2#3{\ctr@lcsn@m{COS@\number#3 }%
    \expandafter\xdef\csname COS@\number#3\endcsname{#1}%
    \expandafter\xdef\csname SIN@\number#3\endcsname{#2}}
\pr@c@lCS{1}{0}{0}
\pr@c@lCS{0.7071}{0.7071}{45}\pr@c@lCS{0.7071}{-0.7071}{-45}
\pr@c@lCS{0}{1}{90}          \pr@c@lCS{0}{-1}{-90}
\pr@c@lCS{-1}{0}{180}        \pr@c@lCS{-1}{0}{-180}
\pr@c@lCS{0}{-1}{270}        \pr@c@lCS{0}{1}{-270}
\ctr@ld@f\def\invers@#1#2{{\v@leur=#2\maxim@m{\v@lmax}{-\v@leur}{\v@leur}%
    \f@ctech=\@ne\m@inv@rs%
    \multiply\v@leur\f@ctech\edef\v@lv@leur{\repdecn@mb{\v@leur}}%
    \p@rtentiere{\p@rtent}{\v@leur}\v@lmin=\p@\divide\v@lmin\p@rtent%
    \inv@rs@\multiply\v@lmax\f@ctech\global\result@t=\v@lmax}#1=\result@t}
\ctr@ld@f\def\m@inv@rs{\ifdim\v@lmax<\p@\multiply\v@lmax\t@n\multiply\f@ctech\t@n\m@inv@rs\fi}
\ctr@ld@f\def\inv@rs@{\v@lmax=-\v@lmin\v@lmax=\v@lv@leur\v@lmax%
    \advance\v@lmax\tw@ pt\v@lmax=\repdecn@mb{\v@lmin}\v@lmax%
    \delt@=\v@lmax\advance\delt@-\v@lmin\ifdim\delt@<\z@\delt@=-\delt@\fi%
    \ifdim\delt@>\epsil@n\v@lmin=\v@lmax\inv@rs@\fi}
\ctr@ld@f\def\minim@m#1#2#3{\relax\ifdim#2<#3#1=#2\else#1=#3\fi}
\ctr@ld@f\def\maxim@m#1#2#3{\relax\ifdim#2>#3#1=#2\else#1=#3\fi}
\ctr@ld@f\def\p@rtentiere#1#2{#1=#2\divide#1by65536 }
\ctr@ld@f\def\r@undint#1#2{{\v@leur=#2\divide\v@leur\t@n\p@rtentiere{\p@rtent}{\v@leur}%
    \v@leur=\p@rtent pt\global\result@t=\t@n\v@leur}#1=\result@t}
\ctr@ld@f\def\sqrt@#1#2{{\v@leur=#2%
    \minim@m{\v@lmin}{\p@}{\v@leur}\maxim@m{\v@lmax}{\p@}{\v@leur}%
    \f@ctech=\@ne\m@sqrt@\sqrt@@%
    \mili@u=\v@lmin\advance\mili@u\v@lmax\divide\mili@u\tw@\multiply\mili@u\f@ctech%
    \global\result@t=\mili@u}#1=\result@t}
\ctr@ld@f\def\m@sqrt@{\ifdim\v@leur>\dcq@\divide\v@leur\c@nt\v@lmax=\v@leur%
    \multiply\f@ctech\t@n\m@sqrt@\fi}
\ctr@ld@f\def\sqrt@@{\mili@u=\v@lmin\advance\mili@u\v@lmax\divide\mili@u\tw@%
    \c@rre=\repdecn@mb{\mili@u}\mili@u%
    \ifdim\c@rre<\v@leur\v@lmin=\mili@u\else\v@lmax=\mili@u\fi%
    \delt@=\v@lmax\advance\delt@-\v@lmin\ifdim\delt@>\epsil@n\sqrt@@\fi}
\ctr@ld@f\def\extrairelepremi@r#1\de#2{\expandafter\lepremi@r#2@#1#2}
\ctr@ld@f\def\lepremi@r#1,#2@#3#4{\def#3{#1}\def#4{#2}\ignorespaces}
\ctr@ld@f\def\@cfor#1:=#2\do#3{%
  \edef\@fortemp{#2}%
  \ifx\@fortemp\empty\else\@cforloop#2,\@nil,\@nil\@@#1{#3}\fi}
\ctr@ln@m\@nextwhile
\ctr@ld@f\def\@cforloop#1,#2\@@#3#4{%
  \def#3{#1}%
  \ifx#3\Fig@nnil\let\@nextwhile=\Fig@fornoop\else#4\relax\let\@nextwhile=\@cforloop\fi%
  \@nextwhile#2\@@#3{#4}}

\ctr@ld@f\def\@ecfor#1:=#2\do#3{%
  \def\@@cfor{\@cfor#1:=}%
  \edef\@@@cfor{#2}%
  \expandafter\@@cfor\@@@cfor\do{#3}}
\ctr@ld@f\def\Fig@nnil{\@nil}
\ctr@ld@f\def\Fig@fornoop#1\@@#2#3{}
\ctr@ln@m\list@@rg
\ctr@ld@f\def\trtlis@rg#1#2{\def\list@@rg{#1}%
    \@ecfor\p@rv@l:=\list@@rg\do{\expandafter#2\p@rv@l|}}
\ctr@ld@f\def\trtlis@rgtok#1{\let@xte={}\let\n@xt\addt@t@xt\addt@t@xt #1}
\ctr@ln@m\M@cro
\ctr@ln@m\n@xt
\ctr@ld@f\def\addt@t@xt#1{\if#1|\let\n@xt\relax\else%
    \if#1,\expandafter\M@cro\the\let@xte|\let@xte={}%
    \else\let@xte=\expandafter{\the\let@xte #1}\fi\fi\n@xt}
\ctr@ln@w{newbox}\b@xvisu
\ctr@ln@w{newtoks}\let@xte
\ctr@ln@w{newif}\ifitis@K
\ctr@ln@w{newcount}\s@mme
\ctr@ln@w{newcount}\l@mbd@un \ctr@ln@w{newcount}\l@mbd@de
\ctr@ln@w{newcount}\superc@ntr@l\superc@ntr@l=\@ne        
\ctr@ln@w{newcount}\typec@ntr@l\typec@ntr@l=\superc@ntr@l 
\ctr@ln@w{newdimen}\v@lX  \ctr@ln@w{newdimen}\v@lY  \ctr@ln@w{newdimen}\v@lZ
\ctr@ln@w{newdimen}\v@lXa \ctr@ln@w{newdimen}\v@lYa \ctr@ln@w{newdimen}\v@lZa
\ctr@ln@w{newdimen}\unit@\unit@=\p@ 
\ctr@ld@f\def\unit@util{pt}
\ctr@ld@f\def\ptT@ptps{0.996264 }
\ctr@ld@f\def\ptpsT@pt{1.00375 }
\ctr@ld@f\def\ptT@unit@{1} 
\ctr@ld@f\def\setunit@#1{\def\unit@util{#1}\setunit@@#1:\invers@{\result@t}{\unit@}%
    \edef\ptT@unit@{\repdecn@mb\result@t}}
\ctr@ld@f\def\setunit@@#1#2:{\ifcat#1a\unit@=\@ne#1#2\else\unit@=#1#2\fi}
\ctr@ld@f\def\d@fm@cdim#1#2{{\v@leur=#2\v@leur=\ptT@unit@\v@leur\xdef#1{\repdecn@mb\v@leur}}}
\ctr@ln@w{newif}\ifBdingB@x\BdingB@xtrue
\ctr@ln@w{newdimen}\c@@rdXmin \ctr@ln@w{newdimen}\c@@rdYmin  
\ctr@ln@w{newdimen}\c@@rdXmax \ctr@ln@w{newdimen}\c@@rdYmax
\ctr@ld@f\def\b@undb@x#1#2{\ifBdingB@x%
    \relax\ifdim#1<\c@@rdXmin\global\c@@rdXmin=#1\fi%
    \relax\ifdim#2<\c@@rdYmin\global\c@@rdYmin=#2\fi%
    \relax\ifdim#1>\c@@rdXmax\global\c@@rdXmax=#1\fi%
    \relax\ifdim#2>\c@@rdYmax\global\c@@rdYmax=#2\fi\fi}
\ctr@ld@f\def\b@undb@xP#1{{\Figg@tXY{#1}\b@undb@x{\v@lX}{\v@lY}}}
\ctr@ld@f\def\ellBB@x#1;#2,#3(#4,#5,#6){{\s@uvc@ntr@l\et@tellBB@x%
    \setc@ntr@l{2}\figptell-2::#1;#2,#3(#4,#6)\b@undb@xP{-2}%
    \figptell-2::#1;#2,#3(#5,#6)\b@undb@xP{-2}%
    \c@ssin{\C@}{\S@}{#6}\v@lmin=\C@ pt\v@lmax=\S@ pt%
    \mili@u=#3\v@lmin\delt@=#2\v@lmax\arct@n\v@leur(\delt@,\mili@u)%
    \mili@u=-#3\v@lmax\delt@=#2\v@lmin\arct@n\c@rre(\delt@,\mili@u)%
    \v@leur=\rdT@deg\v@leur\advance\v@leur-\DePI@deg%
    \c@rre=\rdT@deg\c@rre\advance\c@rre-\DePI@deg%
    \v@lmin=#4pt\v@lmax=#5pt%
    \loop\ifdim\v@leur<\v@lmax\ifdim\v@leur>\v@lmin%
    \edef\@ngle{\repdecn@mb\v@leur}\figptell-2::#1;#2,#3(\@ngle,#6)%
    \b@undb@xP{-2}\fi\advance\v@leur\PI@deg\repeat%
    \loop\ifdim\c@rre<\v@lmax\ifdim\c@rre>\v@lmin%
    \edef\@ngle{\repdecn@mb\c@rre}\figptell-2::#1;#2,#3(\@ngle,#6)%
    \b@undb@xP{-2}\fi\advance\c@rre\PI@deg\repeat%
    \resetc@ntr@l\et@tellBB@x}\ignorespaces}
\ctr@ld@f\def\initb@undb@x{\c@@rdXmin=\maxdimen\c@@rdYmin=\maxdimen%
    \c@@rdXmax=-\maxdimen\c@@rdYmax=-\maxdimen}
\ctr@ld@f\def\c@ntr@lnum#1{%
    \relax\ifnum\typec@ntr@l=\@ne%
    \ifnum#1<\z@%
    \immediate\write16{*** Forbidden point number (#1). Abort.}\end\fi\fi%
    \set@bjc@de{#1}}
\ctr@ln@m\objc@de
\ctr@ld@f\def\set@bjc@de#1{\edef\objc@de{@BJ\ifnum#1<\z@ M\romannumeral-#1\else\romannumeral#1\fi}}
\s@mme=\m@ne\loop\ifnum\s@mme>-19
  \set@bjc@de{\s@mme}\ctr@lcsn@m\objc@de\ctr@lcsn@m{\objc@de T}
\advance\s@mme\m@ne\repeat
\s@mme=\@ne\loop\ifnum\s@mme<6
  \set@bjc@de{\s@mme}\ctr@lcsn@m\objc@de\ctr@lcsn@m{\objc@de T}
\advance\s@mme\@ne\repeat
\ctr@ld@f\def\setc@ntr@l#1{\ifnum\superc@ntr@l>#1\typec@ntr@l=\superc@ntr@l%
    \else\typec@ntr@l=#1\fi}
\ctr@ld@f\def\resetc@ntr@l#1{\global\superc@ntr@l=#1\setc@ntr@l{#1}}
\ctr@ld@f\def\s@uvc@ntr@l#1{\edef#1{\the\superc@ntr@l}}
\ctr@ln@m\c@lproscal
\ctr@ld@f\def\c@lproscalDD#1[#2,#3]{{\Figg@tXY{#2}%
    \edef\Xu@{\repdecn@mb{\v@lX}}\edef\Yu@{\repdecn@mb{\v@lY}}\Figg@tXY{#3}%
    \global\result@t=\Xu@\v@lX\global\advance\result@t\Yu@\v@lY}#1=\result@t}
\ctr@ld@f\def\c@lproscalTD#1[#2,#3]{{\Figg@tXY{#2}\edef\Xu@{\repdecn@mb{\v@lX}}%
    \edef\Yu@{\repdecn@mb{\v@lY}}\edef\Zu@{\repdecn@mb{\v@lZ}}%
    \Figg@tXY{#3}\global\result@t=\Xu@\v@lX\global\advance\result@t\Yu@\v@lY%
    \global\advance\result@t\Zu@\v@lZ}#1=\result@t}
\ctr@ld@f\def\c@lprovec#1{%
    \det@rmC\v@lZa(\v@lX,\v@lY,\v@lmin,\v@lmax)%
    \det@rmC\v@lXa(\v@lY,\v@lZ,\v@lmax,\v@leur)%
    \det@rmC\v@lYa(\v@lZ,\v@lX,\v@leur,\v@lmin)%
    \Figv@ctCreg#1(\v@lXa,\v@lYa,\v@lZa)}
\ctr@ld@f\def\det@rm#1[#2,#3]{{\Figg@tXY{#2}\Figg@tXYa{#3}%
    \delt@=\repdecn@mb{\v@lX}\v@lYa\advance\delt@-\repdecn@mb{\v@lY}\v@lXa%
    \global\result@t=\delt@}#1=\result@t}
\ctr@ld@f\def\det@rmC#1(#2,#3,#4,#5){{\global\result@t=\repdecn@mb{#2}#5%
    \global\advance\result@t-\repdecn@mb{#3}#4}#1=\result@t}
\ctr@ld@f\def\getredf@ctDD#1(#2,#3){{\maxim@m{\v@lXa}{-#2}{#2}\maxim@m{\v@lYa}{-#3}{#3}%
    \maxim@m{\v@lXa}{\v@lXa}{\v@lYa}
    \ifdim\v@lXa>\@xci pt\divide\v@lXa\@xci%
    \p@rtentiere{\p@rtent}{\v@lXa}\advance\p@rtent\@ne\else\p@rtent=\@ne\fi%
    \global\result@tent=\p@rtent}#1=\result@tent\ignorespaces}
\ctr@ld@f\def\getredf@ctTD#1(#2,#3,#4){{\maxim@m{\v@lXa}{-#2}{#2}\maxim@m{\v@lYa}{-#3}{#3}%
    \maxim@m{\v@lZa}{-#4}{#4}\maxim@m{\v@lXa}{\v@lXa}{\v@lYa}%
    \maxim@m{\v@lXa}{\v@lXa}{\v@lZa}
    \ifdim\v@lXa>\@lxxiv pt\divide\v@lXa\@lxxiv%
    \p@rtentiere{\p@rtent}{\v@lXa}\advance\p@rtent\@ne\else\p@rtent=\@ne\fi%
    \global\result@tent=\p@rtent}#1=\result@tent\ignorespaces}
\ctr@ln@m\getredf@ctB
\ctr@ld@f\def\getredf@ctBDD#1{\getredf@ctDD#1(\v@lX,\v@lY)}
\ctr@ld@f\def\getredf@ctBTD#1{\getredf@ctTD#1(\v@lX,\v@lY,\v@lZ)}
\ctr@ld@f\def\FigptintercircB@zDD#1:#2:#3,#4[#5,#6,#7,#8]{{\s@uvc@ntr@l\et@tfigptintercircB@zDD%
    \setc@ntr@l{2}\figvectPDD-1[#5,#8]\Figg@tXY{-1}\getredf@ctDD\f@ctech(\v@lX,\v@lY)%
    \mili@u=#4\unit@\divide\mili@u\f@ctech\c@rre=\repdecn@mb{\mili@u}\mili@u%
    \figptBezierDD-5::#3[#5,#6,#7,#8]%
    \v@lmin=#3\p@\v@lmax=\v@lmin\advance\v@lmax0.1\p@%
    \loop\edef\T@{\repdecn@mb{\v@lmax}}\figptBezierDD-2::\T@[#5,#6,#7,#8]%
    \figvectPDD-1[-5,-2]\n@rmeucCDD{\delt@}{-1}\ifdim\delt@<\c@rre\v@lmin=\v@lmax%
    \advance\v@lmax0.1\p@\repeat%
    \loop\mili@u=\v@lmin\advance\mili@u\v@lmax%
    \divide\mili@u\tw@\edef\T@{\repdecn@mb{\mili@u}}\figptBezierDD-2::\T@[#5,#6,#7,#8]%
    \figvectPDD-1[-5,-2]\n@rmeucCDD{\delt@}{-1}\ifdim\delt@>\c@rre\v@lmax=\mili@u%
    \else\v@lmin=\mili@u\fi\v@leur=\v@lmax\advance\v@leur-\v@lmin%
    \ifdim\v@leur>\epsil@n\repeat\figptcopyDD#1:#2/-2/%
    \resetc@ntr@l\et@tfigptintercircB@zDD}\ignorespaces}
\ctr@ln@m\figptinterlines
\ctr@ld@f\def\inters@cDD#1:#2[#3,#4;#5,#6]{{\s@uvc@ntr@l\et@tinters@cDD%
    \setc@ntr@l{2}\vecunit@{-1}{#4}\vecunit@{-2}{#6}%
    \Figg@tXY{-1}\setc@ntr@l{1}\Figg@tXYa{#3}%
    \edef\A@{\repdecn@mb{\v@lX}}\edef\B@{\repdecn@mb{\v@lY}}%
    \v@lmin=\B@\v@lXa\advance\v@lmin-\A@\v@lYa%
    \Figg@tXYa{#5}\setc@ntr@l{2}\Figg@tXY{-2}%
    \edef\C@{\repdecn@mb{\v@lX}}\edef\D@{\repdecn@mb{\v@lY}}%
    \v@lmax=\D@\v@lXa\advance\v@lmax-\C@\v@lYa%
    \delt@=\A@\v@lY\advance\delt@-\B@\v@lX%
    \invers@{\v@leur}{\delt@}\edef\v@ldelta{\repdecn@mb{\v@leur}}%
    \v@lXa=\A@\v@lmax\advance\v@lXa-\C@\v@lmin%
    \v@lYa=\B@\v@lmax\advance\v@lYa-\D@\v@lmin%
    \v@lXa=\v@ldelta\v@lXa\v@lYa=\v@ldelta\v@lYa%
    \setc@ntr@l{1}\Figp@intregDD#1:{#2}(\v@lXa,\v@lYa)%
    \resetc@ntr@l\et@tinters@cDD}\ignorespaces}
\ctr@ld@f\def\inters@cTD#1:#2[#3,#4;#5,#6]{{\s@uvc@ntr@l\et@tinters@cTD%
    \setc@ntr@l{2}\figvectNVTD-1[#4,#6]\figvectNVTD-2[#6,-1]\figvectPTD-1[#3,#5]%
    \r@pPSTD\v@leur[-2,-1,#4]\edef\v@lcoef{\repdecn@mb{\v@leur}}%
    \figpttraTD#1:{#2}=#3/\v@lcoef,#4/\resetc@ntr@l\et@tinters@cTD}\ignorespaces}
\ctr@ld@f\def\r@pPSTD#1[#2,#3,#4]{{\Figg@tXY{#2}\edef\Xu@{\repdecn@mb{\v@lX}}%
    \edef\Yu@{\repdecn@mb{\v@lY}}\edef\Zu@{\repdecn@mb{\v@lZ}}%
    \Figg@tXY{#3}\v@lmin=\Xu@\v@lX\advance\v@lmin\Yu@\v@lY\advance\v@lmin\Zu@\v@lZ%
    \Figg@tXY{#4}\v@lmax=\Xu@\v@lX\advance\v@lmax\Yu@\v@lY\advance\v@lmax\Zu@\v@lZ%
    \invers@{\v@leur}{\v@lmax}\global\result@t=\repdecn@mb{\v@leur}\v@lmin}%
    #1=\result@t}
\ctr@ln@m\n@rminf
\ctr@ld@f\def\n@rminfDD#1#2{{\Figg@tXY{#2}\maxim@m{\v@lX}{\v@lX}{-\v@lX}%
    \maxim@m{\v@lY}{\v@lY}{-\v@lY}\maxim@m{\global\result@t}{\v@lX}{\v@lY}}%
    #1=\result@t}
\ctr@ld@f\def\n@rminfTD#1#2{{\Figg@tXY{#2}\maxim@m{\v@lX}{\v@lX}{-\v@lX}%
    \maxim@m{\v@lY}{\v@lY}{-\v@lY}\maxim@m{\v@lZ}{\v@lZ}{-\v@lZ}%
    \maxim@m{\v@lX}{\v@lX}{\v@lY}\maxim@m{\global\result@t}{\v@lX}{\v@lZ}}%
    #1=\result@t}
\ctr@ln@m\n@rmeucC
\ctr@ld@f\def\n@rmeucCDD#1#2{\Figg@tXY{#2}\divide\v@lX\f@ctech\divide\v@lY\f@ctech%
    #1=\repdecn@mb{\v@lX}\v@lX\v@lX=\repdecn@mb{\v@lY}\v@lY\advance#1\v@lX}
\ctr@ld@f\def\n@rmeucCTD#1#2{\Figg@tXY{#2}%
    \divide\v@lX\f@ctech\divide\v@lY\f@ctech\divide\v@lZ\f@ctech%
    #1=\repdecn@mb{\v@lX}\v@lX\v@lX=\repdecn@mb{\v@lY}\v@lY\advance#1\v@lX%
    \v@lX=\repdecn@mb{\v@lZ}\v@lZ\advance#1\v@lX}
\ctr@ln@m\n@rmeucSV
\ctr@ld@f\def\n@rmeucSVDD#1#2{{\Figg@tXY{#2}%
    \v@lXa=\repdecn@mb{\v@lX}\v@lX\v@lYa=\repdecn@mb{\v@lY}\v@lY%
    \advance\v@lXa\v@lYa\sqrt@{\global\result@t}{\v@lXa}}#1=\result@t}
\ctr@ld@f\def\n@rmeucSVTD#1#2{{\Figg@tXY{#2}\v@lXa=\repdecn@mb{\v@lX}\v@lX%
    \v@lYa=\repdecn@mb{\v@lY}\v@lY\v@lZa=\repdecn@mb{\v@lZ}\v@lZ%
    \advance\v@lXa\v@lYa\advance\v@lXa\v@lZa\sqrt@{\global\result@t}{\v@lXa}}#1=\result@t}
\ctr@ln@m\n@rmeuc
\ctr@ld@f\def\n@rmeucDD#1#2{{\Figg@tXY{#2}\getredf@ctDD\f@ctech(\v@lX,\v@lY)%
    \divide\v@lX\f@ctech\divide\v@lY\f@ctech%
    \v@lXa=\repdecn@mb{\v@lX}\v@lX\v@lYa=\repdecn@mb{\v@lY}\v@lY%
    \advance\v@lXa\v@lYa\sqrt@{\global\result@t}{\v@lXa}%
    \global\multiply\result@t\f@ctech}#1=\result@t}
\ctr@ld@f\def\n@rmeucTD#1#2{{\Figg@tXY{#2}\getredf@ctTD\f@ctech(\v@lX,\v@lY,\v@lZ)%
    \divide\v@lX\f@ctech\divide\v@lY\f@ctech\divide\v@lZ\f@ctech%
    \v@lXa=\repdecn@mb{\v@lX}\v@lX%
    \v@lYa=\repdecn@mb{\v@lY}\v@lY\v@lZa=\repdecn@mb{\v@lZ}\v@lZ%
    \advance\v@lXa\v@lYa\advance\v@lXa\v@lZa\sqrt@{\global\result@t}{\v@lXa}%
    \global\multiply\result@t\f@ctech}#1=\result@t}
\ctr@ln@m\vecunit@
\ctr@ld@f\def\vecunit@DD#1#2{{\Figg@tXY{#2}\getredf@ctDD\f@ctech(\v@lX,\v@lY)%
    \divide\v@lX\f@ctech\divide\v@lY\f@ctech%
    \Figv@ctCreg#1(\v@lX,\v@lY)\n@rmeucSV{\v@lYa}{#1}%
    \invers@{\v@lXa}{\v@lYa}\edef\v@lv@lXa{\repdecn@mb{\v@lXa}}%
    \v@lX=\v@lv@lXa\v@lX\v@lY=\v@lv@lXa\v@lY%
    \Figv@ctCreg#1(\v@lX,\v@lY)\multiply\v@lYa\f@ctech\global\result@t=\v@lYa}}
\ctr@ld@f\def\vecunit@TD#1#2{{\Figg@tXY{#2}\getredf@ctTD\f@ctech(\v@lX,\v@lY,\v@lZ)%
    \divide\v@lX\f@ctech\divide\v@lY\f@ctech\divide\v@lZ\f@ctech%
    \Figv@ctCreg#1(\v@lX,\v@lY,\v@lZ)\n@rmeucSV{\v@lYa}{#1}%
    \invers@{\v@lXa}{\v@lYa}\edef\v@lv@lXa{\repdecn@mb{\v@lXa}}%
    \v@lX=\v@lv@lXa\v@lX\v@lY=\v@lv@lXa\v@lY\v@lZ=\v@lv@lXa\v@lZ%
    \Figv@ctCreg#1(\v@lX,\v@lY,\v@lZ)\multiply\v@lYa\f@ctech\global\result@t=\v@lYa}}
\ctr@ld@f\def\vecunitC@TD[#1,#2]{\Figg@tXYa{#1}\Figg@tXY{#2}%
    \advance\v@lX-\v@lXa\advance\v@lY-\v@lYa\advance\v@lZ-\v@lZa\c@lvecunitTD}
\ctr@ld@f\def\vecunitCV@TD#1{\Figg@tXY{#1}\c@lvecunitTD}
\ctr@ld@f\def\c@lvecunitTD{\getredf@ctTD\f@ctech(\v@lX,\v@lY,\v@lZ)%
    \divide\v@lX\f@ctech\divide\v@lY\f@ctech\divide\v@lZ\f@ctech%
    \v@lXa=\repdecn@mb{\v@lX}\v@lX%
    \v@lYa=\repdecn@mb{\v@lY}\v@lY\v@lZa=\repdecn@mb{\v@lZ}\v@lZ%
    \advance\v@lXa\v@lYa\advance\v@lXa\v@lZa\sqrt@{\v@lYa}{\v@lXa}%
    \invers@{\v@lXa}{\v@lYa}\edef\v@lv@lXa{\repdecn@mb{\v@lXa}}%
    \v@lX=\v@lv@lXa\v@lX\v@lY=\v@lv@lXa\v@lY\v@lZ=\v@lv@lXa\v@lZ}
\ctr@ln@m\figgetangle
\ctr@ld@f\def\figgetangleDD#1[#2,#3,#4]{\ifGR@cri{\s@uvc@ntr@l\et@tfiggetangleDD\setc@ntr@l{2}%
    \figvectPDD-1[#2,#3]\figvectPDD-2[#2,#4]\vecunit@{-1}{-1}%
    \c@lproscalDD\delt@[-2,-1]\figvectNVDD-1[-1]\c@lproscalDD\v@leur[-2,-1]%
    \arct@n\v@lmax(\delt@,\v@leur)\v@lmax=\rdT@deg\v@lmax%
    \ifdim\v@lmax<\z@\advance\v@lmax\DePI@deg\fi\xdef#1{\repdecn@mb{\v@lmax}}%
    \resetc@ntr@l\et@tfiggetangleDD}\ignorespaces\fi}
\ctr@ld@f\def\figgetangleTD#1[#2,#3,#4,#5]{\ifGR@cri{\s@uvc@ntr@l\et@tfiggetangleTD\setc@ntr@l{2}%
    \figvectPTD-1[#2,#3]\figvectPTD-2[#2,#5]\figvectNVTD-3[-1,-2]%
    \figvectPTD-2[#2,#4]\figvectNVTD-4[-3,-1]%
    \vecunit@{-1}{-1}\c@lproscalTD\delt@[-2,-1]\c@lproscalTD\v@leur[-2,-4]%
    \arct@n\v@lmax(\delt@,\v@leur)\v@lmax=\rdT@deg\v@lmax%
    \ifdim\v@lmax<\z@\advance\v@lmax\DePI@deg\fi\xdef#1{\repdecn@mb{\v@lmax}}%
    \resetc@ntr@l\et@tfiggetangleTD}\ignorespaces\fi}    
\ctr@ld@f\def\figgetdist#1[#2,#3]{\ifGR@cri{\s@uvc@ntr@l\et@tfiggetdist\setc@ntr@l{2}%
    \figvectP-1[#2,#3]\n@rmeuc{\v@lX}{-1}\v@lX=\ptT@unit@\v@lX\xdef#1{\repdecn@mb{\v@lX}}%
    \resetc@ntr@l\et@tfiggetdist}\ignorespaces\fi}
\ctr@ld@f\def\figget#1=#2[#3]{\keln@mun#1|%
    \def\n@mref{a}\ifx\l@debut\n@mref\figgetangle#2[#3]\else
    \def\n@mref{d}\ifx\l@debut\n@mref\figgetdist#2[#3]\else
    \W@rnmeskwd{figget}{#1}\fi\fi\ignorespaces}
\ctr@ld@f\def\Figg@tT#1{\c@ntr@lnum{#1}%
    {\expandafter\expandafter\expandafter\extr@ctT\csname\objc@de\endcsname:%
     \ifnum\B@@ltxt=\z@\ptn@me{#1}\else\csname\objc@de T\endcsname\fi}}
\ctr@ld@f\def\extr@ctT#1,#2,#3/#4:{\def\B@@ltxt{#3}}
\ctr@ld@f\def\Figg@tXY#1{\c@ntr@lnum{#1}%
    \expandafter\expandafter\expandafter\extr@ctC\csname\objc@de\endcsname:}
\ctr@ln@m\extr@ctC
\ctr@ld@f\def\extr@ctCDD#1/#2,#3,#4:{\v@lX=#2\v@lY=#3}
\ctr@ld@f\def\extr@ctCTD#1/#2,#3,#4:{\v@lX=#2\v@lY=#3\v@lZ=#4}
\ctr@ld@f\def\Figg@tXYa#1{\c@ntr@lnum{#1}%
    \expandafter\expandafter\expandafter\extr@ctCa\csname\objc@de\endcsname:}
\ctr@ln@m\extr@ctCa
\ctr@ld@f\def\extr@ctCaDD#1/#2,#3,#4:{\v@lXa=#2\v@lYa=#3}
\ctr@ld@f\def\extr@ctCaTD#1/#2,#3,#4:{\v@lXa=#2\v@lYa=#3\v@lZa=#4}
\ctr@ln@m\t@xt@
\ctr@ld@f\def\figinit#1{\t@stc@tcodech@nge\initpr@lim\Figinit@#1,:\initpss@ttings\ignorespaces}
\ctr@ld@f\def\Figinit@#1,#2:{\setunit@{#1}\def\t@xt@{#2}\ifx\t@xt@\empty\else\Figinit@@#2:\fi}
\ctr@ld@f\def\Figinit@@#1#2:{\if#12 \else\Figs@tproj{#1}\initTD@\fi}
\ctr@ln@w{newif}\ifTr@isDim
\ctr@ld@f\def\UnD@fined{UNDEFINED}
\ctr@ln@m\@utoFN
\ctr@ln@m\@utoFInDone
\ctr@ln@m\disob@unit
\ctr@ld@f\def\initpr@lim{\initb@undb@x\figsetmark{}\figsetptname{$A_{##1}$}\def\Sc@leFact{1}%
    \initDD@\figsetroundcoord{yes}\GR@critrue\expandafter\setupd@te\D@FTupdate:%
    \edef\disob@unit{\UnD@fined}\edef\t@rgetpt{\UnD@fined}\gdef\@utoFInDone{1}\gdef\@utoFN{0}}
\ctr@ld@f\def\initDD@{\Tr@isDimfalse%
    \ifPDFm@ke%
     \let\Ps@rcerc=\Ps@rcercBz%
     \let\Ps@rell=\Ps@rellBz%
    \fi
    \let\c@lDCUn=\c@lDCUnDD%
    \let\c@lDCDeux=\c@lDCDeuxDD%
    \let\c@ldefproj=\relax%
    \let\c@lproscal=\c@lproscalDD%
    \let\c@lprojSP=\relax%
    \let\extr@ctC=\extr@ctCDD%
    \let\extr@ctCa=\extr@ctCaDD%
    \let\extr@ctCF=\extr@ctCFDD%
    \let\Figp@intreg=\Figp@intregDD%
    \let\Figpts@xes=\Figpts@xesDD%
    \let\getredf@ctB=\getredf@ctBDD%
    \let\n@rmeucSV=\n@rmeucSVDD\let\n@rmeuc=\n@rmeucDD\let\n@rmeucC\n@rmeucCDD\let\n@rminf=\n@rminfDD%
    \let\pr@dMatV=\pr@dMatVDD%
    \let\Q@@xes=\Q@@xesDD%
    \let\vecunit@=\vecunit@DD%
    \let\figcoord=\figcoordDD%
    \let\figgetangle=\figgetangleDD%
    \let\figpt=\figptDD%
    \let\figptBezier=\figptBezierDD%
    \let\figptbary=\figptbaryDD%
    \let\figptcirc=\figptcircDD%
    \let\figptcircumcenter=\figptcircumcenterDD%
    \let\figptcopy=\figptcopyDD%
    \let\figptcurvcenter=\figptcurvcenterDD%
    \let\figptell=\figptellDD%
    \let\figptendnormal=\figptendnormalDD%
    \let\figptinterlineplane=\figptinterlineplaneDD%
    \let\figptinterlines=\inters@cDD%
    \let\figptorthocenter=\figptorthocenterDD%
    \let\figptorthoprojline=\figptorthoprojlineDD%
    \let\figptorthoprojplane=\figptorthoprojplaneDD%
    \let\figptrot=\figptrotDD%
    \let\figptscontrol=\figptscontrolDD%
    \let\figptsintercirc=\figptsintercircDD%
    \let\figptsinterlinell=\figptsinterlinellDD%
    \let\figptsorthoprojline=\figptsorthoprojlineDD%
    \let\figptorthoprojplane=\figptorthoprojplaneDD%
    \let\figptsrot=\figptsrotDD%
    \let\figptssym=\figptssymDD%
    \let\figptstra=\figptstraDD%
    \let\figptsym=\figptsymDD%
    \let\figpttraC=\figpttraCDD%
    \let\figpttra=\figpttraDD%
    \let\figptvisilimSL=\figptvisilimSLDD%
    \let\figsetobdist=\figsetobdistDD%
    \let\figsettarget=\figsettargetDD%
    \let\figsetview=\figsetviewDD%
    \let\figvectDBezier=\figvectDBezierDD%
    \let\figvectN=\figvectNDD%
    \let\figvectNV=\figvectNVDD%
    \let\figvectP=\figvectPDD%
    \let\figvectU=\figvectUDD%
    \let\figdrawarccircP=\Q@arccircPDD%
    \let\figdrawarccirc=\Q@arccircDD%
    \let\figdrawarcell=\Q@arcellDD%
    \let\figdrawarcellPA=\Q@arcellPADD%
    \let\figdrawarrowBezier=\Q@arrowBezierDD%
    \let\figdrawarrowcircP=\Q@arrowcircPDD%
    \let\figdrawarrowcirc=\Q@arrowcircDD%
    \let\figdrawarrowhead=\Q@arrowheadDD%
    \let\figdrawarrow=\Q@arrowDD%
    \let\figdrawBezier=\Q@BezierDD%
    \let\figdrawcirc=\Q@circDD%
    \let\figdrawcurve=\Q@curveDD%
    \let\figdrawnormal=\Q@normalDD%
    }
\ctr@ld@f\def\initTD@{\Tr@isDimtrue\initb@undb@xTD\newt@rgetptfalse\newdis@bfalse%
    \let\c@lDCUn=\c@lDCUnTD%
    \let\c@lDCDeux=\c@lDCDeuxTD%
    \let\c@ldefproj=\c@ldefprojTD%
    \let\c@lproscal=\c@lproscalTD%
    \let\extr@ctC=\extr@ctCTD%
    \let\extr@ctCa=\extr@ctCaTD%
    \let\extr@ctCF=\extr@ctCFTD%
    \let\Figp@intreg=\Figp@intregTD%
    \let\Figpts@xes=\Figpts@xesTD%
    \let\getredf@ctB=\getredf@ctBTD%
    \let\n@rmeucSV=\n@rmeucSVTD\let\n@rmeuc=\n@rmeucTD\let\n@rmeucC\n@rmeucCTD\let\n@rminf=\n@rminfTD%
    \let\pr@dMatV=\pr@dMatVTD%
    \let\Q@@xes=\Q@@xesTD%
    \let\vecunit@=\vecunit@TD%
    \let\figcoord=\figcoordTD%
    \let\figgetangle=\figgetangleTD%
    \let\figpt=\figptTD%
    \let\figptBezier=\figptBezierTD%
    \let\figptbary=\figptbaryTD%
    \let\figptcirc=\figptcircTD%
    \let\figptcircumcenter=\figptcircumcenterTD%
    \let\figptcopy=\figptcopyTD%
    \let\figptcurvcenter=\figptcurvcenterTD%
    \let\figptinterlineplane=\figptinterlineplaneTD%
    \let\figptinterlines=\inters@cTD%
    \let\figptorthocenter=\figptorthocenterTD%
    \let\figptorthoprojline=\figptorthoprojlineTD%
    \let\figptorthoprojplane=\figptorthoprojplaneTD%
    \let\figptrot=\figptrotTD%
    \let\figptscontrol=\figptscontrolTD%
    \let\figptsintercirc=\figptsintercircTD%
    \let\figptsorthoprojline=\figptsorthoprojlineTD%
    \let\figptsorthoprojplane=\figptsorthoprojplaneTD%
    \let\figptsrot=\figptsrotTD%
    \let\figptssym=\figptssymTD%
    \let\figptstra=\figptstraTD%
    \let\figptsym=\figptsymTD%
    \let\figpttraC=\figpttraCTD%
    \let\figpttra=\figpttraTD%
    \let\figptvisilimSL=\figptvisilimSLTD%
    \let\figsetobdist=\figsetobdistTD%
    \let\figsettarget=\figsettargetTD%
    \let\figsetview=\figsetviewTD%
    \let\figvectDBezier=\figvectDBezierTD%
    \let\figvectN=\figvectNTD%
    \let\figvectNV=\figvectNVTD%
    \let\figvectP=\figvectPTD%
    \let\figvectU=\figvectUTD%
    \let\figdrawarccircP=\Q@arccircPTD%
    \let\figdrawarccirc=\Q@arccircTD%
    \let\figdrawarcell=\Q@arcellTD%
    \let\figdrawarcellPA=\Q@arcellPATD%
    \let\figdrawarrowBezier=\Q@arrowBezierTD%
    \let\figdrawarrowcircP=\Q@arrowcircPTD%
    \let\figdrawarrowcirc=\Q@arrowcircTD%
    \let\figdrawarrowhead=\Q@arrowheadTD%
    \let\figdrawarrow=\Q@arrowTD%
    \let\figdrawBezier=\Q@BezierTD%
    \let\figdrawcirc=\Q@circTD%
    \let\figdrawcurve=\Q@curveTD%
    }
\ctr@ld@f\def\un@v@ilable#1{\immediate\write16{*** The macro #1 is not available in the current context.}}
\ctr@ld@f\def\figinsert#1{{\def\t@xt@{#1}\relax%
    \ifx\t@xt@\empty\ifnum\@utoFInDone>\z@\Figinsert@\DefGIfilen@me,:\fi%
    \else\expandafter\FiginsertNu@#1 :\fi}\ignorespaces}
\ctr@ld@f\def\FiginsertNu@#1 #2:{\def\t@xt@{#1}\relax\ifx\t@xt@\empty\def\t@xt@{#2}%
    \ifx\t@xt@\empty\ifnum\@utoFInDone>\z@\Figinsert@\DefGIfilen@me,:\fi%
    \else\FiginsertNu@#2:\fi\else\expandafter\FiginsertNd@#1 #2:\fi}
\ctr@ld@f\def\FiginsertNd@#1#2:{\ifcat#1a\Figinsert@#1#2,:\else%
    \ifnum\@utoFInDone>\z@\Figinsert@\DefGIfilen@me,#1#2,:\fi\fi}
\ctr@ln@m\Sc@leFact
\ctr@ld@f\def\Figinsert@#1,#2:{\def\t@xt@{#2}\ifx\t@xt@\empty\xdef\Sc@leFact{1}\else%
    \X@rgdeux@#2\xdef\Sc@leFact{\@rgdeux}\fi%
    \Figdisc@rdLTS{#1}{\t@xt@}\@psfgetbb{\t@xt@}%
    \v@lX=\@psfllx\p@\v@lX=\ptpsT@pt\v@lX\v@lX=\Sc@leFact\v@lX%
    \v@lY=\@psflly\p@\v@lY=\ptpsT@pt\v@lY\v@lY=\Sc@leFact\v@lY%
    \b@undb@x{\v@lX}{\v@lY}%
    \v@lX=\@psfurx\p@\v@lX=\ptpsT@pt\v@lX\v@lX=\Sc@leFact\v@lX%
    \v@lY=\@psfury\p@\v@lY=\ptpsT@pt\v@lY\v@lY=\Sc@leFact\v@lY%
    \b@undb@x{\v@lX}{\v@lY}%
    \ifPDFm@ke\Figinclud@PDF{\t@xt@}{\Sc@leFact}\else%
    \v@lX=\c@nt pt\v@lX=\Sc@leFact\v@lX\edef\F@ct{\repdecn@mb{\v@lX}}%
    \ifx\TeXturesonMacOSltX\special{postscriptfile #1 vscale=\F@ct\space hscale=\F@ct}%
    \else\includegraphics{#1}\fi\fi%
    \message{[\t@xt@]}\ignorespaces}
\ctr@ld@f\def\Figdisc@rdLTS#1#2{\expandafter\Figdisc@rdLTS@#1 :#2}
\ctr@ld@f\def\Figdisc@rdLTS@#1 #2:#3{\def#3{#1}\relax\ifx#3\empty\expandafter\Figdisc@rdLTS@#2:#3\fi}
\ctr@ld@f\def\figinsertE#1{\FiginsertE@#1,:\ignorespaces}
\ctr@ld@f\def\FiginsertE@#1,#2:{{\def\t@xt@{#2}\ifx\t@xt@\empty\xdef\Sc@leFact{1}\else%
    \X@rgdeux@#2\xdef\Sc@leFact{\@rgdeux}\fi%
    \Figdisc@rdLTS{#1}{\t@xt@}\pdfximage{\t@xt@}%
    \setbox\Gb@x=\hbox{\pdfrefximage\pdflastximage}%
    \v@lX=\z@\v@lY=-\Sc@leFact\dp\Gb@x\b@undb@x{\v@lX}{\v@lY}%
    \advance\v@lX\Sc@leFact\wd\Gb@x\advance\v@lY\Sc@leFact\dp\Gb@x%
    \advance\v@lY\Sc@leFact\ht\Gb@x\b@undb@x{\v@lX}{\v@lY}%
    \v@lX=\Sc@leFact\wd\Gb@x\pdfximage width \v@lX {\t@xt@}%
    \rlap{\pdfrefximage\pdflastximage}\message{[\t@xt@]}}\ignorespaces}
\ctr@ld@f\def\X@rgdeux@#1,{\edef\@rgdeux{#1}}
\ctr@ln@m\figpt
\ctr@ld@f\def\figptDD#1:#2(#3,#4){\ifGR@cri\c@ntr@lnum{#1}%
    {\v@lX=#3\unit@\v@lY=#4\unit@\Fig@dmpt{#2}{\z@}}\ignorespaces\fi}
\ctr@ld@f\def\Fig@dmpt#1#2{\def\t@xt@{#1}\ifx\t@xt@\empty\def\B@@ltxt{\z@}%
    \else\expandafter\gdef\csname\objc@de T\endcsname{#1}\def\B@@ltxt{\@ne}\fi%
    \expandafter\xdef\csname\objc@de\endcsname{\ifitis@vect@r\C@dCl@svect%
    \else\C@dCl@spt\fi,\z@,\B@@ltxt/\the\v@lX,\the\v@lY,#2}}
\ctr@ld@f\def\C@dCl@spt{P}
\ctr@ld@f\def\C@dCl@svect{V}
\ctr@ln@m\c@@rdYZ
\ctr@ln@m\c@@rdY
\ctr@ld@f\def\figptTD#1:#2(#3,#4){\ifGR@cri\c@ntr@lnum{#1}%
    \def\c@@rdYZ{#4,0,0}\extrairelepremi@r\c@@rdY\de\c@@rdYZ%
    \extrairelepremi@r\c@@rdZ\de\c@@rdYZ%
    {\v@lX=#3\unit@\v@lY=\c@@rdY\unit@\v@lZ=\c@@rdZ\unit@\Fig@dmpt{#2}{\the\v@lZ}%
    \b@undb@xTD{\v@lX}{\v@lY}{\v@lZ}}\ignorespaces\fi}
\ctr@ln@m\Figp@intreg
\ctr@ld@f\def\Figp@intregDD#1:#2(#3,#4){\c@ntr@lnum{#1}%
    {\result@t=#4\v@lX=#3\v@lY=\result@t\Fig@dmpt{#2}{\z@}}\ignorespaces}
\ctr@ld@f\def\Figp@intregTD#1:#2(#3,#4){\c@ntr@lnum{#1}%
    \def\c@@rdYZ{#4,\z@,\z@}\extrairelepremi@r\c@@rdY\de\c@@rdYZ%
    \extrairelepremi@r\c@@rdZ\de\c@@rdYZ%
    {\v@lX=#3\v@lY=\c@@rdY\v@lZ=\c@@rdZ\Fig@dmpt{#2}{\the\v@lZ}%
    \b@undb@xTD{\v@lX}{\v@lY}{\v@lZ}}\ignorespaces}
\ctr@ln@m\figptBezier
\ctr@ld@f\def\figptBezierDD#1:#2:#3[#4,#5,#6,#7]{\ifGR@cri{\s@uvc@ntr@l\et@tfigptBezierDD%
    \FigptBezier@#3[#4,#5,#6,#7]\Figp@intregDD#1:{#2}(\v@lX,\v@lY)%
    \resetc@ntr@l\et@tfigptBezierDD}\ignorespaces\fi}
\ctr@ld@f\def\figptBezierTD#1:#2:#3[#4,#5,#6,#7]{\ifGR@cri{\s@uvc@ntr@l\et@tfigptBezierTD%
    \FigptBezier@#3[#4,#5,#6,#7]\Figp@intregTD#1:{#2}(\v@lX,\v@lY,\v@lZ)%
    \resetc@ntr@l\et@tfigptBezierTD}\ignorespaces\fi}
\ctr@ld@f\def\FigptBezier@#1[#2,#3,#4,#5]{\setc@ntr@l{2}%
    \edef\T@{#1}\v@leur=\p@\advance\v@leur-#1pt\edef\UNmT@{\repdecn@mb{\v@leur}}%
    \figptcopy-4:/#2/\figptcopy-3:/#3/\figptcopy-2:/#4/\figptcopy-1:/#5/%
    \l@mbd@un=-4 \l@mbd@de=-\thr@@\p@rtent=\m@ne\c@lDecast%
    \l@mbd@un=-4 \l@mbd@de=-\thr@@\p@rtent=-\tw@\c@lDecast%
    \l@mbd@un=-4 \l@mbd@de=-\thr@@\p@rtent=-\thr@@\c@lDecast\Figg@tXY{-4}}
\ctr@ln@m\c@lDCUn
\ctr@ld@f\def\c@lDCUnDD#1#2{\Figg@tXY{#1}\v@lX=\UNmT@\v@lX\v@lY=\UNmT@\v@lY%
    \Figg@tXYa{#2}\advance\v@lX\T@\v@lXa\advance\v@lY\T@\v@lYa%
    \Figp@intregDD#1:(\v@lX,\v@lY)}
\ctr@ld@f\def\c@lDCUnTD#1#2{\Figg@tXY{#1}\v@lX=\UNmT@\v@lX\v@lY=\UNmT@\v@lY\v@lZ=\UNmT@\v@lZ%
    \Figg@tXYa{#2}\advance\v@lX\T@\v@lXa\advance\v@lY\T@\v@lYa\advance\v@lZ\T@\v@lZa%
    \Figp@intregTD#1:(\v@lX,\v@lY,\v@lZ)}
\ctr@ld@f\def\c@lDecast{\relax\ifnum\l@mbd@un<\p@rtent\c@lDCUn{\l@mbd@un}{\l@mbd@de}%
    \advance\l@mbd@un\@ne\advance\l@mbd@de\@ne\c@lDecast\fi}
\ctr@ld@f\def\figptmap#1:#2=#3/#4/#5/{\ifGR@cri{\s@uvc@ntr@l\et@tfigptmap%
    \setc@ntr@l{2}\figvectP-1[#4,#3]\Figg@tXY{-1}%
    \pr@dMatV/#5/\figpttra#1:{#2}=#4/1,-1/%
    \resetc@ntr@l\et@tfigptmap}\ignorespaces\fi}
\ctr@ln@m\pr@dMatV
\ctr@ld@f\def\pr@dMatVDD/#1,#2;#3,#4/{\v@lXa=#1\v@lX\advance\v@lXa#2\v@lY%
    \v@lYa=#3\v@lX\advance\v@lYa#4\v@lY\Figv@ctCreg-1(\v@lXa,\v@lYa)}
\ctr@ld@f\def\pr@dMatVTD/#1,#2,#3;#4,#5,#6;#7,#8,#9/{%
    \v@lXa=#1\v@lX\advance\v@lXa#2\v@lY\advance\v@lXa#3\v@lZ%
    \v@lYa=#4\v@lX\advance\v@lYa#5\v@lY\advance\v@lYa#6\v@lZ%
    \v@lZa=#7\v@lX\advance\v@lZa#8\v@lY\advance\v@lZa#9\v@lZ%
    \Figv@ctCreg-1(\v@lXa,\v@lYa,\v@lZa)}
\ctr@ln@m\figptbary
\ctr@ld@f\def\figptbaryDD#1:#2[#3;#4]{\ifGR@cri{\edef\list@num{#3}\extrairelepremi@r\p@int\de\list@num%
    \s@mme=\z@\@ecfor\c@ef:=#4\do{\advance\s@mme\c@ef}%
    \edef\listec@ef{#4,0}\extrairelepremi@r\c@ef\de\listec@ef%
    \Figg@tXY{\p@int}\divide\v@lX\s@mme\divide\v@lY\s@mme%
    \multiply\v@lX\c@ef\multiply\v@lY\c@ef%
    \@ecfor\p@int:=\list@num\do{\extrairelepremi@r\c@ef\de\listec@ef%
           \Figg@tXYa{\p@int}\divide\v@lXa\s@mme\divide\v@lYa\s@mme%
           \multiply\v@lXa\c@ef\multiply\v@lYa\c@ef%
           \advance\v@lX\v@lXa\advance\v@lY\v@lYa}%
    \Figp@intregDD#1:{#2}(\v@lX,\v@lY)}\ignorespaces\fi}
\ctr@ld@f\def\figptbaryTD#1:#2[#3;#4]{\ifGR@cri{\edef\list@num{#3}\extrairelepremi@r\p@int\de\list@num%
    \s@mme=\z@\@ecfor\c@ef:=#4\do{\advance\s@mme\c@ef}%
    \edef\listec@ef{#4,0}\extrairelepremi@r\c@ef\de\listec@ef%
    \Figg@tXY{\p@int}\divide\v@lX\s@mme\divide\v@lY\s@mme\divide\v@lZ\s@mme%
    \multiply\v@lX\c@ef\multiply\v@lY\c@ef\multiply\v@lZ\c@ef%
    \@ecfor\p@int:=\list@num\do{\extrairelepremi@r\c@ef\de\listec@ef%
           \Figg@tXYa{\p@int}\divide\v@lXa\s@mme\divide\v@lYa\s@mme\divide\v@lZa\s@mme%
           \multiply\v@lXa\c@ef\multiply\v@lYa\c@ef\multiply\v@lZa\c@ef%
           \advance\v@lX\v@lXa\advance\v@lY\v@lYa\advance\v@lZ\v@lZa}%
    \Figp@intregTD#1:{#2}(\v@lX,\v@lY,\v@lZ)}\ignorespaces\fi}
\ctr@ld@f\def\figptbaryR#1:#2[#3;#4]{\ifGR@cri{%
    \v@leur=\z@\@ecfor\c@ef:=#4\do{\maxim@m{\v@lmax}{\c@ef pt}{-\c@ef pt}%
    \ifdim\v@lmax>\v@leur\v@leur=\v@lmax\fi}%
    \ifdim\v@leur<\p@\f@ctech=\@M\else\ifdim\v@leur<\t@n\p@\f@ctech=\@m\else%
    \ifdim\v@leur<\c@nt\p@\f@ctech=\c@nt\else\ifdim\v@leur<\@m\p@\f@ctech=\t@n\else%
    \f@ctech=\@ne\fi\fi\fi\fi%
    \def\listec@ef{0}%
    \@ecfor\c@ef:=#4\do{\sc@lec@nvRI{\c@ef pt}\edef\listec@ef{\listec@ef,\the\s@mme}}%
    \extrairelepremi@r\c@ef\de\listec@ef\figptbary#1:#2[#3;\listec@ef]}\ignorespaces\fi}
\ctr@ld@f\def\sc@lec@nvRI#1{\v@leur=#1\p@rtentiere{\s@mme}{\v@leur}\advance\v@leur-\s@mme\p@%
    \multiply\v@leur\f@ctech\p@rtentiere{\p@rtent}{\v@leur}%
    \multiply\s@mme\f@ctech\advance\s@mme\p@rtent}
\ctr@ln@m\figptcirc
\ctr@ld@f\def\figptcircDD#1:#2:#3;#4(#5){\ifGR@cri{\s@uvc@ntr@l\et@tfigptcircDD%
    \c@lptellDD#1:{#2}:#3;#4,#4(#5)\resetc@ntr@l\et@tfigptcircDD}\ignorespaces\fi}
\ctr@ld@f\def\figptcircTD#1:#2:#3,#4,#5;#6(#7){\ifGR@cri{\s@uvc@ntr@l\et@tfigptcircTD%
    \setc@ntr@l{2}\c@lExtAxes#3,#4,#5(#6)\figptellP#1:{#2}:#3,-4,-5(#7)%
    \resetc@ntr@l\et@tfigptcircTD}\ignorespaces\fi}
\ctr@ln@m\figptcircumcenter
\ctr@ld@f\def\figptcircumcenterDD#1:#2[#3,#4,#5]{\ifGR@cri{\s@uvc@ntr@l\et@tfigptcircumcenterDD%
    \setc@ntr@l{2}\figvectNDD-5[#3,#4]\figptbaryDD-3:[#3,#4;1,1]%
                  \figvectNDD-6[#4,#5]\figptbaryDD-4:[#4,#5;1,1]%
    \resetc@ntr@l{2}\inters@cDD#1:{#2}[-3,-5;-4,-6]%
    \resetc@ntr@l\et@tfigptcircumcenterDD}\ignorespaces\fi}
\ctr@ld@f\def\figptcircumcenterTD#1:#2[#3,#4,#5]{\ifGR@cri{\s@uvc@ntr@l\et@tfigptcircumcenterTD%
    \setc@ntr@l{2}\figvectNTD-1[#3,#4,#5]%
    \figvectPTD-3[#3,#4]\figvectNVTD-5[-1,-3]\figptbaryTD-3:[#3,#4;1,1]%
    \figvectPTD-4[#4,#5]\figvectNVTD-6[-1,-4]\figptbaryTD-4:[#4,#5;1,1]%
    \resetc@ntr@l{2}\inters@cTD#1:{#2}[-3,-5;-4,-6]%
    \resetc@ntr@l\et@tfigptcircumcenterTD}\ignorespaces\fi}
\ctr@ln@m\figptcopy
\ctr@ld@f\def\figptcopyDD#1:#2/#3/{\ifGR@cri{\Figg@tXY{#3}%
    \Figp@intregDD#1:{#2}(\v@lX,\v@lY)}\ignorespaces\fi}
\ctr@ld@f\def\figptcopyTD#1:#2/#3/{\ifGR@cri{\Figg@tXY{#3}%
    \Figp@intregTD#1:{#2}(\v@lX,\v@lY,\v@lZ)}\ignorespaces\fi}
\ctr@ln@m\figptcurvcenter
\ctr@ld@f\def\figptcurvcenterDD#1:#2:#3[#4,#5,#6,#7]{\ifGR@cri{\s@uvc@ntr@l\et@tfigptcurvcenterDD%
    \setc@ntr@l{2}\c@lcurvradDD#3[#4,#5,#6,#7]\edef\Sprim@{\repdecn@mb{\result@t}}%
    \figptBezierDD-1::#3[#4,#5,#6,#7]\figpttraDD#1:{#2}=-1/\Sprim@,-5/%
    \resetc@ntr@l\et@tfigptcurvcenterDD}\ignorespaces\fi}
\ctr@ld@f\def\figptcurvcenterTD#1:#2:#3[#4,#5,#6,#7]{\ifGR@cri{\s@uvc@ntr@l\et@tfigptcurvcenterTD%
    \setc@ntr@l{2}\figvectDBezierTD -5:1,#3[#4,#5,#6,#7]%
    \figvectDBezierTD -6:2,#3[#4,#5,#6,#7]\vecunit@TD{-5}{-5}%
    \edef\Sprim@{\repdecn@mb{\result@t}}\figvectNVTD-1[-6,-5]%
    \figvectNVTD-5[-5,-1]\c@lproscalTD\v@leur[-6,-5]%
    \invers@{\v@leur}{\v@leur}\v@leur=\Sprim@\v@leur\v@leur=\Sprim@\v@leur%
    \figptBezierTD-1::#3[#4,#5,#6,#7]\edef\Sprim@{\repdecn@mb{\v@leur}}%
    \figpttraTD#1:{#2}=-1/\Sprim@,-5/\resetc@ntr@l\et@tfigptcurvcenterTD}\ignorespaces\fi}
\ctr@ld@f\def\c@lcurvradDD#1[#2,#3,#4,#5]{{\figvectDBezierDD -5:1,#1[#2,#3,#4,#5]%
    \figvectDBezierDD -6:2,#1[#2,#3,#4,#5]\vecunit@DD{-5}{-5}%
    \edef\Sprim@{\repdecn@mb{\result@t}}\figvectNVDD-5[-5]\c@lproscalDD\v@leur[-6,-5]%
    \invers@{\v@leur}{\v@leur}\v@leur=\Sprim@\v@leur\v@leur=\Sprim@\v@leur%
    \global\result@t=\v@leur}}
\ctr@ln@m\figptell
\ctr@ld@f\def\figptellDD#1:#2:#3;#4,#5(#6,#7){\ifGR@cri{\s@uvc@ntr@l\et@tfigptell%
    \c@lptellDD#1::#3;#4,#5(#6)\figptrotDD#1:{#2}=#1/#3,#7/%
    \resetc@ntr@l\et@tfigptell}\ignorespaces\fi}
\ctr@ld@f\def\c@lptellDD#1:#2:#3;#4,#5(#6){\c@ssin{\C@}{\S@}{#6}\v@lmin=\C@ pt\v@lmax=\S@ pt%
    \v@lmin=#4\v@lmin\v@lmax=#5\v@lmax%
    \edef\Xc@mp{\repdecn@mb{\v@lmin}}\edef\Yc@mp{\repdecn@mb{\v@lmax}}%
    \setc@ntr@l{2}\figvectC-1(\Xc@mp,\Yc@mp)\figpttraDD#1:{#2}=#3/1,-1/}
\ctr@ld@f\def\figptellP#1:#2:#3,#4,#5(#6){\ifGR@cri{\s@uvc@ntr@l\et@tfigptellP%
    \setc@ntr@l{2}\figvectP-1[#3,#4]\figvectP-2[#3,#5]%
    \v@leur=#6pt\c@lptellP{#3}{-1}{-2}\figptcopy#1:{#2}/-3/%
    \resetc@ntr@l\et@tfigptellP}\ignorespaces\fi}
\ctr@ln@m\@ngle
\ctr@ld@f\def\c@lptellP#1#2#3{\edef\@ngle{\repdecn@mb\v@leur}\c@ssin{\C@}{\S@}{\@ngle}%
    \figpttra-3:=#1/\C@,#2/\figpttra-3:=-3/\S@,#3/}
\ctr@ln@m\figptendnormal
\ctr@ld@f\def\figptendnormalDD#1:#2:#3,#4[#5,#6]{\ifGR@cri{\s@uvc@ntr@l\et@tfigptendnormal%
    \Figg@tXYa{#5}\Figg@tXY{#6}%
    \advance\v@lX-\v@lXa\advance\v@lY-\v@lYa%
    \setc@ntr@l{2}\Figv@ctCreg-1(\v@lX,\v@lY)\vecunit@{-1}{-1}\Figg@tXY{-1}%
    \delt@=#3\unit@\maxim@m{\delt@}{\delt@}{-\delt@}\edef\l@ngueur{\repdecn@mb{\delt@}}%
    \v@lX=\l@ngueur\v@lX\v@lY=\l@ngueur\v@lY%
    \delt@=\p@\advance\delt@-#4pt\edef\l@ngueur{\repdecn@mb{\delt@}}%
    \figptbaryR-1:[#5,#6;#4,\l@ngueur]\Figg@tXYa{-1}%
    \advance\v@lXa\v@lY\advance\v@lYa-\v@lX%
    \setc@ntr@l{1}\Figp@intregDD#1:{#2}(\v@lXa,\v@lYa)\resetc@ntr@l\et@tfigptendnormal}%
    \ignorespaces\fi}
\ctr@ld@f\def\figptexcenter#1:#2[#3,#4,#5]{\ifGR@cri{\let@xte={-}
    \Figptexinsc@nter#1:#2[#3,#4,#5]}\ignorespaces\fi}
\ctr@ld@f\def\figptincenter#1:#2[#3,#4,#5]{\ifGR@cri{\let@xte={}
    \Figptexinsc@nter#1:#2[#3,#4,#5]}\ignorespaces\fi}
\ctr@ld@f
\ctr@ld@f\def\Figptexinsc@nter#1:#2[#3,#4,#5]{%
    \figgetdist\LA@[#4,#5]\figgetdist\LB@[#3,#5]\figgetdist\LC@[#3,#4]%
    \figptbaryR#1:{#2}[#3,#4,#5;\the\let@xte\LA@,\LB@,\LC@]}
\ctr@ln@m\figptinterlineplane
\ctr@ld@f\def\figptinterlineplaneDD{\un@v@ilable{figptinterlineplane}}
\ctr@ld@f\def\figptinterlineplaneTD#1:#2[#3,#4;#5,#6]{\ifGR@cri{\s@uvc@ntr@l\et@tfigptinterlineplane%
    \setc@ntr@l{2}\figvectPTD-1[#3,#5]\vecunit@TD{-2}{#6}%
    \r@pPSTD\v@leur[-2,-1,#4]\edef\v@lcoef{\repdecn@mb{\v@leur}}%
    \figpttraTD#1:{#2}=#3/\v@lcoef,#4/\resetc@ntr@l\et@tfigptinterlineplane}\ignorespaces\fi}
\ctr@ln@m\figptorthocenter
\ctr@ld@f\def\figptorthocenterDD#1:#2[#3,#4,#5]{\ifGR@cri{\s@uvc@ntr@l\et@tfigptorthocenterDD%
    \setc@ntr@l{2}\figvectNDD-3[#3,#4]\figvectNDD-4[#4,#5]%
    \resetc@ntr@l{2}\inters@cDD#1:{#2}[#5,-3;#3,-4]%
    \resetc@ntr@l\et@tfigptorthocenterDD}\ignorespaces\fi}
\ctr@ld@f\def\figptorthocenterTD#1:#2[#3,#4,#5]{\ifGR@cri{\s@uvc@ntr@l\et@tfigptorthocenterTD%
    \setc@ntr@l{2}\figvectNTD-1[#3,#4,#5]%
    \figvectPTD-2[#3,#4]\figvectNVTD-3[-1,-2]%
    \figvectPTD-2[#4,#5]\figvectNVTD-4[-1,-2]%
    \resetc@ntr@l{2}\inters@cTD#1:{#2}[#5,-3;#3,-4]%
    \resetc@ntr@l\et@tfigptorthocenterTD}\ignorespaces\fi}
\ctr@ln@m\figptorthoprojline
\ctr@ld@f\def\figptorthoprojlineDD#1:#2=#3/#4,#5/{\ifGR@cri{\s@uvc@ntr@l\et@tfigptorthoprojlineDD%
    \setc@ntr@l{2}\figvectPDD-3[#4,#5]\figvectNVDD-4[-3]\resetc@ntr@l{2}%
    \inters@cDD#1:{#2}[#3,-4;#4,-3]\resetc@ntr@l\et@tfigptorthoprojlineDD}\ignorespaces\fi}
\ctr@ld@f\def\figptorthoprojlineTD#1:#2=#3/#4,#5/{\ifGR@cri{\s@uvc@ntr@l\et@tfigptorthoprojlineTD%
    \setc@ntr@l{2}\figvectPTD-1[#4,#3]\figvectPTD-2[#4,#5]\vecunit@TD{-2}{-2}%
    \c@lproscalTD\v@leur[-1,-2]\edef\v@lcoef{\repdecn@mb{\v@leur}}%
    \figpttraTD#1:{#2}=#4/\v@lcoef,-2/\resetc@ntr@l\et@tfigptorthoprojlineTD}\ignorespaces\fi}
\ctr@ln@m\figptorthoprojplane
\ctr@ld@f\def\figptorthoprojplaneDD{\un@v@ilable{figptorthoprojplane}}
\ctr@ld@f\def\figptorthoprojplaneTD#1:#2=#3/#4,#5/{\ifGR@cri{\s@uvc@ntr@l\et@tfigptorthoprojplane%
    \setc@ntr@l{2}\figvectPTD-1[#3,#4]\vecunit@TD{-2}{#5}%
    \c@lproscalTD\v@leur[-1,-2]\edef\v@lcoef{\repdecn@mb{\v@leur}}%
    \figpttraTD#1:{#2}=#3/\v@lcoef,-2/\resetc@ntr@l\et@tfigptorthoprojplane}\ignorespaces\fi}
\ctr@ld@f\def\figpthom#1:#2=#3/#4,#5/{\ifGR@cri{\s@uvc@ntr@l\et@tfigpthom%
    \setc@ntr@l{2}\figvectP-1[#4,#3]\figpttra#1:{#2}=#4/#5,-1/%
    \resetc@ntr@l\et@tfigpthom}\ignorespaces\fi}
\ctr@ld@f\def\figptinv#1:#2=#3/#4,#5/{\ifGR@cri{\s@uvc@ntr@l\et@tfigptinv%
    \setc@ntr@l{2}\figvectP-1[#4,#3]\Figg@tXY{-1}%
    \getredf@ctB\f@ctech\n@rmeucC{\delt@}{-1}%
    \delt@=\ptT@unit@\delt@\delt@=\ptT@unit@\delt@%
    \invers@{\delt@}{\delt@}\multiply\f@ctech\f@ctech\divide\delt@\f@ctech%
    \delt@=#5\delt@\edef\v@lcoef{\repdecn@mb{\delt@}}\figpttra#1:{#2}=#4/\v@lcoef,-1/%
    \resetc@ntr@l\et@tfigptinv}\ignorespaces\fi}
\ctr@ln@m\figptrot
\ctr@ld@f\def\figptrotDD#1:#2=#3/#4,#5/{\ifGR@cri{\s@uvc@ntr@l\et@tfigptrotDD%
    \c@ssin{\C@}{\S@}{#5}\setc@ntr@l{2}\figvectPDD-1[#4,#3]\Figg@tXY{-1}%
    \v@lXa=\C@\v@lX\advance\v@lXa-\S@\v@lY%
    \v@lYa=\S@\v@lX\advance\v@lYa\C@\v@lY%
    \Figv@ctCreg-1(\v@lXa,\v@lYa)\figpttraDD#1:{#2}=#4/1,-1/%
    \resetc@ntr@l\et@tfigptrotDD}\ignorespaces\fi}
\ctr@ld@f\def\figptrotTD#1:#2=#3/#4,#5,#6/{\ifGR@cri{\s@uvc@ntr@l\et@tfigptrotTD%
    \c@ssin{\C@}{\S@}{#5}%
    \setc@ntr@l{2}\figptorthoprojplaneTD-3:=#4/#3,#6/\figvectPTD-2[-3,#3]%
    \n@rmeucTD\v@leur{-2}\ifdim\v@leur<\Cepsil@n\Figg@tXYa{#3}\else%
    \edef\v@lcoef{\repdecn@mb{\v@leur}}\figvectNVTD-1[#6,-2]%
    \Figg@tXYa{-1}\v@lXa=\v@lcoef\v@lXa\v@lYa=\v@lcoef\v@lYa\v@lZa=\v@lcoef\v@lZa%
    \v@lXa=\S@\v@lXa\v@lYa=\S@\v@lYa\v@lZa=\S@\v@lZa\Figg@tXY{-2}%
    \advance\v@lXa\C@\v@lX\advance\v@lYa\C@\v@lY\advance\v@lZa\C@\v@lZ%
    \Figg@tXY{-3}\advance\v@lXa\v@lX\advance\v@lYa\v@lY\advance\v@lZa\v@lZ\fi%
    \Figp@intregTD#1:{#2}(\v@lXa,\v@lYa,\v@lZa)\resetc@ntr@l\et@tfigptrotTD}\ignorespaces\fi}
\ctr@ln@m\figptsym
\ctr@ld@f\def\figptsymDD#1:#2=#3/#4,#5/{\ifGR@cri{\s@uvc@ntr@l\et@tfigptsymDD%
    \resetc@ntr@l{2}\figptorthoprojlineDD-5:=#3/#4,#5/\figvectPDD-2[#3,-5]%
    \figpttraDD#1:{#2}=#3/2,-2/\resetc@ntr@l\et@tfigptsymDD}\ignorespaces\fi}
\ctr@ld@f\def\figptsymTD#1:#2=#3/#4,#5/{\ifGR@cri{\s@uvc@ntr@l\et@tfigptsymTD%
    \resetc@ntr@l{2}\figptorthoprojplaneTD-3:=#3/#4,#5/\figvectPTD-2[#3,-3]%
    \figpttraTD#1:{#2}=#3/2,-2/\resetc@ntr@l\et@tfigptsymTD}\ignorespaces\fi}
\ctr@ln@m\figpttra
\ctr@ld@f\def\figpttraDD#1:#2=#3/#4,#5/{\ifGR@cri{\Figg@tXYa{#5}\v@lXa=#4\v@lXa\v@lYa=#4\v@lYa%
    \Figg@tXY{#3}\advance\v@lX\v@lXa\advance\v@lY\v@lYa%
    \Figp@intregDD#1:{#2}(\v@lX,\v@lY)}\ignorespaces\fi}
\ctr@ld@f\def\figpttraTD#1:#2=#3/#4,#5/{\ifGR@cri{\Figg@tXYa{#5}\v@lXa=#4\v@lXa\v@lYa=#4\v@lYa%
    \v@lZa=#4\v@lZa\Figg@tXY{#3}\advance\v@lX\v@lXa\advance\v@lY\v@lYa%
    \advance\v@lZ\v@lZa\Figp@intregTD#1:{#2}(\v@lX,\v@lY,\v@lZ)}\ignorespaces\fi}
\ctr@ln@m\figpttraC
\ctr@ld@f\def\figpttraCDD#1:#2=#3/#4,#5/{\ifGR@cri{\v@lXa=#4\unit@\v@lYa=#5\unit@%
    \Figg@tXY{#3}\advance\v@lX\v@lXa\advance\v@lY\v@lYa%
    \Figp@intregDD#1:{#2}(\v@lX,\v@lY)}\ignorespaces\fi}
\ctr@ld@f\def\figpttraCTD#1:#2=#3/#4,#5,#6/{\ifGR@cri{\v@lXa=#4\unit@\v@lYa=#5\unit@\v@lZa=#6\unit@%
    \Figg@tXY{#3}\advance\v@lX\v@lXa\advance\v@lY\v@lYa\advance\v@lZ\v@lZa%
    \Figp@intregTD#1:{#2}(\v@lX,\v@lY,\v@lZ)}\ignorespaces\fi}
\ctr@ld@f\def\figptsaxes#1:#2(#3){\ifGR@cri{\an@lys@xes#3,:\ifx\t@xt@\empty%
    \ifTr@isDim\Figpts@xes#1:#2(0,#3,0,#3,0,#3)\else\Figpts@xes#1:#2(0,#3,0,#3)\fi%
    \else\Figpts@xes#1:#2(#3)\fi}\ignorespaces\fi}
\ctr@ln@m\Figpts@xes
\ctr@ld@f\def\Figpts@xesDD#1:#2(#3,#4,#5,#6){%
    \s@mme=#1\figpttraC\the\s@mme:$x$=#2/#4,0/%
    \advance\s@mme\@ne\figpttraC\the\s@mme:$y$=#2/0,#6/}
\ctr@ld@f\def\Figpts@xesTD#1:#2(#3,#4,#5,#6,#7,#8){%
    \s@mme=#1\figpttraC\the\s@mme:$x$=#2/#4,0,0/%
    \advance\s@mme\@ne\figpttraC\the\s@mme:$y$=#2/0,#6,0/%
    \advance\s@mme\@ne\figpttraC\the\s@mme:$z$=#2/0,0,#8/}
\ctr@ld@f\def\figptsmap#1=#2/#3/#4/{\ifGR@cri{\s@uvc@ntr@l\et@tfigptsmap%
    \setc@ntr@l{2}\def\list@num{#2}\s@mme=#1%
    \@ecfor\p@int:=\list@num\do{\figvectP-1[#3,\p@int]\Figg@tXY{-1}%
    \pr@dMatV/#4/\figpttra\the\s@mme:=#3/1,-1/\advance\s@mme\@ne}%
    \resetc@ntr@l\et@tfigptsmap}\ignorespaces\fi}
\ctr@ln@m\figptscontrol
\ctr@ld@f\def\figptscontrolDD#1[#2,#3,#4,#5]{\ifGR@cri{\s@uvc@ntr@l\et@tfigptscontrolDD\setc@ntr@l{2}%
    \v@lX=\z@\v@lY=\z@\Figtr@nptDD{-5}{#2}\Figtr@nptDD{2}{#5}%
    \divide\v@lX\@vi\divide\v@lY\@vi%
    \Figtr@nptDD{3}{#3}\Figtr@nptDD{-1.5}{#4}\Figp@intregDD-1:(\v@lX,\v@lY)%
    \v@lX=\z@\v@lY=\z@\Figtr@nptDD{2}{#2}\Figtr@nptDD{-5}{#5}%
    \divide\v@lX\@vi\divide\v@lY\@vi\Figtr@nptDD{-1.5}{#3}\Figtr@nptDD{3}{#4}%
    \s@mme=#1\advance\s@mme\@ne\Figp@intregDD\the\s@mme:(\v@lX,\v@lY)%
    \figptcopyDD#1:/-1/\resetc@ntr@l\et@tfigptscontrolDD}\ignorespaces\fi}
\ctr@ld@f\def\figptscontrolTD#1[#2,#3,#4,#5]{\ifGR@cri{\s@uvc@ntr@l\et@tfigptscontrolTD\setc@ntr@l{2}%
    \v@lX=\z@\v@lY=\z@\v@lZ=\z@\Figtr@nptTD{-5}{#2}\Figtr@nptTD{2}{#5}%
    \divide\v@lX\@vi\divide\v@lY\@vi\divide\v@lZ\@vi%
    \Figtr@nptTD{3}{#3}\Figtr@nptTD{-1.5}{#4}\Figp@intregTD-1:(\v@lX,\v@lY,\v@lZ)%
    \v@lX=\z@\v@lY=\z@\v@lZ=\z@\Figtr@nptTD{2}{#2}\Figtr@nptTD{-5}{#5}%
    \divide\v@lX\@vi\divide\v@lY\@vi\divide\v@lZ\@vi\Figtr@nptTD{-1.5}{#3}\Figtr@nptTD{3}{#4}%
    \s@mme=#1\advance\s@mme\@ne\Figp@intregTD\the\s@mme:(\v@lX,\v@lY,\v@lZ)%
    \figptcopyTD#1:/-1/\resetc@ntr@l\et@tfigptscontrolTD}\ignorespaces\fi}
\ctr@ld@f\def\Figtr@nptDD#1#2{\Figg@tXYa{#2}\v@lXa=#1\v@lXa\v@lYa=#1\v@lYa%
    \advance\v@lX\v@lXa\advance\v@lY\v@lYa}
\ctr@ld@f\def\Figtr@nptTD#1#2{\Figg@tXYa{#2}\v@lXa=#1\v@lXa\v@lYa=#1\v@lYa\v@lZa=#1\v@lZa%
    \advance\v@lX\v@lXa\advance\v@lY\v@lYa\advance\v@lZ\v@lZa}
\ctr@ld@f\def\figptscontrolcurve#1,#2[#3]{\ifGR@cri{\s@uvc@ntr@l\et@tfigptscontrolcurve%
    \def\list@num{#3}\extrairelepremi@r\Ak@\de\list@num%
    \extrairelepremi@r\Ai@\de\list@num\extrairelepremi@r\Aj@\de\list@num%
    \s@mme=#1\figptcopy\the\s@mme:/\Ai@/%
    \setc@ntr@l{2}\figvectP -1[\Ak@,\Aj@]%
    \@ecfor\Ak@:=\list@num\do{\advance\s@mme\@ne\figpttra\the\s@mme:=\Ai@/\curv@roundness,-1/%
       \figvectP -1[\Ai@,\Ak@]\advance\s@mme\@ne\figpttra\the\s@mme:=\Aj@/-\curv@roundness,-1/%
       \advance\s@mme\@ne\figptcopy\the\s@mme:/\Aj@/%
       \edef\Ai@{\Aj@}\edef\Aj@{\Ak@}}\advance\s@mme-#1\divide\s@mme\thr@@%
       \xdef#2{\the\s@mme}%
    \resetc@ntr@l\et@tfigptscontrolcurve}\ignorespaces\fi}
\ctr@ln@m\figptsintercirc
\ctr@ld@f\def\figptsintercircDD#1[#2,#3;#4,#5]{\ifGR@cri{\s@uvc@ntr@l\et@tfigptsintercircDD%
    \setc@ntr@l{2}\let\c@lNVintc=\c@lNVintcDD\Figptsintercirc@#1[#2,#3;#4,#5]%
    \resetc@ntr@l\et@tfigptsintercircDD}\ignorespaces\fi}
\ctr@ld@f\def\figptsintercircTD#1[#2,#3;#4,#5;#6]{\ifGR@cri{\s@uvc@ntr@l\et@tfigptsintercircTD%
    \setc@ntr@l{2}\let\c@lNVintc=\c@lNVintcTD\vecunitC@TD[#2,#6]%
    \Figv@ctCreg-3(\v@lX,\v@lY,\v@lZ)\Figptsintercirc@#1[#2,#3;#4,#5]%
    \resetc@ntr@l\et@tfigptsintercircTD}\ignorespaces\fi}
\ctr@ld@f\def\Figptsintercirc@#1[#2,#3;#4,#5]{\figvectP-1[#2,#4]%
    \vecunit@{-1}{-1}\delt@=\result@t\f@ctech=\result@tent%
    \s@mme=#1\advance\s@mme\@ne\figptcopy#1:/#2/\figptcopy\the\s@mme:/#4/%
    \ifdim\delt@=\z@\else%
    \v@lmin=#3\unit@\v@lmax=#5\unit@\v@leur=\v@lmin\advance\v@leur\v@lmax%
    \ifdim\v@leur>\delt@%
    \v@leur=\v@lmin\advance\v@leur-\v@lmax\maxim@m{\v@leur}{\v@leur}{-\v@leur}%
    \ifdim\v@leur<\delt@%
    \divide\v@lmin\f@ctech\divide\v@lmax\f@ctech\divide\delt@\f@ctech%
    \v@lmin=\repdecn@mb{\v@lmin}\v@lmin\v@lmax=\repdecn@mb{\v@lmax}\v@lmax%
    \invers@{\v@leur}{\delt@}\advance\v@lmax-\v@lmin%
    \v@lmax=-\repdecn@mb{\v@leur}\v@lmax\advance\delt@\v@lmax\delt@=.5\delt@%
    \v@lmax=\delt@\multiply\v@lmax\f@ctech%
    \edef\t@ille{\repdecn@mb{\v@lmax}}\figpttra-2:=#2/\t@ille,-1/%
    \delt@=\repdecn@mb{\delt@}\delt@\advance\v@lmin-\delt@%
    \sqrt@{\v@leur}{\v@lmin}\multiply\v@leur\f@ctech\edef\t@ille{\repdecn@mb{\v@leur}}%
    \c@lNVintc\figpttra#1:=-2/-\t@ille,-1/\figpttra\the\s@mme:=-2/\t@ille,-1/\fi\fi\fi}
\ctr@ld@f\def\c@lNVintcDD{\Figg@tXY{-1}\Figv@ctCreg-1(-\v@lY,\v@lX)} 
\ctr@ld@f\def\c@lNVintcTD{{\Figg@tXY{-3}\v@lmin=\v@lX\v@lmax=\v@lY\v@leur=\v@lZ%
    \Figg@tXY{-1}\c@lprovec{-3}\vecunit@{-3}{-3}
    \Figg@tXY{-1}\v@lmin=\v@lX\v@lmax=\v@lY%
    \v@leur=\v@lZ\Figg@tXY{-3}\c@lprovec{-1}}} 
\ctr@ln@m\figptsinterlinell
\ctr@ld@f\def\figptsinterlinellDD#1[#2,#3,#4,#5;#6,#7]{\ifGR@cri{\s@uvc@ntr@l\et@tfigptsinterlinellDD%
    \figptcopy#1:/#6/\s@mme=#1\advance\s@mme\@ne\figptcopy\the\s@mme:/#7/%
    \v@lmin=#3\unit@\v@lmax=#4\unit@
    \setc@ntr@l{2}\figptbaryDD-4:[#6,#7;1,1]\figptsrotDD-3=-4,#7/#2,-#5/
    \Figg@tXY{-3}\Figg@tXYa{#2}\advance\v@lX-\v@lXa\advance\v@lY-\v@lYa
    \figvectP-1[-3,-2]\Figg@tXYa{-1}\figvectP-3[-4,#7]\Figptsint@rLE{#1}
    \resetc@ntr@l\et@tfigptsinterlinellDD}\ignorespaces\fi}
\ctr@ld@f\def\figptsinterlinellP#1[#2,#3,#4;#5,#6]{\ifGR@cri{\s@uvc@ntr@l\et@tfigptsinterlinellP%
    \figptcopy#1:/#5/\s@mme=#1\advance\s@mme\@ne\figptcopy\the\s@mme:/#6/\setc@ntr@l{2}%
    \figvectP-1[#2,#3]\vecunit@{-1}{-1}\v@lmin=\result@t
    \figvectP-2[#2,#4]\vecunit@{-2}{-2}\v@lmax=\result@t
    \figptbary-4:[#5,#6;1,1]
    \figvectP-3[#2,-4]\c@lproscal\v@lX[-3,-1]\c@lproscal\v@lY[-3,-2]
    \figvectP-3[-4,#6]\c@lproscal\v@lXa[-3,-1]\c@lproscal\v@lYa[-3,-2]
    \Figptsint@rLE{#1}\resetc@ntr@l\et@tfigptsinterlinellP}\ignorespaces\fi}
\ctr@ld@f\def\Figptsint@rLE#1{%
    \getredf@ctDD\f@ctech(\v@lmin,\v@lmax)%
    \getredf@ctDD\p@rtent(\v@lX,\v@lY)\ifnum\p@rtent>\f@ctech\f@ctech=\p@rtent\fi%
    \getredf@ctDD\p@rtent(\v@lXa,\v@lYa)\ifnum\p@rtent>\f@ctech\f@ctech=\p@rtent\fi%
    \divide\v@lmin\f@ctech\divide\v@lmax\f@ctech\divide\v@lX\f@ctech\divide\v@lY\f@ctech%
    \divide\v@lXa\f@ctech\divide\v@lYa\f@ctech%
    \c@rre=\repdecn@mb\v@lXa\v@lmax\mili@u=\repdecn@mb\v@lYa\v@lmin%
    \getredf@ctDD\f@ctech(\c@rre,\mili@u)%
    \c@rre=\repdecn@mb\v@lX\v@lmax\mili@u=\repdecn@mb\v@lY\v@lmin%
    \getredf@ctDD\p@rtent(\c@rre,\mili@u)\ifnum\p@rtent>\f@ctech\f@ctech=\p@rtent\fi%
    \divide\v@lmin\f@ctech\divide\v@lmax\f@ctech\divide\v@lX\f@ctech\divide\v@lY\f@ctech%
    \divide\v@lXa\f@ctech\divide\v@lYa\f@ctech%
    \v@lmin=\repdecn@mb{\v@lmin}\v@lmin\v@lmax=\repdecn@mb{\v@lmax}\v@lmax%
    \edef\G@xde{\repdecn@mb\v@lmin}\edef\P@xde{\repdecn@mb\v@lmax}%
    \c@rre=-\v@lmax\v@leur=\repdecn@mb\v@lY\v@lY\advance\c@rre\v@leur\c@rre=\G@xde\c@rre%
    \v@leur=\repdecn@mb\v@lX\v@lX\v@leur=\P@xde\v@leur\advance\c@rre\v@leur
    \v@lmin=\repdecn@mb\v@lYa\v@lmin\v@lmax=\repdecn@mb\v@lXa\v@lmax%
    \mili@u=\repdecn@mb\v@lX\v@lmax\advance\mili@u\repdecn@mb\v@lY\v@lmin
    \v@lmax=\repdecn@mb\v@lXa\v@lmax\advance\v@lmax\repdecn@mb\v@lYa\v@lmin
    \ifdim\v@lmax>\epsil@n%
    \maxim@m{\v@leur}{\c@rre}{-\c@rre}\maxim@m{\v@lmin}{\mili@u}{-\mili@u}%
    \maxim@m{\v@leur}{\v@leur}{\v@lmin}\maxim@m{\v@lmin}{\v@lmax}{-\v@lmax}%
    \maxim@m{\v@leur}{\v@leur}{\v@lmin}\p@rtentiere{\p@rtent}{\v@leur}\advance\p@rtent\@ne%
    \divide\c@rre\p@rtent\divide\mili@u\p@rtent\divide\v@lmax\p@rtent%
    \delt@=\repdecn@mb{\mili@u}\mili@u\v@leur=\repdecn@mb{\v@lmax}\c@rre%
    \advance\delt@-\v@leur\ifdim\delt@<\z@\else\sqrt@\delt@\delt@%
    \invers@\v@lmax\v@lmax\edef\Uns@rAp{\repdecn@mb\v@lmax}%
    \v@leur=-\mili@u\advance\v@leur-\delt@\v@leur=\Uns@rAp\v@leur%
    \edef\t@ille{\repdecn@mb\v@leur}\figpttra#1:=-4/\t@ille,-3/\s@mme=#1\advance\s@mme\@ne%
    \v@leur=-\mili@u\advance\v@leur\delt@\v@leur=\Uns@rAp\v@leur%
    \edef\t@ille{\repdecn@mb\v@leur}\figpttra\the\s@mme:=-4/\t@ille,-3/\fi\fi}
\ctr@ln@m\figptsorthoprojline
\ctr@ld@f\def\figptsorthoprojlineDD#1=#2/#3,#4/{\ifGR@cri{\s@uvc@ntr@l\et@tfigptsorthoprojlineDD%
    \setc@ntr@l{2}\figvectPDD-3[#3,#4]\figvectNVDD-4[-3]\resetc@ntr@l{2}%
    \def\list@num{#2}\s@mme=#1\@ecfor\p@int:=\list@num\do{%
    \inters@cDD\the\s@mme:[\p@int,-4;#3,-3]\advance\s@mme\@ne}%
    \resetc@ntr@l\et@tfigptsorthoprojlineDD}\ignorespaces\fi}
\ctr@ld@f\def\figptsorthoprojlineTD#1=#2/#3,#4/{\ifGR@cri{\s@uvc@ntr@l\et@tfigptsorthoprojlineTD%
    \setc@ntr@l{2}\figvectPTD-2[#3,#4]\vecunit@TD{-2}{-2}%
    \def\list@num{#2}\s@mme=#1\@ecfor\p@int:=\list@num\do{%
    \figvectPTD-1[#3,\p@int]\c@lproscalTD\v@leur[-1,-2]%
    \edef\v@lcoef{\repdecn@mb{\v@leur}}\figpttraTD\the\s@mme:=#3/\v@lcoef,-2/%
    \advance\s@mme\@ne}\resetc@ntr@l\et@tfigptsorthoprojlineTD}\ignorespaces\fi}
\ctr@ln@m\figptsorthoprojplane
\ctr@ld@f\def\figptsorthoprojplaneDD{\un@v@ilable{figptsorthoprojplane}}
\ctr@ld@f\def\figptsorthoprojplaneTD#1=#2/#3,#4/{\ifGR@cri{\s@uvc@ntr@l\et@tfigptsorthoprojplane%
    \setc@ntr@l{2}\vecunit@TD{-2}{#4}%
    \def\list@num{#2}\s@mme=#1\@ecfor\p@int:=\list@num\do{\figvectPTD-1[\p@int,#3]%
    \c@lproscalTD\v@leur[-1,-2]\edef\v@lcoef{\repdecn@mb{\v@leur}}%
    \figpttraTD\the\s@mme:=\p@int/\v@lcoef,-2/\advance\s@mme\@ne}%
    \resetc@ntr@l\et@tfigptsorthoprojplane}\ignorespaces\fi}
\ctr@ld@f\def\figptshom#1=#2/#3,#4/{\ifGR@cri{\s@uvc@ntr@l\et@tfigptshom%
    \setc@ntr@l{2}\def\list@num{#2}\s@mme=#1%
    \@ecfor\p@int:=\list@num\do{\figvectP-1[#3,\p@int]%
    \figpttra\the\s@mme:=#3/#4,-1/\advance\s@mme\@ne}%
    \resetc@ntr@l\et@tfigptshom}\ignorespaces\fi}
\ctr@ld@f\def\figptsinv#1=#2/#3,#4/{\ifGR@cri{\s@uvc@ntr@l\et@tfigptsinv%
    \setc@ntr@l{2}\def\list@num{#2}\s@mme=#1%
    \@ecfor\p@int:=\list@num\do{\figvectP-1[#3,\p@int]\Figg@tXY{-1}%
    \getredf@ctB\f@ctech\n@rmeucC{\delt@}{-1}%
    \delt@=\ptT@unit@\delt@\delt@=\ptT@unit@\delt@%
    \invers@{\delt@}{\delt@}\multiply\f@ctech\f@ctech\divide\delt@\f@ctech%
    \delt@=#4\delt@\edef\v@lcoef{\repdecn@mb{\delt@}}\figpttra\the\s@mme:=#3/\v@lcoef,-1/%
    \advance\s@mme\@ne}\resetc@ntr@l\et@tfigptsinv}\ignorespaces\fi}
\ctr@ln@m\figptsrot
\ctr@ld@f\def\figptsrotDD#1=#2/#3,#4/{\ifGR@cri{\s@uvc@ntr@l\et@tfigptsrotDD%
    \c@ssin{\C@}{\S@}{#4}\setc@ntr@l{2}\def\list@num{#2}\s@mme=#1%
    \@ecfor\p@int:=\list@num\do{\figvectPDD-1[#3,\p@int]\Figg@tXY{-1}%
    \v@lXa=\C@\v@lX\advance\v@lXa-\S@\v@lY%
    \v@lYa=\S@\v@lX\advance\v@lYa\C@\v@lY%
    \Figv@ctCreg-1(\v@lXa,\v@lYa)\figpttraDD\the\s@mme:=#3/1,-1/\advance\s@mme\@ne}%
    \resetc@ntr@l\et@tfigptsrotDD}\ignorespaces\fi}
\ctr@ld@f\def\figptsrotTD#1=#2/#3,#4,#5/{\ifGR@cri{\s@uvc@ntr@l\et@tfigptsrotTD%
    \c@ssin{\C@}{\S@}{#4}%
    \setc@ntr@l{2}\def\list@num{#2}\s@mme=#1%
    \@ecfor\p@int:=\list@num\do{\figptorthoprojplaneTD-3:=#3/\p@int,#5/%
    \figvectPTD-2[-3,\p@int]%
    \figvectNVTD-1[#5,-2]\n@rmeucTD\v@leur{-2}\edef\v@lcoef{\repdecn@mb{\v@leur}}%
    \Figg@tXYa{-1}\v@lXa=\v@lcoef\v@lXa\v@lYa=\v@lcoef\v@lYa\v@lZa=\v@lcoef\v@lZa%
    \v@lXa=\S@\v@lXa\v@lYa=\S@\v@lYa\v@lZa=\S@\v@lZa\Figg@tXY{-2}%
    \advance\v@lXa\C@\v@lX\advance\v@lYa\C@\v@lY\advance\v@lZa\C@\v@lZ%
    \Figg@tXY{-3}\advance\v@lXa\v@lX\advance\v@lYa\v@lY\advance\v@lZa\v@lZ%
    \Figp@intregTD\the\s@mme:(\v@lXa,\v@lYa,\v@lZa)\advance\s@mme\@ne}%
    \resetc@ntr@l\et@tfigptsrotTD}\ignorespaces\fi}
\ctr@ln@m\figptssym
\ctr@ld@f\def\figptssymDD#1=#2/#3,#4/{\ifGR@cri{\s@uvc@ntr@l\et@tfigptssymDD%
    \setc@ntr@l{2}\figvectPDD-3[#3,#4]\Figg@tXY{-3}\Figv@ctCreg-4(-\v@lY,\v@lX)%
    \resetc@ntr@l{2}\def\list@num{#2}\s@mme=#1%
    \@ecfor\p@int:=\list@num\do{\inters@cDD-5:[#3,-3;\p@int,-4]\figvectPDD-2[\p@int,-5]%
    \figpttraDD\the\s@mme:=\p@int/2,-2/\advance\s@mme\@ne}%
    \resetc@ntr@l\et@tfigptssymDD}\ignorespaces\fi}
\ctr@ld@f\def\figptssymTD#1=#2/#3,#4/{\ifGR@cri{\s@uvc@ntr@l\et@tfigptssymTD%
    \setc@ntr@l{2}\vecunit@TD{-2}{#4}\def\list@num{#2}\s@mme=#1%
    \@ecfor\p@int:=\list@num\do{\figvectPTD-1[\p@int,#3]%
    \c@lproscalTD\v@leur[-1,-2]\v@leur=2\v@leur\edef\v@lcoef{\repdecn@mb{\v@leur}}%
    \figpttraTD\the\s@mme:=\p@int/\v@lcoef,-2/\advance\s@mme\@ne}%
    \resetc@ntr@l\et@tfigptssymTD}\ignorespaces\fi}
\ctr@ln@m\figptstra
\ctr@ld@f\def\figptstraDD#1=#2/#3,#4/{\ifGR@cri{\Figg@tXYa{#4}\v@lXa=#3\v@lXa\v@lYa=#3\v@lYa%
    \def\list@num{#2}\s@mme=#1\@ecfor\p@int:=\list@num\do{\Figg@tXY{\p@int}%
    \advance\v@lX\v@lXa\advance\v@lY\v@lYa%
    \Figp@intregDD\the\s@mme:(\v@lX,\v@lY)\advance\s@mme\@ne}}\ignorespaces\fi}
\ctr@ld@f\def\figptstraTD#1=#2/#3,#4/{\ifGR@cri{\Figg@tXYa{#4}\v@lXa=#3\v@lXa\v@lYa=#3\v@lYa%
    \v@lZa=#3\v@lZa\def\list@num{#2}\s@mme=#1\@ecfor\p@int:=\list@num\do{\Figg@tXY{\p@int}%
    \advance\v@lX\v@lXa\advance\v@lY\v@lYa\advance\v@lZ\v@lZa%
    \Figp@intregTD\the\s@mme:(\v@lX,\v@lY,\v@lZ)\advance\s@mme\@ne}}\ignorespaces\fi}
\ctr@ln@m\figptvisilimSL
\ctr@ld@f\def\figptvisilimSLDD{\un@v@ilable{figptvisilimSL}}
\ctr@ld@f\def\figptvisilimSLTD#1:#2[#3,#4;#5,#6]{\ifGR@cri{\s@uvc@ntr@l\et@tfigptvisilimSLTD%
    \setc@ntr@l{2}\figvectP-1[#3,#4]\n@rminf{\delt@}{-1}%
    \ifcase\CUR@proj\v@lX=\cxa@\p@\v@lY=-\p@\v@lZ=\cxb@\p@
    \Figv@ctCreg-2(\v@lX,\v@lY,\v@lZ)\figvectP-3[#5,#6]\figvectNV-1[-2,-3]%
    \or\figvectP-1[#5,#6]\vecunitCV@TD{-1}\v@lmin=\v@lX\v@lmax=\v@lY
    \v@leur=\v@lZ\v@lX=\cza@\p@\v@lY=\czb@\p@\v@lZ=\czc@\p@\c@lprovec{-1}%
    \or\c@ley@pt{-2}\figvectN-1[#5,#6,-2]\fi
    \edef\Ai@{#3}\edef\Aj@{#4}\figvectP-2[#5,\Ai@]\c@lproscal\v@leur[-1,-2]%
    \ifdim\v@leur>\z@\p@rtent=\@ne\else\p@rtent=\m@ne\fi%
    \figvectP-2[#5,\Aj@]\c@lproscal\v@leur[-1,-2]%
    \ifdim\p@rtent\v@leur>\z@\figptcopy#1:#2/#3/%
    \message{*** \BS@ figptvisilimSL: points are on the same side.}\else%
    \figptcopy-3:/#3/\figptcopy-4:/#4/%
    \loop\figptbary-5:[-3,-4;1,1]\figvectP-2[#5,-5]\c@lproscal\v@leur[-1,-2]%
    \ifdim\p@rtent\v@leur>\z@\figptcopy-3:/-5/\else\figptcopy-4:/-5/\fi%
    \divide\delt@\tw@\ifdim\delt@>\epsil@n\repeat%
    \figptbary#1:#2[-3,-4;1,1]\fi\resetc@ntr@l\et@tfigptvisilimSLTD}\ignorespaces\fi}
\ctr@ld@f\def\c@ley@pt#1{\t@stp@r\ifitis@K\v@lX=\cza@\p@\v@lY=\czb@\p@\v@lZ=\czc@\p@%
    \Figv@ctCreg-1(\v@lX,\v@lY,\v@lZ)\Figp@intreg-2:(\wd\Bt@rget,\ht\Bt@rget,\dp\Bt@rget)%
    \figpttra#1:=-2/-\disob@intern,-1/\else\end\fi}
\ctr@ld@f\def\t@stp@r{\itis@Ktrue\ifnewt@rgetpt\else\itis@Kfalse%
    \message{*** \BS@ figptvisilimXX: target point undefined.}\fi\ifnewdis@b\else%
    \itis@Kfalse\message{*** \BS@ figptvisilimXX: observation distance undefined.}\fi%
    \ifitis@K\else\message{*** This macro must be called after \BS@ figdrawbegin or after
    having set the missing parameter(s) with \BS@ figset proj()}\fi}
\ctr@ld@f\def\figscan#1(#2,#3){{\s@uvc@ntr@l\et@tfigscan\@psfgetbb{#1}\if@psfbbfound\else%
    \def\@psfllx{0}\def\@psflly{20}\def\@psfurx{540}\def\@psfury{640}\fi\figscan@{#2}{#3}%
    \resetc@ntr@l\et@tfigscan}\ignorespaces}
\ctr@ld@f\def\figscan@#1#2{%
    \unit@=\@ne bp\setc@ntr@l{2}\figsetmark{}%
    \def\minst@p{20pt}%
    \v@lX=\@psfllx\p@\v@lX=\Sc@leFact\v@lX\r@undint\v@lX\v@lX%
    \v@lY=\@psflly\p@\v@lY=\Sc@leFact\v@lY\ifdim\v@lY>\z@\r@undint\v@lY\v@lY\fi%
    \delt@=\@psfury\p@\delt@=\Sc@leFact\delt@%
    \advance\delt@-\v@lY\v@lXa=\@psfurx\p@\v@lXa=\Sc@leFact\v@lXa\v@leur=\minst@p%
    \edef\valv@lY{\repdecn@mb{\v@lY}}\edef\LgTr@it{\the\delt@}%
    \loop\ifdim\v@lX<\v@lXa\edef\valv@lX{\repdecn@mb{\v@lX}}%
    \figptDD -1:(\valv@lX,\valv@lY)\figwriten -1:\hbox{\vrule height\LgTr@it}(0)%
    \ifdim\v@leur<\minst@p\else\figsetmark{\raise-8bp\hbox{$\scriptscriptstyle\triangle$}}%
    \figwrites -1:\@ffichnb{0}{\valv@lX}(6)\v@leur=\z@\figsetmark{}\fi%
    \advance\v@leur#1pt\advance\v@lX#1pt\repeat%
    \def\minst@p{10pt}%
    \v@lX=\@psfllx\p@\v@lX=\Sc@leFact\v@lX\ifdim\v@lX>\z@\r@undint\v@lX\v@lX\fi%
    \v@lY=\@psflly\p@\v@lY=\Sc@leFact\v@lY\r@undint\v@lY\v@lY%
    \delt@=\@psfurx\p@\delt@=\Sc@leFact\delt@%
    \advance\delt@-\v@lX\v@lYa=\@psfury\p@\v@lYa=\Sc@leFact\v@lYa\v@leur=\minst@p%
    \edef\valv@lX{\repdecn@mb{\v@lX}}\edef\LgTr@it{\the\delt@}%
    \loop\ifdim\v@lY<\v@lYa\edef\valv@lY{\repdecn@mb{\v@lY}}%
    \figptDD -1:(\valv@lX,\valv@lY)\figwritee -1:\vbox{\hrule width\LgTr@it}(0)%
    \ifdim\v@leur<\minst@p\else\figsetmark{$\triangleright$\kern4bp}%
    \figwritew -1:\@ffichnb{0}{\valv@lY}(6)\v@leur=\z@\figsetmark{}\fi%
    \advance\v@leur#2pt\advance\v@lY#2pt\repeat}
\ctr@ld@f
\ctr@ld@f\def\figscan@E#1(#2,#3){{\s@uvc@ntr@l\et@tfigscan@E%
    \Figdisc@rdLTS{#1}{\t@xt@}\pdfximage{\t@xt@}%
    \setbox\Gb@x=\hbox{\pdfrefximage\pdflastximage}%
    \edef\@psfllx{0}\v@lY=-\dp\Gb@x\edef\@psflly{\repdecn@mb{\v@lY}}%
    \edef\@psfurx{\repdecn@mb{\wd\Gb@x}}%
    \v@lY=\dp\Gb@x\advance\v@lY\ht\Gb@x\edef\@psfury{\repdecn@mb{\v@lY}}%
    \figscan@{#2}{#3}\resetc@ntr@l\et@tfigscan@E}\ignorespaces}
\ctr@ld@f\def\figshowpts[#1,#2]{{\figsetmark{$\bullet$}\figsetptname{\bf ##1}%
    \p@rtent=#2\relax\ifnum\p@rtent<\z@\p@rtent=\z@\fi%
    \s@mme=#1\relax\ifnum\s@mme<\z@\s@mme=\z@\fi%
    \loop\ifnum\s@mme<\p@rtent\pt@rvect{\s@mme}%
    \ifitis@K\figwriten{\the\s@mme}:(4pt)\fi\advance\s@mme\@ne\repeat%
    \pt@rvect{\s@mme}\ifitis@K\figwriten{\the\s@mme}:(4pt)\fi}\ignorespaces}
\ctr@ld@f\def\pt@rvect#1{\set@bjc@de{#1}%
    \expandafter\expandafter\expandafter\inqpt@rvec\csname\objc@de\endcsname:}
\ctr@ld@f\def\inqpt@rvec#1#2:{\if#1\C@dCl@spt\itis@Ktrue\else\itis@Kfalse\fi}
\ctr@ld@f\def\figshowsettings{{%
    \immediate\write16{====================================================================}%
    \immediate\write16{ Current settings are (DDV means "with dynamic default value"):}%
    \immediate\write16{ --- GENERAL ---}%
    \immediate\write16{Scale factor and Unit = \unit@util\space (\the\unit@)
     \space -> \BS@ figinit{ScaleFactorUnit}}%
    \immediate\write16{Update mode = \ifGRupdatem@de yes\else no\fi
     \space-> \BS@ figset(update=yes/no) or \BS@ figsetdefault(update=yes/no)}%
    \immediate\write16{ --- WRITING ---}%
    \immediate\write16{Implicit point name = \ptn@me{i} \space-> \BS@ figset write(ptname={Name})}%
    \immediate\write16{Point marker = \the\c@nsymb \space -> \BS@ figset write(mark=Mark)}%
    \immediate\write16{Print rounded coordinates = \ifr@undcoord yes\else no\fi
     \space-> \BS@ figset write(roundcoord=yes/no)}%
    \immediate\write16{ --- GRAPHICAL (general) ---}%
    \immediate\write16{Color = \CUR@color \space-> \BS@ figset(color=ColorDefinition)}%
    \immediate\write16{Filling mode = \iffillm@de yes\else no\fi
     \space-> \BS@ figset(fillmode=yes/no)}%
    \immediate\write16{Line join = \CUR@join \space-> \BS@ figset(join=miter/round/bevel)}%
    \immediate\write16{Line style = \CUR@dash \space-> \BS@ figset(dash=Index/Pattern)}%
    \immediate\write16{Line width = \CUR@width
     \space-> \BS@ figset(width=real in PostScript units)}%
    \immediate\write16{ --- GRAPHICAL (specific) ---}%
    \immediate\write16{Altitude (all the following attributes are DDV):}%
    \immediate\write16{ Base line color =
     \ifx\DDV@blcolor\D@FTref general color\else\DDV@blcolor\fi
     \space-> \BS@ figset altitude(blcolor=ColorDefinition)}%
    \immediate\write16{ Base line style =
     \ifx\DDV@bldash\D@FTref general style\else\DDV@bldash\fi
     \space-> \BS@ figset altitude(bldash=Index/Pattern)}%
    \immediate\write16{ Base line width =
     \ifx\DDV@blwidth\D@FTref general width\else\DDV@blwidth\fi
     \space-> \BS@ figset altitude(blwidth=real in PostScript units)}%
    \immediate\write16{ Square line color =
     \ifx\DDV@sqcolor\D@FTref general color\else\DDV@sqcolor\fi
     \space-> \BS@ figset altitude(sqcolor=ColorDefinition)}%
    \immediate\write16{ Square line style =
     \ifx\DDV@sqdash\D@FTref general style\else\DDV@sqdash\fi
     \space-> \BS@ figset altitude(sqdash=Index/Pattern)}%
    \immediate\write16{ Square line width =
     \ifx\DDV@sqwidth\D@FTref general width\else\DDV@sqwidth\fi
     \space-> \BS@ figset altitude(sqwidth=real in PostScript units)}%
    \immediate\write16{Arrowhead:}%
    \immediate\write16{ (half-)Angle = \@rrowheadangle
     \space-> \BS@ figset arrowhead(angle=real in degrees)}%
    \immediate\write16{ Filling mode = \if@rrowhfill yes\else no\fi
     \space-> \BS@ figset arrowhead(fillmode=yes/no)}%
    \immediate\write16{ "Outside" = \if@rrowhout yes\else no\fi
     \space-> \BS@ figset arrowhead(out=yes/no)}%
    \immediate\write16{ Length = \@rrowheadlength
     \if@rrowratio\space(not active)\else\space(active)\fi
     \space-> \BS@ figset arrowhead(length=real in user coord.)}%
    \immediate\write16{ Ratio = \@rrowheadratio
     \if@rrowratio\space(active)\else\space(not active)\fi
     \space-> \BS@ figset arrowhead(ratio=real in [0,1])}%
    \immediate\write16{Curve:}%
    \immediate\write16{ Roundness = \curv@roundness
     \space-> \BS@ figset curve(roundness=real in [0,0.5])}%
    \immediate\write16{Flow chart:}%
    \immediate\write16{ Arrow position = \@rrowp@s
     \space-> \BS@ figset flowchart(arrowposition=real in [0,1])}%
    \immediate\write16{ Arrow reference point = \ifcase\@rrowr@fpt start\else end\fi
     \space-> \BS@ figset flowchart(arrowrefpt = start/end)}%
    \immediate\write16{ Background color = \fcbgc@lor
     \space-> \BS@ figset flowchart(bgcolor=ColorDefinition)}%
    \immediate\write16{ Line type = \ifcase\fclin@typ@ curve\else polygon\fi
     \space-> \BS@ figset flowchart(line=polygon/curve)}%
    \immediate\write16{ Padding = (\Xp@dd, \Yp@dd)
     \space-> \BS@ figset flowchart(padding = real in user coord.)}%
    \immediate\write16{\space\space\space\space(or
     \BS@ figset flowchart(xpadding=real, ypadding=real) )}%
    \immediate\write16{ Radius = \fclin@r@d
     \space-> \BS@ figset flowchart(radius=positive real in user coord.)}%
    \immediate\write16{ Shape = \fcsh@pe
     \space-> \BS@ figset flowchart(shape = rectangle, ellipse or lozenge)}%
    \immediate\write16{ Thickness color (DDV) = 
     \ifx\DDV@thickcolor\D@FTref general color\else\DDV@thickcolor\fi
     \space-> \BS@ figset flowchart(thickcolor=ColorDefinition)}%
    \immediate\write16{ Thickness = \thickn@ss
     \space-> \BS@ figset flowchart(thickness = real in user coord.)}%
    \immediate\write16{Mesh:}%
    \immediate\write16{ Diagonal = \c@ntrolmesh
     \space-> \BS@ figset mesh(diag=integer in {-1,0,1})}%
    \immediate\write16{ Lines color (DDV) =
     \ifx\DDV@meshcolor\D@FTref general color\else\DDV@meshcolor\fi
     \space-> \BS@ figset mesh(color=ColorDefinition)}%
    \immediate\write16{ Lines style (DDV) =
     \ifx\DDV@meshdash\D@FTref general style\else\DDV@meshdash\fi
     \space-> \BS@ figset mesh(dash=Index/Pattern)}%
    \immediate\write16{ Lines width (DDV) =
     \ifx\DDV@meshwidth\D@FTref general width\else\DDV@meshwidth\fi
     \space-> \BS@ figset mesh(width=real in PostScript units)}%
    \immediate\write16{Trimesh:}%
    \immediate\write16{ Lines color (DDV) =
     \ifx\DDV@tmeshcolor\D@FTref general color\else\DDV@tmeshcolor\fi
     \space-> \BS@ figset trimesh(color=ColorDefinition)}%
    \immediate\write16{ Lines style (DDV) =
     \ifx\DDV@tmeshdash\D@FTref general style\else\DDV@tmeshdash\fi
     \space-> \BS@ figset trimesh(dash=Index/Pattern)}%
    \immediate\write16{ Lines width (DDV) =
     \ifx\DDV@tmeshwidth\D@FTref general width\else\DDV@tmeshwidth\fi
     \space-> \BS@ figset trimesh(width=real in PostScript units)}%
    \ifTr@isDim%
    \immediate\write16{ --- 3D to 2D PROJECTION ---}%
    \immediate\write16{Projection : \typ@proj \space-> \BS@ figinit{ScaleFactorUnit, ProjType}}%
    \immediate\write16{Longitude (psi) = \v@lPsi \space-> \BS@ figset proj(psi=real in degrees)}%
    \ifcase\CUR@proj\immediate\write16{Depth coeff. (Lambda)
     \space = \v@lTheta \space-> \BS@ figset proj(lambda=real in [0,1])}%
    \else\immediate\write16{Latitude (theta)
     \space = \v@lTheta \space-> \BS@ figset proj(theta=real in degrees)}%
    \fi%
    \ifnum\CUR@proj=\tw@%
    \immediate\write16{Observation distance = \disob@unit
     \space-> \BS@ figset proj(dist=real in user coord.)}%
    \immediate\write16{Target point = \t@rgetpt \space-> \BS@ figset proj(targetpt=pt number)}%
     \v@lX=\ptT@unit@\wd\Bt@rget\v@lY=\ptT@unit@\ht\Bt@rget\v@lZ=\ptT@unit@\dp\Bt@rget%
    \immediate\write16{ Its coordinates are
     (\repdecn@mb{\v@lX}, \repdecn@mb{\v@lY}, \repdecn@mb{\v@lZ})}%
    \fi%
    \fi%
    \immediate\write16{====================================================================}%
    \ignorespaces}}
\ctr@ln@w{newif}\ifitis@vect@r
\ctr@ld@f\def\figvectC#1(#2,#3){{\itis@vect@rtrue\figpt#1:(#2,#3)}\ignorespaces}
\ctr@ld@f\def\Figv@ctCreg#1(#2,#3){{\itis@vect@rtrue\Figp@intreg#1:(#2,#3)}\ignorespaces}
\ctr@ln@m\figvectDBezier
\ctr@ld@f\def\figvectDBezierDD#1:#2,#3[#4,#5,#6,#7]{\ifGR@cri{\s@uvc@ntr@l\et@tfigvectDBezierDD%
    \FigvectDBezier@#2,#3[#4,#5,#6,#7]\v@lX=\c@ef\v@lX\v@lY=\c@ef\v@lY%
    \Figv@ctCreg#1(\v@lX,\v@lY)\resetc@ntr@l\et@tfigvectDBezierDD}\ignorespaces\fi}
\ctr@ld@f\def\figvectDBezierTD#1:#2,#3[#4,#5,#6,#7]{\ifGR@cri{\s@uvc@ntr@l\et@tfigvectDBezierTD%
    \FigvectDBezier@#2,#3[#4,#5,#6,#7]\v@lX=\c@ef\v@lX\v@lY=\c@ef\v@lY\v@lZ=\c@ef\v@lZ%
    \Figv@ctCreg#1(\v@lX,\v@lY,\v@lZ)\resetc@ntr@l\et@tfigvectDBezierTD}\ignorespaces\fi}
\ctr@ld@f\def\FigvectDBezier@#1,#2[#3,#4,#5,#6]{\setc@ntr@l{2}%
    \edef\T@{#2}\v@leur=\p@\advance\v@leur-#2pt\edef\UNmT@{\repdecn@mb{\v@leur}}%
    \ifnum#1=\tw@\def\c@ef{6}\else\def\c@ef{3}\fi%
    \figptcopy-4:/#3/\figptcopy-3:/#4/\figptcopy-2:/#5/\figptcopy-1:/#6/%
    \l@mbd@un=-4 \l@mbd@de=-\thr@@\p@rtent=\m@ne\c@lDecast%
    \ifnum#1=\tw@\c@lDCDeux{-4}{-3}\c@lDCDeux{-3}{-2}\c@lDCDeux{-4}{-3}\else%
    \l@mbd@un=-4 \l@mbd@de=-\thr@@\p@rtent=-\tw@\c@lDecast%
    \c@lDCDeux{-4}{-3}\fi\Figg@tXY{-4}}
\ctr@ln@m\c@lDCDeux
\ctr@ld@f\def\c@lDCDeuxDD#1#2{\Figg@tXY{#2}\Figg@tXYa{#1}%
    \advance\v@lX-\v@lXa\advance\v@lY-\v@lYa\Figp@intregDD#1:(\v@lX,\v@lY)}
\ctr@ld@f\def\c@lDCDeuxTD#1#2{\Figg@tXY{#2}\Figg@tXYa{#1}\advance\v@lX-\v@lXa%
    \advance\v@lY-\v@lYa\advance\v@lZ-\v@lZa\Figp@intregTD#1:(\v@lX,\v@lY,\v@lZ)}
\ctr@ln@m\figvectN
\ctr@ld@f\def\figvectNDD#1[#2,#3]{\ifGR@cri{\Figg@tXYa{#2}\Figg@tXY{#3}%
    \advance\v@lX-\v@lXa\advance\v@lY-\v@lYa%
    \Figv@ctCreg#1(-\v@lY,\v@lX)}\ignorespaces\fi}
\ctr@ld@f\def\figvectNTD#1[#2,#3,#4]{\ifGR@cri{\vecunitC@TD[#2,#4]\v@lmin=\v@lX\v@lmax=\v@lY%
    \v@leur=\v@lZ\vecunitC@TD[#2,#3]\c@lprovec{#1}}\ignorespaces\fi}
\ctr@ln@m\figvectNV
\ctr@ld@f\def\figvectNVDD#1[#2]{\ifGR@cri{\Figg@tXY{#2}\Figv@ctCreg#1(-\v@lY,\v@lX)}\ignorespaces\fi}
\ctr@ld@f\def\figvectNVTD#1[#2,#3]{\ifGR@cri{\vecunitCV@TD{#3}\v@lmin=\v@lX\v@lmax=\v@lY%
    \v@leur=\v@lZ\vecunitCV@TD{#2}\c@lprovec{#1}}\ignorespaces\fi}
\ctr@ln@m\figvectP
\ctr@ld@f\def\figvectPDD#1[#2,#3]{\ifGR@cri{\Figg@tXYa{#2}\Figg@tXY{#3}%
    \advance\v@lX-\v@lXa\advance\v@lY-\v@lYa%
    \Figv@ctCreg#1(\v@lX,\v@lY)}\ignorespaces\fi}
\ctr@ld@f\def\figvectPTD#1[#2,#3]{\ifGR@cri{\Figg@tXYa{#2}\Figg@tXY{#3}%
    \advance\v@lX-\v@lXa\advance\v@lY-\v@lYa\advance\v@lZ-\v@lZa%
    \Figv@ctCreg#1(\v@lX,\v@lY,\v@lZ)}\ignorespaces\fi}
\ctr@ln@m\figvectU
\ctr@ld@f\def\figvectUDD#1[#2]{\ifGR@cri{\n@rmeuc\v@leur{#2}\invers@\v@leur\v@leur%
    \delt@=\repdecn@mb{\v@leur}\unit@\edef\v@ldelt@{\repdecn@mb{\delt@}}%
    \Figg@tXY{#2}\v@lX=\v@ldelt@\v@lX\v@lY=\v@ldelt@\v@lY%
    \Figv@ctCreg#1(\v@lX,\v@lY)}\ignorespaces\fi}
\ctr@ld@f\def\figvectUTD#1[#2]{\ifGR@cri{\n@rmeuc\v@leur{#2}\invers@\v@leur\v@leur%
    \delt@=\repdecn@mb{\v@leur}\unit@\edef\v@ldelt@{\repdecn@mb{\delt@}}%
    \Figg@tXY{#2}\v@lX=\v@ldelt@\v@lX\v@lY=\v@ldelt@\v@lY\v@lZ=\v@ldelt@\v@lZ%
    \Figv@ctCreg#1(\v@lX,\v@lY,\v@lZ)}\ignorespaces\fi}
\ctr@ld@f\def\figvisu#1#2#3{\c@ldefproj\initb@undb@x\xdef\figforTeXFigno{\figforTeXnextFigno}%
    \s@mme=\figforTeXnextFigno\advance\s@mme\@ne\xdef\figforTeXnextFigno{\number\s@mme}%
    \setbox\b@xvisu=\hbox{\ifnum\@utoFN>\z@\figinsert{}\gdef\@utoFInDone{0}\fi\ignorespaces#3}%
    \gdef\@utoFInDone{1}\gdef\@utoFN{0}%
    \v@lXa=-\c@@rdYmin\v@lYa=\c@@rdYmax\advance\v@lYa-\c@@rdYmin%
    \v@lX=\c@@rdXmax\advance\v@lX-\c@@rdXmin%
    \setbox#1=\hbox{#2}\v@lY=-\v@lX\maxim@m{\v@lX}{\v@lX}{\wd#1}%
    \advance\v@lY\v@lX\divide\v@lY\tw@\advance\v@lY-\c@@rdXmin%
    \setbox#1=\vbox{\parindent\z@\hsize=\v@lX\vskip\v@lYa%
    \rlap{\hskip\v@lY\smash{\raise\v@lXa\box\b@xvisu}}%
    \def\t@xt@{#2}\ifx\t@xt@\empty\else\medskip\centerline{#2}\fi}\wd#1=\v@lX}
\ctr@ld@f\def\figDecrementFigno{{\xdef\figforTeXnextFigno{\figforTeXFigno}%
    \s@mme=\figforTeXFigno\advance\s@mme\m@ne\xdef\figforTeXFigno{\number\s@mme}}}
\ctr@ln@w{newbox}\Bt@rget\setbox\Bt@rget=\null
\ctr@ln@w{newbox}\BminTD@\setbox\BminTD@=\null
\ctr@ln@w{newbox}\BmaxTD@\setbox\BmaxTD@=\null
\ctr@ln@w{newif}\ifnewt@rgetpt\ctr@ln@w{newif}\ifnewdis@b
\ctr@ld@f\def\b@undb@xTD#1#2#3{%
    \relax\ifdim#1<\wd\BminTD@\global\wd\BminTD@=#1\fi%
    \relax\ifdim#2<\ht\BminTD@\global\ht\BminTD@=#2\fi%
    \relax\ifdim#3<\dp\BminTD@\global\dp\BminTD@=#3\fi%
    \relax\ifdim#1>\wd\BmaxTD@\global\wd\BmaxTD@=#1\fi%
    \relax\ifdim#2>\ht\BmaxTD@\global\ht\BmaxTD@=#2\fi%
    \relax\ifdim#3>\dp\BmaxTD@\global\dp\BmaxTD@=#3\fi}
\ctr@ld@f\def\c@ldefdisob{{\ifdim\wd\BminTD@<\maxdimen\v@leur=\wd\BmaxTD@\advance\v@leur-\wd\BminTD@%
    \delt@=\ht\BmaxTD@\advance\delt@-\ht\BminTD@\maxim@m{\v@leur}{\v@leur}{\delt@}%
    \delt@=\dp\BmaxTD@\advance\delt@-\dp\BminTD@\maxim@m{\v@leur}{\v@leur}{\delt@}%
    \v@leur=5\v@leur\else\v@leur=800pt\fi\c@ldefdisob@{\v@leur}}}
\ctr@ln@m\disob@intern
\ctr@ln@m\disob@
\ctr@ln@m\divf@ctproj
\ctr@ld@f\def\c@ldefdisob@#1{{\v@leur=#1\ifdim\v@leur<\p@\v@leur=800pt\fi%
    \xdef\disob@intern{\repdecn@mb{\v@leur}}%
    \delt@=\ptT@unit@\v@leur\xdef\disob@unit{\repdecn@mb{\delt@}}%
    \f@ctech=\@ne\loop\ifdim\v@leur>\t@n pt\divide\v@leur\t@n\multiply\f@ctech\t@n\repeat%
    \xdef\disob@{\repdecn@mb{\v@leur}}\xdef\divf@ctproj{\the\f@ctech}}%
    \global\newdis@btrue}
\ctr@ln@m\t@rgetpt
\ctr@ld@f\def\c@ldeft@rgetpt{\newt@rgetpttrue\def\t@rgetpt{CenterBoundBox}{%
    \delt@=\wd\BmaxTD@\advance\delt@-\wd\BminTD@\divide\delt@\tw@%
    \v@leur=\wd\BminTD@\advance\v@leur\delt@\global\wd\Bt@rget=\v@leur%
    \delt@=\ht\BmaxTD@\advance\delt@-\ht\BminTD@\divide\delt@\tw@%
    \v@leur=\ht\BminTD@\advance\v@leur\delt@\global\ht\Bt@rget=\v@leur%
    \delt@=\dp\BmaxTD@\advance\delt@-\dp\BminTD@\divide\delt@\tw@%
    \v@leur=\dp\BminTD@\advance\v@leur\delt@\global\dp\Bt@rget=\v@leur}}
\ctr@ln@m\c@ldefproj
\ctr@ld@f\def\c@ldefprojTD{\ifnewt@rgetpt\else\c@ldeft@rgetpt\fi\ifnewdis@b\else\c@ldefdisob\fi}
\ctr@ld@f\def\c@lprojcav{
    \v@lZa=\cxa@\v@lY\advance\v@lX\v@lZa%
    \v@lZa=\cxb@\v@lY\v@lY=\v@lZ\advance\v@lY\v@lZa\ignorespaces}
\ctr@ln@m\v@lcoef
\ctr@ld@f\def\c@lprojrea{
    \advance\v@lX-\wd\Bt@rget\advance\v@lY-\ht\Bt@rget\advance\v@lZ-\dp\Bt@rget%
    \v@lZa=\cza@\v@lX\advance\v@lZa\czb@\v@lY\advance\v@lZa\czc@\v@lZ%
    \divide\v@lZa\divf@ctproj\advance\v@lZa\disob@ pt\invers@{\v@lZa}{\v@lZa}%
    \v@lZa=\disob@\v@lZa\edef\v@lcoef{\repdecn@mb{\v@lZa}}%
    \v@lXa=\cxa@\v@lX\advance\v@lXa\cxb@\v@lY\v@lXa=\v@lcoef\v@lXa%
    \v@lY=\cyb@\v@lY\advance\v@lY\cya@\v@lX\advance\v@lY\cyc@\v@lZ%
    \v@lY=\v@lcoef\v@lY\v@lX=\v@lXa\ignorespaces}
\ctr@ld@f\def\c@lprojort{
    \v@lXa=\cxa@\v@lX\advance\v@lXa\cxb@\v@lY%
    \v@lY=\cyb@\v@lY\advance\v@lY\cya@\v@lX\advance\v@lY\cyc@\v@lZ%
    \v@lX=\v@lXa\ignorespaces}
\ctr@ld@f\def\Figptpr@j#1:#2/#3/{{\Figg@tXY{#3}\superc@lprojSP%
    \Figp@intregDD#1:{#2}(\v@lX,\v@lY)}\ignorespaces}
\ctr@ln@m\figsetobdist
\ctr@ld@f\def\figsetobdistDD{\un@v@ilable{figsetobdist}}
\ctr@ld@f\def\figsetobdistTD(#1){{\ifCUR@PS\W@rnmesIgn{figset proj(dist=...)}%
    \else\v@leur=#1\unit@\c@ldefdisob@{\v@leur}\fi}\ignorespaces}
\ctr@ln@m\c@lprojSP
\ctr@ln@m\CUR@proj
\ctr@ln@m\typ@proj
\ctr@ln@m\superc@lprojSP
\ctr@ld@f\def\Figs@tproj#1{%
    \if#13 \def@ultproj\else\if#1c\def@ultproj%
    \else\if#1o\xdef\CUR@proj{1}\xdef\typ@proj{orthogonal}%
         \figsetviewTD(\def@ultpsi,\def@ulttheta)%
         \global\let\c@lprojSP=\c@lprojort\global\let\superc@lprojSP=\c@lprojort%
    \else\if#1r\xdef\CUR@proj{2}\xdef\typ@proj{realistic}%
         \figsetviewTD(\def@ultpsi,\def@ulttheta)%
         \global\let\c@lprojSP=\c@lprojrea\global\let\superc@lprojSP=\c@lprojrea%
    \else\def@ultproj\message{*** Unknown projection. Cavalier projection assumed.}%
    \fi\fi\fi\fi}
\ctr@ld@f\def\def@ultproj{\xdef\CUR@proj{0}\xdef\typ@proj{cavalier}\figsetviewTD(\def@ultpsi,0.5)%
         \global\let\c@lprojSP=\c@lprojcav\global\let\superc@lprojSP=\c@lprojcav}
\ctr@ln@m\figsettarget
\ctr@ld@f\def\figsettargetDD{\un@v@ilable{figsettarget}}
\ctr@ld@f\def\figsettargetTD[#1]{{\ifCUR@PS\W@rnmesIgn{figset proj(targetpt=...)}%
    \else\global\newt@rgetpttrue\xdef\t@rgetpt{#1}\Figg@tXY{#1}\global\wd\Bt@rget=\v@lX%
    \global\ht\Bt@rget=\v@lY\global\dp\Bt@rget=\v@lZ\fi}\ignorespaces}
\ctr@ln@m\figsetview
\ctr@ld@f\def\figsetviewDD{\un@v@ilable{figsetview}}
\ctr@ld@f\def\figsetviewTD(#1){\ifCUR@PS\W@rnmesIgn{figset proj(Psi|Theta|Lambda=...)}%
     \else\Figsetview@#1,:\fi\ignorespaces}
\ctr@ld@f\def\Figsetview@#1,#2:{{\xdef\v@lPsi{#1}\def\t@xt@{#2}%
    \ifx\t@xt@\empty\def\@rgdeux{\v@lTheta}\else\X@rgdeux@#2\fi%
    \c@ssin{\costhet@}{\sinthet@}{#1}\v@lmin=\costhet@ pt\v@lmax=\sinthet@ pt%
    \ifcase\CUR@proj%
    \v@leur=\@rgdeux\v@lmin\xdef\cxa@{\repdecn@mb{\v@leur}}%
    \v@leur=\@rgdeux\v@lmax\xdef\cxb@{\repdecn@mb{\v@leur}}\v@leur=\@rgdeux pt%
    \relax\ifdim\v@leur>\p@\message{*** Lambda too large ! See \BS@ figset proj() !}\fi%
    \else%
    \v@lmax=-\v@lmax\xdef\cxa@{\repdecn@mb{\v@lmax}}\xdef\cxb@{\costhet@}%
    \ifx\t@xt@\empty\edef\@rgdeux{\def@ulttheta}\fi\c@ssin{\C@}{\S@}{\@rgdeux}%
    \v@lmax=-\S@ pt%
    \v@leur=\v@lmax\v@leur=\costhet@\v@leur\xdef\cya@{\repdecn@mb{\v@leur}}%
    \v@leur=\v@lmax\v@leur=\sinthet@\v@leur\xdef\cyb@{\repdecn@mb{\v@leur}}%
    \xdef\cyc@{\C@}\v@lmin=-\C@ pt%
    \v@leur=\v@lmin\v@leur=\costhet@\v@leur\xdef\cza@{\repdecn@mb{\v@leur}}%
    \v@leur=\v@lmin\v@leur=\sinthet@\v@leur\xdef\czb@{\repdecn@mb{\v@leur}}%
    \xdef\czc@{\repdecn@mb{\v@lmax}}\fi%
    \xdef\v@lTheta{\@rgdeux}}}
\ctr@ld@f\def\def@ultpsi{40}
\ctr@ld@f\def\def@ulttheta{25}
\ctr@ln@m\l@debut
\ctr@ln@m\n@mref
\ctr@ld@f\def\Figsetpr@j#1=#2|{\keln@mtr#1|%
    \def\n@mref{dep}\ifx\l@debut\n@mref\Figsetd@p{#2}\else
    \def\n@mref{dis}\ifx\l@debut\n@mref%
     \ifnum\CUR@proj=\tw@\figsetobdist(#2)\else\Figset@rr\fi\else
    \def\n@mref{lam}\ifx\l@debut\n@mref\Figsetd@p{#2}\else
    \def\n@mref{lat}\ifx\l@debut\n@mref\Figsetth@{#2}\else
    \def\n@mref{lon}\ifx\l@debut\n@mref\figsetview(#2)\else
    \def\n@mref{psi}\ifx\l@debut\n@mref\figsetview(#2)\else
    \def\n@mref{tar}\ifx\l@debut\n@mref%
     \ifnum\CUR@proj=\tw@\figsettarget[#2]\else\Figset@rr\fi\else
    \def\n@mref{the}\ifx\l@debut\n@mref\Figsetth@{#2}\else
    \W@rnmesAttr{figset proj}{#1}\fi\fi\fi\fi\fi\fi\fi\fi}
\ctr@ld@f\def\Figsetd@p#1{\ifnum\CUR@proj=\z@\figsetview(\v@lPsi,#1)\else\Figset@rr\fi}
\ctr@ld@f\def\Figsetth@#1{\ifnum\CUR@proj=\z@\Figset@rr\else\figsetview(\v@lPsi,#1)\fi}
\ctr@ld@f\def\Figset@rr{\message{*** \BS@ figset proj(): Attribute "\n@mref" ignored, incompatible
    with current projection}}
\ctr@ld@f\def\initb@undb@xTD{\wd\BminTD@=\maxdimen\ht\BminTD@=\maxdimen\dp\BminTD@=\maxdimen%
    \wd\BmaxTD@=-\maxdimen\ht\BmaxTD@=-\maxdimen\dp\BmaxTD@=-\maxdimen}
\ctr@ln@w{newbox}\Gb@x      
\ctr@ln@w{newbox}\Gb@xSC    
\ctr@ln@w{newtoks}\c@nsymb  
\ctr@ln@w{newif}\ifr@undcoord\ctr@ln@w{newif}\ifunitpr@sent
\ctr@ld@f\def\unssqrttw@{0.707106 }
\ctr@ld@f\def\figAst{\raise-1.15ex\hbox{$\ast$}}
\ctr@ld@f\def\figBullet{\raise-1.15ex\hbox{$\bullet$}}
\ctr@ld@f\def\figCirc{\raise-1.15ex\hbox{$\circ$}}
\ctr@ld@f\def\figDiamond{\raise-1.15ex\hbox{$\diamond$}}%
\ctr@ld@f\def\boxit#1#2{\leavevmode\hbox{\vrule\vbox{\hrule\vglue#1%
    \vtop{\hbox{\kern#1{#2}\kern#1}\vglue#1\hrule}}\vrule}}
\ctr@ld@f
\ctr@ld@f
\ctr@ld@f\def\c@nterpt{\ignorespaces%
    \kern-.5\wd\Gb@xSC%
    \raise-.5\ht\Gb@xSC\rlap{\hbox{\raise.5\dp\Gb@xSC\hbox{\copy\Gb@xSC}}}%
    \kern .5\wd\Gb@xSC\ignorespaces}
\ctr@ld@f\def\b@undb@xSC#1#2{{\v@lXa=#1\v@lYa=#2%
    \v@leur=\ht\Gb@xSC\advance\v@leur\dp\Gb@xSC%
    \advance\v@lXa-.5\wd\Gb@xSC\advance\v@lYa-.5\v@leur\b@undb@x{\v@lXa}{\v@lYa}%
    \advance\v@lXa\wd\Gb@xSC\advance\v@lYa\v@leur\b@undb@x{\v@lXa}{\v@lYa}}}
\ctr@ln@m\Dist@n
\ctr@ln@m\l@suite
\ctr@ld@f\def\@keldist#1#2{\edef\Dist@n{#2}\y@tiunit{\Dist@n}%
    \ifunitpr@sent#1=\Dist@n\else#1=\Dist@n\unit@\fi}
\ctr@ld@f\def\y@tiunit#1{\unitpr@sentfalse\expandafter\y@tiunit@#1:}
\ctr@ld@f\def\y@tiunit@#1#2:{\ifcat#1a\unitpr@senttrue\else\def\l@suite{#2}%
    \ifx\l@suite\empty\else\y@tiunit@#2:\fi\fi}
\ctr@ln@m\figcoord
\ctr@ld@f\def\figcoordDD#1{{\v@lX=\ptT@unit@\v@lX\v@lY=\ptT@unit@\v@lY%
    \ifr@undcoord\ifcase#1\v@leur=0.5pt\or\v@leur=0.05pt\or\v@leur=0.005pt%
    \or\v@leur=0.0005pt\else\v@leur=\z@\fi%
    \ifdim\v@lX<\z@\advance\v@lX-\v@leur\else\advance\v@lX\v@leur\fi%
    \ifdim\v@lY<\z@\advance\v@lY-\v@leur\else\advance\v@lY\v@leur\fi\fi%
    (\@ffichnb{#1}{\repdecn@mb{\v@lX}},\ifmmode\else\thinspace\fi%
    \@ffichnb{#1}{\repdecn@mb{\v@lY}})}}
\ctr@ld@f\def\@ffichnb#1#2{{\def\@@ffich{\@ffich#1(}\edef\n@mbre{#2}%
    \expandafter\@@ffich\n@mbre)}}
\ctr@ld@f\def\@ffich#1(#2.#3){{#2\ifnum#1>\z@.\fi\def\dig@ts{#3}\s@mme=\z@%
    \loop\ifnum\s@mme<#1\expandafter\@ffichdec\dig@ts:\advance\s@mme\@ne\repeat}}
\ctr@ld@f\def\@ffichdec#1#2:{\relax#1\def\dig@ts{#20}}
\ctr@ld@f\def\figcoordTD#1{{\v@lX=\ptT@unit@\v@lX\v@lY=\ptT@unit@\v@lY\v@lZ=\ptT@unit@\v@lZ%
    \ifr@undcoord\ifcase#1\v@leur=0.5pt\or\v@leur=0.05pt\or\v@leur=0.005pt%
    \or\v@leur=0.0005pt\else\v@leur=\z@\fi%
    \ifdim\v@lX<\z@\advance\v@lX-\v@leur\else\advance\v@lX\v@leur\fi%
    \ifdim\v@lY<\z@\advance\v@lY-\v@leur\else\advance\v@lY\v@leur\fi%
    \ifdim\v@lZ<\z@\advance\v@lZ-\v@leur\else\advance\v@lZ\v@leur\fi\fi%
    (\@ffichnb{#1}{\repdecn@mb{\v@lX}},\ifmmode\else\thinspace\fi%
     \@ffichnb{#1}{\repdecn@mb{\v@lY}},\ifmmode\else\thinspace\fi%
     \@ffichnb{#1}{\repdecn@mb{\v@lZ}})}}
\ctr@ld@f\def\figsetroundcoord#1{\expandafter\Figsetr@undcoord#1:\ignorespaces}
\ctr@ld@f\def\Figsetr@undcoord#1#2:{\if#1n\r@undcoordfalse\else\r@undcoordtrue\fi}
\ctr@ld@f\def\Figsetwr@te#1=#2|{\keln@mun#1|%
    \def\n@mref{m}\ifx\l@debut\n@mref\figsetmark{#2}\else
    \def\n@mref{p}\ifx\l@debut\n@mref\figsetptname{#2}\else
    \def\n@mref{r}\ifx\l@debut\n@mref\figsetroundcoord{#2}\else
    \W@rnmesAttr{figset write}{#1}\fi\fi\fi}
\ctr@ld@f\def\figsetmark#1{\c@nsymb={#1}\setbox\Gb@xSC=\hbox{\the\c@nsymb}\ignorespaces}
\ctr@ln@m\ptn@me
\ctr@ld@f\def\figsetptname#1{\def\ptn@me##1{#1}\ignorespaces}
\ctr@ld@f\def\FigWrit@L#1:#2(#3,#4){\ignorespaces\@keldist\v@leur{#3}\@keldist\delt@{#4}%
    \C@rp@r@m\def\list@num{#1}\@ecfor\p@int:=\list@num\do{\FigWrit@pt{\p@int}{#2}}}
\ctr@ld@f\def\FigWrit@pt#1#2{\FigWp@r@m{#1}{#2}\Vc@rrect\figWp@si%
    \ifdim\wd\Gb@xSC>\z@\b@undb@xSC{\v@lX}{\v@lY}\fi\figWBB@x}
\ctr@ld@f\def\FigWp@r@m#1#2{\Figg@tXY{#1}%
    \setbox\Gb@x=\hbox{\def\t@xt@{#2}\ifx\t@xt@\empty\Figg@tT{#1}\else#2\fi}\c@lprojSP}
\ctr@ld@f\let\Vc@rrect=\relax
\ctr@ld@f\let\C@rp@r@m=\relax
\ctr@ld@f\def\figwrite[#1]#2{{\ignorespaces\def\list@num{#1}\@ecfor\p@int:=\list@num\do{%
    \setbox\Gb@x=\hbox{\def\t@xt@{#2}\ifx\t@xt@\empty\Figg@tT{\p@int}\else#2\fi}%
    \Figwrit@{\p@int}}}\ignorespaces}
\ctr@ld@f\def\Figwrit@#1{\Figg@tXY{#1}\c@lprojSP%
    \rlap{\kern\v@lX\raise\v@lY\hbox{\unhcopy\Gb@x}}\v@leur=\v@lY%
    \advance\v@lY\ht\Gb@x\b@undb@x{\v@lX}{\v@lY}\advance\v@lX\wd\Gb@x%
    \v@lY=\v@leur\advance\v@lY-\dp\Gb@x\b@undb@x{\v@lX}{\v@lY}}
\ctr@ld@f\def\figwritec[#1]#2{{\ignorespaces\def\list@num{#1}%
    \@ecfor\p@int:=\list@num\do{\Figwrit@c{\p@int}{#2}}}\ignorespaces}
\ctr@ld@f\def\Figwrit@c#1#2{\FigWp@r@m{#1}{#2}%
    \rlap{\kern\v@lX\raise\v@lY\hbox{\rlap{\kern-.5\wd\Gb@x%
    \raise-.5\ht\Gb@x\hbox{\raise.5\dp\Gb@x\hbox{\unhcopy\Gb@x}}}}}%
    \v@leur=\ht\Gb@x\advance\v@leur\dp\Gb@x%
    \advance\v@lX-.5\wd\Gb@x\advance\v@lY-.5\v@leur\b@undb@x{\v@lX}{\v@lY}%
    \advance\v@lX\wd\Gb@x\advance\v@lY\v@leur\b@undb@x{\v@lX}{\v@lY}}
\ctr@ld@f\def\figwritep[#1]{{\ignorespaces\def\list@num{#1}\setbox\Gb@x=\hbox{\c@nterpt}%
    \@ecfor\p@int:=\list@num\do{\Figwrit@{\p@int}}}\ignorespaces}
\ctr@ld@f\def\figwritew#1:#2(#3){\figwritegcw#1:{#2}(#3,0pt)}
\ctr@ld@f\def\figwritee#1:#2(#3){\figwritegce#1:{#2}(#3,0pt)}
\ctr@ld@f\def\figwriten#1:#2(#3){{\def\Vc@rrect{\v@lZ=\v@leur\advance\v@lZ\dp\Gb@x}%
    \Figwrit@NS#1:{#2}(#3)}\ignorespaces}
\ctr@ld@f\def\figwrites#1:#2(#3){{\def\Vc@rrect{\v@lZ=-\v@leur\advance\v@lZ-\ht\Gb@x}%
    \Figwrit@NS#1:{#2}(#3)}\ignorespaces}
\ctr@ld@f\def\Figwrit@NS#1:#2(#3){\let\figWp@si=\FigWp@siNS\let\figWBB@x=\FigWBB@xNS%
    \FigWrit@L#1:{#2}(#3,0pt)}
\ctr@ld@f\def\FigWp@siNS{\rlap{\kern\v@lX\raise\v@lY\hbox{\rlap{\kern-.5\wd\Gb@x%
    \raise\v@lZ\hbox{\unhcopy\Gb@x}}\c@nterpt}}}
\ctr@ld@f\def\FigWBB@xNS{\advance\v@lY\v@lZ%
    \advance\v@lY-\dp\Gb@x\advance\v@lX-.5\wd\Gb@x\b@undb@x{\v@lX}{\v@lY}%
    \advance\v@lY\ht\Gb@x\advance\v@lY\dp\Gb@x%
    \advance\v@lX\wd\Gb@x\b@undb@x{\v@lX}{\v@lY}}
\ctr@ld@f\def\figwritenw#1:#2(#3){{\let\figWp@si=\FigWp@sigW\let\figWBB@x=\FigWBB@xgWE%
    \def\C@rp@r@m{\v@leur=\unssqrttw@\v@leur\delt@=\v@leur%
    \ifdim\delt@=\z@\delt@=\epsil@n\fi}\let@xte={-}\FigWrit@L#1:{#2}(#3,0pt)}\ignorespaces}
\ctr@ld@f\def\figwritesw#1:#2(#3){{\let\figWp@si=\FigWp@sigW\let\figWBB@x=\FigWBB@xgWE%
    \def\C@rp@r@m{\v@leur=\unssqrttw@\v@leur\delt@=-\v@leur%
    \ifdim\delt@=\z@\delt@=-\epsil@n\fi}\let@xte={-}\FigWrit@L#1:{#2}(#3,0pt)}\ignorespaces}
\ctr@ld@f\def\figwritene#1:#2(#3){{\let\figWp@si=\FigWp@sigE\let\figWBB@x=\FigWBB@xgWE%
    \def\C@rp@r@m{\v@leur=\unssqrttw@\v@leur\delt@=\v@leur%
    \ifdim\delt@=\z@\delt@=\epsil@n\fi}\let@xte={}\FigWrit@L#1:{#2}(#3,0pt)}\ignorespaces}
\ctr@ld@f\def\figwritese#1:#2(#3){{\let\figWp@si=\FigWp@sigE\let\figWBB@x=\FigWBB@xgWE%
    \def\C@rp@r@m{\v@leur=\unssqrttw@\v@leur\delt@=-\v@leur%
    \ifdim\delt@=\z@\delt@=-\epsil@n\fi}\let@xte={}\FigWrit@L#1:{#2}(#3,0pt)}\ignorespaces}
\ctr@ld@f\def\figwritegw#1:#2(#3,#4){{\let\figWp@si=\FigWp@sigW\let\figWBB@x=\FigWBB@xgWE%
    \let@xte={-}\FigWrit@L#1:{#2}(#3,#4)}\ignorespaces}
\ctr@ld@f\def\figwritege#1:#2(#3,#4){{\let\figWp@si=\FigWp@sigE\let\figWBB@x=\FigWBB@xgWE%
    \let@xte={}\FigWrit@L#1:{#2}(#3,#4)}\ignorespaces}
\ctr@ld@f\def\FigWp@sigW{\v@lXa=\z@\v@lYa=\ht\Gb@x\advance\v@lYa\dp\Gb@x%
    \ifdim\delt@>\z@\relax%
    \rlap{\kern\v@lX\raise\v@lY\hbox{\rlap{\kern-\wd\Gb@x\kern-\v@leur%
          \raise\delt@\hbox{\raise\dp\Gb@x\hbox{\unhcopy\Gb@x}}}\c@nterpt}}%
    \else\ifdim\delt@<\z@\relax\v@lYa=-\v@lYa%
    \rlap{\kern\v@lX\raise\v@lY\hbox{\rlap{\kern-\wd\Gb@x\kern-\v@leur%
          \raise\delt@\hbox{\raise-\ht\Gb@x\hbox{\unhcopy\Gb@x}}}\c@nterpt}}%
    \else\v@lXa=-.5\v@lYa%
    \rlap{\kern\v@lX\raise\v@lY\hbox{\rlap{\kern-\wd\Gb@x\kern-\v@leur%
          \raise-.5\ht\Gb@x\hbox{\raise.5\dp\Gb@x\hbox{\unhcopy\Gb@x}}}\c@nterpt}}%
    \fi\fi}
\ctr@ld@f\def\FigWp@sigE{\v@lXa=\z@\v@lYa=\ht\Gb@x\advance\v@lYa\dp\Gb@x%
    \ifdim\delt@>\z@\relax%
    \rlap{\kern\v@lX\raise\v@lY\hbox{\c@nterpt\kern\v@leur%
          \raise\delt@\hbox{\raise\dp\Gb@x\hbox{\unhcopy\Gb@x}}}}%
    \else\ifdim\delt@<\z@\relax\v@lYa=-\v@lYa%
    \rlap{\kern\v@lX\raise\v@lY\hbox{\c@nterpt\kern\v@leur%
          \raise\delt@\hbox{\raise-\ht\Gb@x\hbox{\unhcopy\Gb@x}}}}%
    \else\v@lXa=-.5\v@lYa%
    \rlap{\kern\v@lX\raise\v@lY\hbox{\c@nterpt\kern\v@leur%
          \raise-.5\ht\Gb@x\hbox{\raise.5\dp\Gb@x\hbox{\unhcopy\Gb@x}}}}%
    \fi\fi}
\ctr@ld@f\def\FigWBB@xgWE{\advance\v@lY\delt@%
    \advance\v@lX\the\let@xte\v@leur\advance\v@lY\v@lXa\b@undb@x{\v@lX}{\v@lY}%
    \advance\v@lX\the\let@xte\wd\Gb@x\advance\v@lY\v@lYa\b@undb@x{\v@lX}{\v@lY}}
\ctr@ld@f\def\figwritegcw#1:#2(#3,#4){{\let\figWp@si=\FigWp@sigcW\let\figWBB@x=\FigWBB@xgcWE%
    \let@xte={-}\FigWrit@L#1:{#2}(#3,#4)}\ignorespaces}
\ctr@ld@f\def\figwritegce#1:#2(#3,#4){{\let\figWp@si=\FigWp@sigcE\let\figWBB@x=\FigWBB@xgcWE%
    \let@xte={}\FigWrit@L#1:{#2}(#3,#4)}\ignorespaces}
\ctr@ld@f\def\FigWp@sigcW{\rlap{\kern\v@lX\raise\v@lY\hbox{\rlap{\kern-\wd\Gb@x\kern-\v@leur%
     \raise-.5\ht\Gb@x\hbox{\raise\delt@\hbox{\raise.5\dp\Gb@x\hbox{\unhcopy\Gb@x}}}}%
     \c@nterpt}}}
\ctr@ld@f\def\FigWp@sigcE{\rlap{\kern\v@lX\raise\v@lY\hbox{\c@nterpt\kern\v@leur%
    \raise-.5\ht\Gb@x\hbox{\raise\delt@\hbox{\raise.5\dp\Gb@x\hbox{\unhcopy\Gb@x}}}}}}
\ctr@ld@f\def\FigWBB@xgcWE{\v@lZ=\ht\Gb@x\advance\v@lZ\dp\Gb@x%
    \advance\v@lX\the\let@xte\v@leur\advance\v@lY\delt@\advance\v@lY.5\v@lZ%
    \b@undb@x{\v@lX}{\v@lY}%
    \advance\v@lX\the\let@xte\wd\Gb@x\advance\v@lY-\v@lZ\b@undb@x{\v@lX}{\v@lY}}
\ctr@ld@f\def\figwritebn#1:#2(#3){{\def\Vc@rrect{\v@lZ=\v@leur}\Figwrit@NS#1:{#2}(#3)}\ignorespaces}
\ctr@ld@f\def\figwritebs#1:#2(#3){{\def\Vc@rrect{\v@lZ=-\v@leur}\Figwrit@NS#1:{#2}(#3)}\ignorespaces}
\ctr@ld@f\def\figwritebw#1:#2(#3){{\let\figWp@si=\FigWp@sibW\let\figWBB@x=\FigWBB@xbWE%
    \let@xte={-}\FigWrit@L#1:{#2}(#3,0pt)}\ignorespaces}
\ctr@ld@f\def\figwritebe#1:#2(#3){{\let\figWp@si=\FigWp@sibE\let\figWBB@x=\FigWBB@xbWE%
    \let@xte={}\FigWrit@L#1:{#2}(#3,0pt)}\ignorespaces}
\ctr@ld@f\def\FigWp@sibW{\rlap{\kern\v@lX\raise\v@lY\hbox{\rlap{\kern-\wd\Gb@x\kern-\v@leur%
          \hbox{\unhcopy\Gb@x}}\c@nterpt}}}
\ctr@ld@f\def\FigWp@sibE{\rlap{\kern\v@lX\raise\v@lY\hbox{\c@nterpt\kern\v@leur%
          \hbox{\unhcopy\Gb@x}}}}
\ctr@ld@f\def\FigWBB@xbWE{\v@lZ=\ht\Gb@x\advance\v@lZ\dp\Gb@x%
    \advance\v@lX\the\let@xte\v@leur\advance\v@lY\ht\Gb@x\b@undb@x{\v@lX}{\v@lY}%
    \advance\v@lX\the\let@xte\wd\Gb@x\advance\v@lY-\v@lZ\b@undb@x{\v@lX}{\v@lY}}
\ctr@ln@w{newread}\frf@g  \ctr@ln@w{newwrite}\fwf@g
\ctr@ln@w{newif}\ifCUR@PS
\ctr@ln@w{newif}\ifGR@cri
\ctr@ln@w{newif}\ifUse@llipse
\ctr@ln@w{newif}\ifGRdebugm@de \GRdebugm@defalse 
\ctr@ln@w{newif}\ifPDFm@ke
\ifx\pdfliteral\undefined\else\ifnum\pdfoutput>\z@\PDFm@ketrue\fi\fi
\ctr@ld@f\def\initPDF@rDVI{%
\ifPDFm@ke
 \let\figscan=\figscan@E
 \let\newGr@FN=\newGr@FNPDF
 \ctr@ld@f\def\c@mcurveto{c}
 \ctr@ld@f\def\c@mfill{f}
 \ctr@ld@f\def\c@mgsave{q}
 \ctr@ld@f\def\c@mgrestore{Q}
 \ctr@ld@f\def\c@mlineto{l}
 \ctr@ld@f\def\c@mmoveto{m}
 \ctr@ld@f\def\c@msetgray{g}     \ctr@ld@f\def\c@msetgrayStroke{G}
 \ctr@ld@f\def\c@msetcmykcolor{k}\ctr@ld@f\def\c@msetcmykcolorStroke{K}
 \ctr@ld@f\def\c@msetrgbcolor{rg}\ctr@ld@f\def\c@msetrgbcolorStroke{RG}
 \ctr@ld@f\def\d@fprimarC@lor{\CUR@color\space\CUR@colorc@md%
               \space\CUR@color\space\CUR@colorc@mdStroke}
 \ctr@ld@f\def\c@msetdash{d}
 \ctr@ld@f\def\c@msetlinejoin{j}
 \ctr@ld@f\def\c@msetlinewidth{w}
 \ctr@ld@f\def\f@gclosestroke{\immediate\write\fwf@g{s}}
 \ctr@ld@f\def\f@gfill{\immediate\write\fwf@g{\fillc@md}}
 \ctr@ld@f\def\f@gnewpath{}
 \ctr@ld@f\def\f@gstroke{\immediate\write\fwf@g{S}}
\else
 \let\figinsertE=\figinsert
 \let\newGr@FN=\newGr@FNDVI
 \ctr@ld@f\def\c@mcurveto{curveto}
 \ctr@ld@f\def\c@mfill{fill}
 \ctr@ld@f\def\c@mgsave{gsave}
 \ctr@ld@f\def\c@mgrestore{grestore}
 \ctr@ld@f\def\c@mlineto{lineto}
 \ctr@ld@f\def\c@mmoveto{moveto}
 \ctr@ld@f\def\c@msetgray{setgray}          \ctr@ld@f\def\c@msetgrayStroke{}
 \ctr@ld@f\def\c@msetcmykcolor{setcmykcolor}\ctr@ld@f\def\c@msetcmykcolorStroke{}
 \ctr@ld@f\def\c@msetrgbcolor{setrgbcolor}  \ctr@ld@f\def\c@msetrgbcolorStroke{}
 \ctr@ld@f\def\d@fprimarC@lor{\CUR@color\space\CUR@colorc@md}
 \ctr@ld@f\def\c@msetdash{setdash}
 \ctr@ld@f\def\c@msetlinejoin{setlinejoin}
 \ctr@ld@f\def\c@msetlinewidth{setlinewidth}
 \ctr@ld@f\def\f@gclosestroke{\immediate\write\fwf@g{closepath\space stroke}}
 \ctr@ld@f\def\f@gfill{\immediate\write\fwf@g{\fillc@md}}
 \ctr@ld@f\def\f@gnewpath{\immediate\write\fwf@g{newpath}}
 \ctr@ld@f\def\f@gstroke{\immediate\write\fwf@g{stroke}}
\fi}
\ctr@ld@f\def\c@pypsfile#1#2{\c@pyfil@{\immediate\write#1}{#2}}
\ctr@ld@f\def\Figinclud@PDF#1#2{\openin\frf@g=#1\pdfliteral{q #2 0 0 #2 0 0 cm}%
    \c@pyfil@{\pdfliteral}{\frf@g}\pdfliteral{Q}\closein\frf@g}
\ctr@ln@w{newif}\ifmored@ta
\ctr@ln@m\bl@nkline
\ctr@ld@f\def\c@pyfil@#1#2{\def\bl@nkline{\par}{\catcode`\%=12
    \loop\ifeof#2\mored@tafalse\else\mored@tatrue\immediate\read#2 to\tr@c
    \ifx\tr@c\bl@nkline\else#1{\tr@c}\fi\fi\ifmored@ta\repeat}}
\ctr@ld@f\def\keln@mun#1#2|{\def\l@debut{#1}\def\l@suite{#2}}
\ctr@ld@f\def\keln@mde#1#2#3|{\def\l@debut{#1#2}\def\l@suite{#3}}
\ctr@ld@f\def\keln@mtr#1#2#3#4|{\def\l@debut{#1#2#3}\def\l@suite{#4}}
\ctr@ld@f\def\keln@mqu#1#2#3#4#5|{\def\l@debut{#1#2#3#4}\def\l@suite{#5}}
\ctr@ld@f\let\@psffilein=\frf@g 
\ctr@ln@w{newif}\if@psffileok    
\ctr@ln@w{newif}\if@psfbbfound   
\ctr@ln@w{newif}\if@psfverbose   
\@psfverbosetrue
\ctr@ln@m\@psfllx \ctr@ln@m\@psflly
\ctr@ln@m\@psfurx \ctr@ln@m\@psfury
\ctr@ln@m\resetcolonc@tcode
\ctr@ld@f\def\@psfgetbb#1{\global\@psfbbfoundfalse%
\global\def\@psfllx{0}\global\def\@psflly{0}%
\global\def\@psfurx{30}\global\def\@psfury{30}%
\openin\@psffilein=#1\relax
\ifeof\@psffilein\errmessage{I couldn't open #1, will ignore it}\else
   \edef\resetcolonc@tcode{\catcode`\noexpand\:\the\catcode`\:\relax}%
   {\@psffileoktrue \chardef\other=12
    \def\do##1{\catcode`##1=\other}\dospecials \catcode`\ =10 \resetcolonc@tcode
    \loop
       \read\@psffilein to \@psffileline
       \ifeof\@psffilein\@psffileokfalse\else
          \expandafter\@psfaux\@psffileline:. \\%
       \fi
   \if@psffileok\repeat
   \if@psfbbfound\else
    \if@psfverbose\message{No bounding box comment in #1; using defaults}\fi\fi
   }\closein\@psffilein\fi}%
\ctr@ln@m\@psfbblit
\ctr@ln@m\@psfpercent
{\catcode`\%=12 \global\let\@psfpercent=
\ctr@ln@m\@psfaux
\long\def\@psfaux#1#2:#3\\{\ifx#1\@psfpercent
   \def\testit{#2}\ifx\testit\@psfbblit
      \@psfgrab #3 . . . \\%
      \@psffileokfalse
      \global\@psfbbfoundtrue
   \fi\else\ifx#1\par\else\@psffileokfalse\fi\fi}%
\ctr@ld@f\def\@psfempty{}%
\ctr@ld@f\def\@psfgrab #1 #2 #3 #4 #5\\{%
\global\def\@psfllx{#1}\ifx\@psfllx\@psfempty
      \@psfgrab #2 #3 #4 #5 .\\\else
   \global\def\@psflly{#2}%
   \global\def\@psfurx{#3}\global\def\@psfury{#4}\fi}%
\ctr@ld@f\def\PSwrit@cmd#1#2#3{{\Figg@tXY{#1}\c@lprojSP\b@undb@x{\v@lX}{\v@lY}%
    \v@lX=\ptT@ptps\v@lX\v@lY=\ptT@ptps\v@lY%
    \immediate\write#3{\repdecn@mb{\v@lX}\space\repdecn@mb{\v@lY}\space#2}}}
\ctr@ld@f\def\PSwrit@cmdS#1#2#3#4#5{{\Figg@tXY{#1}\c@lprojSP\b@undb@x{\v@lX}{\v@lY}%
    \global\result@t=\v@lX\global\result@@t=\v@lY%
    \v@lX=\ptT@ptps\v@lX\v@lY=\ptT@ptps\v@lY%
    \immediate\write#3{\repdecn@mb{\v@lX}\space\repdecn@mb{\v@lY}\space#2}}%
    \edef#4{\the\result@t}\edef#5{\the\result@@t}}
\ctr@ld@f\def\update@ttr#1#2#3{\Figdisc@rdLTS{#3}{\n@mref}%
    \ifx\n@mref\D@FTref#2{#1}\else#2{#3}\fi}
\ctr@ld@f\def\D@FTref{default}
\ctr@ld@f\def\W@rnmesAttr#1#2{%
    \immediate\write16{*** Unknown attribute: \BS@ #1(..., #2=...)}}
\ctr@ld@f\def\W@rnmeskwd#1#2{%
    \immediate\write16{*** Unknown keyword #2 in \BS@ #1}}
\ctr@ld@f\def\W@rnmesIgn#1{\immediate\write16{*** \BS@ #1 is ignored inside a
     \BS@ figdrawbegin-\BS@ figdrawend block.}}
\ctr@ld@f\def\Psset@lti#1=#2|{\keln@mtr#1|%
    \def\n@mref{blc}\ifx\l@debut\n@mref\update@ttr\D@FTref\P@setblcolor{#2}\else
    \def\n@mref{bld}\ifx\l@debut\n@mref\update@ttr\D@FTref\P@setbldash{#2}\else
    \def\n@mref{blw}\ifx\l@debut\n@mref\update@ttr\D@FTref\P@setblwidth{#2}\else
    \def\n@mref{sqc}\ifx\l@debut\n@mref\update@ttr\D@FTref\P@setsqcolor{#2}\else
    \def\n@mref{sqd}\ifx\l@debut\n@mref\update@ttr\D@FTref\P@setsqdash{#2}\else
    \def\n@mref{sqw}\ifx\l@debut\n@mref\update@ttr\D@FTref\P@setsqwidth{#2}\else
    \W@rnmesAttr{figset altitude}{#1}\fi\fi\fi\fi\fi\fi}
\ctr@ln@m\DDV@blcolor
\ctr@ld@f\def\P@setblcolor#1{\edef\DDV@blcolor{#1}}
\ctr@ln@m\DDV@bldash
\ctr@ld@f\def\P@setbldash#1{\edef\DDV@bldash{#1}}
\ctr@ln@m\DDV@blwidth
\ctr@ld@f\def\P@setblwidth#1{\edef\DDV@blwidth{#1}}
\ctr@ln@m\DDV@sqcolor
\ctr@ld@f\def\P@setsqcolor#1{\edef\DDV@sqcolor{#1}}
\ctr@ln@m\DDV@sqdash
\ctr@ld@f\def\P@setsqdash#1{\edef\DDV@sqdash{#1}}
\ctr@ln@m\DDV@sqwidth
\ctr@ld@f\def\P@setsqwidth#1{\edef\DDV@sqwidth{#1}}
\ctr@ld@f\def\figdrawaltitude#1[#2,#3,#4]{{\ifCUR@PS\ifGR@cri%
    \PSc@mment{altitude Square Dim=#1, Triangle=[#2 / #3,#4]}%
    \s@uvc@ntr@l\et@tpsaltitude\resetc@ntr@l{2}\figptorthoprojline-5:=#2/#3,#4/%
    \figvectP -1[#3,#4]\n@rminf{\v@leur}{-1}\vecunit@{-3}{-1}%
    \figvectP -1[-5,#3]\n@rminf{\v@lmin}{-1}\figvectP -2[-5,#4]\n@rminf{\v@lmax}{-2}%
    \ifdim\v@lmin<\v@lmax\s@mme=#3\else\v@lmax=\v@lmin\s@mme=#4\fi%
    \figvectP -4[-5,#2]\vecunit@{-4}{-4}\delt@=#1\unit@%
    \edef\t@ille{\repdecn@mb{\delt@}}\figpttra-1:=-5/\t@ille,-3/%
    \figptstra-3=-5,-1/\t@ille,-4/\figdrawline[#2,-5]%
    \Pss@tspecifSt{color=\DDV@sqcolor,dash=\DDV@sqdash,width=\DDV@sqwidth}%
    \figdrawline[-1,-2,-3]%
    \Psrest@reSt{color=\DDV@sqcolor,dash=\DDV@sqdash,width=\DDV@sqwidth}%
    \ifdim\v@leur<\v@lmax%
    \Pss@tspecifSt{color=\DDV@blcolor,dash=\DDV@bldash,width=\DDV@blwidth}%
    \figdrawline[-5,\the\s@mme]%
    \Psrest@reSt{color=\DDV@blcolor,dash=\DDV@bldash,width=\DDV@blwidth}%
    \fi\PSc@mment{End altitude}\resetc@ntr@l\et@tpsaltitude\fi\fi}}
\ctr@ld@f\def\Ps@rcerc#1;#2(#3,#4){\ellBB@x#1;#2,#2(#3,#4,0)%
    \f@gnewpath{\delt@=#2\unit@\delt@=\ptT@ptps\delt@%
    \BdingB@xfalse%
    \PSwrit@cmd{#1}{\repdecn@mb{\delt@}\space #3\space #4\space arc}{\fwf@g}}}
\ctr@ln@m\figdrawarccirc
\ctr@ld@f\def\Q@arccircDD#1;#2(#3,#4){\ifCUR@PS\ifGR@cri%
    \PSc@mment{arccircDD Center=#1 ; Radius=#2 (Ang1=#3, Ang2=#4)}%
    \iffillm@de\Ps@rcerc#1;#2(#3,#4)%
    \f@gfill%
    \else\Ps@rcerc#1;#2(#3,#4)\f@gstroke\fi%
    \PSc@mment{End arccircDD}\fi\fi}
\ctr@ld@f\def\Q@arccircTD#1,#2,#3;#4(#5,#6){{\ifCUR@PS\ifGR@cri\s@uvc@ntr@l\et@tpsarccircTD%
    \PSc@mment{arccircTD Center=#1,P1=#2,P2=#3 ; Radius=#4 (Ang1=#5, Ang2=#6)}%
    \setc@ntr@l{2}\c@lExtAxes#1,#2,#3(#4)\Q@arcellPATD#1,-4,-5(#5,#6)%
    \PSc@mment{End arccircTD}\resetc@ntr@l\et@tpsarccircTD\fi\fi}}
\ctr@ld@f\def\c@lExtAxes#1,#2,#3(#4){%
    \figvectPTD-5[#1,#2]\vecunit@{-5}{-5}\figvectNTD-4[#1,#2,#3]\vecunit@{-4}{-4}%
    \figvectNVTD-3[-4,-5]\delt@=#4\unit@\edef\r@yon{\repdecn@mb{\delt@}}%
    \figpttra-4:=#1/\r@yon,-5/\figpttra-5:=#1/\r@yon,-3/}
\ctr@ln@m\figdrawarccircP
\ctr@ld@f\def\Q@arccircPDD#1;#2[#3,#4]{{\ifCUR@PS\ifGR@cri\s@uvc@ntr@l\et@tpsarccircPDD%
    \PSc@mment{arccircPDD Center=#1; Radius=#2, [P1=#3, P2=#4]}%
    \Ps@ngleparam#1;#2[#3,#4]\ifdim\v@lmin>\v@lmax\advance\v@lmax\DePI@deg\fi%
    \edef\@ngdeb{\repdecn@mb{\v@lmin}}\edef\@ngfin{\repdecn@mb{\v@lmax}}%
    \figdrawarccirc#1;\r@dius(\@ngdeb,\@ngfin)%
    \PSc@mment{End arccircPDD}\resetc@ntr@l\et@tpsarccircPDD\fi\fi}}
\ctr@ld@f\def\Q@arccircPTD#1;#2[#3,#4,#5]{{\ifCUR@PS\ifGR@cri\s@uvc@ntr@l\et@tpsarccircPTD%
    \PSc@mment{arccircPTD Center=#1; Radius=#2, [P1=#3, P2=#4, P3=#5]}%
    \setc@ntr@l{2}\c@lExtAxes#1,#3,#5(#2)\figdrawarcellPP#1,-4,-5[#3,#4]%
    \PSc@mment{End arccircPTD}\resetc@ntr@l\et@tpsarccircPTD\fi\fi}}
\ctr@ld@f\def\Ps@ngleparam#1;#2[#3,#4]{\setc@ntr@l{2}%
    \figvectPDD-1[#1,#3]\vecunit@{-1}{-1}\Figg@tXY{-1}\arct@n\v@lmin(\v@lX,\v@lY)%
    \figvectPDD-2[#1,#4]\vecunit@{-2}{-2}\Figg@tXY{-2}\arct@n\v@lmax(\v@lX,\v@lY)%
    \v@lmin=\rdT@deg\v@lmin\v@lmax=\rdT@deg\v@lmax%
    \v@leur=#2pt\maxim@m{\mili@u}{-\v@leur}{\v@leur}%
    \edef\r@dius{\repdecn@mb{\mili@u}}}
\ctr@ld@f\def\Ps@rcercBz#1;#2(#3,#4){\Ps@rellBz#1;#2,#2(#3,#4,0)}
\ctr@ld@f\def\Ps@rellBz#1;#2,#3(#4,#5,#6){%
    \ellBB@x#1;#2,#3(#4,#5,#6)\BdingB@xfalse%
    \c@lNbarcs{#4}{#5}\v@leur=#4pt\setc@ntr@l{2}\figptell-13::#1;#2,#3(#4,#6)%
    \f@gnewpath\PSwrit@cmd{-13}{\c@mmoveto}{\fwf@g}%
    \s@mme=\z@\bcl@rellBz#1;#2,#3(#6)\BdingB@xtrue}
\ctr@ld@f\def\bcl@rellBz#1;#2,#3(#4){\relax%
    \ifnum\s@mme<\p@rtent\advance\s@mme\@ne%
    \advance\v@leur\delt@\edef\@ngle{\repdecn@mb\v@leur}\figptell-14::#1;#2,#3(\@ngle,#4)%
    \advance\v@leur\delt@\edef\@ngle{\repdecn@mb\v@leur}\figptell-15::#1;#2,#3(\@ngle,#4)%
    \advance\v@leur\delt@\edef\@ngle{\repdecn@mb\v@leur}\figptell-16::#1;#2,#3(\@ngle,#4)%
    \figptscontrolDD-18[-13,-14,-15,-16]%
    \PSwrit@cmd{-18}{}{\fwf@g}\PSwrit@cmd{-17}{}{\fwf@g}%
    \PSwrit@cmd{-16}{\c@mcurveto}{\fwf@g}%
    \figptcopyDD-13:/-16/\bcl@rellBz#1;#2,#3(#4)\fi}
\ctr@ld@f\def\Ps@rell#1;#2,#3(#4,#5,#6){\ellBB@x#1;#2,#3(#4,#5,#6)%
    \f@gnewpath{\v@lmin=#2\unit@\v@lmin=\ptT@ptps\v@lmin%
    \v@lmax=#3\unit@\v@lmax=\ptT@ptps\v@lmax\BdingB@xfalse%
    \PSwrit@cmd{#1}%
    {#6\space\repdecn@mb{\v@lmin}\space\repdecn@mb{\v@lmax}\space #4\space #5\space ellipse}{\fwf@g}}%
    \global\Use@llipsetrue}
\ctr@ln@m\figdrawarcell
\ctr@ld@f\def\Q@arcellDD#1;#2,#3(#4,#5,#6){{\ifCUR@PS\ifGR@cri%
    \PSc@mment{arcellDD Center=#1 ; XRad=#2, YRad=#3 (Ang1=#4, Ang2=#5, Inclination=#6)}%
    \iffillm@de\Ps@rell#1;#2,#3(#4,#5,#6)%
    \f@gfill%
    \else\Ps@rell#1;#2,#3(#4,#5,#6)\f@gstroke\fi%
    \PSc@mment{End arcellDD}\fi\fi}}
\ctr@ld@f\def\Q@arcellTD#1;#2,#3(#4,#5,#6){{\ifCUR@PS\ifGR@cri\s@uvc@ntr@l\et@tpsarcellTD%
    \PSc@mment{arcellTD Center=#1 ; XRad=#2, YRad=#3 (Ang1=#4, Ang2=#5, Inclination=#6)}%
    \setc@ntr@l{2}\figpttraC -8:=#1/#2,0,0/\figpttraC -7:=#1/0,#3,0/%
    \figvectC -4(0,0,1)\figptsrot -8=-8,-7/#1,#6,-4/\Q@arcellPATD#1,-8,-7(#4,#5)%
    \PSc@mment{End arcellTD}\resetc@ntr@l\et@tpsarcellTD\fi\fi}}
\ctr@ln@m\figdrawarcellPA
\ctr@ld@f\def\Q@arcellPADD#1,#2,#3(#4,#5){{\ifCUR@PS\ifGR@cri\s@uvc@ntr@l\et@tpsarcellPADD%
    \PSc@mment{arcellPADD Center=#1,PtAxis1=#2,PtAxis2=#3 (Ang1=#4, Ang2=#5)}%
    \setc@ntr@l{2}\figvectPDD-1[#1,#2]\vecunit@DD{-1}{-1}\v@lX=\ptT@unit@\result@t%
    \edef\XR@d{\repdecn@mb{\v@lX}}\Figg@tXY{-1}\arct@n\v@lmin(\v@lX,\v@lY)%
    \v@lmin=\rdT@deg\v@lmin\edef\Inclin@{\repdecn@mb{\v@lmin}}%
    \figgetdist\YR@d[#1,#3]\Q@arcellDD#1;\XR@d,\YR@d(#4,#5,\Inclin@)%
    \PSc@mment{End arcellPADD}\resetc@ntr@l\et@tpsarcellPADD\fi\fi}}
\ctr@ld@f\def\Q@arcellPATD#1,#2,#3(#4,#5){{\ifCUR@PS\ifGR@cri\s@uvc@ntr@l\et@tpsarcellPATD%
    \PSc@mment{arcellPATD Center=#1,PtAxis1=#2,PtAxis2=#3 (Ang1=#4, Ang2=#5)}%
    \iffillm@de\Ps@rellPATD#1,#2,#3(#4,#5)%
    \f@gfill%
    \else\Ps@rellPATD#1,#2,#3(#4,#5)\f@gstroke\fi%
    \PSc@mment{End arcellPATD}\resetc@ntr@l\et@tpsarcellPATD\fi\fi}}
\ctr@ld@f\def\Ps@rellPATD#1,#2,#3(#4,#5){\let\c@lprojSP=\relax%
    \setc@ntr@l{2}\figvectPTD-1[#1,#2]\figvectPTD-2[#1,#3]\c@lNbarcs{#4}{#5}%
    \v@leur=#4pt\c@lptellP{#1}{-1}{-2}\Figptpr@j-5:/-3/%
    \f@gnewpath\PSwrit@cmdS{-5}{\c@mmoveto}{\fwf@g}{\X@un}{\Y@un}%
    \edef\C@nt@r{#1}\s@mme=\z@\bcl@rellPATD}
\ctr@ld@f\def\bcl@rellPATD{\relax%
    \ifnum\s@mme<\p@rtent\advance\s@mme\@ne%
    \advance\v@leur\delt@\c@lptellP{\C@nt@r}{-1}{-2}\Figptpr@j-4:/-3/%
    \advance\v@leur\delt@\c@lptellP{\C@nt@r}{-1}{-2}\Figptpr@j-6:/-3/%
    \advance\v@leur\delt@\c@lptellP{\C@nt@r}{-1}{-2}\Figptpr@j-3:/-3/%
    \v@lX=\z@\v@lY=\z@\Figtr@nptDD{-5}{-5}\Figtr@nptDD{2}{-3}%
    \divide\v@lX\@vi\divide\v@lY\@vi%
    \Figtr@nptDD{3}{-4}\Figtr@nptDD{-1.5}{-6}\v@lmin=\v@lX\v@lmax=\v@lY%
    \v@lX=\z@\v@lY=\z@\Figtr@nptDD{2}{-5}\Figtr@nptDD{-5}{-3}%
    \divide\v@lX\@vi\divide\v@lY\@vi\Figtr@nptDD{-1.5}{-4}\Figtr@nptDD{3}{-6}%
    \BdingB@xfalse%
    \Figp@intregDD-4:(\v@lmin,\v@lmax)\PSwrit@cmdS{-4}{}{\fwf@g}{\X@de}{\Y@de}%
    \Figp@intregDD-4:(\v@lX,\v@lY)\PSwrit@cmdS{-4}{}{\fwf@g}{\X@tr}{\Y@tr}%
    \BdingB@xtrue\PSwrit@cmdS{-3}{\c@mcurveto}{\fwf@g}{\X@qu}{\Y@qu}%
    \B@zierBB@x{1}{\Y@un}(\X@un,\X@de,\X@tr,\X@qu)%
    \B@zierBB@x{2}{\X@un}(\Y@un,\Y@de,\Y@tr,\Y@qu)%
    \edef\X@un{\X@qu}\edef\Y@un{\Y@qu}\figptcopyDD-5:/-3/\bcl@rellPATD\fi}
\ctr@ld@f\def\c@lNbarcs#1#2{%
    \delt@=#2pt\advance\delt@-#1pt\maxim@m{\v@lmax}{\delt@}{-\delt@}%
    \v@leur=\v@lmax\divide\v@leur45 \p@rtentiere{\p@rtent}{\v@leur}\advance\p@rtent\@ne%
    \s@mme=\p@rtent\multiply\s@mme\thr@@\divide\delt@\s@mme}
\ctr@ld@f\def\figdrawarcellPP#1,#2,#3[#4,#5]{{\ifCUR@PS\ifGR@cri\s@uvc@ntr@l\et@tpsarcellPP%
    \PSc@mment{arcellPP Center=#1,PtAxis1=#2,PtAxis2=#3 [Point1=#4, Point2=#5]}%
    \setc@ntr@l{2}\figvectP-2[#1,#3]\vecunit@{-2}{-2}\v@lmin=\result@t%
    \invers@{\v@lmax}{\v@lmin}%
    \figvectP-1[#1,#2]\vecunit@{-1}{-1}\v@leur=\result@t%
    \v@leur=\repdecn@mb{\v@lmax}\v@leur\edef\AsB@{\repdecn@mb{\v@leur}}
    \c@lAngle{#1}{#4}{\v@lmin}\edef\@ngdeb{\repdecn@mb{\v@lmin}}%
    \c@lAngle{#1}{#5}{\v@lmax}\ifdim\v@lmin>\v@lmax\advance\v@lmax\DePI@deg\fi%
    \edef\@ngfin{\repdecn@mb{\v@lmax}}\figdrawarcellPA#1,#2,#3(\@ngdeb,\@ngfin)%
    \PSc@mment{End arcellPP}\resetc@ntr@l\et@tpsarcellPP\fi\fi}}
\ctr@ld@f\def\c@lAngle#1#2#3{\figvectP-3[#1,#2]%
    \c@lproscal\delt@[-3,-1]\c@lproscal\v@leur[-3,-2]%
    \v@leur=\AsB@\v@leur\arct@n#3(\delt@,\v@leur)#3=\rdT@deg#3}
\ctr@ln@w{newif}\if@rrowratio\@rrowratiotrue
\ctr@ln@w{newif}\if@rrowhfill
\ctr@ln@w{newif}\if@rrowhout
\ctr@ld@f\def\Psset@rrowhe@d#1=#2|{\keln@mun#1|%
    \def\n@mref{a}\ifx\l@debut\n@mref\update@ttr\D@FTarrowheadangle\Q@s@tarrowheadangle{#2}\else
    \def\n@mref{f}\ifx\l@debut\n@mref\update@ttr\D@FTarrowheadfill\Q@s@tarrowheadfill{#2}\else
    \def\n@mref{l}\ifx\l@debut\n@mref\update@ttr\D@FTarrowheadlength\Q@s@tarrowheadlength{#2}\else
    \def\n@mref{o}\ifx\l@debut\n@mref\update@ttr\D@FTarrowheadout\Q@s@tarrowheadout{#2}\else
    \def\n@mref{r}\ifx\l@debut\n@mref\update@ttr\D@FTarrowheadratio\Q@s@tarrowheadratio{#2}\else
    \W@rnmesAttr{figset arrowhead}{#1}\fi\fi\fi\fi\fi}
\ctr@ln@m\@rrowheadangle
\ctr@ln@m\C@AHANG \ctr@ln@m\S@AHANG \ctr@ln@m\UNSS@N
\ctr@ld@f\def\Q@s@tarrowheadangle#1{\edef\@rrowheadangle{#1}{\c@ssin{\C@}{\S@}{#1}%
    \xdef\C@AHANG{\C@}\xdef\S@AHANG{\S@}\v@lmax=\S@ pt%
    \invers@{\v@leur}{\v@lmax}\maxim@m{\v@leur}{\v@leur}{-\v@leur}%
    \xdef\UNSS@N{\the\v@leur}}}
\ctr@ld@f\def\Q@s@tarrowheadfill#1{\expandafter\set@rrowhfill#1:}
\ctr@ld@f\def\set@rrowhfill#1#2:{\if#1n\@rrowhfillfalse\else\@rrowhfilltrue\fi}
\ctr@ld@f\def\Q@s@tarrowheadout#1{\expandafter\set@rrowhout#1:}
\ctr@ld@f\def\set@rrowhout#1#2:{\if#1n\@rrowhoutfalse\else\@rrowhouttrue\fi}
\ctr@ln@m\@rrowheadlength
\ctr@ld@f\def\Q@s@tarrowheadlength#1{\edef\@rrowheadlength{#1}\@rrowratiofalse}
\ctr@ln@m\@rrowheadratio
\ctr@ld@f\def\Q@s@tarrowheadratio#1{\edef\@rrowheadratio{#1}\@rrowratiotrue}
\ctr@ln@m\D@FTarrowheadlength
\ctr@ld@f\def\figresetarrowhead{%
    \Q@s@tarrowheadangle{\D@FTarrowheadangle}%
    \Q@s@tarrowheadfill{\D@FTarrowheadfill}%
    \Q@s@tarrowheadout{\D@FTarrowheadout}%
    \Q@s@tarrowheadratio{\D@FTarrowheadratio}%
    \d@fm@cdim\D@FTarrowheadlength{\D@FTh@rdahlength}
    \Q@s@tarrowheadlength{\D@FTarrowheadlength}}
\ctr@ld@f\def\D@FTarrowheadratio{0.1}
\ctr@ld@f\def\D@FTarrowheadangle{20}
\ctr@ld@f\def\D@FTarrowheadfill{no}
\ctr@ld@f\def\D@FTarrowheadout{no}
\ctr@ld@f\def\D@FTh@rdahlength{8pt}
\ctr@ln@m\figdrawarrow
\ctr@ld@f\def\Q@arrowDD[#1,#2]{{\ifCUR@PS\ifGR@cri\s@uvc@ntr@l\et@tpsarrow%
    \PSc@mment{arrowDD [Pt1,Pt2]=[#1,#2]}\Q@s@tfillmode{no}%
    \Q@arrowheadDD[#1,#2]\setc@ntr@l{2}\figdrawline[#1,-3]%
    \PSc@mment{End arrowDD}\resetc@ntr@l\et@tpsarrow\fi\fi}}
\ctr@ld@f\def\Q@arrowTD[#1,#2]{{\ifCUR@PS\ifGR@cri\s@uvc@ntr@l\et@tpsarrowTD%
    \PSc@mment{arrowTD [Pt1,Pt2]=[#1,#2]}\resetc@ntr@l{2}%
    \Figptpr@j-5:/#1/\Figptpr@j-6:/#2/\let\c@lprojSP=\relax\Q@arrowDD[-5,-6]%
    \PSc@mment{End arrowTD}\resetc@ntr@l\et@tpsarrowTD\fi\fi}}
\ctr@ln@m\figdrawarrowhead
\ctr@ld@f\def\Q@arrowheadDD[#1,#2]{{\ifCUR@PS\ifGR@cri\s@uvc@ntr@l\et@tpsarrowheadDD%
    \if@rrowhfill\def\@hangle{-\@rrowheadangle}\else\def\@hangle{\@rrowheadangle}\fi%
    \if@rrowratio%
    \if@rrowhout\def\@hratio{-\@rrowheadratio}\else\def\@hratio{\@rrowheadratio}\fi%
    \PSc@mment{arrowheadDD Ratio=\@hratio, Angle=\@hangle, [Pt1,Pt2]=[#1,#2]}%
    \Ps@rrowhead\@hratio,\@hangle[#1,#2]%
    \else%
    \if@rrowhout\def\@hlength{-\@rrowheadlength}\else\def\@hlength{\@rrowheadlength}\fi%
    \PSc@mment{arrowheadDD Length=\@hlength, Angle=\@hangle, [Pt1,Pt2]=[#1,#2]}%
    \Ps@rrowheadfd\@hlength,\@hangle[#1,#2]%
    \fi%
    \PSc@mment{End arrowheadDD}\resetc@ntr@l\et@tpsarrowheadDD\fi\fi}}
\ctr@ld@f\def\Q@arrowheadTD[#1,#2]{{\ifCUR@PS\ifGR@cri\s@uvc@ntr@l\et@tpsarrowheadTD%
    \PSc@mment{arrowheadTD [Pt1,Pt2]=[#1,#2]}\resetc@ntr@l{2}%
    \Figptpr@j-5:/#1/\Figptpr@j-6:/#2/\let\c@lprojSP=\relax\Q@arrowheadDD[-5,-6]%
    \PSc@mment{End arrowheadTD}\resetc@ntr@l\et@tpsarrowheadTD\fi\fi}}
\ctr@ld@f\def\Ps@rrowhead#1,#2[#3,#4]{\v@leur=#1\p@\maxim@m{\v@leur}{\v@leur}{-\v@leur}%
    \ifdim\v@leur>\Cepsil@n{
    \PSc@mment{@rrowhead Ratio=#1, Angle=#2, [Pt1,Pt2]=[#3,#4]}\v@leur=\UNSS@N%
    \v@leur=\CUR@width\v@leur\v@leur=\ptpsT@pt\v@leur\delt@=.5\v@leur
    \setc@ntr@l{2}\figvectPDD-3[#4,#3]%
    \Figg@tXY{-3}\v@lX=#1\v@lX\v@lY=#1\v@lY\Figv@ctCreg-3(\v@lX,\v@lY)%
    \vecunit@{-4}{-3}\mili@u=\result@t%
    \ifdim#2pt>\z@\v@lXa=-\C@AHANG\delt@%
     \edef\c@ef{\repdecn@mb{\v@lXa}}\figpttraDD-3:=-3/\c@ef,-4/\fi%
    \edef\c@ef{\repdecn@mb{\delt@}}%
    \v@lXa=\mili@u\v@lXa=\C@AHANG\v@lXa%
    \v@lYa=\ptpsT@pt\p@\v@lYa=\CUR@width\v@lYa\v@lYa=\sDcc@ngle\v@lYa%
    \advance\v@lXa-\v@lYa\gdef\sDcc@ngle{0}%
    \ifdim\v@lXa>\v@leur\edef\c@efendpt{\repdecn@mb{\v@leur}}%
    \else\edef\c@efendpt{\repdecn@mb{\v@lXa}}\fi%
    \Figg@tXY{-3}\v@lmin=\v@lX\v@lmax=\v@lY%
    \v@lXa=\C@AHANG\v@lmin\v@lYa=\S@AHANG\v@lmax\advance\v@lXa\v@lYa%
    \v@lYa=-\S@AHANG\v@lmin\v@lX=\C@AHANG\v@lmax\advance\v@lYa\v@lX%
    \setc@ntr@l{1}\Figg@tXY{#4}\advance\v@lX\v@lXa\advance\v@lY\v@lYa%
    \setc@ntr@l{2}\Figp@intregDD-2:(\v@lX,\v@lY)%
    \v@lXa=\C@AHANG\v@lmin\v@lYa=-\S@AHANG\v@lmax\advance\v@lXa\v@lYa%
    \v@lYa=\S@AHANG\v@lmin\v@lX=\C@AHANG\v@lmax\advance\v@lYa\v@lX%
    \setc@ntr@l{1}\Figg@tXY{#4}\advance\v@lX\v@lXa\advance\v@lY\v@lYa%
    \setc@ntr@l{2}\Figp@intregDD-1:(\v@lX,\v@lY)%
    \ifdim#2pt<\z@\fillm@detrue\figdrawline[-2,#4,-1]
    \else\figptstraDD-3=#4,-2,-1/\c@ef,-4/\s@uvdash{\typ@dash}\Q@s@tdash{\D@FTdash}%
    \figdrawline[-2,-3,-1]\Q@s@tdash{\typ@dash}\fi
    \ifdim#1pt>\z@\figpttraDD-3:=#4/\c@efendpt,-4/\else\figptcopyDD-3:/#4/\fi%
    \PSc@mment{End @rrowhead}}\fi}
\ctr@ld@f\def\sDcc@ngle{0}
\ctr@ld@f\def\Ps@rrowheadfd#1,#2[#3,#4]{{%
    \PSc@mment{@rrowheadfd Length=#1, Angle=#2, [Pt1,Pt2]=[#3,#4]}%
    \setc@ntr@l{2}\figvectPDD-1[#3,#4]\n@rmeucDD{\v@leur}{-1}\v@leur=\ptT@unit@\v@leur%
    \invers@{\v@leur}{\v@leur}\v@leur=#1\v@leur\edef\R@tio{\repdecn@mb{\v@leur}}%
    \Ps@rrowhead\R@tio,#2[#3,#4]\PSc@mment{End @rrowheadfd}}}
\ctr@ln@m\figdrawarrowBezier
\ctr@ld@f\def\Q@arrowBezierDD[#1,#2,#3,#4]{{\ifCUR@PS\ifGR@cri\s@uvc@ntr@l\et@tpsarrowBezierDD%
    \PSc@mment{arrowBezierDD Control points=#1,#2,#3,#4}\setc@ntr@l{2}%
    \if@rrowratio\c@larclengthDD\v@leur,10[#1,#2,#3,#4]\else\v@leur=\z@\fi%
    \Ps@rrowB@zDD\v@leur[#1,#2,#3,#4]%
    \PSc@mment{End arrowBezierDD}\resetc@ntr@l\et@tpsarrowBezierDD\fi\fi}}
\ctr@ld@f\def\Q@arrowBezierTD[#1,#2,#3,#4]{{\ifCUR@PS\ifGR@cri\s@uvc@ntr@l\et@tpsarrowBezierTD%
    \PSc@mment{arrowBezierTD Control points=#1,#2,#3,#4}\resetc@ntr@l{2}%
    \Figptpr@j-7:/#1/\Figptpr@j-8:/#2/\Figptpr@j-9:/#3/\Figptpr@j-10:/#4/%
    \let\c@lprojSP=\relax\ifnum\CUR@proj<\tw@\Q@arrowBezierDD[-7,-8,-9,-10]%
    \else\f@gnewpath\PSwrit@cmd{-7}{\c@mmoveto}{\fwf@g}%
    \if@rrowratio\c@larclengthDD\mili@u,10[-7,-8,-9,-10]\else\mili@u=\z@\fi%
    \p@rtent=\NBz@rcs\advance\p@rtent\m@ne\subB@zierTD\p@rtent[#1,#2,#3,#4]%
    \f@gstroke%
    \advance\v@lmin\p@rtent\delt@
    \v@leur=\v@lmin\advance\v@leur0.33333 \delt@\edef\unti@rs{\repdecn@mb{\v@leur}}%
    \v@leur=\v@lmin\advance\v@leur0.66666 \delt@\edef\deti@rs{\repdecn@mb{\v@leur}}%
    \figptcopyDD-8:/-10/\c@lsubBzarc\unti@rs,\deti@rs[#1,#2,#3,#4]%
    \figptcopyDD-8:/-4/\figptcopyDD-9:/-3/\Ps@rrowB@zDD\mili@u[-7,-8,-9,-10]\fi%
    \PSc@mment{End arrowBezierTD}\resetc@ntr@l\et@tpsarrowBezierTD\fi\fi}}
\ctr@ld@f\def\c@larclengthDD#1,#2[#3,#4,#5,#6]{{\p@rtent=#2\figptcopyDD-5:/#3/%
    \delt@=\p@\divide\delt@\p@rtent\c@rre=\z@\v@leur=\z@\s@mme=\z@%
    \loop\ifnum\s@mme<\p@rtent\advance\s@mme\@ne\advance\v@leur\delt@%
    \edef\T@{\repdecn@mb{\v@leur}}\figptBezierDD-6::\T@[#3,#4,#5,#6]%
    \figvectPDD-1[-5,-6]\n@rmeucDD{\mili@u}{-1}\advance\c@rre\mili@u%
    \figptcopyDD-5:/-6/\repeat\global\result@t=\ptT@unit@\c@rre}#1=\result@t}
\ctr@ld@f\def\Ps@rrowB@zDD#1[#2,#3,#4,#5]{{\Q@s@tfillmode{no}%
    \if@rrowratio\delt@=\@rrowheadratio#1\else\delt@=\@rrowheadlength pt\fi%
    \v@leur=\C@AHANG\delt@\edef\R@dius{\repdecn@mb{\v@leur}}%
    \FigptintercircB@zDD-5::0,\R@dius[#5,#4,#3,#2]%
    \Q@s@tarrowheadlength{\repdecn@mb{\delt@}}\Q@arrowheadDD[-5,#5]%
    \let\n@rmeuc=\n@rmeucDD\figgetdist\R@dius[#5,-3]%
    \FigptintercircB@zDD-6::0,\R@dius[#5,#4,#3,#2]%
    \figptBezierDD-5::0.33333[#5,#4,#3,#2]\figptBezierDD-3::0.66666[#5,#4,#3,#2]%
    \figptscontrolDD-5[-6,-5,-3,#2]\Q@BezierDD1[-6,-5,-4,#2]}}
\ctr@ln@m\figdrawarrowcirc
\ctr@ld@f\def\Q@arrowcircDD#1;#2(#3,#4){{\ifCUR@PS\ifGR@cri\s@uvc@ntr@l\et@tpsarrowcircDD%
    \PSc@mment{arrowcircDD Center=#1 ; Radius=#2 (Ang1=#3,Ang2=#4)}%
    \Q@s@tfillmode{no}\Pscirc@rrowhead#1;#2(#3,#4)%
    \setc@ntr@l{2}\figvectPDD -4[#1,-3]\vecunit@{-4}{-4}%
    \Figg@tXY{-4}\arct@n\v@lmin(\v@lX,\v@lY)%
    \v@lmin=\rdT@deg\v@lmin\v@leur=#4pt\advance\v@leur-\v@lmin%
    \maxim@m{\v@leur}{\v@leur}{-\v@leur}%
    \ifdim\v@leur>\DemiPI@deg\relax\ifdim\v@lmin<#4pt\advance\v@lmin\DePI@deg%
    \else\advance\v@lmin-\DePI@deg\fi\fi\edef\ar@ngle{\repdecn@mb{\v@lmin}}%
    \ifdim#3pt<#4pt\figdrawarccirc#1;#2(#3,\ar@ngle)\else\figdrawarccirc#1;#2(\ar@ngle,#3)\fi%
    \PSc@mment{End arrowcircDD}\resetc@ntr@l\et@tpsarrowcircDD\fi\fi}}
\ctr@ld@f\def\Q@arrowcircTD#1,#2,#3;#4(#5,#6){{\ifCUR@PS\ifGR@cri\s@uvc@ntr@l\et@tpsarrowcircTD%
    \PSc@mment{arrowcircTD Center=#1,P1=#2,P2=#3 ; Radius=#4 (Ang1=#5, Ang2=#6)}%
    \resetc@ntr@l{2}\c@lExtAxes#1,#2,#3(#4)\let\c@lprojSP=\relax%
    \figvectPTD-11[#1,-4]\figvectPTD-12[#1,-5]\c@lNbarcs{#5}{#6}%
    \if@rrowratio\v@lmax=\degT@rd\v@lmax\edef\D@lpha{\repdecn@mb{\v@lmax}}\fi%
    \advance\p@rtent\m@ne\mili@u=\z@%
    \v@leur=#5pt\c@lptellP{#1}{-11}{-12}\Figptpr@j-9:/-3/%
    \f@gnewpath\PSwrit@cmdS{-9}{\c@mmoveto}{\fwf@g}{\X@un}{\Y@un}%
    \edef\C@nt@r{#1}\s@mme=\z@\bcl@rcircTD\f@gstroke%
    \advance\v@leur\delt@\c@lptellP{#1}{-11}{-12}\Figptpr@j-5:/-3/%
    \advance\v@leur\delt@\c@lptellP{#1}{-11}{-12}\Figptpr@j-6:/-3/%
    \advance\v@leur\delt@\c@lptellP{#1}{-11}{-12}\Figptpr@j-10:/-3/%
    \figptscontrolDD-8[-9,-5,-6,-10]%
    \if@rrowratio\c@lcurvradDD0.5[-9,-8,-7,-10]\advance\mili@u\result@t%
    \maxim@m{\mili@u}{\mili@u}{-\mili@u}\mili@u=\ptT@unit@\mili@u%
    \mili@u=\D@lpha\mili@u\advance\p@rtent\@ne\divide\mili@u\p@rtent\fi%
    \Ps@rrowB@zDD\mili@u[-9,-8,-7,-10]%
    \PSc@mment{End arrowcircTD}\resetc@ntr@l\et@tpsarrowcircTD\fi\fi}}
\ctr@ld@f\def\bcl@rcircTD{\relax%
    \ifnum\s@mme<\p@rtent\advance\s@mme\@ne%
    \advance\v@leur\delt@\c@lptellP{\C@nt@r}{-11}{-12}\Figptpr@j-5:/-3/%
    \advance\v@leur\delt@\c@lptellP{\C@nt@r}{-11}{-12}\Figptpr@j-6:/-3/%
    \advance\v@leur\delt@\c@lptellP{\C@nt@r}{-11}{-12}\Figptpr@j-10:/-3/%
    \figptscontrolDD-8[-9,-5,-6,-10]\BdingB@xfalse%
    \PSwrit@cmdS{-8}{}{\fwf@g}{\X@de}{\Y@de}\PSwrit@cmdS{-7}{}{\fwf@g}{\X@tr}{\Y@tr}%
    \BdingB@xtrue\PSwrit@cmdS{-10}{\c@mcurveto}{\fwf@g}{\X@qu}{\Y@qu}%
    \if@rrowratio\c@lcurvradDD0.5[-9,-8,-7,-10]\advance\mili@u\result@t\fi%
    \B@zierBB@x{1}{\Y@un}(\X@un,\X@de,\X@tr,\X@qu)%
    \B@zierBB@x{2}{\X@un}(\Y@un,\Y@de,\Y@tr,\Y@qu)%
    \edef\X@un{\X@qu}\edef\Y@un{\Y@qu}\figptcopyDD-9:/-10/\bcl@rcircTD\fi}
\ctr@ld@f\def\Pscirc@rrowhead#1;#2(#3,#4){{%
    \PSc@mment{circ@rrowhead Center=#1 ; Radius=#2 (Ang1=#3,Ang2=#4)}%
    \v@leur=#2\unit@\edef\s@glen{\repdecn@mb{\v@leur}}\v@lY=\z@\v@lX=\v@leur%
    \resetc@ntr@l{2}\Figv@ctCreg-3(\v@lX,\v@lY)\figpttraDD-5:=#1/1,-3/%
    \figptrotDD-5:=-5/#1,#4/%
    \figvectPDD-3[#1,-5]\Figg@tXY{-3}\v@leur=\v@lX%
    \ifdim#3pt<#4pt\v@lX=\v@lY\v@lY=-\v@leur\else\v@lX=-\v@lY\v@lY=\v@leur\fi%
    \Figv@ctCreg-3(\v@lX,\v@lY)\vecunit@{-3}{-3}%
    \if@rrowratio\v@leur=#4pt\advance\v@leur-#3pt\maxim@m{\mili@u}{-\v@leur}{\v@leur}%
    \mili@u=\degT@rd\mili@u\v@leur=\s@glen\mili@u\edef\s@glen{\repdecn@mb{\v@leur}}%
    \mili@u=#2\mili@u\mili@u=\@rrowheadratio\mili@u\else\mili@u=\@rrowheadlength pt\fi%
    \figpttraDD-6:=-5/\s@glen,-3/\v@leur=#2pt\v@leur=2\v@leur%
    \invers@{\v@leur}{\v@leur}\c@rre=\repdecn@mb{\v@leur}\mili@u
    \mili@u=\c@rre\mili@u=\repdecn@mb{\c@rre}\mili@u%
    \v@leur=\p@\advance\v@leur-\mili@u
    \invers@{\mili@u}{2\v@leur}\delt@=\c@rre\delt@=\repdecn@mb{\mili@u}\delt@%
    \xdef\sDcc@ngle{\repdecn@mb{\delt@}}
    \sqrt@{\mili@u}{\v@leur}\arct@n\v@leur(\mili@u,\c@rre)%
    \v@leur=\rdT@deg\v@leur
    \ifdim#3pt<#4pt\v@leur=-\v@leur\fi%
    \if@rrowhout\v@leur=-\v@leur\fi\edef\cor@ngle{\repdecn@mb{\v@leur}}%
    \figptrotDD-6:=-6/-5,\cor@ngle/\Q@arrowheadDD[-6,-5]%
    \PSc@mment{End circ@rrowhead}}}
\ctr@ln@m\figdrawarrowcircP
\ctr@ld@f\def\Q@arrowcircPDD#1;#2[#3,#4]{{\ifCUR@PS\ifGR@cri%
    \PSc@mment{arrowcircPDD Center=#1; Radius=#2, [P1=#3,P2=#4]}%
    \s@uvc@ntr@l\et@tpsarrowcircPDD\Ps@ngleparam#1;#2[#3,#4]%
    \ifdim\v@leur>\z@\ifdim\v@lmin>\v@lmax\advance\v@lmax\DePI@deg\fi%
    \else\ifdim\v@lmin<\v@lmax\advance\v@lmin\DePI@deg\fi\fi%
    \edef\@ngdeb{\repdecn@mb{\v@lmin}}\edef\@ngfin{\repdecn@mb{\v@lmax}}%
    \figdrawarrowcirc#1;\r@dius(\@ngdeb,\@ngfin)%
    \PSc@mment{End arrowcircPDD}\resetc@ntr@l\et@tpsarrowcircPDD\fi\fi}}
\ctr@ld@f\def\Q@arrowcircPTD#1;#2[#3,#4,#5]{{\ifCUR@PS\ifGR@cri\s@uvc@ntr@l\et@tpsarrowcircPTD%
    \PSc@mment{arrowcircPTD Center=#1; Radius=#2, [P1=#3,P2=#4,P3=#5]}%
    \figgetangleTD\@ngfin[#1,#3,#4,#5]\v@leur=#2pt%
    \maxim@m{\mili@u}{-\v@leur}{\v@leur}\edef\r@dius{\repdecn@mb{\mili@u}}%
    \ifdim\v@leur<\z@\v@lmax=\@ngfin pt\advance\v@lmax-\DePI@deg%
    \edef\@ngfin{\repdecn@mb{\v@lmax}}\fi\Q@arrowcircTD#1,#3,#5;\r@dius(0,\@ngfin)%
    \PSc@mment{End arrowcircPTD}\resetc@ntr@l\et@tpsarrowcircPTD\fi\fi}}
\ctr@ld@f\def\figdrawaxes#1(#2){{\ifCUR@PS\ifGR@cri\s@uvc@ntr@l\et@tpsaxes%
    \PSc@mment{axes Origin=#1 Range=(#2)}\an@lys@xes#2,:\resetc@ntr@l{2}%
    \ifx\t@xt@\empty\ifTr@isDim\Q@@xes#1(0,#2,0,#2,0,#2)\else\Q@@xes#1(0,#2,0,#2)\fi%
    \else\Q@@xes#1(#2)\fi\PSc@mment{End axes}\resetc@ntr@l\et@tpsaxes\fi\fi}}
\ctr@ld@f\def\an@lys@xes#1,#2:{\def\t@xt@{#2}}
\ctr@ln@m\Q@@xes
\ctr@ld@f\def\Q@@xesDD#1(#2,#3,#4,#5){%
    \figpttraC-5:=#1/#2,0/\figpttraC-6:=#1/#3,0/\Q@arrowDD[-5,-6]%
    \figpttraC-5:=#1/0,#4/\figpttraC-6:=#1/0,#5/\Q@arrowDD[-5,-6]}
\ctr@ld@f\def\Q@@xesTD#1(#2,#3,#4,#5,#6,#7){%
    \figpttraC-7:=#1/#2,0,0/\figpttraC-8:=#1/#3,0,0/\Q@arrowTD[-7,-8]%
    \figpttraC-7:=#1/0,#4,0/\figpttraC-8:=#1/0,#5,0/\Q@arrowTD[-7,-8]%
    \figpttraC-7:=#1/0,0,#6/\figpttraC-8:=#1/0,0,#7/\Q@arrowTD[-7,-8]}
\ctr@ln@m\newGr@FN
\ctr@ld@f\def\newGr@FNPDF#1{\s@mme=\Gr@FNb\advance\s@mme\@ne\xdef\Gr@FNb{\number\s@mme}}
\ctr@ld@f\def\newGr@FNDVI#1{\newGr@FNPDF{}\xdef#1{\jobname GI\Gr@FNb.anx}}
\ctr@ld@f\def\figdrawbegin#1{\newGr@FN\DefGIfilen@me\gdef\@utoFN{0}%
    \def\t@xt@{#1}\relax\ifx\t@xt@\empty\GRupdatem@detrue%
    \gdef\@utoFN{1}\Psb@ginfig\DefGIfilen@me\else\expandafter\Psb@ginfigNu@#1 :\fi}
\ctr@ld@f\def\Psb@ginfigNu@#1 #2:{\def\t@xt@{#1}\relax\ifx\t@xt@\empty\def\t@xt@{#2}%
    \ifx\t@xt@\empty\GRupdatem@detrue\gdef\@utoFN{1}\Psb@ginfig\DefGIfilen@me%
    \else\Psb@ginfigNu@#2:\fi\else\Psb@ginfig{#1}\fi}
\ctr@ln@m\PSfilen@me \ctr@ln@m\auxfilen@me
\ctr@ld@f\def\Psb@ginfig#1{\ifCUR@PS\else%
    \edef\PSfilen@me{#1}\edef\auxfilen@me{\jobname.anx}%
    \ifGRupdatem@de\GR@critrue\else\openin\frf@g=\PSfilen@me\relax%
    \ifeof\frf@g\GR@critrue\else\GR@crifalse\fi\closein\frf@g\fi%
    \CUR@PStrue\c@ldefproj\expandafter\setupd@te\D@FTupdate:%
    \ifGR@cri\initb@undb@x%
    \immediate\openout\fwf@g=\auxfilen@me\initpss@ttings\fi%
    \fi}
\ctr@ld@f\def\Gr@FNb{0}
\ctr@ld@f\def\figforTeXFileno{\Gr@FNb}
\ctr@ld@f\def\figforTeXFigno{0 }
\ctr@ld@f\def\figforTeXnextFigno{1 }
\ctr@ld@f\edef\DefGIfilen@me{\jobname GI.anx}
\ctr@ld@f\def\initpss@ttings{\figreset{altitude,arrowhead,curve,general,flowchart,mesh,trimesh}%
    \Use@llipsefalse}
\ctr@ld@f\def\B@zierBB@x#1#2(#3,#4,#5,#6){{\c@rre=\t@n\epsil@n
    \v@lmax=#4\advance\v@lmax-#5\v@lmax=\thr@@\v@lmax\advance\v@lmax#6\advance\v@lmax-#3%
    \mili@u=#4\mili@u=-\tw@\mili@u\advance\mili@u#3\advance\mili@u#5%
    \v@lmin=#4\advance\v@lmin-#3\maxim@m{\v@leur}{-\v@lmax}{\v@lmax}%
    \maxim@m{\delt@}{-\mili@u}{\mili@u}\maxim@m{\v@leur}{\v@leur}{\delt@}%
    \maxim@m{\delt@}{-\v@lmin}{\v@lmin}\maxim@m{\v@leur}{\v@leur}{\delt@}%
    \ifdim\v@leur>\c@rre\invers@{\v@leur}{\v@leur}\edef\Uns@rM@x{\repdecn@mb{\v@leur}}%
    \v@lmax=\Uns@rM@x\v@lmax\mili@u=\Uns@rM@x\mili@u\v@lmin=\Uns@rM@x\v@lmin%
    \maxim@m{\v@leur}{-\v@lmax}{\v@lmax}\ifdim\v@leur<\c@rre%
    \maxim@m{\v@leur}{-\mili@u}{\mili@u}\ifdim\v@leur<\c@rre\else%
    \invers@{\mili@u}{\mili@u}\v@leur=-0.5\v@lmin%
    \v@leur=\repdecn@mb{\mili@u}\v@leur\m@jBBB@x{\v@leur}{#1}{#2}(#3,#4,#5,#6)\fi%
    \else\delt@=\repdecn@mb{\mili@u}\mili@u\v@leur=\repdecn@mb{\v@lmax}\v@lmin%
    \advance\delt@-\v@leur\ifdim\delt@<\z@\else\invers@{\v@lmax}{\v@lmax}%
    \edef\Uns@rAp{\repdecn@mb{\v@lmax}}\sqrt@{\delt@}{\delt@}%
    \v@leur=-\mili@u\advance\v@leur\delt@\v@leur=\Uns@rAp\v@leur%
    \m@jBBB@x{\v@leur}{#1}{#2}(#3,#4,#5,#6)%
    \v@leur=-\mili@u\advance\v@leur-\delt@\v@leur=\Uns@rAp\v@leur%
    \m@jBBB@x{\v@leur}{#1}{#2}(#3,#4,#5,#6)\fi\fi\fi}}
\ctr@ld@f\def\m@jBBB@x#1#2#3(#4,#5,#6,#7){{\relax\ifdim#1>\z@\ifdim#1<\p@%
    \edef\T@{\repdecn@mb{#1}}\v@lX=\p@\advance\v@lX-#1\edef\UNmT@{\repdecn@mb{\v@lX}}%
    \v@lX=#4\v@lY=#5\v@lZ=#6\v@lXa=#7\v@lX=\UNmT@\v@lX\advance\v@lX\T@\v@lY%
    \v@lY=\UNmT@\v@lY\advance\v@lY\T@\v@lZ\v@lZ=\UNmT@\v@lZ\advance\v@lZ\T@\v@lXa%
    \v@lX=\UNmT@\v@lX\advance\v@lX\T@\v@lY\v@lY=\UNmT@\v@lY\advance\v@lY\T@\v@lZ%
    \v@lX=\UNmT@\v@lX\advance\v@lX\T@\v@lY%
    \ifcase#2\or\v@lY=#3\or\v@lY=\v@lX\v@lX=#3\fi\b@undb@x{\v@lX}{\v@lY}\fi\fi}}
\ctr@ld@f\def\PsB@zier#1[#2]{{\f@gnewpath%
    \s@mme=\z@\def\list@num{#2,0}\extrairelepremi@r\p@int\de\list@num%
    \PSwrit@cmdS{\p@int}{\c@mmoveto}{\fwf@g}{\X@un}{\Y@un}\p@rtent=#1\bclB@zier}}
\ctr@ld@f\def\bclB@zier{\relax%
    \ifnum\s@mme<\p@rtent\advance\s@mme\@ne\BdingB@xfalse%
    \extrairelepremi@r\p@int\de\list@num\PSwrit@cmdS{\p@int}{}{\fwf@g}{\X@de}{\Y@de}%
    \extrairelepremi@r\p@int\de\list@num\PSwrit@cmdS{\p@int}{}{\fwf@g}{\X@tr}{\Y@tr}%
    \BdingB@xtrue%
    \extrairelepremi@r\p@int\de\list@num\PSwrit@cmdS{\p@int}{\c@mcurveto}{\fwf@g}{\X@qu}{\Y@qu}%
    \B@zierBB@x{1}{\Y@un}(\X@un,\X@de,\X@tr,\X@qu)%
    \B@zierBB@x{2}{\X@un}(\Y@un,\Y@de,\Y@tr,\Y@qu)%
    \edef\X@un{\X@qu}\edef\Y@un{\Y@qu}\bclB@zier\fi}
\ctr@ln@m\figdrawBezier
\ctr@ld@f\def\Q@BezierDD#1[#2]{\ifCUR@PS\ifGR@cri%
    \PSc@mment{BezierDD N arcs=#1, Control points=#2}%
    \iffillm@de\PsB@zier#1[#2]%
    \f@gfill%
    \else\PsB@zier#1[#2]\f@gstroke\fi%
    \PSc@mment{End BezierDD}\fi\fi}
\ctr@ln@m\et@tpsBezierTD
\ctr@ld@f\def\Q@BezierTD#1[#2]{\ifCUR@PS\ifGR@cri\s@uvc@ntr@l\et@tpsBezierTD%
    \PSc@mment{BezierTD N arcs=#1, Control points=#2}%
    \iffillm@de\PsB@zierTD#1[#2]%
    \f@gfill%
    \else\PsB@zierTD#1[#2]\f@gstroke\fi%
    \PSc@mment{End BezierTD}\resetc@ntr@l\et@tpsBezierTD\fi\fi}
\ctr@ld@f\def\PsB@zierTD#1[#2]{\ifnum\CUR@proj<\tw@\PsB@zier#1[#2]\else\PsB@zier@TD#1[#2]\fi}
\ctr@ld@f\def\PsB@zier@TD#1[#2]{{\f@gnewpath%
    \s@mme=\z@\def\list@num{#2,0}\extrairelepremi@r\p@int\de\list@num%
    \let\c@lprojSP=\relax\setc@ntr@l{2}\Figptpr@j-7:/\p@int/%
    \PSwrit@cmd{-7}{\c@mmoveto}{\fwf@g}%
    \loop\ifnum\s@mme<#1\advance\s@mme\@ne\extrairelepremi@r\p@intun\de\list@num%
    \extrairelepremi@r\p@intde\de\list@num\extrairelepremi@r\p@inttr\de\list@num%
    \subB@zierTD\NBz@rcs[\p@int,\p@intun,\p@intde,\p@inttr]\edef\p@int{\p@inttr}\repeat}}
\ctr@ld@f\def\subB@zierTD#1[#2,#3,#4,#5]{\delt@=\p@\divide\delt@\NBz@rcs\v@lmin=\z@%
    {\Figg@tXY{-7}\edef\X@un{\the\v@lX}\edef\Y@un{\the\v@lY}%
    \s@mme=\z@\loop\ifnum\s@mme<#1\advance\s@mme\@ne%
    \v@leur=\v@lmin\advance\v@leur0.33333 \delt@\edef\unti@rs{\repdecn@mb{\v@leur}}%
    \v@leur=\v@lmin\advance\v@leur0.66666 \delt@\edef\deti@rs{\repdecn@mb{\v@leur}}%
    \advance\v@lmin\delt@\edef\trti@rs{\repdecn@mb{\v@lmin}}%
    \figptBezierTD-8::\trti@rs[#2,#3,#4,#5]\Figptpr@j-8:/-8/%
    \c@lsubBzarc\unti@rs,\deti@rs[#2,#3,#4,#5]\BdingB@xfalse%
    \PSwrit@cmdS{-4}{}{\fwf@g}{\X@de}{\Y@de}\PSwrit@cmdS{-3}{}{\fwf@g}{\X@tr}{\Y@tr}%
    \BdingB@xtrue\PSwrit@cmdS{-8}{\c@mcurveto}{\fwf@g}{\X@qu}{\Y@qu}%
    \B@zierBB@x{1}{\Y@un}(\X@un,\X@de,\X@tr,\X@qu)%
    \B@zierBB@x{2}{\X@un}(\Y@un,\Y@de,\Y@tr,\Y@qu)%
    \edef\X@un{\X@qu}\edef\Y@un{\Y@qu}\figptcopyDD-7:/-8/\repeat}}
\ctr@ld@f\def\NBz@rcs{2}
\ctr@ld@f\def\c@lsubBzarc#1,#2[#3,#4,#5,#6]{\figptBezierTD-5::#1[#3,#4,#5,#6]%
    \figptBezierTD-6::#2[#3,#4,#5,#6]\Figptpr@j-4:/-5/\Figptpr@j-5:/-6/%
    \figptscontrolDD-4[-7,-4,-5,-8]}
\ctr@ln@m\figdrawcirc
\ctr@ld@f\def\Q@circDD#1(#2){\ifCUR@PS\ifGR@cri\PSc@mment{circDD Center=#1 (Radius=#2)}%
    \Q@arccircDD#1;#2(0,360)\PSc@mment{End circDD}\fi\fi}
\ctr@ld@f\def\Q@circTD#1,#2,#3(#4){\ifCUR@PS\ifGR@cri%
    \PSc@mment{circTD Center=#1,P1=#2,P2=#3 (Radius=#4)}%
    \Q@arccircTD#1,#2,#3;#4(0,360)\PSc@mment{End circTD}\fi\fi}
\ctr@ln@m\p@urcent
{\catcode`\%=12\gdef\p@urcent{
\ctr@ld@f\def\PSc@mment#1{\ifGRdebugm@de\immediate\write\fwf@g{\p@urcent\space#1}\fi}
\ctr@ln@m\acc@louv \ctr@ln@m\acc@lfer
{\catcode`\[=1\catcode`\{=12\gdef\acc@louv[{}}
{\catcode`\]=2\catcode`\}=12\gdef\acc@lfer{}]]
\ctr@ld@f\def\PSdict@{\ifUse@llipse%
    \immediate\write\fwf@g{/ellipsedict 9 dict def ellipsedict /mtrx matrix put}%
    \immediate\write\fwf@g{/ellipse \acc@louv ellipsedict begin}%
    \immediate\write\fwf@g{ /endangle exch def /startangle exch def}%
    \immediate\write\fwf@g{ /yrad exch def /xrad exch def}%
    \immediate\write\fwf@g{ /rotangle exch def /y exch def /x exch def}%
    \immediate\write\fwf@g{ /savematrix mtrx currentmatrix def}%
    \immediate\write\fwf@g{ x y translate rotangle rotate xrad yrad scale}%
    \immediate\write\fwf@g{ 0 0 1 startangle endangle arc}%
    \immediate\write\fwf@g{ savematrix setmatrix end\acc@lfer def}%
    \fi\PShe@der{EndProlog}}
\ctr@ld@f\def\Pssetc@rve#1=#2|{\keln@mun#1|%
    \def\n@mref{r}\ifx\l@debut\n@mref\update@ttr\D@FTroundness\Q@s@troundness{#2}\else
    \W@rnmesAttr{figset curve}{#1}\fi}
\ctr@ln@m\curv@roundness
\ctr@ld@f\def\Q@s@troundness#1{\edef\curv@roundness{#1}}
\ctr@ld@f\def\D@FTroundness{0.2} 
\ctr@ln@m\figdrawcurve
\ctr@ld@f\def\Q@curveDD[#1]{{\ifCUR@PS\ifGR@cri\PSc@mment{curveDD Points=#1}%
    \s@uvc@ntr@l\et@tpscurveDD%
    \iffillm@de\Psc@rveDD\curv@roundness[#1]%
    \f@gfill%
    \else\Psc@rveDD\curv@roundness[#1]\f@gstroke\fi%
    \PSc@mment{End curveDD}\resetc@ntr@l\et@tpscurveDD\fi\fi}}
\ctr@ld@f\def\Q@curveTD[#1]{{\ifCUR@PS\ifGR@cri%
    \PSc@mment{curveTD Points=#1}\s@uvc@ntr@l\et@tpscurveTD\let\c@lprojSP=\relax%
    \iffillm@de\Psc@rveTD\curv@roundness[#1]%
    \f@gfill%
    \else\Psc@rveTD\curv@roundness[#1]\f@gstroke\fi%
    \PSc@mment{End curveTD}\resetc@ntr@l\et@tpscurveTD\fi\fi}}
\ctr@ld@f\def\Psc@rveDD#1[#2]{%
    \def\list@num{#2}\extrairelepremi@r\Ak@\de\list@num%
    \extrairelepremi@r\Ai@\de\list@num\extrairelepremi@r\Aj@\de\list@num%
    \f@gnewpath\PSwrit@cmdS{\Ai@}{\c@mmoveto}{\fwf@g}{\X@un}{\Y@un}%
    \setc@ntr@l{2}\figvectPDD -1[\Ak@,\Aj@]%
    \@ecfor\Ak@:=\list@num\do{\figpttraDD-2:=\Ai@/#1,-1/\BdingB@xfalse%
       \PSwrit@cmdS{-2}{}{\fwf@g}{\X@de}{\Y@de}%
       \figvectPDD -1[\Ai@,\Ak@]\figpttraDD-2:=\Aj@/-#1,-1/%
       \PSwrit@cmdS{-2}{}{\fwf@g}{\X@tr}{\Y@tr}\BdingB@xtrue%
       \PSwrit@cmdS{\Aj@}{\c@mcurveto}{\fwf@g}{\X@qu}{\Y@qu}%
       \B@zierBB@x{1}{\Y@un}(\X@un,\X@de,\X@tr,\X@qu)%
       \B@zierBB@x{2}{\X@un}(\Y@un,\Y@de,\Y@tr,\Y@qu)%
       \edef\X@un{\X@qu}\edef\Y@un{\Y@qu}\edef\Ai@{\Aj@}\edef\Aj@{\Ak@}}}
\ctr@ld@f\def\Psc@rveTD#1[#2]{\ifnum\CUR@proj<\tw@\Psc@rvePPTD#1[#2]\else\Psc@rveCPTD#1[#2]\fi}
\ctr@ld@f\def\Psc@rvePPTD#1[#2]{\setc@ntr@l{2}%
    \def\list@num{#2}\extrairelepremi@r\Ak@\de\list@num\Figptpr@j-5:/\Ak@/%
    \extrairelepremi@r\Ai@\de\list@num\Figptpr@j-3:/\Ai@/%
    \extrairelepremi@r\Aj@\de\list@num\Figptpr@j-4:/\Aj@/%
    \f@gnewpath\PSwrit@cmdS{-3}{\c@mmoveto}{\fwf@g}{\X@un}{\Y@un}%
    \figvectPDD -1[-5,-4]%
    \@ecfor\Ak@:=\list@num\do{\Figptpr@j-5:/\Ak@/\figpttraDD-2:=-3/#1,-1/%
       \BdingB@xfalse\PSwrit@cmdS{-2}{}{\fwf@g}{\X@de}{\Y@de}%
       \figvectPDD -1[-3,-5]\figpttraDD-2:=-4/-#1,-1/%
       \PSwrit@cmdS{-2}{}{\fwf@g}{\X@tr}{\Y@tr}\BdingB@xtrue%
       \PSwrit@cmdS{-4}{\c@mcurveto}{\fwf@g}{\X@qu}{\Y@qu}%
       \B@zierBB@x{1}{\Y@un}(\X@un,\X@de,\X@tr,\X@qu)%
       \B@zierBB@x{2}{\X@un}(\Y@un,\Y@de,\Y@tr,\Y@qu)%
       \edef\X@un{\X@qu}\edef\Y@un{\Y@qu}\figptcopyDD-3:/-4/\figptcopyDD-4:/-5/}}
\ctr@ld@f\def\Psc@rveCPTD#1[#2]{\setc@ntr@l{2}%
    \def\list@num{#2}\extrairelepremi@r\Ak@\de\list@num%
    \extrairelepremi@r\Ai@\de\list@num\extrairelepremi@r\Aj@\de\list@num%
    \Figptpr@j-7:/\Ai@/%
    \f@gnewpath\PSwrit@cmd{-7}{\c@mmoveto}{\fwf@g}%
    \figvectPTD -9[\Ak@,\Aj@]%
    \@ecfor\Ak@:=\list@num\do{\figpttraTD-10:=\Ai@/#1,-9/%
       \figvectPTD -9[\Ai@,\Ak@]\figpttraTD-11:=\Aj@/-#1,-9/%
       \subB@zierTD\NBz@rcs[\Ai@,-10,-11,\Aj@]\edef\Ai@{\Aj@}\edef\Aj@{\Ak@}}}
\ctr@ld@f\def\figdrawend{\ifCUR@PS\ifGR@cri\immediate\closeout\fwf@g%
    \immediate\openout\fwf@g=\PSfilen@me\relax%
    \ifPDFm@ke\PSBdingB@x\else%
    \immediate\write\fwf@g{\p@urcent\string!PS-Adobe-2.0 EPSF-2.0}%
    \PShe@der{Creator\string: TeX (fig4tex.tex)}%
    \PShe@der{Title\string: \PSfilen@me}%
    \PShe@der{CreationDate\string: \the\day/\the\month/\the\year}%
    \PSBdingB@x%
    \PShe@der{EndComments}\PSdict@\fi%
    \immediate\write\fwf@g{\c@mgsave}%
    \openin\frf@g=\auxfilen@me\c@pypsfile\fwf@g\frf@g\closein\frf@g%
    \immediate\write\fwf@g{\c@mgrestore}%
    \PSc@mment{End of file.}\immediate\closeout\fwf@g%
    \immediate\openout\fwf@g=\auxfilen@me\immediate\closeout\fwf@g%
    \immediate\write16{File \PSfilen@me\space created.}\fi\fi\CUR@PSfalse\GR@critrue}
\ctr@ld@f\def\PShe@der#1{\immediate\write\fwf@g{\p@urcent\p@urcent#1}}
\ctr@ld@f\def\PSBdingB@x{{\v@lX=\ptT@ptps\c@@rdXmin\v@lY=\ptT@ptps\c@@rdYmin%
     \v@lXa=\ptT@ptps\c@@rdXmax\v@lYa=\ptT@ptps\c@@rdYmax%
     \PShe@der{BoundingBox\string: \repdecn@mb{\v@lX}\space\repdecn@mb{\v@lY}%
     \space\repdecn@mb{\v@lXa}\space\repdecn@mb{\v@lYa}}}}
\ctr@ld@f\def\figdrawfcconnect[#1]{{\ifCUR@PS\ifGR@cri\PSc@mment{fcconnect Points=#1}%
    \Q@s@tfillmode{no}\s@uvc@ntr@l\et@tpsfcconnect\resetc@ntr@l{2}%
    \fcc@nnect@[#1]\resetc@ntr@l\et@tpsfcconnect\PSc@mment{End fcconnect}\fi\fi}}
\ctr@ld@f\def\fcc@nnect@[#1]{\let\N@rm=\n@rmeucDD\def\list@num{#1}%
    \extrairelepremi@r\Ai@\de\list@num\edef\pr@m{\Ai@}\v@leur=\z@\p@rtent=\@ne\c@llgtot%
    \ifcase\fclin@typ@\edef\list@num{[\pr@m,#1,\Ai@}\expandafter\figdrawcurve\list@num]%
    \else\ifdim\fclin@r@d\p@>\z@\Pslin@conge[#1]\else\figdrawline[#1]\fi\fi%
    \v@leur=\@rrowp@s\v@leur\edef\list@num{#1,\Ai@,0}%
    \extrairelepremi@r\Ai@\de\list@num\mili@u=\epsil@n\c@llgpart%
    \advance\mili@u-\epsil@n\advance\mili@u-\delt@\advance\v@leur-\mili@u%
    \ifcase\fclin@typ@\invers@\mili@u\delt@%
    \ifnum\@rrowr@fpt>\z@\advance\delt@-\v@leur\v@leur=\delt@\fi%
    \v@leur=\repdecn@mb\v@leur\mili@u\edef\v@lt{\repdecn@mb\v@leur}%
    \extrairelepremi@r\Ak@\de\list@num%
    \figvectPDD-1[\pr@m,\Aj@]\figpttraDD-6:=\Ai@/\curv@roundness,-1/%
    \figvectPDD-1[\Ak@,\Ai@]\figpttraDD-7:=\Aj@/\curv@roundness,-1/%
    \delt@=\@rrowheadlength\p@\delt@=\C@AHANG\delt@\edef\R@dius{\repdecn@mb{\delt@}}%
    \ifcase\@rrowr@fpt%
    \FigptintercircB@zDD-8::\v@lt,\R@dius[\Ai@,-6,-7,\Aj@]\Q@arrowheadDD[-5,-8]\else%
    \FigptintercircB@zDD-8::\v@lt,\R@dius[\Aj@,-7,-6,\Ai@]\Q@arrowheadDD[-8,-5]\fi%
    \else\advance\delt@-\v@leur%
    \p@rtentiere{\p@rtent}{\delt@}\edef\C@efun{\the\p@rtent}%
    \p@rtentiere{\p@rtent}{\v@leur}\edef\C@efde{\the\p@rtent}%
    \figptbaryDD-5:[\Ai@,\Aj@;\C@efun,\C@efde]\ifcase\@rrowr@fpt%
    \delt@=\@rrowheadlength\unit@\delt@=\C@AHANG\delt@\edef\t@ille{\repdecn@mb{\delt@}}%
    \figvectPDD-2[\Ai@,\Aj@]\vecunit@{-2}{-2}\figpttraDD-5:=-5/\t@ille,-2/\fi%
    \Q@arrowheadDD[\Ai@,-5]\fi}
\ctr@ld@f\def\c@llgtot{\@ecfor\Aj@:=\list@num\do{\figvectP-1[\Ai@,\Aj@]\N@rm\delt@{-1}%
    \advance\v@leur\delt@\advance\p@rtent\@ne\edef\Ai@{\Aj@}}}
\ctr@ld@f\def\c@llgpart{\extrairelepremi@r\Aj@\de\list@num\figvectP-1[\Ai@,\Aj@]\N@rm\delt@{-1}%
    \advance\mili@u\delt@\ifdim\mili@u<\v@leur\edef\pr@m{\Ai@}\edef\Ai@{\Aj@}\c@llgpart\fi}
\ctr@ld@f\def\Pslin@conge[#1]{\ifnum\p@rtent>\tw@{\def\list@num{#1}%
    \extrairelepremi@r\Ai@\de\list@num\extrairelepremi@r\Aj@\de\list@num%
    \figptcopy-6:/\Ai@/\figvectP-3[\Ai@,\Aj@]\vecunit@{-3}{-3}\v@lmax=\result@t%
    \@ecfor\Ak@:=\list@num\do{\figvectP-4[\Aj@,\Ak@]\vecunit@{-4}{-4}%
    \minim@m\v@lmin\v@lmax\result@t\v@lmax=\result@t%
    \det@rm\delt@[-3,-4]\maxim@m\mili@u{\delt@}{-\delt@}\ifdim\mili@u>\Cepsil@n%
    \ifdim\delt@>\z@\figgetangleDD\Angl@[\Aj@,\Ak@,\Ai@]\else%
    \figgetangleDD\Angl@[\Aj@,\Ai@,\Ak@]\fi%
    \v@leur=\PI@deg\advance\v@leur-\Angl@\p@\divide\v@leur\tw@%
    \edef\Angl@{\repdecn@mb\v@leur}\c@ssin{\C@}{\S@}{\Angl@}\v@leur=\fclin@r@d\unit@%
    \v@leur=\S@\v@leur\mili@u=\C@\p@\invers@\mili@u\mili@u%
    \v@leur=\repdecn@mb{\mili@u}\v@leur%
    \minim@m\v@leur\v@leur\v@lmin\edef\t@ille{\repdecn@mb{\v@leur}}%
    \figpttra-5:=\Aj@/-\t@ille,-3/\figdrawline[-6,-5]\figpttra-6:=\Aj@/\t@ille,-4/%
    \figvectNVDD-3[-3]\figvectNVDD-8[-4]\inters@cDD-7:[-5,-3;-6,-8]%
    \ifdim\delt@>\z@\figdrawarccircP-7;\fclin@r@d[-5,-6]\else\figdrawarccircP-7;\fclin@r@d[-6,-5]\fi%
    \else\figdrawline[-6,\Aj@]\figptcopy-6:/\Aj@/\fi
    \edef\Ai@{\Aj@}\edef\Aj@{\Ak@}\figptcopy-3:/-4/}\figdrawline[-6,\Aj@]}\else\figdrawline[#1]\fi}
\ctr@ld@f\def\figdrawfcnode[#1]#2{{\ifCUR@PS\ifGR@cri\PSc@mment{fcnode Points=#1}%
    \s@uvc@ntr@l\et@tpsfcnode\resetc@ntr@l{2}%
    \def\t@xt@{#2}\ifx\t@xt@\empty\def\g@tt@xt{\setbox\Gb@x=\hbox{\Figg@tT{\p@int}}}%
    \else\def\g@tt@xt{\setbox\Gb@x=\hbox{#2}}\fi%
    \v@lmin=\h@rdfcXp@dd\advance\v@lmin\Xp@dd\unit@\multiply\v@lmin\tw@%
    \v@lmax=\h@rdfcYp@dd\advance\v@lmax\Yp@dd\unit@\multiply\v@lmax\tw@%
    \Figv@ctCreg-8(\unit@,-\unit@)\def\list@num{#1}%
    \delt@=\CUR@width bp\divide\delt@\tw@%
    \fcn@de\PSc@mment{End fcnode}\resetc@ntr@l\et@tpsfcnode\fi\fi}}
\ctr@ld@f\def\d@butn@de{\g@tt@xt\v@lX=\wd\Gb@x%
    \v@lY=\ht\Gb@x\advance\v@lY\dp\Gb@x\advance\v@lX\v@lmin\advance\v@lY\v@lmax}
\ctr@ld@f\def\fcn@deE{%
    \@ecfor\p@int:=\list@num\do{\d@butn@de\v@lX=\unssqrttw@\v@lX\v@lY=\unssqrttw@\v@lY%
    \ifdim\thickn@ss\p@>\z@
    \v@lXa=\v@lX\advance\v@lXa\delt@\v@lXa=\ptT@unit@\v@lXa\edef\XR@d{\repdecn@mb\v@lXa}%
    \v@lYa=\v@lY\advance\v@lYa\delt@\v@lYa=\ptT@unit@\v@lYa\edef\YR@d{\repdecn@mb\v@lYa}%
    \arct@n\v@leur(\v@lXa,\v@lYa)\v@leur=\rdT@deg\v@leur\edef\@nglde{\repdecn@mb\v@leur}%
    {\c@lptellDD-2::\p@int;\XR@d,\YR@d(\@nglde)}
    \advance\v@leur-\PI@deg\edef\@nglun{\repdecn@mb\v@leur}%
    {\c@lptellDD-3::\p@int;\XR@d,\YR@d(\@nglun)}%
    \figptstra-6=-3,-2,\p@int/\thickn@ss,-8/\Q@s@tfillmode{yes}%
    \Pss@tspecifSt{color=\DDV@thickcolor}%
    \figdrawline[-2,-3,-6,-5]\figdrawarcell-4;\XR@d,\YR@d(\@nglun,\@nglde,0)%
    \Psrest@reSt{color=\DDV@thickcolor}\fi
    \v@lX=\ptT@unit@\v@lX\v@lY=\ptT@unit@\v@lY%
    \edef\XR@d{\repdecn@mb\v@lX}\edef\YR@d{\repdecn@mb\v@lY}%
    \Q@s@tfillmode{yes}\Pss@tspecifSt{color=\fcbgc@lor}%
    \figdrawarcell\p@int;\XR@d,\YR@d(0,360,0)%
    \Q@s@tfillmode{no}\Psrest@reSt{color=\fcbgc@lor}\figdrawarcell\p@int;\XR@d,\YR@d(0,360,0)}}
\ctr@ld@f\def\fcn@deL{\delt@=\ptT@unit@\delt@\edef\t@ille{\repdecn@mb\delt@}%
    \@ecfor\p@int:=\list@num\do{\Figg@tXYa{\p@int}\d@butn@de%
    \ifdim\v@lX>\v@lY\itis@Ktrue\else\itis@Kfalse\fi%
    \advance\v@lXa-\v@lX\Figp@intreg-1:(\v@lXa,\v@lYa)%
    \advance\v@lXa\v@lX\advance\v@lYa-\v@lY\Figp@intreg-2:(\v@lXa,\v@lYa)%
    \advance\v@lXa\v@lX\advance\v@lYa\v@lY\Figp@intreg-3:(\v@lXa,\v@lYa)%
    \advance\v@lXa-\v@lX\advance\v@lYa\v@lY\Figp@intreg-4:(\v@lXa,\v@lYa)%
    \ifdim\thickn@ss\p@>\z@
    \Figg@tXYa{\p@int}\Q@s@tfillmode{yes}\Pss@tspecifSt{color=\DDV@thickcolor}%
    \c@lpt@xt{-1}{-4}\c@lpt@xt@\v@lXa\v@lYa\v@lX\v@lY\c@rre\delt@%
    \Figp@intregDD-9:(\v@lZ,\v@lYa)\Figp@intregDD-11:(\v@lZa,\v@lYa)%
    \c@lpt@xt{-4}{-3}\c@lpt@xt@\v@lYa\v@lXa\v@lY\v@lX\delt@\c@rre%
    \Figp@intregDD-12:(\v@lXa,\v@lZ)\Figp@intregDD-10:(\v@lXa,\v@lZa)%
    \ifitis@K\figptstra-7=-9,-10,-11/\thickn@ss,-8/\figdrawline[-9,-11,-5,-6,-7]\else%
    \figptstra-7=-10,-11,-12/\thickn@ss,-8/\figdrawline[-10,-12,-5,-6,-7]\fi%
    \Psrest@reSt{color=\DDV@thickcolor}\fi
    \Q@s@tfillmode{yes}\Pss@tspecifSt{color=\fcbgc@lor}\figdrawline[-1,-2,-3,-4]%
    \Q@s@tfillmode{no}\Psrest@reSt{color=\fcbgc@lor}\figdrawline[-1,-2,-3,-4,-1]}}
\ctr@ld@f\def\c@lpt@xt#1#2{\figvectN-7[#1,#2]\vecunit@{-7}{-7}\figpttra-5:=#1/\t@ille,-7/%
    \figvectP-7[#1,#2]\Figg@tXY{-7}\c@rre=\v@lX\delt@=\v@lY\Figg@tXY{-5}}
\ctr@ld@f\def\c@lpt@xt@#1#2#3#4#5#6{\v@lZ=#6\invers@{\v@lZ}{\v@lZ}\v@leur=\repdecn@mb{#5}\v@lZ%
    \v@lZ=#2\advance\v@lZ-#4\mili@u=\repdecn@mb{\v@leur}\v@lZ%
    \v@lZ=#3\advance\v@lZ\mili@u\v@lZa=-\v@lZ\advance\v@lZa\tw@#1}
\ctr@ld@f\def\fcn@deR{\@ecfor\p@int:=\list@num\do{\Figg@tXYa{\p@int}\d@butn@de%
    \advance\v@lXa-0.5\v@lX\advance\v@lYa-0.5\v@lY\Figp@intreg-1:(\v@lXa,\v@lYa)%
    \advance\v@lXa\v@lX\Figp@intreg-2:(\v@lXa,\v@lYa)%
    \advance\v@lYa\v@lY\Figp@intreg-3:(\v@lXa,\v@lYa)%
    \advance\v@lXa-\v@lX\Figp@intreg-4:(\v@lXa,\v@lYa)%
    \ifdim\thickn@ss\p@>\z@
    \Q@s@tfillmode{yes}\Pss@tspecifSt{color=\DDV@thickcolor}%
    \Figv@ctCreg-5(-\delt@,-\delt@)\figpttra-9:=-1/1,-5/%
    \Figv@ctCreg-5(\delt@,-\delt@)\figpttra-10:=-2/1,-5/%
    \Figv@ctCreg-5(\delt@,\delt@)\figpttra-11:=-3/1,-5/%
    \figptstra-7=-9,-10,-11/\thickn@ss,-8/\figdrawline[-9,-11,-5,-6,-7]%
    \Psrest@reSt{color=\DDV@thickcolor}\fi
    \Q@s@tfillmode{yes}\Pss@tspecifSt{color=\fcbgc@lor}\figdrawline[-1,-2,-3,-4]%
    \Q@s@tfillmode{no}\Psrest@reSt{color=\fcbgc@lor}\figdrawline[-1,-2,-3,-4,-1]}}
\ctr@ld@f\def\Pssetfl@wchart#1=#2|{\keln@mtr#1|%
    \def\n@mref{arr}\ifx\l@debut\n@mref\expandafter\keln@mtr\l@suite|%
     \def\n@mref{owp}\ifx\l@debut\n@mref\update@ttr\D@FTfcarrowposition\P@setfcarrowposition{#2}\else
     \def\n@mref{owr}\ifx\l@debut\n@mref\update@ttr\D@FTfcarrowrefpt\P@setfcarrowrefpt{#2}\else
     \W@rnmesAttr{figset flowchart}{#1}\fi\fi\else%
    \def\n@mref{bgc}\ifx\l@debut\n@mref\update@ttr\D@FTfcbgcolor\P@setfcbgcolor{#2}\else
    \def\n@mref{lin}\ifx\l@debut\n@mref\update@ttr\D@FTfcline\P@setfcline{#2}\else
    \def\n@mref{pad}\ifx\l@debut\n@mref\update@ttr\D@FTfcxpadding\P@setfcxpadding{#2}%
                                       \update@ttr\D@FTfcypadding\P@setfcypadding{#2}\else
    \def\n@mref{rad}\ifx\l@debut\n@mref\update@ttr\D@FTfcradius\P@setfcradius{#2}\else
    \def\n@mref{sha}\ifx\l@debut\n@mref\update@ttr\D@FTfcshape\P@setfcshape{#2}\else
    \def\n@mref{thi}\ifx\l@debut\n@mref\expandafter\keln@mtr\l@suite|%
     \def\n@mref{ckc}\ifx\l@debut\n@mref\update@ttr\D@FTref\P@setfcthickcolor{#2}\else
     \def\n@mref{ckn}\ifx\l@debut\n@mref\update@ttr\D@FTfcthickness\P@setfcthickness{#2}\else
     \W@rnmesAttr{figset flowchart}{#1}\fi\fi\else%
    \def\n@mref{xpa}\ifx\l@debut\n@mref\update@ttr\D@FTfcxpadding\P@setfcxpadding{#2}\else
    \def\n@mref{ypa}\ifx\l@debut\n@mref\update@ttr\D@FTfcypadding\P@setfcypadding{#2}\else
    \W@rnmesAttr{figset flowchart}{#1}\fi\fi\fi\fi\fi\fi\fi\fi\fi}
\ctr@ln@m\@rrowp@s
\ctr@ld@f\def\P@setfcarrowposition#1{\edef\@rrowp@s{#1}}
\ctr@ln@m\@rrowr@fpt
\ctr@ld@f\def\P@setfcarrowrefpt#1{\setfcr@fpt#1|}
\ctr@ld@f\def\setfcr@fpt#1#2|{\if#1e\def\@rrowr@fpt{1}\else\def\@rrowr@fpt{0}\fi}
\ctr@ln@m\fcbgc@lor
\ctr@ld@f\def\P@setfcbgcolor#1{\edef\fcbgc@lor{#1}}
\ctr@ln@m\fclin@typ@
\ctr@ld@f\def\P@setfcline#1{\setfccurv@#1|}
\ctr@ld@f\def\setfccurv@#1#2|{\if#1c\def\fclin@typ@{0}\else\def\fclin@typ@{1}\fi}
\ctr@ln@m\fclin@r@d
\ctr@ld@f\def\P@setfcradius#1{\edef\fclin@r@d{#1}}
\ctr@ln@m\fcn@de \ctr@ln@m\fcsh@pe
\ctr@ln@m\h@rdfcXp@dd \ctr@ln@m\h@rdfcYp@dd
\ctr@ld@f\def\P@setfcshape#1{\setfcshap@#1|}
\ctr@ld@f\def\setfcshap@#1#2|{%
    \if#1e\let\fcn@de=\fcn@deE\def\h@rdfcXp@dd{4pt}\def\h@rdfcYp@dd{4pt}%
     \edef\fcsh@pe{ellipse}\else%
    \if#1l\let\fcn@de=\fcn@deL\def\h@rdfcXp@dd{4pt}\def\h@rdfcYp@dd{4pt}%
     \edef\fcsh@pe{lozenge}\else%
          \let\fcn@de=\fcn@deR\def\h@rdfcXp@dd{6pt}\def\h@rdfcYp@dd{6pt}%
     \edef\fcsh@pe{rectangle}\fi\fi}
\ctr@ln@m\DDV@thickcolor
\ctr@ld@f\def\P@setfcthickcolor#1{\edef\DDV@thickcolor{#1}}
\ctr@ln@m\thickn@ss
\ctr@ld@f\def\P@setfcthickness#1{\edef\thickn@ss{#1}}
\ctr@ln@m\Xp@dd
\ctr@ld@f\def\P@setfcxpadding#1{\edef\Xp@dd{#1}}
\ctr@ln@m\Yp@dd
\ctr@ld@f\def\P@setfcypadding#1{\edef\Yp@dd{#1}}
\ctr@ld@f\def\figdrawline[#1]{{\ifCUR@PS\ifGR@cri\PSc@mment{line Points=#1}%
    \let\figdrawlign@=\Pslign@P\Pslin@{#1}\PSc@mment{End line}\fi\fi}}
\ctr@ld@f\def\figdrawlineF#1{{\ifCUR@PS\ifGR@cri\PSc@mment{lineF Filename=#1}%
    \let\figdrawlign@=\Pslign@F\Pslin@{#1}\PSc@mment{End lineF}\fi\fi}}
\ctr@ld@f\def\figdrawlineC(#1){{\ifCUR@PS\ifGR@cri\PSc@mment{lineC}%
    \let\figdrawlign@=\Pslign@C\Pslin@{#1}\PSc@mment{End lineC}\fi\fi}}
\ctr@ld@f\def\Pslin@#1{\iffillm@de\figdrawlign@{#1}%
    \f@gfill%
    \else\figdrawlign@{#1}\ifx\derp@int\premp@int%
    \f@gclosestroke%
    \else\f@gstroke\fi\fi}
\ctr@ld@f\def\Pslign@P#1{\def\list@num{#1}\extrairelepremi@r\p@int\de\list@num%
    \edef\premp@int{\p@int}\f@gnewpath%
    \PSwrit@cmd{\p@int}{\c@mmoveto}{\fwf@g}%
    \@ecfor\p@int:=\list@num\do{\PSwrit@cmd{\p@int}{\c@mlineto}{\fwf@g}%
    \edef\derp@int{\p@int}}}
\ctr@ld@f\def\Pslign@F#1{\s@uvc@ntr@l\et@tPslign@F\setc@ntr@l{2}\openin\frf@g=#1\relax%
    \ifeof\frf@g\message{*** File #1 not found !}\end\else%
    \read\frf@g to\tr@c\edef\premp@int{\tr@c}\expandafter\extr@ctCF\tr@c:%
    \f@gnewpath\PSwrit@cmd{-1}{\c@mmoveto}{\fwf@g}%
    \loop\read\frf@g to\tr@c\ifeof\frf@g\mored@tafalse\else\mored@tatrue\fi%
    \ifmored@ta\expandafter\extr@ctCF\tr@c:\PSwrit@cmd{-1}{\c@mlineto}{\fwf@g}%
    \edef\derp@int{\tr@c}\repeat\fi\closein\frf@g\resetc@ntr@l\et@tPslign@F}
\ctr@ln@m\extr@ctCF
\ctr@ld@f\def\extr@ctCFDD#1 #2:{\v@lX=#1\unit@\v@lY=#2\unit@\Figp@intregDD-1:(\v@lX,\v@lY)}
\ctr@ld@f\def\extr@ctCFTD#1 #2 #3:{\v@lX=#1\unit@\v@lY=#2\unit@\v@lZ=#3\unit@%
    \Figp@intregTD-1:(\v@lX,\v@lY,\v@lZ)}
\ctr@ld@f\def\Pslign@C#1{\s@uvc@ntr@l\et@tPslign@C\setc@ntr@l{2}%
    \def\list@num{#1}\extrairelepremi@r\p@int\de\list@num%
    \edef\premp@int{\p@int}\f@gnewpath%
    \expandafter\Pslign@C@\p@int:\PSwrit@cmd{-1}{\c@mmoveto}{\fwf@g}%
    \@ecfor\p@int:=\list@num\do{\expandafter\Pslign@C@\p@int:%
    \PSwrit@cmd{-1}{\c@mlineto}{\fwf@g}\edef\derp@int{\p@int}}%
    \resetc@ntr@l\et@tPslign@C}
\ctr@ld@f\def\Pslign@C@#1 #2:{{\def\t@xt@{#1}\ifx\t@xt@\empty\Pslign@C@#2:
    \else\extr@ctCF#1 #2:\fi}}
\ctr@ld@f\def\Pssetm@sh#1=#2|{\keln@mde#1|%
    \def\n@mref{co}\ifx\l@debut\n@mref\update@ttr\D@FTref\P@setmeshcolor{#2}\else
    \def\n@mref{da}\ifx\l@debut\n@mref\update@ttr\D@FTref\P@setmeshdash{#2}\else
    \def\n@mref{di}\ifx\l@debut\n@mref\update@ttr\D@FTmeshdiag\Q@s@tmeshdiag{#2}\else
    \def\n@mref{wi}\ifx\l@debut\n@mref\update@ttr\D@FTref\P@setmeshwidth{#2}\else
    \W@rnmesAttr{figset mesh}{#1}\fi\fi\fi\fi}
\ctr@ln@m\c@ntrolmesh
\ctr@ld@f\def\Q@s@tmeshdiag#1{\edef\c@ntrolmesh{#1}}
\ctr@ld@f\def\D@FTmeshdiag{0}    
\ctr@ln@m\DDV@meshcolor
\ctr@ld@f\def\P@setmeshcolor#1{\edef\DDV@meshcolor{#1}}
\ctr@ln@m\DDV@meshdash
\ctr@ld@f\def\P@setmeshdash#1{\edef\DDV@meshdash{#1}}
\ctr@ln@m\DDV@meshwidth
\ctr@ld@f\def\P@setmeshwidth#1{\edef\DDV@meshwidth{#1}}
\ctr@ld@f\def\figdrawmesh#1,#2[#3,#4,#5,#6]{{\ifCUR@PS\ifGR@cri%
    \PSc@mment{mesh N1=#1, N2=#2, Quadrangle=[#3,#4,#5,#6]}\s@uvc@ntr@l\et@tpsmesh%
    \Pss@tspecifSt{color=\DDV@meshcolor,dash=\DDV@meshdash,width=\DDV@meshwidth}%
    \setc@ntr@l{2}%
    \ifnum#1>\@ne\Psmeshp@rt#1[#3,#4,#5,#6]\fi%
    \ifnum#2>\@ne\Psmeshp@rt#2[#4,#5,#6,#3]\fi%
    \ifnum\c@ntrolmesh>\z@\Psmeshdi@g#1,#2[#3,#4,#5,#6]\fi%
    \ifnum\c@ntrolmesh<\z@\Psmeshdi@g#2,#1[#4,#5,#6,#3]\fi%
    \Psrest@reSt{color=\DDV@meshcolor,dash=\DDV@meshdash,width=\DDV@meshwidth}%
    \figdrawline[#3,#4,#5,#6,#3]\PSc@mment{End mesh}\resetc@ntr@l\et@tpsmesh\fi\fi}}
\ctr@ld@f\def\Psmeshp@rt#1[#2,#3,#4,#5]{{\l@mbd@un=\@ne\l@mbd@de=#1\loop%
    \ifnum\l@mbd@un<#1\advance\l@mbd@de\m@ne\figptbary-1:[#2,#3;\l@mbd@de,\l@mbd@un]%
    \figptbary-2:[#5,#4;\l@mbd@de,\l@mbd@un]\figdrawline[-1,-2]\advance\l@mbd@un\@ne\repeat}}
\ctr@ld@f\def\Psmeshdi@g#1,#2[#3,#4,#5,#6]{\figptcopy-2:/#3/\figptcopy-3:/#6/%
    \l@mbd@un=\z@\l@mbd@de=#1\loop\ifnum\l@mbd@un<#1%
    \advance\l@mbd@un\@ne\advance\l@mbd@de\m@ne\figptcopy-1:/-2/\figptcopy-4:/-3/%
    \figptbary-2:[#3,#4;\l@mbd@de,\l@mbd@un]%
    \figptbary-3:[#6,#5;\l@mbd@de,\l@mbd@un]\Psmeshdi@gp@rt#2[-1,-2,-3,-4]\repeat}
\ctr@ld@f\def\Psmeshdi@gp@rt#1[#2,#3,#4,#5]{{\l@mbd@un=\z@\l@mbd@de=#1\loop%
    \ifnum\l@mbd@un<#1\figptbary-5:[#2,#5;\l@mbd@de,\l@mbd@un]%
    \advance\l@mbd@de\m@ne\advance\l@mbd@un\@ne%
    \figptbary-6:[#3,#4;\l@mbd@de,\l@mbd@un]\figdrawline[-5,-6]\repeat}}
\ctr@ln@m\figdrawnormal
\ctr@ld@f\def\Q@normalDD#1,#2[#3,#4]{{\ifCUR@PS\ifGR@cri%
    \PSc@mment{normal Length=#1, Lambda=#2 [Pt1,Pt2]=[#3,#4]}%
    \s@uvc@ntr@l\et@tpsnormal\resetc@ntr@l{2}\figptendnormal-6::#1,#2[#3,#4]%
    \figptcopyDD-5:/-1/\figdrawarrow[-5,-6]%
    \PSc@mment{End normal}\resetc@ntr@l\et@tpsnormal\fi\fi}}
\ctr@ld@f\def\figreset#1{\trtlis@rg{#1}{\Psreset@}}
\ctr@ld@f\def\Psreset@#1|{\def\t@xt@{#1}\ifx\t@xt@\empty\P@resetg@n
    \else\keln@mde#1|%
    \def\n@mref{al}\ifx\l@debut\n@mref%
        \figset altitude(blcolor=default,bldash=default,blwidth=default,%
        sqcolor=default,sqdash=default,sqwidth=default)\else
    \def\n@mref{ar}\ifx\l@debut\n@mref\figresetarrowhead\else
    \def\n@mref{cu}\ifx\l@debut\n@mref\figset curve(roundness=\D@FTroundness)\else
    \def\n@mref{ge}\ifx\l@debut\n@mref\P@resetg@n\else
    \def\n@mref{fl}\ifx\l@debut\n@mref%
        \figset flowchart(arrowp=\D@FTfcarrowposition,arrowr=\D@FTfcarrowrefpt,%
	bgcolor=\D@FTfcbgcolor,line=\D@FTfcline,radius=\D@FTfcradius,%
	shape=\D@FTfcshape,thickcolor=default,thickness=\D@FTfcthickness,%
	xpadd=\D@FTfcxpadding,ypadd=\D@FTfcypadding)\else
    \def\n@mref{me}\ifx\l@debut\n@mref\figset mesh(diag=\D@FTmeshdiag,%
        color=default,dash=default,width=default)\else
    \def\n@mref{tr}\ifx\l@debut\n@mref%
        \figset trimesh(color=default,dash=default,width=default)\else
    \W@rnmeskwd{figreset}{#1}\fi\fi\fi\fi\fi\fi\fi\fi}
\ctr@ld@f\def\P@resetg@n{\figset (color=\D@FTcolor,dash=\D@FTdash,fill=\D@FTfill,%
    join=\D@FTjoin,width=\D@FTwidth)}
\ctr@ld@f\def\figset#1(#2){\def\t@xt@{#1}\ifx\t@xt@\empty\trtlis@rg{#2}{\Pssetg@n}
    \else\keln@mde#1|%
    \def\n@mref{al}\ifx\l@debut\n@mref\trtlis@rg{#2}{\Psset@lti}\else
    \def\n@mref{ar}\ifx\l@debut\n@mref\trtlis@rg{#2}{\Psset@rrowhe@d}\else
    \def\n@mref{cu}\ifx\l@debut\n@mref\trtlis@rg{#2}{\Pssetc@rve}\else
    \def\n@mref{fl}\ifx\l@debut\n@mref\trtlis@rg{#2}{\Pssetfl@wchart}\else
    \def\n@mref{ge}\ifx\l@debut\n@mref\trtlis@rg{#2}{\Pssetg@n}\else
    \def\n@mref{me}\ifx\l@debut\n@mref\trtlis@rg{#2}{\Pssetm@sh}\else
    \def\n@mref{pr}\ifx\l@debut\n@mref\ifCUR@PS\W@rnmesIgn{figset proj(...)}%
     \else\trtlis@rg{#2}{\Figsetpr@j}\fi\else
    \def\n@mref{tr}\ifx\l@debut\n@mref\trtlis@rg{#2}{\Pssettrim@sh}\else
    \def\n@mref{wr}\ifx\l@debut\n@mref\let\M@cro=\Figsetwr@te\trtlis@rgtok{#2,|}\else
    \W@rnmeskwd{figset}{#1}\fi\fi\fi\fi\fi\fi\fi\fi\fi\fi\ignorespaces}
\ctr@ld@f\def\figsetdefault#1(#2){\ifCUR@PS\W@rnmesIgn{figsetdefault}\else%
    \def\t@xt@{#1}\ifx\t@xt@\empty\trtlis@rg{#2}{\Pssd@g@n}\else\keln@mun#1|
    \def\n@mref{a}\ifx\l@debut\n@mref\trtlis@rg{#2}{\Pssd@@rrowhe@d}\else
    \def\n@mref{c}\ifx\l@debut\n@mref\trtlis@rg{#2}{\Pssd@c@rve}\else
    \def\n@mref{g}\ifx\l@debut\n@mref\trtlis@rg{#2}{\Pssd@g@n}\else
    \def\n@mref{f}\ifx\l@debut\n@mref\trtlis@rg{#2}{\Pssd@fl@wchart}\else
    \def\n@mref{m}\ifx\l@debut\n@mref\trtlis@rg{#2}{\Pssd@m@sh}\else
    \W@rnmeskwd{figsetdefault}{#1}\fi\fi\fi\fi\fi\fi\initpss@ttings\fi}
\ctr@ld@f\def\Pssd@g@n#1=#2|{\keln@mun#1|%
    \def\n@mref{c}\ifx\l@debut\n@mref\edef\D@FTcolor{#2}\else
    \def\n@mref{d}\ifx\l@debut\n@mref\edef\D@FTdash{#2}\else
    \def\n@mref{f}\ifx\l@debut\n@mref\edef\D@FTfill{#2}\else
    \def\n@mref{j}\ifx\l@debut\n@mref\edef\D@FTjoin{#2}\else
    \def\n@mref{u}\ifx\l@debut\n@mref\edef\D@FTupdate{#2}\Q@s@tupdate{#2}\else
    \def\n@mref{w}\ifx\l@debut\n@mref\edef\D@FTwidth{#2}\else
    \W@rnmesAttr{figsetdefault}{#1}\fi\fi\fi\fi\fi\fi}
\ctr@ld@f\def\Pssd@@rrowhe@d#1=#2|{\keln@mun#1|%
    \def\n@mref{a}\ifx\l@debut\n@mref\edef\D@FTarrowheadangle{#2}\else
    \def\n@mref{f}\ifx\l@debut\n@mref\edef\D@FTarrowheadfill{#2}\else
    \def\n@mref{l}\ifx\l@debut\n@mref\y@tiunit{#2}\ifunitpr@sent%
     \edef\D@FTh@rdahlength{#2}\else\edef\D@FTh@rdahlength{#2pt}%
     \message{*** \BS@ figsetdefault (..., #1=#2, ...) : unit is missing, pt is assumed.}%
     \fi\else
    \def\n@mref{o}\ifx\l@debut\n@mref\edef\D@FTarrowheadout{#2}\else
    \def\n@mref{r}\ifx\l@debut\n@mref\edef\D@FTarrowheadratio{#2}\else
    \W@rnmesAttr{figsetdefault arrowhead}{#1}\fi\fi\fi\fi\fi}
\ctr@ld@f\def\Pssd@c@rve#1=#2|{\keln@mun#1|%
    \def\n@mref{r}\ifx\l@debut\n@mref\edef\D@FTroundness{#2}\else%
    \W@rnmesAttr{figsetdefault curve}{#1}\fi}
\ctr@ld@f\def\Pssd@fl@wchart#1=#2|{\keln@mtr#1|%
    \def\n@mref{arr}\ifx\l@debut\n@mref\expandafter\keln@mtr\l@suite|%
     \def\n@mref{owp}\ifx\l@debut\n@mref\edef\D@FTfcarrowposition{#2}\else
     \def\n@mref{owr}\ifx\l@debut\n@mref\edef\D@FTfcarrowrefpt{#2}\else
                     \W@rnmesAttr{figsetdefault flowchart}{#1}\fi\fi\else%
    \def\n@mref{bgc}\ifx\l@debut\n@mref\edef\D@FTfcbgcolor{#2}\else
    \def\n@mref{lin}\ifx\l@debut\n@mref\edef\D@FTfcline{#2}\else
    \def\n@mref{pad}\ifx\l@debut\n@mref\edef\D@FTfcxpadding{#2}%
                    \edef\D@FTfcypadding{#2}\else
    \def\n@mref{rad}\ifx\l@debut\n@mref\edef\D@FTfcradius{#2}\else
    \def\n@mref{sha}\ifx\l@debut\n@mref\edef\D@FTfcshape{#2}\else
    \def\n@mref{thi}\ifx\l@debut\n@mref\expandafter\keln@mtr\l@suite|%
     \def\n@mref{ckn}\ifx\l@debut\n@mref\edef\D@FTfcthickness{#2}\else
                     \W@rnmesAttr{figsetdefault flowchart}{#1}\fi\else%
    \def\n@mref{xpa}\ifx\l@debut\n@mref\edef\D@FTfcxpadding{#2}\else
    \def\n@mref{ypa}\ifx\l@debut\n@mref\edef\D@FTfcypadding{#2}\else
    \W@rnmesAttr{figsetdefault flowchart}{#1}\fi\fi\fi\fi\fi\fi\fi\fi\fi}
\ctr@ld@f\def\D@FTfcarrowposition{0.5}
\ctr@ld@f\def\D@FTfcarrowrefpt{start}
\ctr@ld@f\def\D@FTfcbgcolor{1}
\ctr@ld@f\def\D@FTfcline{polygon}
\ctr@ld@f\def\D@FTfcradius{0}
\ctr@ld@f\def\D@FTfcshape{rectangle}
\ctr@ld@f\def\D@FTfcthickness{0}
\ctr@ld@f\def\D@FTfcxpadding{0}
\ctr@ld@f\def\D@FTfcypadding{0}
\ctr@ld@f\def\Pssd@m@sh#1=#2|{\keln@mun#1|%
    \def\n@mref{d}\ifx\l@debut\n@mref\edef\D@FTmeshdiag{#2}\else%
    \W@rnmesAttr{figsetdefault mesh}{#1}\fi}
\ctr@ln@w{newif}\iffillm@de
\ctr@ld@f\def\Q@s@tfillmode#1{\expandafter\setfillm@de#1:}
\ctr@ld@f\def\setfillm@de#1#2:{\if#1n\fillm@defalse\else\fillm@detrue\fi}
\ctr@ld@f\def\D@FTfill{no}     
\ctr@ln@w{newif}\ifGRupdatem@de
\ctr@ld@f\def\Q@s@tupdate#1{\ifCUR@PS\W@rnmesIgn{figset (update=...)}%
    \else\expandafter\setupd@te#1:\fi}
\ctr@ld@f\def\setupd@te#1#2:{\if#1n\GRupdatem@defalse\else\GRupdatem@detrue\fi}
\ctr@ld@f\def\D@FTupdate{no}     
\ctr@ln@m\CUR@color \ctr@ln@m\CUR@colorc@md
\ctr@ld@f\def\s@uvcolor#1{\edef#1{\CUR@color}}
\ctr@ld@f\def\D@FTcolor{0}       
\ctr@ld@f\def\Pssetc@lor#1{\ifGR@cri\result@tent=\@ne\expandafter\c@lnbV@l#1 :%
    \def\CUR@color{}\def\CUR@colorc@md{}%
    \ifcase\result@tent\or\Q@s@tgray{#1}\or\or\Q@s@trgb{#1}\or\Q@s@tcmyk{#1}\fi\fi}
\ctr@ln@m\CUR@colorc@mdStroke
\ctr@ld@f\def\Q@s@tcmyk#1{\ifGR@cri\def\CUR@color{#1}\def\CUR@colorc@md{\c@msetcmykcolor}%
    \def\CUR@colorc@mdStroke{\c@msetcmykcolorStroke}%
    \ifCUR@PS\PSc@mment{setcmyk Color=#1}\us@primarC@lor\fi\fi}
\ctr@ld@f\def\Q@s@trgb#1{\ifGR@cri\def\CUR@color{#1}\def\CUR@colorc@md{\c@msetrgbcolor}%
    \def\CUR@colorc@mdStroke{\c@msetrgbcolorStroke}%
    \ifCUR@PS\PSc@mment{setrgb Color=#1}\us@primarC@lor\fi\fi}
\ctr@ld@f\def\Q@s@tgray#1{\ifGR@cri\def\CUR@color{#1}\def\CUR@colorc@md{\c@msetgray}%
    \def\CUR@colorc@mdStroke{\c@msetgrayStroke}%
    \ifCUR@PS\PSc@mment{setgray Gray level=#1}\us@primarC@lor\fi\fi}
\ctr@ln@m\fillc@md
\ctr@ld@f\def\us@primarC@lor{\immediate\write\fwf@g{\d@fprimarC@lor}%
    \let\fillc@md=\prfillc@md}
\ctr@ld@f\def\prfillc@md{\d@fprimarC@lor\space\c@mfill}
\ctr@ld@f\def\c@lnbV@l#1 #2:{\def\t@xt@{#1}\relax\ifx\t@xt@\empty\c@lnbV@l#2:
    \else\c@lnbV@l@#1 #2:\fi}
\ctr@ld@f\def\c@lnbV@l@#1 #2:{\def\t@xt@{#2}\ifx\t@xt@\empty%
    \def\t@xt@{#1}\ifx\t@xt@\empty\advance\result@tent\m@ne\fi
    \else\advance\result@tent\@ne\c@lnbV@l@#2:\fi}
\ctr@ld@f\def\Blackcmyk{0 0 0 1}
\ctr@ld@f\def\Whitecmyk{0 0 0 0}
\ctr@ld@f\def\Cyancmyk{1 0 0 0}
\ctr@ld@f\def\Magentacmyk{0 1 0 0}
\ctr@ld@f\def\Yellowcmyk{0 0 1 0}
\ctr@ld@f\def\Redcmyk{0 1 1 0}
\ctr@ld@f\def\Greencmyk{1 0 1 0}
\ctr@ld@f\def\Bluecmyk{1 1 0 0}
\ctr@ld@f\def\Graycmyk{0 0 0 0.50}
\ctr@ld@f\def\BrickRedcmyk{0 0.89 0.94 0.28} 
\ctr@ld@f\def\Browncmyk{0 0.81 1 0.60} 
\ctr@ld@f\def\ForestGreencmyk{0.91 0 0.88 0.12} 
\ctr@ld@f\def\Goldenrodcmyk{ 0 0.10 0.84 0} 
\ctr@ld@f\def\Marooncmyk{0 0.87 0.68 0.32} 
\ctr@ld@f\def\Orangecmyk{0 0.61 0.87 0} 
\ctr@ld@f\def\Purplecmyk{0.45 0.86 0 0} 
\ctr@ld@f\def\RoyalBluecmyk{1. 0.50 0 0} 
\ctr@ld@f\def\Violetcmyk{0.79 0.88 0 0} 
\ctr@ld@f\def\Blackrgb{0 0 0}
\ctr@ld@f\def\Whitergb{1 1 1}
\ctr@ld@f\def\Redrgb{1 0 0}
\ctr@ld@f\def\Greenrgb{0 1 0}
\ctr@ld@f\def\Bluergb{0 0 1}
\ctr@ld@f\def\Cyanrgb{0 1 1}
\ctr@ld@f\def\Magentargb{1 0 1}
\ctr@ld@f\def\Yellowrgb{1 1 0}
\ctr@ld@f\def\Grayrgb{0.5 0.5 0.5}
\ctr@ld@f\def\Chocolatergb{0.824 0.412 0.118}
\ctr@ld@f\def\DarkGoldenrodrgb{0.722 0.525 0.043}
\ctr@ld@f\def\DarkOrangergb{1 0.549 0}
\ctr@ld@f\def\Firebrickrgb{0.698 0.133 0.133}
\ctr@ld@f\def\ForestGreenrgb{0.133 0.545 0.133}
\ctr@ld@f\def\Goldrgb{1 0.843 0}
\ctr@ld@f\def\HotPinkrgb{1 0.412 0.706}
\ctr@ld@f\def\Maroonrgb{0.690 0.188 0.376}
\ctr@ld@f\def\Pinkrgb{1 0.753 0.796}
\ctr@ld@f\def\RoyalBluergb{0.255 0.412 0.882}
\ctr@ld@f\def\Pssetg@n#1=#2|{\keln@mun#1|%
    \def\n@mref{c}\ifx\l@debut\n@mref\update@ttr\D@FTcolor\Pssetc@lor{#2}\else
    \def\n@mref{d}\ifx\l@debut\n@mref\update@ttr\D@FTdash\Q@s@tdash{#2}\else
    \def\n@mref{f}\ifx\l@debut\n@mref\update@ttr\D@FTfill\Q@s@tfillmode{#2}\else
    \def\n@mref{j}\ifx\l@debut\n@mref\update@ttr\D@FTjoin\Q@s@tjoin{#2}\else
    \def\n@mref{u}\ifx\l@debut\n@mref\update@ttr\D@FTupdate\Q@s@tupdate{#2}\else
    \def\n@mref{w}\ifx\l@debut\n@mref\update@ttr\D@FTwidth\Q@s@twidth{#2}\else
    \W@rnmesAttr{figset}{#1}\fi\fi\fi\fi\fi\fi}
\ctr@ln@m\CUR@dash
\ctr@ld@f\def\s@uvdash#1{\edef#1{\CUR@dash}}
\ctr@ld@f\def\D@FTdash{1}        
\ctr@ld@f\def\Q@s@tdash#1{\ifGR@cri\edef\CUR@dash{#1}\ifCUR@PS\expandafter\Pssetd@sh#1 :\fi\fi}
\ctr@ld@f\def\Pssetd@shI#1{\PSc@mment{setdash Index=#1}\ifcase#1%
    \or\immediate\write\fwf@g{[] 0 \c@msetdash}
    \or\immediate\write\fwf@g{[6 2] 0 \c@msetdash}
    \or\immediate\write\fwf@g{[4 2] 0 \c@msetdash}
    \or\immediate\write\fwf@g{[2 2] 0 \c@msetdash}
    \or\immediate\write\fwf@g{[1 2] 0 \c@msetdash}
    \or\immediate\write\fwf@g{[2 4] 0 \c@msetdash}
    \or\immediate\write\fwf@g{[3 5] 0 \c@msetdash}
    \or\immediate\write\fwf@g{[3 3] 0 \c@msetdash}
    \or\immediate\write\fwf@g{[3 5 1 5] 0 \c@msetdash}
    \or\immediate\write\fwf@g{[6 4 2 4] 0 \c@msetdash}
    \fi}
\ctr@ld@f\def\Pssetd@sh#1 #2:{{\def\t@xt@{#1}\ifx\t@xt@\empty\Pssetd@sh#2:
    \else\def\t@xt@{#2}\ifx\t@xt@\empty\Pssetd@shI{#1}\else\s@mme=\@ne\def\debutp@t{#1}%
    \an@lysd@sh#2:\ifodd\s@mme\edef\debutp@t{\debutp@t\space\finp@t}\def\finp@t{0}\fi%
    \PSc@mment{setdash Pattern=#1 #2}%
    \immediate\write\fwf@g{[\debutp@t] \finp@t\space\c@msetdash}\fi\fi}}
\ctr@ld@f\def\an@lysd@sh#1 #2:{\def\t@xt@{#2}\ifx\t@xt@\empty\def\finp@t{#1}\else%
    \edef\debutp@t{\debutp@t\space#1}\advance\s@mme\@ne\an@lysd@sh#2:\fi}
\ctr@ln@m\CUR@width
\ctr@ld@f\def\s@uvwidth#1{\edef#1{\CUR@width}}
\ctr@ld@f\def\D@FTwidth{0.4}     
\ctr@ld@f\def\Q@s@twidth#1{\ifGR@cri\edef\CUR@width{#1}\ifCUR@PS%
    \PSc@mment{setwidth Width=#1}\immediate\write\fwf@g{#1 \c@msetlinewidth}\fi\fi}
\ctr@ln@m\CUR@join
\ctr@ld@f\def\s@uvjoin#1{\edef#1{\CUR@join}}
\ctr@ld@f\def\D@FTjoin{miter}   
\ctr@ld@f\def\Q@s@tjoin#1{\ifGR@cri\edef\CUR@join{#1}\ifCUR@PS\expandafter\Pssetj@in#1:\fi\fi}
\ctr@ld@f\def\Pssetj@in#1#2:{\PSc@mment{setjoin join=#1}%
    \if#1r\def\t@xt@{1}\else\if#1b\def\t@xt@{2}\else\def\t@xt@{0}\fi\fi%
    \immediate\write\fwf@g{\t@xt@\space\c@msetlinejoin}}
\ctr@ld@f\def\Pss@tspecifSt#1{\trtlis@rg{#1}{\Pss@tspecifSt@}}
\ctr@ld@f\def\Pss@tspecifSt@#1=#2|{\keln@mun#1|%
    \def\n@mref{c}\ifx\l@debut\n@mref\def\n@mref{#2}\ifx\n@mref\D@FTref\else%
     \s@uvcolor{\typ@color}\Pssetc@lor{#2}\fi\else
    \def\n@mref{d}\ifx\l@debut\n@mref\def\n@mref{#2}\ifx\n@mref\D@FTref\else%
     \s@uvdash{\typ@dash}\Q@s@tdash{#2}\fi\else
    \def\n@mref{j}\ifx\l@debut\n@mref\def\n@mref{#2}\ifx\n@mref\D@FTref\else%
     \s@uvjoin{\typ@join}\Q@s@tjoin{#2}\fi\else
    \def\n@mref{w}\ifx\l@debut\n@mref\def\n@mref{#2}\ifx\n@mref\D@FTref\else%
     \s@uvwidth{\typ@width}\Q@s@twidth{#2}\fi\else
    \W@rnmeskwd{Pss@tspecifSt}{#1}\fi\fi\fi\fi}
\ctr@ld@f\def\Psrest@reSt#1{\trtlis@rg{#1}{\Psrest@reSt@}}
\ctr@ld@f\def\Psrest@reSt@#1=#2|{\keln@mun#1|%
    \def\n@mref{c}\ifx\l@debut\n@mref\def\n@mref{#2}\ifx\n@mref\D@FTref\else%
     \Pssetc@lor{\typ@color}\fi\else
    \def\n@mref{d}\ifx\l@debut\n@mref\def\n@mref{#2}\ifx\n@mref\D@FTref\else%
     \Q@s@tdash{\typ@dash}\fi\else
    \def\n@mref{j}\ifx\l@debut\n@mref\def\n@mref{#2}\ifx\n@mref\D@FTref\else%
     \Q@s@tjoin{\typ@join}\fi\else
    \def\n@mref{w}\ifx\l@debut\n@mref\def\n@mref{#2}\ifx\n@mref\D@FTref\else%
     \Q@s@twidth{\typ@width}\fi\else
    \W@rnmeskwd{Psrest@reSt}{#1}\fi\fi\fi\fi}
\ctr@ld@f\def\Pssettrim@sh#1=#2|{\keln@mde#1|%
    \def\n@mref{co}\ifx\l@debut\n@mref\update@ttr\D@FTref\P@settmeshcolor{#2}\else
    \def\n@mref{da}\ifx\l@debut\n@mref\update@ttr\D@FTref\P@settmeshdash{#2}\else
    \def\n@mref{wi}\ifx\l@debut\n@mref\update@ttr\D@FTref\P@settmeshwidth{#2}\else
    \W@rnmesAttr{figset trimesh}{#1}\fi\fi\fi}
\ctr@ln@m\DDV@tmeshcolor
\ctr@ld@f\def\P@settmeshcolor#1{\edef\DDV@tmeshcolor{#1}}
\ctr@ln@m\DDV@tmeshdash
\ctr@ld@f\def\P@settmeshdash#1{\edef\DDV@tmeshdash{#1}}
\ctr@ln@m\DDV@tmeshwidth
\ctr@ld@f\def\P@settmeshwidth#1{\edef\DDV@tmeshwidth{#1}}
\ctr@ld@f\def\figdrawtrimesh#1[#2,#3,#4]{{\ifCUR@PS\ifGR@cri%
    \PSc@mment{trimesh Type=#1, Triangle=[#2,#3,#4]}%
    \s@uvc@ntr@l\et@tpstrimesh\ifnum#1>\@ne%
    \Pss@tspecifSt{color=\DDV@tmeshcolor,dash=\DDV@tmeshdash,width=\DDV@tmeshwidth}%
    \setc@ntr@l{2}%
    \Pstrimeshp@rt#1[#2,#3,#4]\Pstrimeshp@rt#1[#3,#4,#2]\Pstrimeshp@rt#1[#4,#2,#3]%
    \Psrest@reSt{color=\DDV@tmeshcolor,dash=\DDV@tmeshdash,width=\DDV@tmeshwidth}%
    \fi\figdrawline[#2,#3,#4,#2]%
    \PSc@mment{End trimesh}\resetc@ntr@l\et@tpstrimesh\fi\fi}}
\ctr@ld@f\def\Pstrimeshp@rt#1[#2,#3,#4]{{\l@mbd@un=\@ne\l@mbd@de=#1\loop\ifnum\l@mbd@de>\@ne%
    \advance\l@mbd@de\m@ne\figptbary-1:[#2,#3;\l@mbd@de,\l@mbd@un]%
    \figptbary-2:[#2,#4;\l@mbd@de,\l@mbd@un]\figdrawline[-1,-2]%
    \advance\l@mbd@un\@ne\repeat}}
\initpr@lim\initpss@ttings\initPDF@rDVI
\ctr@ln@w{newbox}\figBoxA
\ctr@ln@w{newbox}\figBoxB
\ctr@ln@w{newbox}\figBoxC
\catcode`\@=12

\figsetdefault(update=yes)

\usepackage{amsmath}
\usepackage{amsfonts}
\usepackage{amsbsy}
\usepackage{amssymb}
\usepackage[normalem]{ulem}
\usepackage{times}
\usepackage{mathrsfs}
\usepackage{exscale}
\usepackage{graphicx}
\usepackage[colorlinks,citecolor=blue,linkcolor=rouge,
            bookmarksopen,
            bookmarksnumbered
           ]{hyperref}

\usepackage{times}
\renewcommand{\arraystretch}{1.25}

\usepackage{color}
\definecolor{gr}{rgb}   {0.,   0.6,   0.25 }
\definecolor{mg}{rgb}   {0.85,  0.,    0.85}
\definecolor{marin}{rgb}   {0.,   0.,   0.8}
\definecolor{rouge}{rgb}   {0.8,   0.,   0.}
\definecolor{orange}{rgb}   {0.8,   0.4,   0.}

\newcommand{\Gr}{\color{gr}}
\newcommand{\Mg}{\color{mg}}
\newcommand{\Bk}{\color{black}}
\newcommand{\Rd}{\color{red}}
\newcommand{\Bl}{\color{marin}}
\newcommand{\Or}{\color{orange}}

\newtheorem{theorem}{Theorem}[section]
\newtheorem{lemma}[theorem]{Lemma}
\newtheorem{proposition}[theorem]{Proposition}
\newtheorem{corollary}[theorem]{Corollary}

\theoremstyle{definition}
\newtheorem{definition}[theorem]{Definition}
\newtheorem{notation}[theorem]{Notation}

\theoremstyle{remark}
\newtheorem{remark}[theorem]{Remark}

\numberwithin{equation}{section}


\newcommand{\ee}{\hskip0.15ex}
\newcommand{\me}{\hskip-0.15ex}
\newcommand{\dd}[1]{_{\raise-0.3ex\hbox{$\scriptstyle #1$}}}
\newcommand{\di}{\displaystyle}
\newcommand{\on}[1]{\raise-.5ex\hbox{\big|}_{#1}}
\renewcommand{\Re}{\operatorname{\rm Re}}
\renewcommand{\Im}{\operatorname{\rm Im}}

\newcommand{\round}[1]{\lfloor #1\ee\rceil}

\newcommand{\vpha}{\left.\vphantom{T^{j_0}_{j_0}}\!\!\right.}

\newcommand {\Norm}[2]{ \mathchoice
    {|\ee #1\ee|\dd{#2}}
    {| #1 |_{#2}}
    {| #1 |_{#2}}
    {| #1 |_{#2}} }
\newcommand {\DNorm}[2]{ \mathchoice
    {\|\ee #1\ee\|\dd{#2}}
    {\| #1 \|_{#2}}
    {\| #1 \|_{#2}}
    {\| #1 \|_{#2}} }
\newcommand {\Normc}[2]{ \mathchoice
    {|\ee #1\ee|\dd{#2}^2}
    {| #1 |_{#2}^2}
    {| #1 |_{#2}^2}
    {| #1 |_{#2}^2} }
\newcommand {\DNormc}[2]{ \mathchoice
    {\|\ee #1\ee\|\dd{#2}^2}
    {\| #1 \|_{#2}^2}
    {\| #1 \|_{#2}^2}
    {\| #1 \|_{#2}^2} }

\renewcommand{\div}{\operatorname{\rm div}}
\newcommand{\grad}{\operatorname{\textbf{grad}}}
\newcommand{\curl}{\operatorname{\textbf{curl}}}
\newcommand{\curls}{\operatorname{\rm curl}}
\newcommand{\Cinf}{\mathscr{C}^\infty}

\newcommand\bB{{\mathbb B}}
\newcommand\C{{\mathbb C}}
\newcommand\R{{\mathbb R}}
\newcommand\N{{\mathbb N}}
\newcommand\Q{{\mathbb Q}}
\newcommand\T{{\mathbb T}}
\newcommand\Z{{\mathbb Z}}

\newcommand{\sC}{{\mathscr C}}
\newcommand{\sB}{{\mathscr B}}
\newcommand{\sD}{{\mathscr D}}
\newcommand{\sE}{{\mathscr E}}
\newcommand{\sL}{{\mathscr L}}
\newcommand{\sS}{{\mathscr S}}
\newcommand{\sZ}{{\mathscr Z}}

\renewcommand{\Re}{\operatorname{Re}}
\renewcommand{\Im}{\operatorname{Im}}

\newcommand\cA{{\mathcal{A}}}
\newcommand\cB{{\mathcal{B}}}
\newcommand\cC{{\mathcal{C}}}
\newcommand\cD{{\mathcal{D}}}
\newcommand\cE{{\mathcal{E}}}
\newcommand\cG{{\mathcal{G}}}
\newcommand\cI{{\mathcal{I}}}
\newcommand\cH{{\mathcal{H}}}
\newcommand\cJ{{\mathcal{J}}}
\newcommand\cK{{\mathcal{K}}}
\newcommand\cL{{\mathcal{L}}}
\newcommand\cM{{\mathcal{M}}}
\newcommand\cO{{\mathcal{O}}}
\newcommand\cR{{\mathcal{R}}}
\newcommand\cS{{\mathcal{S}}}
\newcommand\cT{{\mathcal{T}}}
\newcommand\cU{{\mathcal{U}}}
\newcommand\cV{{\mathcal{V}}}
\newcommand\cZ{{\mathcal{Z}}}

\newcommand{\rA}{{\mathsf{A}}}
\newcommand{\rB}{{\mathsf{B}}}
\newcommand{\mD}{{\mathrm{D}}}
\newcommand{\rF}{{\mathsf{F}}}
\newcommand{\rG}{{\mathsf{G}}}
\newcommand{\rH}{{\mathsf{H}}}
\newcommand{\rM}{{\mathsf{M}}}
\newcommand{\rL}{{\mathsf{L}}}
\newcommand{\mP}{{\mathrm{P}}}
\newcommand{\rP}{{\mathsf{P}}}
\newcommand{\rQ}{{\mathsf{Q}}}
\newcommand{\rR}{{\mathsf{R}}}
\newcommand{\mS}{{\mathrm{S}}}
\newcommand{\rS}{{\mathsf S}}
\newcommand{\rT}{{\mathsf{T}}}
\newcommand{\rU}{{\mathsf{U}}}
\newcommand{\rV}{{\mathsf{V}}}
\newcommand{\rW}{{\mathsf{W}}}

\newcommand{\rb}{{\mathsf{b}}}
\newcommand{\rc}{{\mathsf{c}}}
\newcommand{\rd}{{\mathrm d}}
\newcommand{\re}{{\mathrm e}}
\newcommand{\rh}{{\mathsf{g}}}
\newcommand{\rs}{{\mathsf{s}}}
\newcommand{\ru}{{\mathsf{u}}}
\newcommand{\rv}{{\mathsf{v}}}

\newcommand {\gA}{\mathfrak{A}}
\newcommand {\gC}{\mathfrak{C}}
\newcommand {\gE}{\mathfrak{E}}
\newcommand {\gF}{\mathfrak{F}}
\newcommand {\gL}{{\mathfrak L}}
\newcommand {\gM}{{\mathfrak M}}
\newcommand {\gP}{{\mathfrak P}}
\newcommand {\gR}{{\mathfrak R}}
\newcommand {\gU}{{\mathfrak U}}
\newcommand {\gW}{{\mathfrak W}}
\newcommand {\gZ}{{\mathfrak Z}}

\newcommand{\tu}{{\tilde{u}}}

\newcommand {\Id}{\mathbb I}
\newcommand{\comp}{\mathrm{comp}}
\newcommand{\diam}{\mathrm{diam}}
\newcommand{\dist}{\mathrm{dist}}
\newcommand{\meas}{\mathrm{meas}}

\usepackage{soul}

\begin{document}

\title[Converging expansions]{\bf Converging expansions for Lipschitz\\ self-similar perforations of a plane sector}

\author{Martin Costabel}
\address{IRMAR UMR 6625 du CNRS, Universit\'{e} de Rennes 1, Campus de Beaulieu,
35042 Rennes Cedex, France}
\email{martin.costabel@univ-rennes1.fr}
\author{Matteo Dalla Riva}
\address{Department of Mathematics, The University of Tulsa, 800 South Tucker Drive, Tulsa, Oklahoma 74104, USA}
\email{matteo-dallariva@gmail.com}
\author{Monique Dauge}
\address{IRMAR UMR 6625 du CNRS, Universit\'{e} de Rennes 1, Campus de Beaulieu,
35042 Rennes Cedex, France}
\email{monique.dauge@univ-rennes1.fr}
\author{Paolo Musolino}
\address{Department of Mathematics, Aberystwyth University, Ceredigion SY23 3BZ, Wales, UK}
\email{musolinopaolo@gmail.com}

\keywords{Dirichlet problem, corner singularities, perforated domain, double layer potential, Diophantine approximation}

\subjclass{35J05, 45A05, 31A10, 35B25, 35C20, 11J99}

\begin{abstract}
 In contrast with the well-known methods of matching asymptotics and multiscale (or compound) asymptotics, the ``functional analytic approach'' of Lanza de Cristoforis (Analysis 28, 2008) allows to prove convergence of expansions around interior small holes of size $\varepsilon$ for solutions of elliptic boundary value problems. Using the method of layer potentials, the asymptotic behavior of the solution as $\varepsilon$ tends to zero is described not only by asymptotic series in powers of $\varepsilon$, but by convergent power series. 
Here we use this method to investigate the Dirichlet problem for the Laplace operator where holes are collapsing at a polygonal corner of opening $\omega$. Then in addition to the scale $\varepsilon$ there appears the scale $\eta=\varepsilon^{\pi/\omega}$. We prove that when $\pi/\omega$ is irrational, the solution of the Dirichlet problem is given by convergent series in powers of these two small parameters. 
Due to interference of the two scales, this convergence is obtained, in full generality, by grouping together integer powers of the two scales that are very close to each other. Nevertheless, there exists a dense subset of openings $\omega$ (characterized by Diophantine approximation properties), for which real analyticity in the two variables $\varepsilon$ and $\eta$ holds and the power series converge unconditionally. When $\pi/\omega$ is rational, the series are unconditionally convergent, but contain terms in $\log\varepsilon$. 
\end{abstract}
\maketitle

{
\parskip 1pt
\tableofcontents
}


\section*{Introduction}
\label{s:1}

Domains with small holes are fundamental examples of singularly perturbed domains. The analysis of the asymptotic behavior of elliptic boundary value problems in such perforated domains as the size of the holes tends to zero lays the basis for numerous applications in more involved situations that can be found in the classical monographs \cite{Il92}, \cite{MaNaPl00}, \cite{KoMaMo99} and the more recent \cite{AmKa07}. The two methods that are most widely spread are the {\em matching of asymptotic expansions} as exposed by Il'in \cite{Il92}, and the method of {\em multiscale} (or {\em compound}) expansions as in Maz'ya, Nazarov, and Plamenevskij \cite{MaNaPl00} or Kozlov, Maz'ya, and Movchan \cite{KoMaMo99}. Ammari and Kang \cite{AmKa07} use the method of layer potentials to construct asymptotic expansions.
When the holes are shrinking to the corner of a polygonal domain, one encounters the class of self-similar singular perturbations, a case that has been treated by Maz'ya, Nazarov, and Plamenevskij \cite[Ch.2]{MaNaPl00} with the method of compound expansions, and by Dauge, Tordeux, and Vial \cite{DaToVi10} with both methods of matched and compound expansions. 
 The common feature of these methods is their algorithmic and constructive nature: The terms of the asymptotic expansions are constructed according to a sequential order. At each step of the construction, remainder estimates are proved, but there is no uniform control of the remainders, 
and this does therefore not lead to a convergence proof.

Another method appeared recently, based on the ``functional analytic approach'' introduced by Lanza de Cristoforis \cite{La02}. This method has so far mainly been applied to  the Laplace equation on domains with holes collapsing to interior points. The core feature is a description of the solution as a real analytic function of one or several variables depending on the small parameter $\varepsilon$ that characterizes the size of the holes. This is proved by a reduction to the boundary via integral equations, and after a careful analysis of the boundary integral operators the analytic implicit function theorem can be invoked, providing the expansion of the solutions into convergent series. Analytic functions of several variables appear for example in \cite{La08,DaMu15}, where the two-dimensional Dirichlet problem leads to the introduction of the scale $1/\log\varepsilon$ besides $\varepsilon$, or in \cite{DaMu16}, where a boundary value problem in a domain with moderately close holes is studied 
and the size of the holes and their distance are defined by small parameters that may be of different size.

Our aim in this paper is to understand how this method would apply to the Dirichlet problem in a polygonal domain when holes are shrinking to the corner in a self-similar manner. In the limit $\varepsilon\to0$, the singular behavior of solutions at corners without holes will combine with the singular perturbation of the geometry. 
In contrast to what happens in the case of holes collapsing at interior points or at smooth boundary points \cite{BoDaDaMu16}, we find that the series expansions in powers of $\varepsilon$ that correspond to the asymptotic expansions of \cite{DaToVi10} are only  ``stepwise convergent''. For  corner opening angles $\omega$ that are rational multiples of $\pi$, the series will be unconditionally convergent, but in general for irrational multiples of $\pi$, certain pairs of terms in the series may have to be grouped together in order to achieve convergence. This is a peculiar feature similar to, and in the end caused by, the stepwise convergence of the asymptotic expansion of the solution of boundary value problems near corners when the data are analytic \cite{BraDau82,Dauge84}.

\subsection{Geometric setting}\label{ss:geomset}
We consider perforated domains where the holes are shrinking towards a point of the boundary that is the vertex of a plane sector. For the sake of simplicity, we try to concentrate on the essential features and avoid unnecessary generality. Therefore we consider only one corner, but we admit several holes.

We denote by $t=(t_1,t_2)$  the Cartesian coordinates in the plane $\R^2$, and by $(\rho=|t|,\vartheta=\arg(t))$ the polar coordinates. The open ball with center $0$ and radius $\rho_0$ is denoted by $\sB(0,\rho_0)$. Let the opening angle $\omega$ be chosen in $(0,2\pi)$ and denote by $\rS_\omega$ the infinite sector
\begin{equation}
\label{eq:1E1}
   \rS_\omega = \{t\in\R^2,\quad \vartheta\in(0,\omega)\}.
\end{equation}
The case $\omega=\pi$ is degenerate and corresponds to a half-plane. 

The perforated domains $\rA_{\varepsilon}$ are determined by an unperforated domain $\rA$, a hole pattern $\rP$ and scale factors $\varepsilon$, about which we make some hypotheses.

The \emph{unperforated domain} $\rA$ satisfies the following assumptions, see Fig.\ref{fig:1} left,
\begin{enumerate}
\item $\rA$ is a subset of the sector $\rS_\omega$ and coincides with it near its vertex:
\begin{equation}
\label{eq:1E2}
   \exists\rho_0>0 \quad\mbox{such that}\quad \sB(0,\rho_0)\cap \rA = \sB(0,\rho_0)\cap \rS_\omega,
\end{equation}
\item $\rA$ is bounded, simply connected, and has a Lipschitz boundary,
\item $\rS_\omega\setminus\rA$ has a Lipschitz boundary,
\item $\partial\rA\cap\partial\rS_\omega$ is connected.
\end{enumerate}

The \emph{hole pattern} $\rP$ satisfies, see Fig.\ref{fig:1} right,
\begin{enumerate}
\item $\rP$ is a subset of the sector $\rS_\omega$ and its complement $\rS_\omega\setminus\rP$ coincides with $\rS_\omega$ at infinity:
\begin{equation}
\label{eq:1E3}
   \exists\rho'_0>0 \quad\mbox{such that}\quad 
   \rP\subset\rS_\omega\cap\sB(0,\rho'_0).
\end{equation}
\item $\rP$ is a finite union of bounded simply connected Lipschitz domains $\rP_j$, $j=1,\ldots,J$,
\item $\rS_\omega\setminus\rP$ has a Lipschitz boundary,
\item For any $j\in\{1,\ldots,J\}$, $\partial\rP_j\cap\partial\rS_\omega$ is connected.
\end{enumerate}

\begin{figure}[ht]
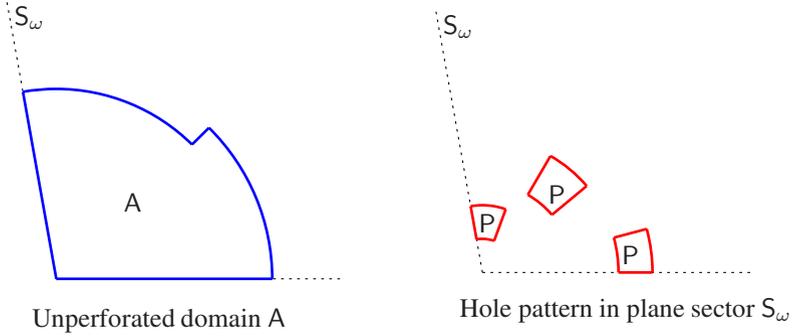

\begin{minipage}{0.49\textwidth}
\input Fig4_perfsector.tex
\end{minipage}
\begin{minipage}{0.49\textwidth}
\input Fig1_perfsector.tex
\end{minipage}
\caption{Limit domain $\rA$ and hole pattern $\rP$.}
\label{fig:1}
\end{figure}

Let $\varepsilon_0=\rho_0/\rho'_0$. 
The family of perforated domains  $\big(\rA_\varepsilon\big)_{0<\varepsilon<\varepsilon_0}$ is defined by, see Fig.\ref{fig:2},
\begin{equation}
\label{eq:Aeps}
   \rA_\varepsilon = \rA \setminus \varepsilon\overline\rP,\quad\mbox{for}\quad 
   0<\varepsilon<\varepsilon_0.
\end{equation}
The family $\varepsilon\rP$ can be seen as a self-similar collection of holes concentrating at the vertex of the sector.
Here, in contrast with \cite{DaMu15} we do not assume that $0$ belongs to $\rP$. We do not even assume that $0$ does not belong to $\partial\rP$. 

Our assumptions \eqref{eq:1E2}, \eqref{eq:1E3} exclude some classes of self-similar perturbations of corner domains that are also interesting to study and have been analyzed using different methods, see \cite{DaToVi10}. For example, condition (2) in \eqref{eq:1E3} excludes the case of the approximation of a sharp corner by rounded corners constructed with circles of radius $\varepsilon$. 
Condition (3) in \eqref{eq:1E3} excludes holes touching the boundary in a point.
The Lipschitz regularity conditions (2) and (3) in \eqref{eq:1E2}, \eqref{eq:1E3} are essential for our boundary integral equation approach. 
On the other hand, the condition that $\rA$ is simply connected and the related connectivity conditions (4) in \eqref{eq:1E2}, \eqref{eq:1E3} are not essential, they are merely made for simplicity of notation.

\begin{figure}[ht]
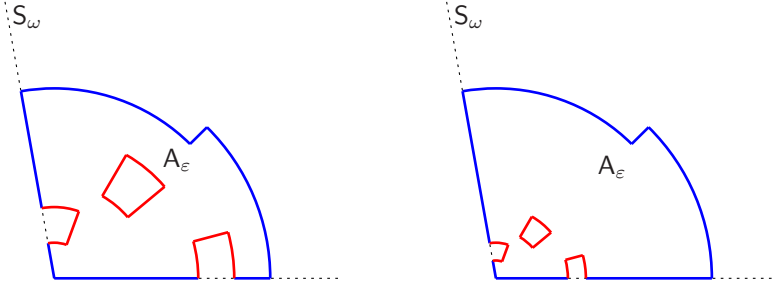

\begin{minipage}{0.49\textwidth}
\input Fig5_perfsector.tex
\end{minipage}
\begin{minipage}{0.49\textwidth}
\input Fig6_perfsector.tex
\end{minipage}
\caption{Perforated domain $\rA_\varepsilon$ for two values of $\varepsilon$.}
\label{fig:2}
\end{figure}

\subsection{Dirichlet problems and mutiscale expansions}
We are interested in the collective behavior of solutions of the family of Poisson problems
\begin{equation}
\label{eq:poisson}
\begin{cases}
\begin{array}{rcll}
   \Delta u_\varepsilon &=& f \quad& \mbox{in}\quad \rA_\varepsilon,\\
   u_\varepsilon &=& 0 \quad& \mbox{on}\quad \partial\rA_\varepsilon.
\end{array}
\end{cases}
\end{equation}
We assume that the common right hand side $f$ is an element of $L^2(\rA)$, which, by restriction to $\rA_\varepsilon$, defines an element of $L^2(\rA_\varepsilon)$ and provides a unique solution $u_\varepsilon\in H^1_0(\rA_\varepsilon)$ to problem \eqref{eq:poisson}. 

If moreover $f$ is infinitely smooth on $\overline\rA$ {in a neighborhood of the origin}, then a description of the $\varepsilon$-behavior of $u_\varepsilon$ can be performed in terms of {\em multiscale asymptotic expansions}. We refer to \cite{MaNaPl00,DaToVi10} which apply to the present situation. As a result of this approach, cf \cite[Th.\,4.1 \& Sect.\,7.1]{DaToVi10}, $u_\varepsilon$ can be described by an asymptotic expansion containing two sorts of terms:
\begin{itemize}
\item Slow terms $u^\beta(t)$, defined in the standard variables $t$
\item Rapid terms, or {\em profiles}, $U^\beta(\frac{t}{\varepsilon})$, defined in the rapid variable $\frac{t}{\varepsilon}$.
\end{itemize}
Here the exponent $\beta$ runs in the set 
$\,\N+\frac{\pi}{\omega}\N=\{\ell+k\frac\pi\omega,\: k,\ell\in\N\}$. 

If $\frac{\pi}{\omega}$ is not a rational number, $u_\varepsilon$ can be expanded in powers of $\varepsilon$ 
\begin{equation}
\label{eq:multisca}
   u_\varepsilon(t) \simeq \sum_{\beta\in\N +\frac{\pi}{\omega}\N} \varepsilon^\beta\,u^\beta(t)
   + \sum_{\beta\in\N +\frac{\pi}{\omega}\N} \varepsilon^\beta\,U^\beta(\tfrac{t}{\varepsilon})\,.
\end{equation}
The sums are asymptotic series, which means the following here: 
Let $(\beta_n)_{n\in\N}$ be the strictly increasing enumeration of $\N+\frac{\pi}{\omega}\N$ and define the $N$th partial sum by
\begin{equation}
\label{eq:multiscN}
    u_\varepsilon^{[N]}(t) = \sum_{n=0}^N \varepsilon^{\beta_n}\,u^{\beta_n}(t)
   + \sum_{n=0}^N \varepsilon^{\beta_n}\,U^{\beta_n}(\tfrac{t}{\varepsilon})
\,.
\end{equation}
Then for all $ N\in\N$ there exists $C_{N}$ such that for all $\varepsilon\in(0,\varepsilon_1]$
\begin{equation}
\label{eq:multiscb}
   \big\Vert u_\varepsilon - u_\varepsilon^{[N]} \big\Vert_{H^1(\rA_\varepsilon)} \le C_N\,\varepsilon^{\beta_{N+1}}
\end{equation}
where we have chosen $\varepsilon_1<\varepsilon_0$. 

If $\frac{\pi}{\omega}$ is a rational number, the terms corresponding to $\beta$ in the intersection $\beta\in\N\cap\frac{\pi}{\omega}\N_*$ contain a $\log\varepsilon$ and the estimate \eqref{eq:multiscb} has to be modified accordingly.

\subsection{Convergence analysis}

{If we want to have convergence of the series \eqref{eq:multisca}, it is not enough that the right hand side $f$ belongs to $L^2(\rA)$ and not even that it is infinitely smooth near the origin, but in addition its asymptotic expansion (Taylor series) at the origin needs to be a convergent series converging to $f$. Thus we have to assume, and we will do this from now on,  
that $f$ 
}
has an extension as a real analytic function in a neighborhood of the origin.
More specifically, we assume that there exist two positive constants $M_f$ and $C$ so that {$f\in L^2(\rA)$ and}
\begin{equation}
\label{eq:f}
   f(t) = \sum_{\alpha\in\N^2} f_\alpha\, t_1^{\alpha_1}t_2^{\alpha_2} ,
   \quad \forall t\in \sB(0,M_f^{-1})\cap\rA,\quad\mbox{with}\quad
   |f_\alpha| \le C M_f^{|\alpha|}.
\end{equation}
{A simple special case would be a right hand side $f\in L^2(\rA)$ that vanishes in a neighborhood of the origin.  Likewise, one could consider, as in \cite{La08,DaMu15,BoDaDaMu16}, a variant of the boundary value problem \eqref{eq:poisson} that is driven not by a domain force $f$, but by a given trace on the boundary.}

In the present work we address the question of the convergence of the series \eqref{eq:multisca}
{under the assumption \eqref{eq:f}. In the above references \cite{MaNaPl00,DaToVi10} the recursive construction of the terms $u^\beta$ and $U^\beta$ of \eqref{eq:multisca} is performed without control of the constants $C_N$ in function of $N$, thus without providing any information on the convergence of the asymptotic series. 
We will exploit the ``functional analytic approach'' to obtain this convergence.
}
 
It follows from general properties of power series that convergence of \eqref{eq:multisca} in the sense that 
\[
 \lim_{N\to\infty}  \big\Vert u_\varepsilon -  u_\varepsilon^{[N]} \big\Vert =0
\]
in some norm and for some $\varepsilon=\varepsilon_1>0$ implies that the series converges absolutely and unconditionally for any $\varepsilon\in(-\varepsilon_1,\varepsilon_1)$.
It will follow from our analysis that there exists a set $\Lambda_\rs$ of real irrational numbers (super-exponential Liouville numbers, see Definition~\ref{def:Liouville}) with the property that whenever the opening angle $\omega$ does not belong to $\pi\Lambda_\rs$, then such an $\varepsilon_1>0$ does indeed exist. For $\omega\in \pi\Lambda_\rs$ on the other hand, in general the series \eqref{eq:multisca} does not converge for any $\varepsilon\ne0$. It is known from classical number theory that both $\Lambda_\rs$ and its complement are uncountable and dense in $\R$ and $\Lambda_\rs$ is of Lebesgue measure zero and even of Hausdorff dimension zero. 
The series can be made convergent, however, for any $\omega\in(0,2\pi)$ by grouping together certain pairs of terms in the sums for which $\beta_{n+1}-\beta_n$ is small. This situation can also be expressed by the fact that there exists a subsequence $(N_k)_{k\in\N}$ of $\N$ such that for any $\varepsilon\in(-\varepsilon_1,\varepsilon_1)$
\[
 \lim_{k\to\infty}  \big\Vert u_\varepsilon -  u_\varepsilon^{[N_k]} \big\Vert =0 \,.
\]
We call this kind of convergence ``stepwise convergence'', and the main result of this paper is the construction of a convergent series in this sense.

Our analysis relies on four main steps, developed in the four sections of this paper. 

{\sc Step 1.} We set $\tu_\varepsilon = u_\varepsilon-u_0\on{\rA_\varepsilon}$, where
$u_0\in H^1_0(\rA)$ is the solution of the limit problem
\begin{equation}
\label{eq:u0}
\begin{cases}
\begin{array}{rcll}
   \Delta u_0 &=& f \quad& \mbox{in}\quad \rA,\\
   u_0 &=& 0 \quad& \mbox{on}\quad \partial\rA\,.
\end{array}
\end{cases}
\end{equation}
Doing this, we reduce our investigation to the harmonic function $\tu_\varepsilon$, solution of the problem
\begin{equation}
\label{eq:tue}
\begin{cases}
\begin{array}{rcll}
   \Delta \tu_\varepsilon &=& 0 \quad& \mbox{in}\quad \rA_\varepsilon,\\
   \tu_\varepsilon &=& -u_0 \quad& \mbox{on}\quad \partial\rA_\varepsilon,
\end{array}
\end{cases}
\end{equation}
Since $u_0$ is zero on $\partial\rA$, the trace of $u_0$ on $\partial\rA_\varepsilon$ can be nonzero only on the boundary of the holes $\varepsilon\partial\rP$. In order to analyze this trace, we expand $u_0$ near the origin in quasi-homogeneous terms with respect to the distance $\rho$ to the vertex according to the classical Kondrat'ev theory \cite{Kondratev67}. The investigation of the possible convergence of this series is far less classical, see \cite{BraDau82}, and it may involve stepwise convergent series.
 This issue is also related to the stability of the terms in the expansion with respect to the opening, cf \cite{CostabelDauge93c,CostabelDauge94}. We provide rather explicit formulas for such expansions in complex variable form.

{\sc Step 2.} We transform problem \eqref{eq:tue} into a similar problem on a perforated domain for which the holes shrink to an \emph{interior} point of the limit domain, a situation studied in \cite{La08,DaMu15,DaMu16}. To get there,  we compose two transformations that are compatible with the Dirichlet Laplacian, 
\begin{itemize}
\item A conformal map of power type,
\item An odd reflection.
\end{itemize}
In this way the unperturbed sector domain $\rA$ is transformed into a bounded simply connected Lipschitz domain $\rB$ that contains the origin, and the hole pattern $\rP$ is transformed into another hole pattern $\rQ$ that is a finite union of simply connected bounded Lipschitz domains $\rQ_j$. The small parameter is transformed into another small parameter $\eta$ by the power law
\[
   \eta = \varepsilon^{\pi/\omega},
\]
and the new perforated domains $\rB_\eta$ have the form
\[
   \rB_\eta = \rB\setminus\eta\overline\rQ,\quad \eta\in(0,\eta_0).
\]
The boundary of $\rB_\eta$ is the disjoint union of the external part $\partial\rB$ and the boundary $\eta \ee\partial\rQ$ of the holes $\eta\rQ$. The holes shrink to the origin $0$, which now lies in the interior of the unperforated domain $\rB$.

In this way problems \eqref{eq:tue} are transformed into Dirichlet problems on $\rB_\eta$
\begin{equation}
\label{eq:pB}
\begin{cases}
\begin{array}{rcll}
   \Delta v_\eta &=& 0 \quad& \mbox{in}\quad \rB_\eta,\\
   v_\eta &=& \mu_\eta \quad& \mbox{on}\quad \partial\rB_\eta\,.
\end{array}
\end{cases}
\end{equation}
The family of Dirichlet traces $\mu_\eta$ are determined by the trace of $u_0$ on the family of boundaries $\varepsilon\partial\rP$ of the holes. They have a special structure due to the mirror symmetry.

{\sc Step 3.} We study analytic families of model problems of this type where $\mu_\eta$ depends on $\eta$ as follows
\begin{equation}
\label{eq:psi}
\begin{cases}
   \mu_\eta(x) = \psi(x) & \mbox{if $x\in\partial\rB$,} \\
   \mu_\eta(x) = \Psi(\frac{x}{\eta}) & \mbox{if $x\in\eta \ee\partial\rQ$.}\\
\end{cases}
\end{equation}
Here we have a clear separation between the external boundary $\partial\rB$ where $\mu_\eta$ does not depend on $\eta$, and the internal boundary $\eta \ee\partial\rQ$ of $\partial\rB_\eta$ that is the boundary of the scaled holes $\eta\rQ$. Via representation formulas involving the double layer potential, 
we transform the problem \eqref{eq:pB} with right hand side \eqref{eq:psi} into an equivalent system of boundary integral equations \eqref{eq:Meta} with a matrix of boundary integral operators $\cM(\eta)$ depending analytically on $\eta$ in a neighborhood of zero and such that $\cM(0)$ is invertible, see Theorem~\ref{thm:Meta=0}.

The crucial property making this possible is the homogeneity of the double layer kernel, which allows to write $\cM(\eta)$ in such a form that its diagonal terms are independent of $\eta$ and the off-diagonal terms vanish at $\eta=0$. The problem corresponding to the boundary integral operator $\cM(0)$ can be interpreted as a decoupled system of Dirichlet problems, one (in slow variables $x$) on the unperturbed domain $\rB$ and a second one (in rapid variables $X=\frac{x}\eta$) on the complement $\R^2\setminus\overline\rQ$ of the holes at $\eta=1$.

{\sc Step 4.}
From the formulation via an analytic family of boundary integral equations in Step~ 3 follows that there exists $\eta_1>0$ such that the solutions $v_\eta$ of problems \eqref{eq:pB}-\eqref{eq:psi} depend on the data $\psi\in H^{1/2}(\partial\rB)$ and $\Psi\in H^{1/2}(\partial\rQ)$ via a solution operator $\cL(\eta)$ that is analytic in $\eta$ for $\eta\in(-\eta_1,\eta_1)$ and, therefore, is given by a convergent series around $0$
\begin{equation}
\label{eq:gL}
   \cL(\eta) = \sum_{n=0}^\infty \eta^n\cL_n,\quad
   |\eta|\le\eta_1\,.
\end{equation}

Combining this with the results of Steps 1 and 2, we obtain expansions of the solutions $u_\varepsilon$ of problem \eqref{eq:poisson} in slow and rapid variables similar to \eqref{eq:multisca} that are not only asymptotic series as $\varepsilon\to0$ like in \eqref{eq:multiscb}, but convergent for $\varepsilon$ in a neighborhood of zero. The convergence is shown in weighted Sobolev norms and it is, in general, ``stepwise'' in the same sense as had been known for the convergence of the expansion in corner singular functions of the solution $u_0$ of the unperturbed problem \eqref{eq:u0}. 
The main results on this kind of convergent expansions are given in Theorems~\ref{thm:ueps}, \ref{thm:outer} and \ref{thm:inner}.

While the series in powers of $\varepsilon$ are, under our general conditions, not unconditionally convergent due to the interaction of integer powers coming from the Taylor expansion of the right hand side $f$ in our problem \eqref{eq:poisson} and the powers of the form $k\pi/\omega$, $k\in\N$, coming from the corner singularities, there are two situations where the convergence is, in fact, unconditional. \\[0.5ex]
The first such situation is met when the opening angle $\omega$ is such that $\pi/\omega$ is either a rational number or, conversely, is not approximated too fast by rational numbers, namely not a super-exponential Liouville number as defined in Definition~\ref{def:Liouville}. In this case, the right hand side $f$ can be arbitrary, as long as it is analytic in a neighborhood of the corner,
see Corollaries \ref{cor:omegaQ} and \ref{cor:irrat}. 
\\[0.5ex]
The second situation where we find unconditional convergence is met for arbitrary opening angles $\omega$ when the right hand side $f$ in \eqref{eq:poisson} vanishes in a neighborhood of the corner: Then we have the {\em converging expansion} in $L^\infty(\Omega)$
\begin{equation}
\label{eq:conv}
   u_\varepsilon(t) = u_0(t)+\sum_{\beta\in \frac{\pi}{\omega}\N_*} \varepsilon^\beta\,u^\beta(t)
   + \sum_{\beta\in\frac{\pi}{\omega}\N_*} \varepsilon^\beta\,U^\beta(\tfrac{t}{\varepsilon}).
\end{equation}
Thus both parts of this two-scale decomposition of $u_\varepsilon$ are given by functions that are real analytic near zero in the variable $\eta=\varepsilon^{\pi/\omega}$, see Corollary~\ref{cor:f=0}.


\section{Unperturbed problem on a plane sector}
\label{s:2}
We are going to analyze the solution $u_0$ of problem \eqref{eq:u0} when the right hand side satisfies the assumption \eqref{eq:f}. We represent $u_0$ as the sum of three series converging in a neighborhood of the vertex:
\[
   u_0 = u_{f} + u_{\partial} + u_{\sf rm}
\]
where
\begin{enumerate}
\item $u_{f}$ is a particular solution of $\Delta u = f$, 
\item $u_{\partial}$ is a particular solution of $\Delta u = 0$, with $u_{\partial}+u_{f}=0$ on the sides $\vartheta = 0$ or $\omega$,
\item $u_{\sf rm}$ is the remaining part of $u_0$.
\end{enumerate}
We use the complex variable form of Cartesian coordinates
\begin{equation}
\label{eq:zeta}
   \zeta = t_1 + it_2,\ \ \bar \zeta = t_1-it_2\quad\mbox{i.e.}\quad
   \zeta = \rho e^{i\vartheta}.
\end{equation}
In particular, instead of \eqref{eq:f}, we write the Taylor expansion at origin of $f$ in the form
\begin{equation}
\label{eq:fcomp}
   f(t) = \sum_{\alpha\in\N^2} \tilde f_\alpha\, \zeta^{\alpha_1}\bar\zeta{}^{\alpha_2} 
   \ \mbox{ in } \ \sB(0,M_f^{-1}),\quad\mbox{with}\quad
   |\tilde f_\alpha| \le  C_M M^{|\alpha|},\ (M>M_f).
\end{equation}

\subsection{Interior particular solution}
The existence of a real analytic particular solution to the equation $\Delta u = f$ is a consequence of classical regularity results (cf.~Morrey and Nirenberg \cite{MoNi57}). Nevertheless, we can also provide an easy direct proof by an explicit formula using the complex variable representation \eqref{eq:fcomp}: It suffices to set
\begin{equation}
\label{eq:uf}
   u_{f}(t) = \sum_{\alpha\in\N^2} \, 
   \frac{\tilde f_\alpha}{4(\alpha_1+1)(\alpha_2+1)}\,\zeta^{\alpha_1+1}\bar\zeta{}^{\alpha_2+1} 
   \ \mbox{ in } \ \sB(0,M_f^{-1}).
\end{equation}
to obtain a particular real analytic solution to the equation $\Delta u = f$ in $\sB(0,M_f^{-1})$.

\subsection{Lateral particular solution}\label{ss:1.2}
Set $\rho_1=\min\{\rho_0,M^{-1}\}$ for a chosen $M>M_f$. In the finite sector $\rA\cap\sB(0,\rho_1)$, the difference $\tilde u\equiv u_0-u_f$ is a harmonic function and its traces on the sides $\vartheta=0$ and $\vartheta=\omega$ coincide with $-u_f$. 
Denote by $g^0$ and $g^\omega$ the restriction of $-u_f$ on the rays $\vartheta=0$ and $\vartheta=\omega$. These two functions are analytic in the variable $\rho$:
\begin{equation}
\label{eq:estG}
   g^0 = \sum_{\ell\in\N_*} g^0_\ell \rho^\ell,\quad
   g^\omega = \sum_{\ell\in\N_*} g^\omega_\ell \rho^\ell,\quad
   |g^0_\ell| + |g^\omega_\ell| \le C \rho_1^{-\ell}.
\end{equation}
The constant $\rho_1>0$, which is a lower bound for the convergence radius of the power series \eqref{eq:estG}, is by construction less than $M_f^{-1}$, where $M_f$ is the constant in the assumption \eqref{eq:f} on the analyticity of the right hand side $f$.  
Note that, by construction, $g^0$ and $g^\omega$ vanish at the origin.

As a next step in the analysis of $u_0$, we now construct a particular solution $u_{\partial}$ of the problem satisfied by $\tilde u$
\begin{equation}
\label{eq:Dirlat}
\left\{
\begin{array}{rcll}
   \Delta u_{\partial}(t)&=&0 \quad& \forall t \in \rA\cap \sB(0,\rho_1)\,,\\
   u_{\partial}(t)&=&-u_{f}(t) \quad& \forall t \in (\rT_0\cup \rT_\omega) \cap \sB(0,\rho_1)\,. \\
\end{array}
\right.
\end{equation}
This solution uses the convergent series expansion \eqref{eq:estG} and will be given as a convergent series, too.

Following \cite{BraDau82}, for any positive integer $\ell\in\N_*$ we can write explicit particular solutions $w_\ell$ to the Dirichlet problem in the infinite sector $\rS_\omega$
\begin{equation}
\label{eq:bvpellsect}
\left\{
\begin{array}{rcll}
   \Delta w_\ell(t)&=&0 \quad& \forall t \in \rS_\omega\,,\\
   w_\ell(t)&=&g^0_\ell \rho^\ell \quad& \forall t \in \rT_0 \,, \\
   w_\ell(t)&=&g^\omega_\ell \rho^\ell \quad& \forall t \in \rT_\omega \,,
\end{array}
\right.
\end{equation}
where $\rT_0$ and $\rT_\omega$ are the two sides of the sector $\rS_\omega$.

The idea is then to give estimates of the $w_\ell$ that show convergence of the series
$  u_\partial=\sum_{\ell\in\N_*}w_\ell\,.
$

There exists always a (quasi-)homogeneous solution of degree $\ell$. The harmonic functions that are homogeneous of degree $\ell$ are
\[
   \Im\zeta^\ell\quad\mbox{and}\quad\Re\zeta^\ell
\]
They are given in polar coordinates by $\rho^\ell\sin\ell\vartheta$ and $\rho^\ell\cos\ell\vartheta$. The determinant of their boundary values is  $\sin \ell\omega$. If this is zero, we cannot solve \eqref{eq:bvpellsect} in homogeneous functions (except in the smooth case, i.e. when $\omega=\pi$, where we find that $w_\ell = b_\ell \, \Re\zeta^\ell$ with $b_\ell = g^0_\ell$ is a solution), but we need the quasihomogeneous function 
\[
  \Im(\zeta^\ell\log\zeta)\,. 
\]
We find the solution

{\em(i)} If $\sin \ell\omega\neq0$, i.e. if $\ell\omega\not\in \pi \N$
\begin{equation}
\label{eq:solel}
\left\{\ 
\begin{aligned}
   & w_\ell(t) = a_\ell \,\Im\zeta^\ell + b_\ell \, \Re\zeta^\ell
   \quad \\
   & \mbox{with}\quad 
   a_\ell = \frac{g^\omega_\ell - g^0_\ell\,\cos\ell\omega}{\sin \ell\omega}
   \quad\mbox{and}\quad 
   b_\ell = g^0_\ell\,.
\end{aligned}
\right.
\end{equation}
In this case the solution to \eqref{eq:bvpellsect} is unique in the space of homogeneous functions of degree $\ell$.

{\em(ii)} If $\sin \ell\omega=0$, i.e. if $\ell\omega = k\pi$ with $k\in \N$, so $\cos \ell\omega=(-1)^k$,
\begin{equation}
\label{eq:solellog}
\left\{\ 
\begin{aligned}
   & w_\ell(t) = a_\ell \,\Im(\zeta^\ell\log\zeta)
   + b_\ell \, \Re\zeta^\ell \\
   & \mbox{with}\quad
   a_\ell = \frac{g^\omega_\ell - g^0_\ell\,\cos\ell\omega}{\omega\cos \ell\omega}
   \quad\mbox{and}\quad
   b_\ell = g^0_\ell\,.
\end{aligned}
\right.
\end{equation}

We draw the following consequences  according to whether $\frac\pi\omega$ is rational or not:

{\em(a)} If $\frac\pi\omega\in\Q$, then the coefficients $a_\ell$ and $b_\ell$ in \eqref{eq:solel}-\eqref{eq:solellog} are controlled since $\sin\ell\omega$ spans a finite set of values: There exists $C'$ such that
\begin{equation}
\label{eq:estaell}
   |a_\ell| +  |b_\ell| \le C' \rho_1^{-\ell},\quad\ell\in\N_*\,.
\end{equation}

{\em(b)} If $\frac\pi\omega\not\in\Q$,  
 estimating $a_\ell$ is hindered by the possible appearance of small denominators $\sin\ell\omega$. In Appendix~\ref{app:Liouville} we show that there exists a dense set of angles $\omega$ such that $\sin\ell\omega$ takes such small values that the series with $w_\ell$ defined by \eqref{eq:solel} will not converge, in general. The criterion is that $\pi/\omega$ belongs to the set $\Lambda_\rs$ of super-exponential Liouville numbers, defined in Definition~\ref{def:Liouville} by their very fast approximability by rational numbers. We can restore the control of $w_\ell$ by modifying it as proposed in \cite{BraDau82,Dauge84}. For this, we ``borrow'' a term from the expansion \eqref{eq:urm} of $u_{\sf rm}$ in Section~\ref{ss:urm} below, namely a solution of the problem with zero lateral boundary conditions, a  Laplace-Dirichlet singularity
\[
   \Im\zeta^{k\pi/\omega} \quad\mbox{(harmonic in $\rS_\omega$, zero on $\rT_0$ and $\rT_\omega$)}.
\]  
Using this with $k
=\round{\ell\omega/\pi}
\in\N_*$ such that $|\ell\omega-k\pi|$ is minimal, we introduce a variant of $w_\ell$ from \eqref{eq:solel} by defining
\[
   \widetilde w_\ell(t) = 
   a_\ell \,\big(\Im\zeta^\ell - \Im\zeta^{k\pi/\omega} \big)
   + b_\ell \, \Re\zeta^\ell
\]
with $a_\ell$ and $b_\ell$ as in \eqref{eq:solel}.
We note that
\[
   a_\ell \,\big(\Im\zeta^\ell - \Im\zeta^{k\pi/\omega} \big) =
   (g^\omega_\ell - g^0_\ell\,\cos\ell\omega) \Im\,
   \frac{\zeta^\ell- \zeta^{k\pi/\omega}}{\sin\ell\omega}
   \;.
\]
The quotient on the right is \emph{stable} because it can be expressed by {\em divided differences}: 
\[
   \frac{\zeta^\ell - \zeta^{k\pi/\omega}}{\sin\ell\omega} = \ 
   \frac{\zeta^\ell - \zeta^{k\pi/\omega}}{\ell - k\pi/\omega} 
   \ \frac{\ell - k\pi/\omega}{\sin\ell\omega - \sin k\pi}\;.
\]
For fixed $\ell$, this is continuous in $\omega$, even if $\ell\omega\to k\pi$, and we recover the logarithmic term from  \eqref{eq:solellog}:
\[
   \lim_{\ell\omega\to k\pi} \Im \frac{\zeta^\ell - \zeta^{k\pi/\omega}}{\sin\ell\omega}  = 
   \Im(\zeta^\ell\log\zeta) \ \frac{1}{\omega\cos\ell\omega}\;.
\]
For fixed $\omega$, we find a bound for the coefficient uniformly in $\ell$ if $|\ell\omega-k\pi|\le\pi/2$:
\[
 \left| \frac{\ell - k\pi/\omega}{\sin\ell\omega} \right|
 = \frac1\omega 
  \, \left| \frac{\ell\omega - k\pi}{\sin(\ell\omega- k\pi)} \right|
 \le \frac\pi{2\omega}\,.
\]
The stable variant of \eqref{eq:solel}, which contains the logarithmic expressions  \eqref{eq:solellog}, is therefore
\begin{equation}
\label{eq:solelk}
\left\{\ 
\begin{aligned}
   & \widetilde w_\ell(t) = \tilde a_\ell \,\Im \frac{\zeta^\ell - \zeta^{k\pi/\omega}}{\ell - k\pi/\omega} 
    + b_\ell \, \Re\zeta^\ell
   \quad \\
   & \mbox{with}\quad 
   \tilde a_\ell =  (g^\omega_\ell - g^0_\ell\,\cos\ell\omega) 
   \ \frac{\ell - k\pi/\omega}{\sin\ell\omega}
   \quad\mbox{and}\quad 
   b_\ell = g^0_\ell\,.
\end{aligned}
\right.
\end{equation}
We need this variant only when $|\ell\omega-k\pi|$ is small. We fix a threshold
\begin{equation}
\label{eq:thresh}
  0<\delta_\omega<\tfrac12\,\min\{\omega,\pi\}
\end{equation}
and replace $w_\ell$ by $\widetilde w_\ell$ if there exists $k\in\N_*$ such that $|\ell\omega-k\pi|\le \delta_\omega$.

The bounds on $\delta_\omega$ imply on one hand that in this definition $k$ is defined uniquely by $\ell$, but $\ell$ is also uniquely determined by $k$. On the other hand, we can check that the coefficients $a_\ell$ and $\tilde a_\ell$ are uniformly controlled: There exists $C'$ independent of $\ell$ such that
\begin{equation}
\label{eq:esttaell}
   |a^\flat_\ell| \le C' \rho_1^{-\ell}\,,\quad
   \mbox{ where }
   a^\flat_\ell=\Big\{\begin{array}{ll}a_\ell\\
                 \widetilde a_\ell\end{array}\;
  \mbox{ if }\; \dist(\ell\omega,\pi\N)\,
             \Big\{\begin{array}{ll}>\delta_\omega\,,\\
                 \le \delta_\omega\,.\end{array}
\end{equation}
Thus, choosing for each value of $\ell$ solutions $w_\ell$ or $\widetilde w_\ell$, cf \eqref{eq:solel}-\eqref{eq:esttaell}, we obtain a convergent series expansion 
for a particular solution $u_{\partial}$ of the (partial) Dirichlet problem
\eqref{eq:Dirlat}.

\subsection{Remaining boundary condition and convergence}\label{ss:urm}
Let us write in $\rA\cap \sB(0,\rho_1)$:
\begin{equation}
\label{eq:tu3}
   u_0 = u_{f} + u_{\partial} + u_{\sf rm}\,.
\end{equation}
Now the function $u_{\sf rm}$ resolves the remaining boundary condition (here we choose $\rho'_1\in(0,\rho_1)$)
\begin{equation}
\label{bvpremain}
\left\{
\begin{array}{rcll}
   \Delta u_{\sf rm}(t)&=&0 & \forall t \in \rA\cap \sB(0,\rho'_1)\,,\\
   u_{\sf rm}(t)&=&0 & \forall t \in (\rT_0\cup \rT_\omega) \cap \sB(0,\rho'_1)\,, \\
   u_{\sf rm}(t)&=&g(t) & \forall t \in \rS_\omega, \ \ |t|=\rho'_1\,,
\end{array}
\right.
\end{equation}
where 
\[
   g(t) \equiv u_0(t)-u_{f}(t)-u_{\partial}(t) \quad\mbox{for}\quad |t|=\rho'_1 \,.
\]
Denoting by $\Pi$ the arc $\vartheta\in(0,\omega)$, $\rho=1$, we can see that the trace $g$ belongs to $H^{1/2}_{00}(\rho'_1\Pi)$. By partial Fourier expansion with respect to the eigenfunction basis $\big(\sin\frac{k\pi}{\omega}\vartheta\big)_{k\in\N_*}$ we find  
\[
   g(t) = \sum_{k\ge1} g_k \sin\frac{k\pi}{\omega}\vartheta,\quad t\in \rho'_1\Pi,
\]
with a bounded\footnote{In fact the sequence $\big(\sqrt{k}\,g_k\big)_{k\in\N_*}$ belongs to $\ell^2(\N_*)$.} sequence $\big(g_k\big)_{k\in\N_*}$, and we deduce the representation
\[
   u_{\sf rm}(t) = \sum_{k\ge1} g_k \Big(\frac{\rho}{\rho'_1}\Big)^{k\pi/\omega}
   \sin\frac{k\pi}{\omega}\vartheta,
   \quad t\in\overline\rA\cap \sB(0,\rho'_1).
\]
Setting $c_{k\pi/\omega}=g_k(\rho'_1)^{-k\pi/\omega}$, we find that the expansion for the remaining term can be written as the converging series
\begin{equation}
\label{eq:urm}
   u_{\sf rm}(t) = \sum_{\gamma\in\frac\pi\omega\N_*} 
   c_\gamma \Im\zeta^{\gamma},
   \quad t\in\overline\rA\cap \sB(0,\rho'_1),
\end{equation}
with the estimates
\begin{equation}
\label{eq:estcg}
   |c_\gamma| \le C (\rho'_1)^{-\gamma}.
\end{equation}
The collection of formulas and estimates \eqref{eq:uf}, \eqref{eq:solel}-\eqref{eq:esttaell}, and \eqref{eq:urm}-\eqref{eq:estcg} motivates the following unified notation.

\begin{notation}
\label{not:1}
Let $\gA$ be the set of indices (here $\N^2_*$ denotes $\N^2\setminus\{(0,0)\}$)
\[
   \gA = \N^2_* \cup \tfrac\pi\omega\N_*,
\]
and let $\gA_0$ be the subset of $\gA$ of elements of the form $(\ell,0)$ with $\ell\in\N_*$ such that there exists
\begin{equation}
\label{eq:ellk}
   k\in\N_*\quad\mbox{with}\quad  
   |\ell\omega-k\pi| \le \delta_\omega  
    \quad\mbox{ (see \eqref{eq:thresh})}.
\end{equation}
For any $\gamma\in\gA$, we define the function $t\mapsto\sZ_\gamma(t)$ as follows
\begin{enumerate}
\item If $\gamma\in\frac\pi\omega\N_*$, set $\sZ_\gamma(t) = \zeta^\gamma$,
\item If $\gamma=(\alpha_1,\alpha_2)\in\N^2_*$ and $\gamma\not\in\gA_0$, set $\sZ_\gamma(t) = \zeta^{\alpha_1}\bar\zeta{}^{\alpha_2}$,
\item If $\gamma=(\ell,0)\in\gA_0$, let $k$ be the unique integer such that \eqref{eq:ellk} holds. Set
\begin{equation}
\label{eq:Zgam}
\begin{cases}
   \sZ_\gamma(t) = \zeta^{\ell}\log \zeta & \mbox{if } \ell=\frac{k\pi}{\omega}, \\
   \displaystyle \sZ_\gamma(t) = \frac{\zeta^\ell - \zeta^{k\pi/\omega}}{\ell - k\pi/\omega}
   & \mbox{if } \ell\neq\frac{k\pi}{\omega}. \\
\end{cases}
\end{equation}
\end{enumerate}
\end{notation}

We are ready to prove the main result of this section:

\begin{theorem}
\label{th:tu}
Let $u_0$ be the solution of the unperturbed problem \eqref{eq:u0} with right hand side $f\in L^2(\rA)$ satisfying \eqref{eq:f}.
We can represent $u_0$ as the sum of a convergent series in a neighborhood of the vertex $0$ 
\begin{equation}
\label{eq:tu}
   u_0(t) = \Im \sum_{\gamma\in\gA} a_\gamma \sZ_\gamma(t), \quad t\in\rA\cap\sB(0, \rho_1)
\end{equation}
where the set $\gA$ and the special functions $\sZ_\gamma$ are introduced in Notation \ref{not:1}, and the coefficients $a_\gamma$ satisfy the analytic type estimates: for all $M>\rho_1^{-1}$ there exists $C$ such that
\begin{equation}
\label{eq:tuest}
   |a_\gamma| \le C M^{|\gamma|},\quad \gamma\in\gA,
\end{equation}
where $|\gamma|=\alpha_1+\alpha_2$ if $\gamma=(\alpha_1,\alpha_2)\in(\N_*)^2$, and $|\gamma|=\gamma$ if $\gamma\in\frac\pi\omega\N_*$. The coefficients $a_\gamma$ are real if $\gamma\in\frac{\pi}{\omega}\N_*$ or if $\gamma=(\ell,0)\in\N^2_*$.
\end{theorem}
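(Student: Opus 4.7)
The proof is an assembly of the three convergent expansions established in Subsections~\ref{ss:1.2}--\ref{ss:urm} for $u_f$, $u_\partial$, and $u_{\sf rm}$, recast in the unified ``$\Im\sum a_\gamma\sZ_\gamma$'' form provided by Notation~\ref{not:1}. I would first choose $\rho_1 \le \min\{\rho_0,M_f^{-1},\rho'_1\}$ (possibly further reduced) so that all three series converge on $\rA\cap\sB(0,\rho_1)$. The task then splits into: (i) identifying the index $\gamma\in\gA$ associated with each elementary term produced in \eqref{eq:uf}, \eqref{eq:solel}--\eqref{eq:solelk} and \eqref{eq:urm}; (ii) rewriting those terms as $\Im(a_\gamma\sZ_\gamma)$; (iii) collecting the coefficient bounds \eqref{eq:fcomp}, \eqref{eq:esttaell}, \eqref{eq:estcg} into the single analytic estimate \eqref{eq:tuest}.

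For the real particular solution $u_f$, I would use $\Re w = \Im(iw)$ to rewrite \eqref{eq:uf} in $\Im$-form, yielding
\[
   a_{(\beta_1,\beta_2)} = \frac{i\,\tilde f_{(\beta_1-1,\beta_2-1)}}{4\beta_1\beta_2},\qquad \sZ_{(\beta_1,\beta_2)}(t) = \zeta^{\beta_1}\bar\zeta^{\beta_2},\qquad \beta_1,\beta_2\ge1.
\]
These indices all lie in $\N^2_*\setminus\gA_0$ (since $\beta_2\ge1$), so case~(2) of Notation~\ref{not:1} applies; the coefficients are generically complex, consistent with the theorem (whose reality claim concerns only $(\ell,0)$ and $\frac\pi\omega\N_*$). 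The analytic estimate follows immediately from \eqref{eq:fcomp}.

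For the lateral solution $u_\partial$, each summand $w_\ell$ or $\widetilde w_\ell$ of Subsection~\ref{ss:1.2} decomposes into a ``sine part'' with real coefficient $a^\flat_\ell$ and a ``cosine part'' $b_\ell\,\Re\zeta^\ell$ with real $b_\ell=g^0_\ell$. Using the identities $a\,\Im\zeta^\ell=\Im(a\zeta^\ell)$ and $b\,\Re\zeta^\ell=\Im(ib\,\bar\zeta^\ell)$ valid for real $a,b$, I would assign the first part to $\gamma=(\ell,0)$ with real coefficient $a^\flat_\ell$ and $\sZ_{(\ell,0)}$ as prescribed by Notation~\ref{not:1} in each of the three subcases (non-resonant, divided-difference, logarithmic), and the second part to $\gamma=(0,\ell)$ with purely imaginary coefficient $ib_\ell$ and $\sZ_{(0,\ell)}=\bar\zeta^\ell$. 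This allocation reproduces exactly the reality pattern asserted by the theorem. The uniform bound \eqref{eq:esttaell} together with \eqref{eq:estG} for $b_\ell$ gives the analytic estimate for these indices. Finally, $u_{\sf rm}$ from \eqref{eq:urm} is already in the target form, with real $a_\gamma=c_\gamma$ for $\gamma\in\frac\pi\omega\N_*$ and bound \eqref{eq:estcg}.

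The main obstacle is the $u_\partial$ step, specifically the small-denominator phenomenon $\sin\ell\omega\to0$ which would spoil any uniform bound on the naive coefficient $a_\ell$ from \eqref{eq:solel}. The divided-difference construction \eqref{eq:solelk}, combined with the threshold $\delta_\omega$ of \eqref{eq:thresh}, is exactly what restores uniform control: the choice $\delta_\omega<\tfrac12\min\{\omega,\pi\}$ ensures that $\ell\mapsto k=\round{\ell\omega/\pi}$ is a well-defined bijection between the resonant integers and the associated $k\in\N_*$, so no double counting occurs across the index sets $\{(\ell,0)\in\gA_0\}$ and $\frac\pi\omega\N_*$, and the stability of the divided difference delivers \eqref{eq:esttaell} uniformly in $\ell$. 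With this under control, summing the three contributions yields \eqref{eq:tu}, and \eqref{eq:tuest} for any $M>\rho_1^{-1}$ follows by shrinking $\rho_1$ if necessary.
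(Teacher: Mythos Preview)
Your proposal is correct and follows essentially the same approach as the paper's own proof: decompose $u_0=u_f+u_\partial+u_{\sf rm}$, rewrite each piece in the $\Im(a_\gamma\sZ_\gamma)$ form via $\Re w=\Im(iw)$ (assigning the $u_f$ terms to $(\beta_1,\beta_2)\in(\N_*)^2$, the sine/cosine parts of $w_\ell^\flat$ to $(\ell,0)$ and $(0,\ell)$, and leaving $u_{\sf rm}$ as is), and then assemble the coefficient bounds \eqref{eq:fcomp}, \eqref{eq:esttaell}, \eqref{eq:estcg} into \eqref{eq:tuest}. The only cosmetic difference is that the paper fixes $\rho_1=\min\{\rho_0,M^{-1}\}$ upfront and handles the $\rho'_1$-dependence of \eqref{eq:estcg} implicitly, whereas you propose shrinking $\rho_1$ at the end; both are fine.
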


\begin{proof}
We start from the representation \eqref{eq:tu3} of $u_0$ in the three parts $u_{f}$, $u_{\partial}$ and $u_{\sf rm}$.

1) $u_{f}$ has the explicit expression \eqref{eq:uf} that can be written as\\
 $\sum_{\alpha_1\in\N_*}\sum_{\alpha_2\in\N_*} b_\alpha \zeta^{\alpha_1}\bar\zeta{}^{\alpha_2}$ with suitable estimates for the coefficients $b_\alpha$:
\[
   |b_\alpha| \le C M^{|\alpha|}.
\]
We notice that the set of indices $(\N_*)^2$ has an empty intersection with $\gA_0$. So
\[
   u_{f}(t) = \sum_{\gamma\in(\N_*)^2} b_\gamma \sZ_\gamma(t).
\]
Since $u_{f}$ is real, we can 
set $a_\gamma=ib_\gamma$ and get
\[
   u_{f}(t) = \Im \sum_{\gamma\in(\N_*)^2} a_\gamma \sZ_\gamma(t).
\]

2) $u_{\partial}$ is equal to $\sum_{\ell\in\N_*}w^\flat_\ell$ with 
\begin{itemize}
\item[{\em i)}] $w^\flat_\ell=w_\ell$ with $w_\ell$ given by \eqref{eq:solel} if $(\ell,0)\not\in\gA_0$,
\item[{\em ii)}] $w^\flat_\ell=w_\ell$ with $w_\ell$ given by \eqref{eq:solellog} if $\ell=\frac{k\pi}{\omega}$ for some $k\in\N_*$,
\item[{\em iii)}] $w^\flat_\ell=\widetilde w_\ell$ with $\widetilde w_\ell$ given by \eqref{eq:solelk} if $(\ell,0)\in\gA_0$  and $\ell\neq\frac{k\pi}{\omega}$, where $k$ is the integer such that \eqref{eq:ellk} holds.
\end{itemize}
We parse each of these three cases

{\em i)} $(\ell,0)\not\in\gA_0$: Then $\sZ_{(\ell,0)}=\zeta^\ell$ and $\sZ_{(0,\ell)}=\bar\zeta{}^\ell$. We use formula \eqref{eq:solel} to obtain
\begin{equation}
\label{eq:well}
   w_\ell = \Im ( a_\ell \sZ_{(\ell,0)} + ib_\ell \sZ_{(0,\ell)})
\end{equation}
The coefficients $a_{(\ell,0)}=a_\ell$ and $a_{(0,\ell)}=ib_\ell$ satisfy the desired estimates, because $(\ell,0)\not\in\gA_0$ implies $|\sin\ell\omega|\ge\sin\frac{\omega}{2}$ .

{\em ii)} There exists $k\in\N_*$ such that $\ell=\frac{k\pi}{\omega}$: Then $\sZ_{(\ell,0)}=\zeta^\ell\log\zeta$ and $\sZ_{(0,\ell)}=\bar\zeta{}^\ell$. We use formula \eqref{eq:solellog} to obtain the representation \eqref{eq:well} again.

{\em iii)} $(\ell,0)\in\gA_0$ and $\ell\neq\frac{k\pi}{\omega}$, with the integer $k$  for which \eqref{eq:ellk} holds. Now we start from formula \eqref{eq:solelk} and find once more the representation \eqref{eq:well} with $a_\ell$ replaced by $\tilde a_\ell$.

3) Finally $u_{\sf rm}$ given by \eqref{eq:urm} is already written in the desired form.
\end{proof}

\begin{remark}
\label{rem:imre}
1) Examining the structure of the terms in \eqref{eq:tu} we can see that a real valued basis for the expansion of $u_0$ is the union of
\begin{itemize}
\item $\Im\sZ_\gamma$ if $\gamma\in\frac\pi\omega\N_*$ or if $\gamma=(\alpha_1,\alpha_2)\in\N^2_*$ with $\alpha_1>\alpha_2$,

\item $\Re\sZ_\gamma$ if $\gamma=(\alpha_1,\alpha_2)\in\N^2_*$ with $\alpha_1\le\alpha_2$.
\end{itemize}

2) The traces of the function $\Im\sZ_{k\pi/\omega}$ are zero on $\partial\rS_\omega$ for all $k\in\N_*$. If we write the expansion \eqref{eq:tu} in the form
\begin{equation}
\label{eq:tup}
   u_0 = \sum_{k\in\N_*} a_{k\pi/\omega} \Im\sZ_{k\pi/\omega}
   + \sum_{\ell\in\N_*} \Big(\Im \sum_{\substack {\gamma\in\N^2\\ |\gamma|=\ell}} a_\gamma \sZ_\gamma\Big)
\end{equation}
we obtain terms $\Im\sZ_{k\pi/\omega}$, or packets of terms $\Im\sum_{|\gamma|=\ell} a_\gamma \sZ_\gamma$ that have zero traces on $\partial\rS_\omega$.
\end{remark}

\begin{remark}
\label{rem:pi}
If $\omega=\pi$, then $u_0$ has a converging Taylor expansion at the origin.
\end{remark}

\begin{remark}
\label{rem:f=0}
If $f=0$ in a neigborhood of the origin, then in the above construction we find 
that $u_{f}$ and $u_{\partial}$ vanish identically, hence $u_0=u_{\sf rm}$. For the latter we have the convergent expansion \eqref{eq:urm}, and therefore $u_0$ has an expansion in terms of $\Im\zeta^{k\frac\pi\omega}$, $k\in\N_*\,$, that is convergent in a neighborhood of the origin. 
\end{remark}

\begin{remark}
\label{rem:other}
 
The definition of $\gA_0$ depends on the choice of the threshold $\delta_\omega$, see \eqref{eq:thresh}. This influences which pairs of terms $\zeta^\ell$ and $\zeta^{k\pi/\omega}$ are grouped together into $\sZ_\gamma$ in the sum \eqref{eq:tu}, but changing $\gA_0$ does not change the sum. One can also omit a finite number of indices from $\gA_0$ without changing the sum. From Appendix~\ref{app:Liouville} follows that we can even set $\delta_\omega=0$ and therefore reduce $\gA_0$ to the empty set if $\pi/\omega$ is irrational, but not a super-exponential Liouville number. The resulting series in which no pairs of terms are regrouped will then converge, with a possibly smaller convergence radius than $\rho_1$ if $\pi/\omega$ is an exponential, but not super-exponential Liouville number. The full convergence radius $\rho_1$ is retained if $\pi/\omega$ is not an exponential Liouville number, in particular if it is not a Liouville number. If $\pi/\omega$ is a super-exponential Liouville number, then there exist right hand sides $f$ such that the unmodified series does not converge for $t\ne0$. 
\end{remark}

\subsection{Residual problem on the perforated domain}

Setting $\tilde u_\varepsilon=u_\varepsilon-u_0$ with $u_\varepsilon$ and $u_0$ the solutions of problems \eqref{eq:poisson} and \eqref{eq:u0}, respectively, we obtain that $\tilde u_\varepsilon$ solves the residual problem
\begin{equation}
\label{eq:pepsgen}
\begin{cases}
\begin{array}{rcll}
   \Delta \tilde u_\varepsilon &=& 0 \quad& \mbox{in}\quad \rA_\varepsilon,\\
   \tilde u_\varepsilon &=& -u_0 \quad& \mbox{on}\quad \partial\rA_\varepsilon,
\end{array}
\end{cases}
\end{equation}
By construction, $u_0$ is zero on $\partial\rA$, therefore on $\partial\rA_\varepsilon\cap\partial\rA$. Thus  the trace of $u_0$ on $\partial\rA_\varepsilon$ can be nonzero only on the part $\varepsilon\partial\rP\cap\rS_\omega$ of the boundary of the perforations, compare Fig.\ref{fig:2}. The converging expansion \eqref{eq:tu} allows us to interpret traces of $u_0$ on $\varepsilon\partial\rP\cap\rS_\omega$ as a series of traces on $\partial\rP\cap\rS_\omega$ with coefficients depending on $\varepsilon$.
To describe this dependence, we recall Notation \ref{not:1} and introduce corresponding combinations of powers of $\varepsilon$.

\begin{notation}
\label{not:2}
Let $\gA$ and $\gA_0$ be the sets of indices introduced in Notation \ref{not:1}. For any $\gamma\in\gA$ we define the function $\varepsilon\mapsto\sE_\gamma(\varepsilon)$ as follows
\begin{enumerate}
\item If $\gamma\in\frac\pi\omega\N_*$, set $\sE_\gamma(\varepsilon) = \varepsilon^\gamma$,
\item If $\gamma=(\alpha_1,\alpha_2)\in\N^2_*$ and $\gamma\not\in\gA_0$, set $\sE_\gamma(\varepsilon) = \varepsilon^{|\gamma|}$,
\item If $\gamma=(\ell,0)\in\gA_0$, let $k$ be the unique integer such that \eqref{eq:ellk} holds. Set
\begin{equation}
\label{eq:Egam}
\begin{cases}
   \sE_\gamma(\varepsilon) = \varepsilon^{\ell}\log \varepsilon & \mbox{if } \ell=\frac{k\pi}{\omega}, \\
   \displaystyle \sE_\gamma(\varepsilon) = 
   \frac{\varepsilon^\ell - \varepsilon^{k\pi/\omega}}{\ell - k\pi/\omega}
   & \mbox{if } \ell\neq\frac{k\pi}{\omega}. \\
\end{cases}
\end{equation}
\end{enumerate}
\end{notation}

The functions $\sZ_\gamma$ \eqref{eq:Zgam} are pseudo-homogeneous in the following sense.

\begin{lemma}
\label{lem:homo}
Let $\gamma\in\gA$ and $T\in\rS_\omega$.
\begin{itemize}
\item If $\gamma\not\in\gA_0$, then 
\[
   \sZ_\gamma(\varepsilon T) = \varepsilon^{|\gamma|} \sZ_\gamma(T) = \sE_\gamma(\varepsilon)\sZ_\gamma(T)\,.
\]
\item If $\gamma=(\ell,0)\in\gA_0$, let $k$ be the unique integer such that \eqref{eq:ellk} holds. We set $\gamma'=\frac{k\pi}{\omega}$ and we have
\[
   \sZ_\gamma(\varepsilon T) = 
   \varepsilon^{|\gamma|} \sZ_{\gamma}(T) + \sE_\gamma(\varepsilon) \sZ_{\gamma'}(T)\,.
\]
\end{itemize}
\end{lemma}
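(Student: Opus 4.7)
The plan is to verify the identities case by case, using the complex variable $\zeta=t_1+it_2$ and the basic observation that if $T\in\rS_\omega$ has complex coordinate $\zeta_T$, then $\varepsilon T$ has complex coordinate $\varepsilon\zeta_T$. Throughout the argument $\varepsilon>0$, so $\log\varepsilon$ is real and $\log(\varepsilon\zeta_T)=\log\varepsilon+\log\zeta_T$ holds without any branch ambiguity.

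First I would dispatch the two homogeneous cases. If $\gamma\in\frac\pi\omega\N_*$, then $\sZ_\gamma(\varepsilon T)=(\varepsilon\zeta_T)^\gamma=\varepsilon^\gamma\sZ_\gamma(T)=\sE_\gamma(\varepsilon)\sZ_\gamma(T)$, and $|\gamma|=\gamma$ by Notation~\ref{not:1}. Similarly, if $\gamma=(\alpha_1,\alpha_2)\in\N^2_*$ with $\gamma\notin\gA_0$, then $\sZ_\gamma(\varepsilon T)=(\varepsilon\zeta_T)^{\alpha_1}\overline{(\varepsilon\zeta_T)}^{\alpha_2}=\varepsilon^{|\gamma|}\sZ_\gamma(T)=\sE_\gamma(\varepsilon)\sZ_\gamma(T)$. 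This settles the first bullet.

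Next, I would treat $\gamma=(\ell,0)\in\gA_0$, splitting according to whether $\ell=k\pi/\omega$ or not. In the resonant subcase $\ell=k\pi/\omega$, writing $\gamma'=k\pi/\omega$ we have $\sZ_{\gamma'}(T)=\zeta_T^{k\pi/\omega}=\zeta_T^\ell$, and a direct expansion gives
\[
   \sZ_\gamma(\varepsilon T)=(\varepsilon\zeta_T)^\ell\log(\varepsilon\zeta_T)=\varepsilon^\ell\zeta_T^\ell\log\zeta_T+\varepsilon^\ell(\log\varepsilon)\,\zeta_T^\ell=\varepsilon^{|\gamma|}\sZ_\gamma(T)+\sE_\gamma(\varepsilon)\sZ_{\gamma'}(T),
\]
where in the last equality I used $\sE_\gamma(\varepsilon)=\varepsilon^\ell\log\varepsilon$ and $|\gamma|=\ell$ from Notation~\ref{not:2}.

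The non-resonant subcase $\ell\neq k\pi/\omega$ is the only one requiring a small algebraic manipulation, so I would treat it last. Starting from the divided-difference form of $\sZ_\gamma$, a single insertion of $\pm\varepsilon^\ell\zeta_T^{k\pi/\omega}$ in the numerator gives
\[
   \sZ_\gamma(\varepsilon T)=\frac{\varepsilon^\ell\zeta_T^\ell-\varepsilon^{k\pi/\omega}\zeta_T^{k\pi/\omega}}{\ell-k\pi/\omega}=\varepsilon^\ell\,\frac{\zeta_T^\ell-\zeta_T^{k\pi/\omega}}{\ell-k\pi/\omega}+\frac{\varepsilon^\ell-\varepsilon^{k\pi/\omega}}{\ell-k\pi/\omega}\,\zeta_T^{k\pi/\omega},
\]
which by Notations~\ref{not:1}--\ref{not:2} reads $\varepsilon^{|\gamma|}\sZ_\gamma(T)+\sE_\gamma(\varepsilon)\sZ_{\gamma'}(T)$. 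This is the announced formula, and no step presents any real obstacle: the statement is essentially a bookkeeping verification that the definitions of $\sZ_\gamma$ and $\sE_\gamma$ have been paired consistently so that the divided-difference structure survives dilation.
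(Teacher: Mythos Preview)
Your proof is correct and complete. The paper does not give a proof of this lemma at all---it is stated and then immediately followed by Corollary~\ref{cor:tu}, so the identities are left as an elementary verification. Your case-by-case check is exactly what is intended, and the add-and-subtract step in the non-resonant divided-difference case is the only nontrivial (and still routine) manipulation needed.
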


\begin{corollary}
\label{cor:tu}
Under the conditions of Theorem \ref{th:tu}, using the packet expansion \eqref{eq:tup}, we find
\begin{multline}
\label{eq:tupe}
   u_0(\varepsilon T) = \!\!
   \sum_{\gamma\in\frac{\pi}{\omega}\N_*} \!\! a_{\gamma} \varepsilon^{\gamma} \Im\sZ_{\gamma}(T)\\
   + \sum_{\ell\in\N_*} \varepsilon^\ell
   \Big(\Im \sum_{\substack {\gamma\in\N^2\\ |\gamma|=\ell}} a_\gamma \sZ_\gamma(T)\Big)
   + \sum_{\gamma\in\gA_0}  a_\gamma \sE_\gamma(\varepsilon)\Im\sZ_{\gamma'}(T).
\end{multline}
Each of the terms or packets has zero trace on $\partial\rS_\omega$.
\end{corollary}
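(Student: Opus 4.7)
The plan is to start from the packet form \eqref{eq:tup} of the expansion of $u_0$ provided by Theorem~\ref{th:tu}, substitute $t=\varepsilon T$, and apply Lemma~\ref{lem:homo} term by term in order to extract the $\varepsilon$-dependence of each $\sZ_\gamma(\varepsilon T)$.

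The first sum of \eqref{eq:tup} is indexed by $\gamma=k\pi/\omega\in\frac{\pi}{\omega}\N_*$, which lies outside $\gA_0\subset\N^2$; the first case of Lemma~\ref{lem:homo} therefore gives $\sZ_\gamma(\varepsilon T)=\varepsilon^\gamma\sZ_\gamma(T)$, producing the first sum of \eqref{eq:tupe} directly. For the packets of the second sum of \eqref{eq:tup}, I would split each inner sum $\sum_{|\gamma|=\ell,\,\gamma\in\N^2}$ according to whether $\gamma\in\gA_0$ or not. For $\gamma\in\N^2\setminus\gA_0$, Lemma~\ref{lem:homo} yields $\sZ_\gamma(\varepsilon T)=\varepsilon^{|\gamma|}\sZ_\gamma(T)$; for $\gamma\in\gA_0$, it yields the two-term decomposition $\sZ_\gamma(\varepsilon T)=\varepsilon^{|\gamma|}\sZ_\gamma(T)+\sE_\gamma(\varepsilon)\sZ_{\gamma'}(T)$. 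Recombining the $\varepsilon^{|\gamma|}\sZ_\gamma(T)$ contributions from both cases and keeping the $\ell$-summation produces the second sum of \eqref{eq:tupe}. Since each element of $\gA_0$ has the form $(\ell,0)$ with $|\gamma|=\ell$, regrouping the residual contributions $\sE_\gamma(\varepsilon)\sZ_{\gamma'}(T)$ over $\ell\in\N_*$ amounts to summing over $\gamma\in\gA_0$, and this gives the third sum of \eqref{eq:tupe} (noting that $a_\gamma$ and $\sE_\gamma(\varepsilon)$ are real for $\gamma\in\gA_0$, so that the $\Im$ commutes through).

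For the zero-trace claim, the first and third sums are handled identically: both involve $\Im\zeta^{k\pi/\omega}=\rho^{k\pi/\omega}\sin(k\pi\vartheta/\omega)$, which vanishes at $\vartheta\in\{0,\omega\}$. For each packet $\Im\sum_{|\gamma|=\ell,\,\gamma\in\N^2}a_\gamma\sZ_\gamma(T)$ of the second sum, the plan is to identify it, via the construction used in the proof of Theorem~\ref{th:tu}, with the degree-$\ell$ piece $u_f^{(\ell)}(T)+w^\flat_\ell(T)$ of $u_f+u_\partial$. By \eqref{eq:estG}, the trace of $u_f^{(\ell)}$ on $\rT_0$ equals $-g^0_\ell\rho^\ell$, which cancels exactly the prescribed value $w^\flat_\ell|_{\rT_0}=g^0_\ell\rho^\ell$ coming from \eqref{eq:bvpellsect}; the analogous cancellation holds on $\rT_\omega$. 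Hence each packet vanishes on $\partial\rS_\omega$.

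The only delicate point is bookkeeping: ensuring that the split-and-reassemble between the second and third sums of \eqref{eq:tupe} is performed exactly, in particular that the equivalence between summing $\gA_0$-contributions indexed by $\ell$ and indexed directly by $\gamma\in\gA_0$ is correctly tracked. No analytic obstacle arises, since Lemma~\ref{lem:homo} and the boundary-matching construction of Section~\ref{ss:1.2} supply every needed ingredient.
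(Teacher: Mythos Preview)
Your proposal is correct and follows exactly the route the paper intends: the corollary is stated without proof because it is an immediate consequence of applying Lemma~\ref{lem:homo} term by term to the packet expansion \eqref{eq:tup}, together with the zero-trace observation already recorded in Remark~\ref{rem:imre}(2). Your bookkeeping for the $\gA_0$ terms and your justification of the zero-trace claim via the identification of each $\ell$-packet with the degree-$\ell$ part of $u_f+u_\partial$ are precisely the details one would fill in.
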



\section{From a perforated sector to a domain with interior holes}
\label{s:3}

In this section, we transform the residual Dirichlet problem \eqref{eq:pepsgen} into a problem on a perforated domain with holes shrinking towards an interior point, so that to be able to use integral representations for its solution.
A suitable transformation is obtained as the composition of two operations, see Fig.\ref{fig:3}:
\begin{itemize}
\item A conformal map $\cG_\kappa$:  $\zeta\mapsto z = \zeta^{\kappa}$ with $\kappa=\frac\pi\omega$ that transforms the sector $\rS_\omega$ into the upper half-plane $\rS_\pi=\R\times\R_+$,
\item The odd reflection operator $\cE$  that extends domains and functions from $\rS_\pi$ to $\R^2$.
\end{itemize}
\begin{figure}[ht]
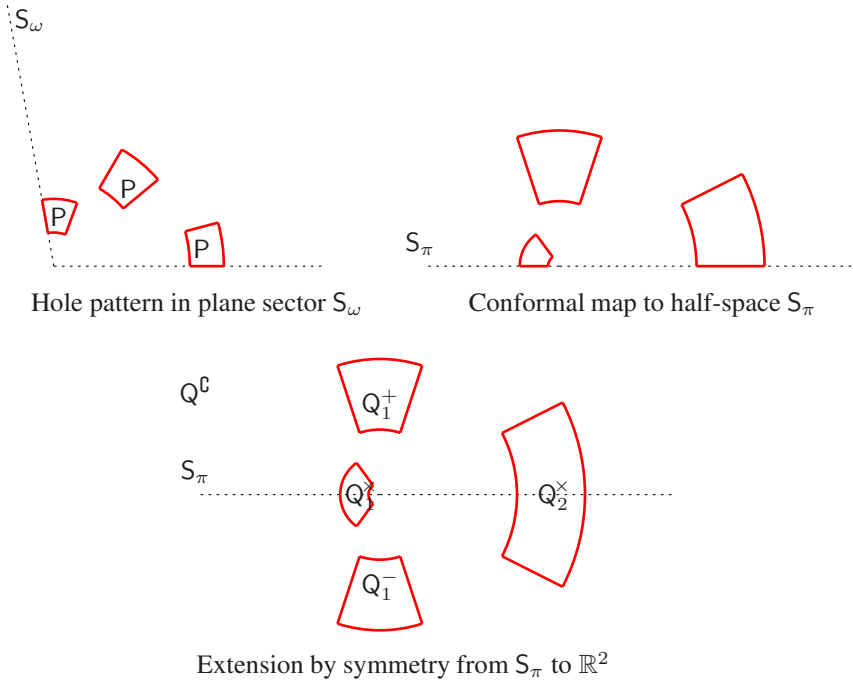

\begin{minipage}[b]{0.49\textwidth}
\input Fig1_perfsector.tex
\end{minipage}
\begin{minipage}[b]{0.49\textwidth}
\input Fig2_perfsector.tex
\end{minipage}
\vskip 1.5em

\begin{minipage}[b]{0.49\textwidth}
\input Fig3_perfsector.tex
\end{minipage}
\caption{Conformal map and symmetry acting on hole pattern $\rP$.}
\label{fig:3}
\end{figure}

We introduce these two operations and list some of their properties before composing them in view of the transformation of problem \eqref{eq:pepsgen}.

\subsection{Conformal mapping of power type}\label{ss:conf}
Let $\omega\in(0,2\pi)$ and $\kappa>0$ be chosen so that $\kappa\omega<2\pi$. The conformal map $\cG_\kappa$:  $\zeta\mapsto z = \zeta^{\kappa}$ transforms Cartesian coordinates $t$ into Cartesian coordinates $x$ with
\[
   \zeta = t_1+it_2\quad\mbox{and}\quad z = x_1+ix_2
\]
and polar coordinates $(\rho,\vartheta)$ into $(r,\theta)$ with
\[
   r = \rho^\kappa\quad\mbox{and}\quad \theta = \kappa\vartheta.
\]

\begin{lemma}
\label{lem:LipGk}
Assume that $\Omega\subset\rS_\omega$ and that $\Omega$ and $\rS_\omega\setminus\overline\Omega$ have a Lipschitz boundary. Then $\cG_\kappa\Omega\subset\rS_{\kappa\omega}$ and, moreover, $\cG_\kappa\Omega$ and $\rS_{\kappa\omega}\setminus\overline{\cG_\kappa\Omega}$ have a Lipschitz boundary.
\end{lemma}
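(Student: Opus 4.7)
The strategy is to check the local Lipschitz graph property at every boundary point of $\cG_\kappa\Omega$ and of $\rS_{\kappa\omega}\setminus\overline{\cG_\kappa\Omega}$, separating points of $\rS_{\kappa\omega}\setminus\{0\}$ from the vertex.

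Under the assumption $\kappa\omega<2\pi$, the map $\cG_\kappa\colon\zeta\mapsto\zeta^\kappa$ (with $\arg\zeta\in[0,\omega]$) is a real-analytic bijection of $\rS_\omega$ onto $\rS_{\kappa\omega}$ with nonvanishing derivative on $\rS_\omega\setminus\{0\}$; it extends to a homeomorphism of the closures fixing $0$, and on every compact subset of $\overline{\rS_\omega}\setminus\{0\}$ it is bi-Lipschitz, with constants depending on the distance to $0$. For any boundary point $z_0=\cG_\kappa(t_0)$ with $t_0\ne 0$, I would select such a compact neighborhood of $t_0$ and push forward the local Lipschitz graph representation of $\partial\Omega$ (respectively $\partial(\rS_\omega\setminus\overline\Omega)$) at $t_0$ through the bi-Lipschitz change of coordinates --- which is well known to preserve Lipschitz boundaries --- to obtain the corresponding representation of $\partial\cG_\kappa\Omega$ (respectively $\partial(\rS_{\kappa\omega}\setminus\overline{\cG_\kappa\Omega})$) at $z_0$.

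At the vertex I split into two cases. If $0\notin\partial\Omega$, then for some $\delta>0$ either $\rS_\omega\cap\sB(0,\delta)\subset\Omega$ or $\rS_\omega\cap\sB(0,\delta)\cap\overline\Omega=\emptyset$; the image then coincides near $0$ with $\rS_{\kappa\omega}\cap\sB(0,\delta^\kappa)$ or is disjoint from a neighborhood of $0$, and the Lipschitz property at $0$ reduces to that of the sector $\rS_{\kappa\omega}$ itself, which holds because $\kappa\omega\in(0,2\pi)$. If $0\in\partial\Omega$, I would express $\cG_\kappa$ in polar coordinates as $(\rho,\vartheta)\mapsto(\rho^\kappa,\kappa\vartheta)$ and exploit the conical Lipschitz structure forced near $0$ by the joint hypothesis on $\Omega$ and $\rS_\omega\setminus\overline\Omega$: the angular factor $\vartheta\mapsto\kappa\vartheta$ is bi-Lipschitz from $[0,\omega]$ onto $[0,\kappa\omega]$, and combined with an explicit local bi-Lipschitz chart in the target Cartesian coordinates (rotation plus radial flattening), it converts the Lipschitz graph representation of $\partial\Omega$ at $0$ into a Lipschitz graph representation of $\partial\cG_\kappa\Omega$ at $0$.

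The main obstacle is the vertex case when $0\in\partial\Omega$: since $\cG_\kappa$ is only H\"older (not bi-Lipschitz) at $0$ unless $\kappa=1$, this case cannot be handled by a direct bi-Lipschitz pull-back. It requires a careful chart construction that uses the angular linearity of $\cG_\kappa$ in polar variables together with the bound $\kappa\omega<2\pi$ to turn a conical Lipschitz graph in polar coordinates into a Cartesian Lipschitz graph in the image, with the same argument applying symmetrically to the complement $\rS_\omega\setminus\overline\Omega$.
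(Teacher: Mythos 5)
The paper states this lemma without any proof, so there is nothing to compare your argument with; it has to stand on its own. Away from the vertex it essentially does, but not for the reason you give: it is \emph{not} true in general that a bi-Lipschitz change of coordinates preserves Lipschitz boundaries (the image of a half-disc under the bi-Lipschitz map $z\mapsto z\,e^{i\alpha\log|z|}$ has a logarithmic-spiral boundary point and is not Lipschitz). What saves you at a point $t_0\ne 0$ is that $\cG_\kappa$ is there a conformal, hence $C^1$, diffeomorphism with nonvanishing derivative, and $C^1$ diffeomorphisms do preserve the local Lipschitz-graph property; you should invoke that, not bi-Lipschitzness. The case $0\notin\partial\Omega$, reduced to the sector $\rS_{\kappa\omega}$ itself, is fine.

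The genuine gap is the vertex case $0\in\partial\Omega$, which you only announce as ``a careful chart construction'' based on the angular linearity of $\cG_\kappa$ and the bound $\kappa\omega<2\pi$. No such construction can exist at that level of generality, because with only those ingredients the conclusion is false. Take $\omega=\pi$, $\kappa=1.9$ (so $\kappa\omega<2\pi$), and near $0$ let $\Omega=\{0<\vartheta<h(\rho)\}$ where $h$ is smooth, $|\rho\,h'(\rho)|\le\varepsilon$ with $\varepsilon$ small, and $h$ oscillates between $a=\tfrac12$ and $b=2.3$ as $\rho\to0$ (cap the domain smoothly at $\rho=1$). Every chord of $\partial\Omega$ near $0$ then has direction, modulo $\pi$, in the arc $[a-C\varepsilon,\pi]$ (ray-to-curve chords are positive combinations of $e^{ih}$ and $e^{i\pi}$, curve chords are averages of tangents), and every chord of $\partial(\rS_\pi\setminus\overline\Omega)$ has direction in $[0,b+C\varepsilon]$; both arcs have length $<\pi$, so $\Omega$ and $\rS_\pi\setminus\overline\Omega$ are Lipschitz, also at $0$. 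But the image boundary curve $r\mapsto r\,e^{i\kappa h(r^{1/\kappa})}$ has tangent direction $\kappa h+\arctan(\rho h')$ (note $r\frac{d}{dr}\big[\kappa h(r^{1/\kappa})\big]=\rho h'(\rho)$ is unchanged), which sweeps an interval of length $\ge\kappa(b-a)-2\varepsilon=3.42-2\varepsilon>\pi$ in every neighborhood of $0$; hence the chord directions of $\partial(\cG_\kappa\Omega)$ omit no direction modulo $\pi$ near $0$, and $\cG_\kappa\Omega$ is not Lipschitz at the vertex. So the vertex step is not a routine chart manipulation: any correct proof must couple, quantitatively, the cone openings of \emph{both} $\Omega$ and $\rS_\omega\setminus\overline\Omega$ with the specific factor $\kappa$ — in the paper only $\kappa=\pi/\omega$, i.e.\ image angles confined to $(0,\pi)$, is ever used, and that is exactly the structure your sketch never exploits. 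As written, your proposal leaves the only nontrivial part of the lemma unproved, and its stated strategy would, if it worked, prove a false statement.
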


A function $u$ defined on such a domain $\Omega\subset\rS_\omega$ is transformed into a function $\cG_\kappa^*u$ defined on $\cG_\kappa\Omega$ through the composition
\[
   \cG_\kappa^*u = u\circ\cG^{-1}_\kappa = u\circ\cG_{1/\kappa}\,.
\]

If $\partial\Omega$ is disjoint from the origin, then for any real $s$, the transformation $\cG_\kappa^*$ defines an isomorphism from the standard Sobolev space $H^s(\Omega)$ onto $H^s(\cG_\kappa\Omega)$. 

If, on the contrary, $0\in\partial\Omega$, there is no such simple transformation law for standard Sobolev spaces. Nevertheless, weighted Sobolev spaces of Kondrat'ev type can be equivalently expressed using polar coordinates and support such transformation: For real $\beta$ and natural integer $m$, the space $K^m_\beta(\Omega)$ is defined as
\begin{equation}
\label{eq:Kmb}
   K^m_\beta(\Omega) = \{u\in L^2_{\rm loc}(\Omega),\quad
   \rho^{\beta+|\alpha|}\partial^\alpha_t u\in L^2(\Omega),\ \ \forall\alpha\in\N^2,\ |\alpha|\le m\}.
\end{equation}
We have the equivalent definition in polar coordinates
\[
   K^m_\beta(\Omega) = \{u\in L^2_{\rm loc}(\Omega),\quad
   \rho^{\beta}(\rho\partial_\rho)^{\alpha_1} \partial^{\alpha_2}_\vartheta u\in L^2(\Omega),\ \ 
   \forall\alpha\in\N^2,\ |\alpha|\le m\}.
\]
\begin{lemma}
\label{lem:Km}
The conformal map $\cG_\kappa$ defines an isomorphism
\[
   \cG^*_\kappa : K^m_\beta(\Omega) \quad\mbox{onto}\quad K^m_{\frac{1+\beta}{\kappa}-1}(\cG_\kappa\Omega).
\]
\end{lemma}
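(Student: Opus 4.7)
The plan is to use the polar-coordinate characterisation of $K^m_\beta$ given just before the statement and reduce the claim to a bookkeeping computation of Jacobians and of the pullback of the vector fields $\rho\partial_\rho$, $\partial_\vartheta$ under the map $\cG_\kappa\colon (\rho,\vartheta)\mapsto(r,\theta)=(\rho^\kappa,\kappa\vartheta)$. Setting $v=\cG^*_\kappa u=u\circ\cG^{-1}_\kappa$, a direct application of the chain rule to $v(r,\theta)=u(r^{1/\kappa},\theta/\kappa)$ yields the two identities
\[
    r\partial_r v \;=\; \tfrac{1}{\kappa}\,(\rho\partial_\rho)u\circ\cG^{-1}_\kappa,
    \qquad
    \partial_\theta v \;=\; \tfrac{1}{\kappa}\,\partial_\vartheta u\circ\cG^{-1}_\kappa,
\]
so that by iteration
\[
    (r\partial_r)^{\alpha_1}\partial_\theta^{\alpha_2} v
    \;=\; \kappa^{-|\alpha|}\,\bigl((\rho\partial_\rho)^{\alpha_1}\partial_\vartheta^{\alpha_2}u\bigr)\circ\cG^{-1}_\kappa
    \qquad\text{for every }\alpha\in\N^2.
\]
This is the key observation: the operators in the polar definition of $K^m_\beta$ are compatible with $\cG_\kappa$ up to a harmless factor $\kappa^{-|\alpha|}$.

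Next I would compute the change of variables in the area element. From $r=\rho^\kappa$ and $\theta=\kappa\vartheta$ one obtains $dr\,d\theta=\kappa^2\rho^{\kappa-1}\,d\rho\,d\vartheta$, so the polar area elements are related by
\[
    r\,dr\,d\theta \;=\; \kappa^2\,\rho^{2\kappa-1}\,d\rho\,d\vartheta.
\]
Inserting the two displays above into the polar form of the $K^m_{\beta'}$-norm with $\beta'=(1+\beta)/\kappa-1$, we get
\[
    \int_{\cG_\kappa\Omega} r^{2\beta'}\bigl|(r\partial_r)^{\alpha_1}\partial_\theta^{\alpha_2}v\bigr|^2\,r\,dr\,d\theta
    = \kappa^{2-2|\alpha|}\!\int_\Omega \rho^{2\kappa\beta'+2\kappa-1}\bigl|(\rho\partial_\rho)^{\alpha_1}\partial_\vartheta^{\alpha_2}u\bigr|^2\,d\rho\,d\vartheta.
\]
The choice of $\beta'$ is exactly what is needed to make the exponent of $\rho$ equal to $2\beta+1$; indeed
$2\kappa\beta'+2\kappa-1=2\beta+1\iff \beta'=\tfrac{1+\beta}{\kappa}-1$,
which is the announced weight. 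The right-hand side is therefore $\kappa^{2-2|\alpha|}$ times the $L^2(\Omega,\rho^{2\beta+1}d\rho\,d\vartheta)$-norm of $(\rho\partial_\rho)^{\alpha_1}\partial_\vartheta^{\alpha_2}u$, i.e. a summand of the polar $K^m_\beta$-norm of $u$.

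Summing over $|\alpha|\le m$ produces a two-sided inequality of the form $C_{\kappa,m}^{-1}\|u\|_{K^m_\beta(\Omega)}\le\|\cG^*_\kappa u\|_{K^m_{\beta'}(\cG_\kappa\Omega)}\le C_{\kappa,m}\|u\|_{K^m_\beta(\Omega)}$, and the same calculation applied to the inverse map $\cG_{1/\kappa}$ (which is also a power-type conformal map with the analogous property supplied by Lemma~\ref{lem:LipGk}) gives that $\cG^*_{1/\kappa}=(\cG^*_\kappa)^{-1}$ is a bounded map in the opposite direction. Hence $\cG^*_\kappa$ is an isomorphism. The only real obstacle is avoiding sign and exponent mistakes in the index arithmetic; once the polar identities and the Jacobian above are written correctly, the algebraic identity $2\kappa\beta'+2\kappa-1=2\beta+1$ selects $\beta'$ uniquely and the rest is routine.
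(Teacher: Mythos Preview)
Your proof is correct and follows essentially the same approach as the paper: the paper merely records the identities $\rho\partial_\rho=\kappa\,r\partial_r$ and $\rho\,d\rho\,d\vartheta=r^{2/\kappa-2}\,r\,dr\,d\theta$ (up to constant factors) and leaves the details to the reader, and you have supplied precisely those details. The index computation $2\kappa\beta'+2\kappa-1=2\beta+1$ is exactly the bookkeeping the paper omits.
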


The proof is based on the formulas
\[
   \rho\partial_\rho = \kappa\, r\partial_r \quad\mbox{and}\quad
   \rho \rd\rho\rd\vartheta = r^{\frac2\kappa-2} r\rd r\rd\theta.
\]
Details are left to the reader.

A relation between standard Sobolev spaces $H^s$ for real positive $s$ and the weighted scale $K^m_\beta$ is the following \cite[Appendix A]{Dauge88}
\begin{equation}
\label{eq:KmHs}
   K^m_\beta(\Omega) \subset H^s(\Omega)\quad\mbox{if}\quad m\ge s\quad\mbox{and}\quad \beta<-s.
\end{equation}

Coming back to the solution $u_0$ of problem \eqref{eq:u0}, we check that as a consequence of \eqref{eq:tu} and of Lemma \ref{lem:Km}, there holds:

\begin{lemma}
\label{lem:u0Km}
Let $m\ge1$ be an integer. Then the solution $u_0$ of problem \eqref{eq:u0} satisfies
\begin{equation}
\label{eq:u0Km}
   u_0\in K^m_\beta(\rA)\quad\forall\beta\quad\mbox{such that}\quad 
   1+\beta>-\min\{\tfrac{\pi}{\omega},2\}.
\end{equation}
Let $\kappa>0$. Then
\begin{equation}
\label{eq:u0KmGk}
   \cG_\kappa u_0\in K^m_{\beta'}(\cG_\kappa\rA)\quad\forall\beta'\quad\mbox{such that}\quad 
   1+\beta'>-\tfrac{1}{\kappa}\,\min\{\tfrac{\pi}{\omega},2\}.
\end{equation}
\end{lemma}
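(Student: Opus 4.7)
The proof will combine the convergent expansion of $u_0$ near the vertex given by Theorem~\ref{th:tu} with classical elliptic regularity on the remainder of $\rA$. In the truncated sector $\rA \cap \sB(0,\rho_1)$ I would start from $u_0 = \Im \sum_{\gamma \in \gA} a_\gamma\,\sZ_\gamma$ and estimate each summand separately in the Kondrat'ev norm. A direct calculation in polar coordinates, using the definition~\eqref{eq:Kmb}, shows that the elementary block $\sZ_\gamma$, whose behavior at the origin is $\rho^{s_\gamma}$ up to possible logarithms, satisfies
\[
  \|\sZ_\gamma\|_{K^m_\beta(\rA \cap \sB(0,R))} \le C(1+|\log R|)^{\delta_\gamma}\, R^{s_\gamma + \beta + 1}
\]
for any $R \le \rho_1$, provided $1 + \beta > -s_\gamma$, with $s_\gamma = |\gamma|$ in cases (1)--(2) of Notation~\ref{not:1} and $s_\gamma = \min(\ell,\, k\pi/\omega)$ in case (3). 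The structural observation that fixes the threshold in the lemma is that every $\gamma$ actually appearing in the expansion obeys $s_\gamma \ge \min\{\pi/\omega,2\}$: by~\eqref{eq:uf}, $u_f$ starts at $\zeta\bar\zeta$, hence its lateral traces $g^0,g^\omega$ vanish to order $2$ at the vertex and $u_\partial = \sum_\ell w_\ell$ begins at $\ell = 2$, while $u_{\sf rm}$ begins at $\pi/\omega$ by~\eqref{eq:urm}.

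Given these per-term bounds, I would sum the series: for any $M > \rho_1^{-1}$ and $R < 1/M$, the analytic estimate $|a_\gamma| \le C M^{|\gamma|}$ from~\eqref{eq:tuest} yields
\[
  \sum_{\gamma \in \gA} |a_\gamma|\, \|\sZ_\gamma\|_{K^m_\beta(\rA \cap \sB(0,R))} \le C(1+|\log R|)\,R^{\beta+1}\sum_{\gamma}(MR)^{|\gamma|}<\infty
\]
under the condition $1+\beta > -\min\{\pi/\omega,2\}$, which gives $u_0 \in K^m_\beta(\rA \cap \sB(0,R))$. On the complement $\rA \setminus \sB(0, R/2)$ the weight $\rho$ is bounded away from zero, so $K^m_\beta$ is equivalent to $H^m$ there; classical interior and boundary elliptic regularity for~\eqref{eq:u0} (the origin being the only singular point of $\partial\rA$ inside $\sB(0,\rho_0)$ by~\eqref{eq:1E2}) then gives $u_0 \in H^m$ on this set and completes the first claim. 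The second claim follows directly from the transformation rule of Lemma~\ref{lem:Km}: setting $\beta' = (1+\beta)/\kappa - 1$, so that $1+\beta' = (1+\beta)/\kappa$, the condition $1+\beta > -\min\{\pi/\omega,2\}$ is equivalent to $1+\beta' > -(1/\kappa)\min\{\pi/\omega,2\}$.

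The main technical difficulty lies in the Kondrat'ev estimate for the blocks $\sZ_\gamma$ with $\gamma \in \gA_0$: the individual powers $\zeta^\ell$ and $\zeta^{k\pi/\omega}$ have exponents that may be arbitrarily close when $\pi/\omega \notin \Q$, so the divided-difference representation~\eqref{eq:Zgam} is essential to avoid a spurious blow-up. Coupled with the uniform control~\eqref{eq:esttaell} of the coefficient $\tilde a_\gamma$, it yields a $K^m_\beta$-estimate with the same geometric decay $(MR)^{|\gamma|}$ as for the simpler terms, uniformly in the pair $(\ell,k)$; this is precisely what makes the sum over $\gA_0$ absolutely convergent and justifies pooling the $\gA_0$-contributions into the single threshold $\min\{\pi/\omega,2\}$.
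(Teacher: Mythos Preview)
Your approach is the one the paper indicates (the lemma is stated without proof, merely as ``a consequence of \eqref{eq:tu} and of Lemma~\ref{lem:Km}''), and your analysis of the corner region is correct: the identification of the minimal homogeneity $\min\{\pi/\omega,2\}$ from the structure of $u_f$, $u_\partial$, $u_{\sf rm}$ is exactly the point, and your handling of the divided-difference blocks $\sZ_\gamma$ for $\gamma\in\gA_0$ is appropriate.

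There is, however, a genuine gap in your treatment of the region \emph{away} from the corner. You assert that ``classical interior and boundary elliptic regularity \dots gives $u_0\in H^m$'' on $\rA\setminus\sB(0,R/2)$, but this is not justified under the paper's hypotheses: the boundary $\partial\rA$ is only Lipschitz (see assumption~(2) following \eqref{eq:1E2}), and the right-hand side $f$ is only assumed to be in $L^2(\rA)$ with analyticity \emph{near the origin}. Away from $\sB(0,\rho_0)$ you therefore have neither smooth boundary nor smooth data, and $u_0$ need not be in $H^m_{\mathrm{loc}}$ up to the boundary for $m\ge2$. Your parenthetical remark only covers the straight sides inside $\sB(0,\rho_0)$; it says nothing about the Lipschitz portion of $\partial\rA$.

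In fact the lemma as stated in the paper suffers from the same issue for $m\ge2$. What is actually used downstream (in the proof of Theorem~\ref{th:Tu0}, estimate \eqref{eq:PhiNorm}) is the $K^2_\beta$-regularity of the explicit building blocks $\Phi_\gamma$ on the bounded set $\rP$, which lies in the sector near the origin --- and this your argument does establish. So the honest statement is $u_0\in K^m_\beta\big(\rA\cap\sB(0,\rho_1)\big)$, together with $u_0\in K^1_\beta(\rA)$ globally; the second assertion \eqref{eq:u0KmGk} then follows from Lemma~\ref{lem:Km} as you wrote.
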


\subsection{Reflection and odd extension}\label{Ss:Reflection}
We first denote by $\cR$ the mapping from $\mathbb{R}^2$ to itself defined as the reflection across the $x_1$ axis
\[
   \cR(x_1,x_2)\equiv (x_1,-x_2) \qquad \forall x=(x_1,x_2) \in \mathbb{R}^2\, .
\]
Then if $\Omega$ is a subset of $\mathbb{R}^2$ and $g$ a function defined on $\Omega$, we denote by $\cR^\ast[g]$ the function on $\cR(\Omega)$ defined by
\[
   \cR^\ast[g](x)=g(\cR(x)) \qquad \forall x \in \cR(\Omega)\, .
\]

Let $\Omega$ be a subdomain of the half-plane $\rS_\pi$. Let us set
\[
   \Gamma = {\rm interior}\,(\partial\Omega\cap\partial\rS_\pi).
\]
We denote by $\cE(\Omega)$ the symmetric extension of $\Omega$ across the $x_1$ axis
\begin{equation}
\label{eq:EOm}
   \cE(\Omega) = \Omega \cup \cR(\Omega) \cup \Gamma.
\end{equation}

Since we want to stay within the category of Lipschitz domains, we need here the assumption that both $\Omega$ and its complement in $\rS_\pi$ are Lipschitz. 
Note that whereas for a Lipschitz domain its complement in $\R^2$ is automatically Lipschitz, too, this is not the case, in general, for the complement in $\rS_\pi$.
This is the reason why we had to make corresponding assumptions in Subsection~\ref{ss:geomset}, see assumptions (2) and (3) on the domain $\rA$ and the perforations $\rP$. Under these assumptions $\cE(\Omega)$ is a Lipschitz domain. Since the proof of this fact is rather technical, we present it in Appendix~\ref{app:symextlip}, Lemma~\ref{lem:LipE}.

If $g$ is a function defined on $\Omega$, the \emph{odd extension} of $g$ to $\cE(\Omega)$ is defined as
\[
\cE^*[g](x)\equiv
\left\{
\begin{array}{ll}
   g(x)\qquad &   \forall x\in\Omega \\
  -g(\cR(x))\qquad &   \forall x\in\cR(\Omega)\\
   0 &   \forall x\in\Gamma\,.
\end{array}
\right.
\]
Let us denote by $H^1_{0,\Gamma}(\Omega)$ the following subspace of $H^1(\Omega)$
\[
   H^1_{0,\Gamma}(\Omega) = \{u\in H^1(\Omega),\quad u\on{\Gamma}=0\}.
\]

\begin{lemma}
\label{lem:KmE}
Assume that $\Omega\subset\rS_\pi$ and that $\Omega$ and $\rS_\pi\setminus\overline\Omega$ have a Lipschitz boundary. Then the odd extension $\cE^*$ defines a bounded embedding
\[
   \cE^*: K^2_\beta(\Omega)\cap H^1_{0,\Gamma}(\Omega) \longrightarrow K^2_\beta(\cE(\Omega))\quad
   \forall \beta\in\R.
\]
\end{lemma}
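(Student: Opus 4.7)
The plan is to show that the distributional derivatives of $\cE^* u$ of order at most $2$ coincide with the functions obtained by reflecting the corresponding derivatives of $u$ (oddly or evenly), and then to use the $\cR$-invariance of the weight $\rho=|x|$ to bound the $K^2_\beta$-norm by $\sqrt{2}\,\|u\|_{K^2_\beta(\Omega)}$.

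First I would compute the pointwise derivatives on each side of $\Gamma$. On $\Omega$ one has $\partial^\alpha(\cE^*u)=\partial^\alpha u$, while on $\cR(\Omega)$ the chain rule applied to $\cE^*u(x)=-u(\cR x)$ yields
\[
   \partial^{(\alpha_1,\alpha_2)}(\cE^*u)(x)=-(-1)^{\alpha_2}\,\bigl(\partial^{(\alpha_1,\alpha_2)}u\bigr)(\cR x).
\]
Thus the piecewise derivative $\partial^\alpha(\cE^*u)$ is the odd extension of $\partial^\alpha u$ when $\alpha_2$ is even and the even extension when $\alpha_2$ is odd.

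Next I would check that no Dirac-type contributions appear on $\Gamma$, so that these pointwise formulas give the full distributional derivatives on $\cE(\Omega)$. The relevant compatibility conditions, to be imposed across $\Gamma$, are the matching of the traces of $\cE^*u$ and of its first-order derivatives. The trace of $\cE^*u$ vanishes on $\Gamma$ from both sides because $u\on{\Gamma}=0$. The trace of $\partial_{x_2}(\cE^*u)$ is the same from both sides since $\partial_{x_2}$ produces the even extension. For $\partial_{x_1}(\cE^*u)$, which is odd, the matching reduces to $\partial_{x_1}u\on{\Gamma}=0$; this follows by commuting the tangential derivative along $\Gamma\subset\{x_2=0\}$ with the trace operator on the hypothesis $u\on{\Gamma}=0$. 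Once all three conditions hold, standard results on piecewise-smooth functions across a Lipschitz interface give $\partial^\alpha(\cE^*u)\in L^2_{\mathrm{loc}}(\cE(\Omega))$ for $|\alpha|\le 2$, with no extra distributional terms.

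Finally, since $\rho(\cR x)=\rho(x)$ and $|\partial^\alpha(\cE^*u)(x)|=|\partial^\alpha u(\cR x)|$ a.e.\ on $\cR(\Omega)$, the identity
\[
   \int_{\cR(\Omega)} \rho^{2(\beta+|\alpha|)}\,|\partial^\alpha(\cE^*u)|^2\,dx \;=\; \int_\Omega \rho^{2(\beta+|\alpha|)}\,|\partial^\alpha u|^2\,dx
\]
holds for each multi-index $\alpha$ with $|\alpha|\le 2$. Summing over $\alpha$ and combining with the trivial doubling of the $L^2$-part produces $\|\cE^*u\|_{K^2_\beta(\cE(\Omega))}=\sqrt{2}\,\|u\|_{K^2_\beta(\Omega)}$, and the fact that $\cE(\Omega)$ is Lipschitz (Lemma~\ref{lem:LipE} in Appendix~\ref{app:symextlip}) ensures $K^2_\beta(\cE(\Omega))$ is well defined.

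The principal obstacle is justifying the vanishing of the tangential trace $\partial_{x_1}u\on{\Gamma}$ near the origin, where $K^2_\beta$ does \emph{not} embed into $H^2$ in general and the Sobolev trace theory does not apply directly. I would handle this by first running the trace argument on each truncated domain $\Omega\setminus\overline{\sB(0,r)}$, $r>0$, where $u\in H^2$ and the standard trace theorem applies, so that the distributional identities $\partial^\alpha(\cE^*u)=\pm\cE^*(\partial^\alpha u)$ hold on $\cE(\Omega)\setminus\overline{\sB(0,r)}$. Since $\{0\}$ has Lebesgue measure zero and the $K^2_\beta$-seminorms are defined by $L^2$-integrals, these identities extend to equalities almost everywhere on $\cE(\Omega)$ and realise $\cE^*u$ as a bona fide element of $K^2_\beta(\cE(\Omega))$.
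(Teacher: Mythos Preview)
Your proof is correct and follows the same line as the paper's: verify that no singular layer appears on $\Gamma$ so that distributional and piecewise derivatives agree, then use the $\cR$-invariance of the weight to obtain the norm-doubling identity $\|\rho^{|\alpha|+\beta}\partial^\alpha\cE^*[u]\|_{L^2(\cE(\Omega))}^2=2\,\|\rho^{|\alpha|+\beta}\partial^\alpha u\|_{L^2(\Omega)}^2$. The paper's version is terser---it checks only that the jumps of $\cE^*[u]$ and of $\partial_2\cE^*[u]$ vanish, which already suffices since $\partial_{x_1}$ is tangential to $\Gamma$ and therefore never generates a surface term (so your separate verification of $\partial_{x_1}u\on{\Gamma}=0$ is not strictly needed)---and it omits your careful treatment of the origin.
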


\begin{proof}
If $u$ belongs to $K^2_\beta(\Omega)\cap H^1_{0,\Gamma}(\Omega)$, the jumps of $\cE^*[u]$ and of $\partial_2\cE^*[u]$ across $\Gamma$ are zero. Hence for all multiindices $\alpha$, $|\alpha|\le2$, the partial derivative $\partial^\alpha \cE^*[u]$ has no density across $\Gamma$ and
\[
   \DNormc{r^{|\alpha|+\beta}\partial^\alpha \cE^*[u]}{L^2({\cE(\Omega)})} = 
   2\DNormc{r^{|\alpha|+\beta}\partial^\alpha u}{L^2(\Omega)}.
\]
\end{proof}

\subsection{Transformation of the residual problem}\label{ss:residual}
We come back to our main setting, with unperforated domain $\rA$, hole pattern $\rP$, and family of perforated domains $\rA_\varepsilon$. We denote by $\cT$ and $\cT^*$ the composition of the conformal map $\cG_{\pi/\omega}$ and the odd extension acting on domains and functions respectively
\begin{equation}
\label{eq:TT}
   \cT = \cE\circ\cG_{\pi/\omega}\quad\mbox{and}\quad \cT^* = \cE^*\circ\cG^*_{\pi/\omega}\,.
\end{equation}
Then we denote
\[
   \rB = \cT(\rA),\quad
   \rQ = \cT(\rP).
\]
As a consequence of assumptions on $\rA$ and $\rP$, and of Lemmas \ref{lem:LipGk} and \ref{lem:LipE}, $\rB$ is a bounded simply connected Lipschitz domain containing the origin, and $\rQ$ is a finite union of bounded simply connected Lipschitz domains. 

The perforated sector $\rA_\varepsilon$ is transformed by $\cT$ into the perforated domain $\rB_\eta$ with
\begin{equation}
\label{eq:eta}
   \eta = \varepsilon^{\pi/\omega}\quad\mbox{and}\quad \rB_\eta = \rB\setminus\eta\overline\rQ.
\end{equation}
We note that the boundary of $\rB_\eta$ is the disjoint union of $\partial\rB$ and $\eta \ee\partial\rQ$, see Fig.\ref{fig:4}:
\begin{equation}
\label{eq:dBe}
   \partial\rB_\eta = \partial\rB\cup\eta \ee\partial\rQ.
\end{equation}

\begin{figure}[ht]
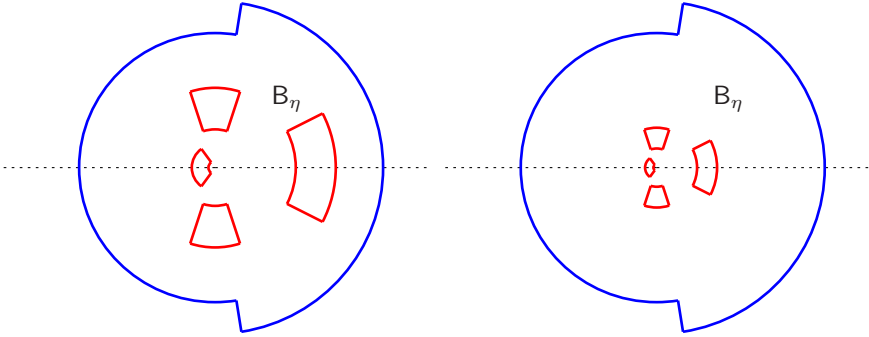

\begin{minipage}[b]{0.49\textwidth}
\input Fig8_perfsector.tex
\end{minipage}
\begin{minipage}[b]{0.49\textwidth}
\input Fig9_perfsector.tex
\end{minipage}
\caption{Transformed perforated domain $\rB_\eta$ for two values of $\eta$.}
\label{fig:4}
\end{figure}

The residual problem \eqref{eq:pepsgen} on $\rA_\varepsilon$ is transformed into the Dirichlet problem on $\rB_\eta$
\begin{equation*}
\begin{cases}
\begin{array}{rcll}
   \Delta v_\eta &=& 0 \quad& \mbox{in}\quad \rB_\eta,\\
   v_\eta &=& -\cT^*[u_0] \quad& \mbox{on}\quad \partial\rB_\eta,
\end{array}
\end{cases}
\end{equation*}
where we have set $v_\eta=\cT^*[\widetilde u_\varepsilon]$. We note that $v_\eta$ belongs to $H^1(\rB_\eta)$ and that its trace is zero on $\partial\rB$. We analyze now the structure of the trace of $\cT^*[u_0]$ on $\eta \ee\partial\rQ$. We take advantage of the converging expansion \eqref{eq:tu} and of the pseudo-homogeneity of its terms. 

We recall from Notation \ref{not:1} that the set of indices $\gA$ is the union of $\frac{\pi}{\omega}\N_*$ and $\N^2_*$, and from Notation \ref{not:2} that the pseudo-homogeneous functions $\sE_\gamma(\varepsilon)$ are defined as $\varepsilon^{|\gamma|}$ if $\gamma$ does not belong to the set of exceptional indices $\gA_0$, and by a divided difference or a logarithmic term in the opposite case.

\begin{theorem}
\label{th:Tu0}
Let $u_0$ be the solution of the unperturbed problem \eqref{eq:u0} with right hand side $f\in L^2(\rA)$ satisfying \eqref{eq:f}. The residual problem \eqref{eq:pepsgen} on $\rA_\varepsilon$ is transformed by the transformation $\cT$ \eqref{eq:TT} into the Dirichlet problem on $\rB_\eta$, with $\eta=\varepsilon^{\pi/\omega}$:
\begin{equation}
\label{eq:petagen}
\begin{cases}
\begin{array}{rcll}
   \Delta v_\eta &=& 0 \quad& \mbox{in}\quad \rB_\eta,\\
   v_\eta &=& 0 \quad& \mbox{on}\quad \partial\rB,\\
   v_\eta &=& -\cT^*[u_0] \quad& \mbox{on}\quad \eta\partial\rQ\,.
\end{array}
\end{cases}
\end{equation}
The trace of $\cT^*[u_0]$ can be written as a convergent sum for $\varepsilon\in(0,\varepsilon_1]$ for some positive $\varepsilon_1$
\begin{equation}
\label{eq:Tu0}
   \cT^*[u_0](\eta X) = \sum_{\gamma\in\gA} \sE_\gamma(\varepsilon)\Psi_\gamma(X), \quad X\in\partial\rQ\,,
\end{equation}
where the set $\gA$ and the functions $\sE_\gamma$ are introduced in Notations \ref{not:1} and \ref{not:2}.
There exists a positive number $\tau\in(0,1/2)$ such that the convergence takes place in the trace Sobolev space $H^{\tau+1/2}(\partial\rQ)$: There exist positive constants $C$ and $M$ such that
\begin{equation}
\label{eq:Tu0est}
   \DNorm{\Psi_\gamma}{H^{\tau+1/2}(\partial\rQ)} \le C M^{|\gamma|},\quad \gamma\in\gA\,.
\end{equation}
\end{theorem}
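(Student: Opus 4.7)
The plan has three parts: verify that the transformation $\cT$ sends problem~\eqref{eq:pepsgen} to~\eqref{eq:petagen}, apply $\cT^*$ termwise to the packet expansion of Corollary~\ref{cor:tu} in order to obtain~\eqref{eq:Tu0}, and establish the uniform $H^{\tau+1/2}(\partial\rQ)$ control on the $\Psi_\gamma$ with geometric growth in $|\gamma|$.

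For the first part, I set $v_\eta=\cT^*[\tilde u_\varepsilon]$. The conformal change of variable preserves harmonicity (Lemma~\ref{lem:LipGk}), and the trace of $\tilde u_\varepsilon$ on the portion of $\partial\rS_\omega$ bordering $\rA_\varepsilon$ is zero because $u_0$ vanishes there. After $\cG^*_{\pi/\omega}$ this trace vanishes on the corresponding segments of the real axis, so the odd extension $\cE^*$ produces a function that is harmonic across the axis — the weak Schwarz reflection principle applies thanks to the Lipschitz regularity guaranteed by Lemma~\ref{lem:LipE}. The boundary conditions in~\eqref{eq:petagen} follow by inspection: $v_\eta=0$ on $\partial\rB$ (image of the external boundary and its reflection), while $v_\eta=-\cT^*[u_0]$ on $\eta\partial\rQ$.

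For the second part, I apply $\cT^*$ termwise to~\eqref{eq:tupe}. Each term or packet there has vanishing trace on $\partial\rS_\omega$, which is precisely the condition ensuring that the odd extension produces a well-defined function on $\rQ$. Explicitly: for $\gamma=k\pi/\omega$ the function $\Im\sZ_\gamma(t)=\rho^{k\pi/\omega}\sin(k\pi\vartheta/\omega)$ becomes $\Im z^k$ under $\cG^*_{\pi/\omega}$, which is already odd across the real axis and extends globally to the harmonic polynomial $\Im(X_1+iX_2)^k$; for the $\ell$-packet, the harmonic polynomial in $(t_1,t_2)$ of degree $\ell$ factors as $r^{\ell\omega/\pi}F(\theta)$ in the $z$-plane with $F$ a trigonometric polynomial vanishing at $\theta=0,\pi$, to which $\cE^*$ applies continuously; for $\gamma\in\gA_0$ the logarithmic and divided-difference expressions in~\eqref{eq:Zgam} transform in the same way. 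The scaling $\eta=\varepsilon^{\pi/\omega}$ combined with Lemma~\ref{lem:homo} matches the $\sE_\gamma(\varepsilon)$ factors, and I define $\Psi_\gamma(X)$ to be (minus) the $\cT^*$-transform of the corresponding slow-variable factor, evaluated at $X\in\partial\rQ$.

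For the third part, each $\Psi_\gamma$ is real-analytic on $\partial\rQ\setminus\{0\}$, so the only delicate case is $0\in\partial\rQ$. Near the origin the worst regularity is produced by the $\ell=1$ packet, which behaves like $r^{\omega/\pi}$; since $\partial\rQ$ is Lipschitz, such behavior lies in $H^s(\partial\rQ)$ exactly for $s<\omega/\pi+1/2$, while higher-order packets are strictly more regular at $0$. Choosing any $\tau\in(0,\min\{\omega/\pi,1/2\})$ therefore yields a single exponent that works uniformly in $\gamma$. The bound~\eqref{eq:Tu0est} then follows by combining the analytic-type estimate $|a_\gamma|\le CM^{|\gamma|}$ of Theorem~\ref{th:tu} with uniform $H^{\tau+1/2}(\partial\rQ)$-bounds on the transformed normalized basis functions, which grow at most geometrically in $|\gamma|$ since $\partial\rQ$ is fixed and the functions are (holomorphic) polynomial, logarithmic, or divided-difference expressions in $z,\bar z$. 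The main obstacle I anticipate lies in controlling the $\gA_0$-contributions uniformly without small-denominator blow-up; however, the stability construction of Subsection~\ref{ss:1.2} already bounds the coefficients $\tilde a_\ell$ independently of the proximity of $\ell\omega$ to $k\pi$, and the divided differences themselves are holomorphic expressions to which routine complex-analytic estimates apply on the compact arc $\partial\rQ$.
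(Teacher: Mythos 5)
Your parts 1 and 2 coincide with the paper's own proof: the paper likewise applies $\cT^*$ termwise to the packet expansion \eqref{eq:tupe}, defines slow-variable building blocks $\Phi_\gamma$ as in \eqref{eq:Phi} and sets $\Psi_\gamma=\cT^*[\Phi_\gamma]$, the zero trace of each packet on $\partial\rS_\omega$ being what makes the odd extension legitimate. Where you diverge is the quantitative step. The paper does not estimate the transformed functions directly on $\partial\rQ$; it proves the uniform bound \eqref{eq:PhiNorm} for the $\Phi_\gamma$ in the weighted space $K^2_\beta(\rP)$ (where the geometric growth $CM^{|\gamma|}$ is immediate from the explicit formulas on the bounded set $\rP$), and then pushes it through a $\gamma$-independent chain of bounded operators: the conformal map (Lemma~\ref{lem:Km}), the odd extension (Lemma~\ref{lem:KmE}), the embedding \eqref{eq:KmHs} into $H^{1+\tau}(\rQ)$, and the Lipschitz trace theorem. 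This buys the uniformity in $\gamma$ for free and yields the sharper admissible range $\tau<\min\{\tfrac12,\tfrac{2\omega}\pi\}$ of Remark~\ref{rem:tau}. Your direct route is workable in principle, but as written it has two weak points. First, the claim that the worst behavior at the origin is the $\ell=1$ packet $\sim r^{\omega/\pi}$ is off: since $u_f=O(\rho^2)$, the lateral data satisfy $g^0_1=g^\omega_1=0$, so $w_1\equiv0$ and the worst nonsingular packet is $\ell=2$, behaving like $r^{2\omega/\pi}$; your resulting range $\tau<\min\{\omega/\pi,1/2\}$ is merely suboptimal, hence harmless for the statement, which asks only for some $\tau\in(0,1/2)$. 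Second, the uniform geometric growth of $\DNorm{\Psi_\gamma}{H^{\tau+1/2}(\partial\rQ)}$ is asserted rather than proved, and your description of the transformed basis functions as ``polynomial \ldots in $z,\bar z$'' is incorrect for the $\N^2_*$-indexed terms, which become fractional powers $z^{\alpha_1\omega/\pi}\bar z^{\alpha_2\omega/\pi}$ (only the $\frac\pi\omega\N_*$ terms become the polynomials $\Im z^k$); controlling their $H^{\tau+1/2}$ traces uniformly when $0\in\partial\rQ$, including across the points where $\partial\rQ$ meets the symmetry axis, essentially requires a scaling or weighted-norm argument of exactly the kind the paper's Lemmas~\ref{lem:Km} and \ref{lem:KmE} encapsulate, so you should either carry out those estimates explicitly or fall back on the weighted-space chain.
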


\begin{proof}
We use the expansion of $u_0(\varepsilon T)$ as written by packets in \eqref{eq:tupe}. Applying the transformation $\cT^*$ we find
\begin{equation}
\label{eq:tupeT}
\begin{aligned}
   \cT^*[u_0](\eta X) =& 
   \sum_{\gamma\in\frac{\pi}{\omega}\N_*} a_{\gamma} \varepsilon^{\gamma} \cT^*[\Im\sZ_{\gamma}](X) \\
   +& \sum_{\ell\in\N_*} \varepsilon^\ell \cT^*
   \Big[\Im \sum_{\substack {\gamma\in\N^2\\ |\gamma|=\ell}} a_\gamma \sZ_\gamma\Big](X)
   + \sum_{\gamma\in\gA_0}  a_\gamma \sE_\gamma(\varepsilon) \cT^*[\Im\sZ_{\gamma'}](X).
\end{aligned}
\end{equation}
We define for $\gamma\in\gA$
\begin{equation}
\label{eq:Phi}
   \Phi_\gamma = 
   \begin{cases}
   \ a_{\gamma} \Im\sZ_{\gamma}
   \quad & \mbox{if}\quad \gamma\in\frac{\pi}{\omega}\N_*,\\
   \ \Im \sum_{\tilde\gamma\in\N^2,\ |\tilde\gamma|=\ell} a_{\tilde\gamma} \sZ_{\tilde\gamma}
   \quad & \mbox{if}\quad \gamma =(0,\ell),\ \ell\in\N_*,\\
   \ a_\gamma  \Im\sZ_{\gamma'}
   \quad & \mbox{if}\quad \gamma =(\ell,0)\in\gA_0,\\
   \ 0 \quad & \mbox{for remaining $\gamma$'s , }
   \end{cases}
\end{equation}
and set
\begin{equation}
\label{eq:Psi}
   \Psi_\gamma = \cT^*[\Phi_\gamma],\quad\forall\gamma\in\gA.
\end{equation}
Thus \eqref{eq:tupeT}-\eqref{eq:Psi} imply \eqref{eq:Tu0}. 

Let us prove estimates \eqref{eq:Tu0est}. Let us choose $\beta<-1$ such that $1+\beta>-\min\{\tfrac{\pi}{\omega},2\}$, cf \eqref{eq:u0Km}. Relying on the explicit form of the functions $\Phi_\gamma$ and on the boundedness of the domain $\rP$, we find that there exist constants $C$ and $M$ such that
\begin{equation}
\label{eq:PhiNorm}
   \DNorm{\Phi_\gamma}{K^2_\beta(\rP)} \le C M^{|\gamma|},\quad \gamma\in\gA\,.
\end{equation}
Let $\beta'=\frac{\omega}{\pi}(1+\beta)-1$. Then $\beta'<-1$ and by Lemma \ref{lem:Km} the conformal map $\cG^*_{\pi/\omega}$ is bounded from $K^2_{\beta}(\rP)$ to $K^2_{\beta'}(\cG_{\pi/\omega}\rP)$. Then by Lemma \ref{lem:KmE}, the odd extension $\cE^*$ is bounded from $K^2_{\beta'}(\cG_{\pi/\omega}\rP)$ to $K^2_{\beta'}(\rQ)$. If we choose $\tau$ such that
\[
   \tau \le -(1+\beta') \quad\mbox{and}\quad \tau\in(0,\tfrac{1}{2}),
\]
we find that by \eqref{eq:KmHs}, the space $K^2_{\beta'}(\cG_{\pi/\omega}\rP)$ is continuously embedded in $H^{\tau+1}(\rQ)$. The trace theorem for Lipschitz domains then yields the continuity of the trace from $H^{\tau+1}(\rQ)$ to $H^{\tau+1/2}(\partial\rQ)$. Hence there exists $C'$ such that
\[
   \DNorm{\Psi_\gamma}{H^{\tau+1/2}(\partial\rQ)} \le C'\DNorm{\Phi_\gamma}{K^2_\beta(\rP)},
   \quad\forall\gamma\in\gA \,.
\]
Combining this with the previous estimate of $\DNorm{\Phi_\gamma}{K^2_\beta(\rP)}$ gives estimates \eqref{eq:Tu0est}.

We finally notice that estimates \eqref{eq:Tu0est} imply the convergence of the series \eqref{eq:Tu0} for $\varepsilon\in[0,\varepsilon_1]$ if $\varepsilon_1$ is chosen such that $\varepsilon_1M<1$.
\end{proof}

\begin{remark}
\label{rem:tau}
From the proof one finds a bound for the regularity index $\tau$
\begin{equation}
\label{eq:boundontau}
 \tau<\min\{\tfrac12,\tfrac{2\omega}\pi\}\,.
\end{equation}
This inequality, which is restrictive for small angles $\omega< \tfrac\pi4$, is mainly due to the use of the embedding \eqref{eq:KmHs} of the weighted Sobolev spaces into unweighted Sobolev spaces. It is needed only in the special situation where the boundary of the holes touches the origin. If, conversely, $0\not\in\partial\rP$, then we can use the trace theorem directly without passing by the embedding \eqref{eq:KmHs}, and the statement of the theorem is true for all $\tau\in(0,1/2)$.
\end{remark}


\section{Symmetric perforated Lipschitz domains}
\label{s:symperf}

In this section we investigate the asymptotic behavior of the solution of a Dirichlet problem in a symmetric Lipschitz domain with small holes. The analysis here performed will allow to study the behavior of the solution of problem \eqref{eq:petagen}.

More precisely, we will consider the case where 
the domain and its holes are symmetric with respect to the horizontal axis and the boundary data are antisymmetric. 
We use the technique with which the behavior of harmonic functions in perforated planar domains was studied in Lanza de Cristoforis \cite{La08} and in Dalla Riva and Musolino \cite{DaMu15}. As in \cite{DaMu15}, we employ boundary integral equations, but there are some differences in the assumptions:
In \cite{DaMu15}, perforations were of class $C^{1,\alpha}$ and connected, whereas we consider here perforations with Lipschitz boundaries and a finite number of connected components. This generalization is naturally implied by the construction of the perforation $\rQ$ from $\rP$ by the conformal transformations and reflections described in the previous sections, because even for a smooth and connected hole $\rP$ in the sector $\rS_\omega$, the resulting perforation $\rQ$ in $\R^2$ may have corners or several connected components.
On the other hand, our symmetry assumptions will allow to simplify notably the treatment of the problem. In particular, we find that we do not have to deal with the logarithmic behavior which arises in the general setting for two-dimensional perforated domains. 

\subsection{Some notions of potential theory on Lipschitz domains}\label{ss:prelLip}

We collect here some known results about harmonic double layer potentials on Lipschitz domains in the plane. Main references for these facts are the paper by Costabel \cite{Co88} and the books by Folland  \cite{Fo95} and McLean \cite{Mc00}. 

We assume that $\Omega\subset\R^2$ is a bounded Lipschitz domain (a role that will mainly be played by the perforated domain $\rB\setminus\eta\rQ$). Furthermore, $\Omega$ will be connected, but its complement 
$\Omega^\complement=\mathbb{R}^2 \setminus \overline{\Omega}$ 
may be not connected. 
Let $\Omega^\complement_{(1)}, \dots, \Omega^\complement_{(m)}$ be the bounded connected components of $\Omega^\complement$ and $\Omega^\complement_{(0)}$ the unbounded connected component of $\Omega^\complement$. Thus the boundary $\partial\Omega$ has the  $m+1$ connected components
$\partial\Omega^\complement_{(0)}, \dots, \partial\Omega^\complement_{(m)}$.

Let $E $ be the function from $\mathbb{R}^2\setminus\{0\}$ to $\mathbb{R}$ defined by
\[
E (x) \equiv
-\frac{1}{2\pi}\log |x| \qquad    \forall x\in 
{\mathbb{R}}^{2}\setminus\{0\}.
\]
 As is well known, $E $ is a fundamental solution of $-\Delta$ on $\mathbb{R}^2$.

If $\phi$ is an integrable function on $\partial \Omega$, we define the double layer potential $\mathcal{D}_{\partial\Omega}[\phi]$ by setting
\[
\mathcal{D}_{\partial\Omega}[\phi] (x)\equiv
 -\int_{\partial\Omega}\phi(y)\, n(y)\cdot \nabla E (x-y)\,\mathrm{d}s_y
 \qquad\forall x\in\mathbb{R}^2 \setminus \partial \Omega\, ,
\] 
where $\mathrm{d}s$ denotes the length element on $\partial\Omega$ and $n$ denotes the outward unit normal to $\Omega$, which exists almost everywhere on $\partial\Omega$. 
In $\R^2\setminus\partial\Omega$, the double layer potential $\mathcal{D}_{\partial\Omega}[\phi]$ is a harmonic function, vanishing at infinity.
By Costabel \cite[Thm.~1]{Co88}, if $\tau \in [-1/2,1/2]$ and $\phi \in H^{1/2+\tau}(\partial \Omega)$ then
\begin{equation}
\label{eq:dblreptau}
 \mathcal{D}_{\partial\Omega}[\phi]\on{\Omega} \in H^{1+\tau}(\Omega)\, , \qquad   \mathcal{D}_{\partial\Omega}[\phi]\on{\Omega^\complement} \in H^{1+\tau}_{\mathrm{loc}}(\Omega^\complement)\, .
\end{equation}
We denote by $\gamma_{0}$ and $\gamma_{0}^\complement$ the interior and exterior traces on $\partial \Omega$, respectively, and by $\gamma_{1}$ and $\gamma_{1}^\complement$ the interior and exterior normal derivatives on $\partial \Omega$, respectively, (both taken with respect to the exterior normal $n$). Then we have the jump relations \cite[Lem.~4.1]{Co88}
\[
\gamma_{0}^\complement\mathcal{D}_{\partial\Omega}[\phi]-\gamma_{0}\mathcal{D}_{\partial\Omega}[\phi]=\phi \ , \qquad \gamma_{1} \mathcal{D}_{\partial\Omega}[\phi]=\gamma_{1}^\complement\mathcal{D}_{\partial\Omega}[\phi]\, ,
\]
for all $\phi \in H^{1/2+\tau}(\partial \Omega)$. We introduce the boundary operators $\mathcal{K}_{\partial\Omega}$ and $\mathcal{W}_{\partial \Omega}$ by setting
\[
\mathcal{K}_{\partial \Omega}[\phi]\equiv \frac{1}{2}\bigg(\gamma_{0}\mathcal{D}_{\partial\Omega}[\phi]+\gamma_{0}^\complement\mathcal{D}_{\partial\Omega}[\phi]\bigg)\, , \quad
\mathcal{W}_{\partial \Omega}[\phi]\equiv-\gamma_{1}\mathcal{D}_{\partial\Omega}[\phi] =-\gamma_{1}^\complement\mathcal{D}_{\partial\Omega}[\phi]   
\]
for all $\phi \in H^{1/2+\tau}(\partial \Omega)$. As a consequence,
\begin{equation}\label{eq:jump}
\gamma_{0}\mathcal{D}_{\partial \Omega}[\phi]=- \tfrac{1}{2}\phi + \mathcal{K}_{\partial \Omega}[\phi]\, , \quad \gamma_{0}^\complement\mathcal{D}_{\partial\Omega} [\phi]= \tfrac{1}{2}\phi + \mathcal{K}_{\partial \Omega}[\phi]\, , \quad \forall \phi \in H^{1/2+\tau}(\partial \Omega)\, .
\end{equation}
Thus the boundary integral operator associated with the Dirichlet problem in $\Omega$ is $-\frac{1}{2}I + \mathcal{K}_{\partial \Omega}$, whose mapping properties we therefore want to summarize in the  sequel.  

From Costabel and Wendland \cite[Remark 3.15]{CoWe86} (see also Steinbach and Wendland \cite{StWe01} and Mayboroda and Mitrea \cite{MaMi06}), we deduce the validity of the following. 

\begin{lemma}\label{thm:Fred}
For any $\tau \in [-1/2,1/2]$, 
the operators $\pm\frac{1}{2}I + \mathcal{K}_{\partial \Omega} \colon H^{1/2+\tau}(\partial \Omega)\to H^{1/2+\tau}(\partial \Omega)$ are Fredholm operators of index 0.
\end{lemma}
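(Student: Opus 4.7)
The plan is to combine the classical Fredholm theory for the energy space $H^{1/2}(\partial\Omega)$ with a duality plus interpolation argument to cover the full range $\tau\in[-1/2,1/2]$.

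I would first treat the base case $\tau=0$. By the jump relation \eqref{eq:jump}, the operator $-\frac{1}{2}I+\mathcal{K}_{\partial\Omega}$ is the interior trace of $\mathcal{D}_{\partial\Omega}[\phi]$, and is therefore the boundary integral operator associated with the interior Dirichlet problem in $\Omega$. The classical well-posedness of the $H^1(\Omega)$ Dirichlet problem on bounded Lipschitz domains then yields that $-\frac{1}{2}I+\mathcal{K}_{\partial\Omega}$ is Fredholm of index $0$ on $H^{1/2}(\partial\Omega)$; the analogous analysis of the exterior Dirichlet problem (using that $\mathcal{D}_{\partial\Omega}[\phi]$ decays at infinity) handles $+\frac{1}{2}I+\mathcal{K}_{\partial\Omega}$. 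This is precisely the approach taken in \cite{Co88} and \cite{CoWe86}, and can alternatively be formulated via the G\r{a}rding inequality for the associated single-layer and hypersingular operators.

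Next, I would extend to the dual endpoint $\tau=-1/2$. The $L^2$-adjoint of $\pm\frac{1}{2}I+\mathcal{K}_{\partial\Omega}$ is $\pm\frac{1}{2}I+\mathcal{K}'_{\partial\Omega}$, which is the boundary integral operator of the Neumann problem on $\Omega$. Using the Verchota $L^2$ theory for the Neumann problem on Lipschitz domains, this adjoint is Fredholm of index $0$ on $L^2(\partial\Omega)$, whence by duality the original operator is Fredholm of index $0$ on $L^2(\partial\Omega)=H^0(\partial\Omega)$. For the opposite endpoint $\tau=1/2$, corresponding to $H^1(\partial\Omega)$, I would invoke the planar Lipschitz results of Mayboroda and Mitrea \cite{MaMi06}. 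The intermediate values of $\tau$ are then recovered by interpolation, using that the kernel and cokernel dimensions of $\pm\frac{1}{2}I+\mathcal{K}_{\partial\Omega}$ are stable across the scale: any kernel element produces, via the double-layer representation, a harmonic function on $\Omega$ or $\Omega^\complement$ with vanishing trace, and the induced regularity on the density $\phi$ shows that the kernel lies in the smoothest space on the scale.

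The main obstacle is the endpoint $\tau=1/2$. For Lipschitz domains in dimension $n\ge3$, the Fredholm property of $\pm\frac{1}{2}I+\mathcal{K}_{\partial\Omega}$ on $H^1(\partial\Omega)$ can genuinely fail when the Lipschitz constant is large, so that one must in general restrict the range of Sobolev exponents. In the present two-dimensional setting, however, the Rellich-type identity for Lipschitz graphs in the plane, together with the conformal structure available in $\mathbb{R}^2$, allows the arguments of \cite{MaMi06} and \cite{StWe01} to reach all the way up to $H^1(\partial\Omega)$, which is what makes the clean full-range statement of the lemma possible.
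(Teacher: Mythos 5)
The paper does not actually prove this lemma: it is quoted directly from the literature, namely from Costabel--Wendland \cite[Remark 3.15]{CoWe86} (with \cite{StWe01} and \cite{MaMi06} as further references), where the Fredholm property on the scale $H^{1/2+\tau}(\partial\Omega)$, $|\tau|\le 1/2$, is obtained from the G\aa rding inequalities for the single layer and hypersingular operators together with the Calder\'on relations. Your reconstruction---energy space $\tau=0$, then the endpoints $L^2(\partial\Omega)$ and $H^1(\partial\Omega)$ via Rellich-identity results and \cite{MaMi06}, then interpolation---is a legitimate alternative route resting on essentially the same circle of results, and it can be made rigorous. Two steps, however, are stated more loosely than they deserve. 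First, at $\tau=0$ the Fredholm property is \emph{not} a direct consequence of the well-posedness of the $H^1(\Omega)$ Dirichlet problem (indeed $-\frac12 I+\mathcal{K}_{\partial\Omega}$ is not invertible here, as Lemma \ref{lem:V-} shows); the honest argument is the strong-ellipticity one you mention only as an ``alternative'', i.e.\ coercivity of $V$ and of $W$ modulo constants plus the identities $VW=\frac14 I-\mathcal{K}^2$, $V\mathcal{K}'=\mathcal{K}V$, which is exactly the content of \cite{CoWe86}. Second, Fredholmness does not interpolate from endpoint spaces in general; your kernel-regularity remark is the right ingredient, but one must also control the cokernel and the closed range, e.g.\ by noting that kernel and a complement of the range are spanned by the \emph{same} locally constant functions at both endpoints, adding a finite-rank correction with smooth range to get an operator invertible at both endpoints, and then using that for a nested scale the two inverses are consistent and hence interpolate. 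Alternatively one can simply cite \cite{MaMi06}, whose Besov--Triebel--Lizorkin results at $p=q=2$ already cover the whole range $\tau\in[-1/2,1/2]$, which is what the paper does. Finally, your cautionary remark that the Fredholm property on $H^1(\partial\Omega)$ ``can genuinely fail'' for Lipschitz domains in dimension $n\ge3$ with large Lipschitz constant is not accurate for the harmonic double layer: the $L^2$-based regularity theory (Verchota, Dahlberg--Kenig) is independent of the Lipschitz constant and of the dimension; the failures you have in mind occur for $L^p$ with $p$ away from $2$, for other smoothness indices in the Besov scale, or for systems. This does not affect your argument, since the two-dimensional case at $p=2$ is all that is needed here.
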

The kernels and cokernels of $\pm\frac{1}{2}I + \mathcal{K}_{\partial \Omega}$ are also known; they are independent of $\tau$. They are described in terms of the characteristic functions of the connected components of $\partial \Omega$. Here, if $\mathcal{O}$ is a subset of $\partial \Omega$ we denote by $\chi_{\mathcal{O}}$ the function from $\partial \Omega$ to $\mathbb{R}$ defined by
\[
\chi_{\mathcal{O}}(x)\equiv \left\{
\begin{array}{ll}
1 &\text{if $x \in \mathcal{O}$}\, ,\\
0 &\text{if $x \in \partial \Omega \setminus \mathcal{O}$}\, .
\end{array}
\right.
\] 
The value of the double layer potential of a constant density is well known. 
\[
 \mathcal{D}_{\partial\Omega}[\chi_{\partial\Omega}](x) = \left\{
\begin{array}{ll}
0 &\text{if $x \in \Omega^\complement$}\\
-1 &\text{if $x \in  \Omega $}
\end{array}
\right.\quad 
\text{ hence } \quad 
\mathcal{K}_{\partial \Omega}[\chi_{\partial\Omega}]= -\tfrac12 \chi_{\partial\Omega}\,.
\]
Applying this to the components $\Omega^\complement_{(j)}$, $j=1,\dots,m$, we see that the characteristic functions $\chi_{\partial\Omega^\complement_{(j)}}$ generate double layer potentials that vanish in $\Omega$ and that they are therefore in the kernel of the operator $-\frac{1}{2}I + \mathcal{K}_{\partial \Omega}$. In fact, by arguing as in Folland \cite[Ch.~3]{Fo95} one can prove the following.
\begin{lemma}\label{lem:V-}
Let
\[
 \mathfrak{V}_{\pm}\equiv \bigg\{\phi \in H^{1/2}(\partial \Omega) \colon \pm \tfrac{1}{2}\phi + \mathcal{K}_{\partial \Omega}[\phi] =0 \bigg\}\, .
\]
Then $\mathfrak{V}_+$ has dimension $1$ and consists of constant functions on $\partial\Omega$.
The space $\mathfrak{V}_-$ has dimension $m$ and is generated by $\{\chi_{\partial \Omega^\complement_{(j)}}\}_{j=1}^{m}$. 
\end{lemma}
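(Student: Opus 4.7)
The plan is to characterize the kernels $\mathfrak{V}_\pm$ by applying the jump relations \eqref{eq:jump} to $u := \mathcal{D}_{\partial\Omega}[\phi]$ and then invoking uniqueness of interior and exterior Dirichlet/Neumann problems on Lipschitz domains. A key two-dimensional input is that $\nabla E(x-y) = O(|x|^{-1})$ as $|x|\to\infty$, so the double layer potential of any density in $H^{1/2}(\partial\Omega)$ decays to zero at infinity; this is what prevents the logarithmic behavior that typically complicates exterior problems in dimension two.

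First I would verify the stated inclusions by direct computation. The identity $\mathcal{D}_{\partial\Omega}[\chi_{\partial\Omega}] = -\chi_{\Omega}$ recalled above and the jump formulas give $\mathcal{K}_{\partial\Omega}[\chi_{\partial\Omega}] = -\tfrac{1}{2}\chi_{\partial\Omega}$, so $\chi_{\partial\Omega}\in\mathfrak{V}_+$. Applying the same identity to the bounded Lipschitz domain $\Omega^\complement_{(j)}$ on its own, and observing that the outward unit normal of $\Omega$ restricted to $\partial\Omega^\complement_{(j)}$ is opposite to the outward unit normal of $\Omega^\complement_{(j)}$, one obtains $\mathcal{D}_{\partial\Omega}[\chi_{\partial\Omega^\complement_{(j)}}] = \chi_{\Omega^\complement_{(j)}}$, whose interior trace on $\partial\Omega$ vanishes; hence $\chi_{\partial\Omega^\complement_{(j)}}\in\mathfrak{V}_-$ for $j=1,\ldots,m$. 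Linear independence of the family $\{\chi_{\partial\Omega^\complement_{(j)}}\}_{j=1}^m$ is immediate from the disjointness of their supports.

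For the reverse inclusions, I would take $\phi\in\mathfrak{V}_-$ and set $u = \mathcal{D}_{\partial\Omega}[\phi]$. The jump relation \eqref{eq:jump} yields $\gamma_0 u = 0$; since $\Omega$ is connected and $u$ is harmonic there, uniqueness of the interior Dirichlet problem on a Lipschitz domain gives $u\equiv 0$ on $\Omega$, hence $\gamma_1 u = 0$, and by the continuity of the conormal derivative across $\partial\Omega$ also $\gamma_1^\complement u = 0$. On each bounded component $\Omega^\complement_{(j)}$ with $j\geq 1$, the function $u$ solves a homogeneous Neumann problem and is therefore a constant $c_j$; on the unbounded component $\Omega^\complement_{(0)}$, the homogeneous Neumann problem with $u(x)\to 0$ at infinity has only the trivial solution. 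The jump formula $\phi = \gamma_0^\complement u - \gamma_0 u$ then produces the representation $\phi = \sum_{j=1}^m c_j\, \chi_{\partial\Omega^\complement_{(j)}}$, proving $\dim\mathfrak{V}_- = m$. The analysis of $\mathfrak{V}_+$ is entirely parallel with Dirichlet and Neumann interchanged: from $\gamma_0^\complement u = 0$ and uniqueness of the interior Dirichlet problem on each bounded $\Omega^\complement_{(j)}$ and of the exterior Dirichlet problem with vanishing limit at infinity on $\Omega^\complement_{(0)}$, one gets $u\equiv 0$ on $\Omega^\complement$; then $\gamma_1 u = 0$ forces $u$ to be one and the same constant $c$ on the connected domain $\Omega$, so $\phi = -c\,\chi_{\partial\Omega}$ and $\dim\mathfrak{V}_+ = 1$.

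The only delicate point is the correct use of the exterior problems on $\Omega^\complement_{(0)}$: the zero boundary data must be paired with the decay at infinity provided by the double layer structure, and this is what rules out additive constants that would otherwise appear in the 2D exterior Neumann theory. The Lipschitz-domain uniqueness and the continuity of the conormal derivative across $\partial\Omega$ for $H^1$ harmonic functions are standard and can be quoted from \cite{Mc00} and \cite{Co88}; no further technical machinery is needed.
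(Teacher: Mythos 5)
Your argument is correct and is essentially the proof the paper has in mind: the paper only exhibits the characteristic functions as kernel elements and then refers to ``arguing as in Folland [Ch.~3]'', which is exactly the jump-relation-plus-uniqueness (interior/exterior Dirichlet and Neumann, with decay at infinity of the double layer potential) argument you spell out. Your sign computation giving $\mathcal{D}_{\partial\Omega}[\chi_{\partial\Omega^\complement_{(j)}}]=\chi_{\Omega^\complement_{(j)}}$ and the use of \eqref{eq:jump} to recover $\phi=\gamma_0^\complement u-\gamma_0 u$ are both consistent with the paper's conventions, so no gap remains.
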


In order to characterize the range of the double layer potential operator, we use the description of the mapping properties of the operator  of the normal derivative of the double layer potential 
$\mathcal{W}_{\partial \Omega}=-\gamma_1\mathcal{D}_{\partial \Omega}$
as given in  McLean \cite[Thm.~8.20]{Mc00}.
\begin{lemma}\label{lem:kerW}
The operator $\mathcal{W}_{\partial \Omega}$ is a bounded selfadjoint operator from $H^{1/2}(\partial \Omega)$ to its dual space $H^{-1/2}(\partial \Omega)$. The kernel of $\mathcal{W}_{\partial \Omega}$ consists of locally constant functions in $H^{1/2}(\partial \Omega)$. Its dimension is $m+1$, and it is generated by  $\{\chi_{\partial \Omega^\complement_{(j)}}\}_{j=0}^{m}$.
\end{lemma}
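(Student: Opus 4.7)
The plan has three ingredients: mapping properties of the double layer potential, a Green-type identity yielding self-adjointness and positivity, and a uniqueness argument for harmonic functions with vanishing Neumann trace.

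For boundedness, I would start from \eqref{eq:dblreptau} with $\tau=0$: $\mathcal{D}_{\partial\Omega}[\phi]$ lies in $H^{1}(\Omega)\cap H^{1}_{\mathrm{loc}}(\Omega^\complement)$ and is harmonic off $\partial\Omega$, so the normal derivatives $\gamma_{1}$ and $\gamma_{1}^\complement$ are defined in $H^{-1/2}(\partial\Omega)$ by the standard weak formulation based on Green's first identity. In two dimensions the double layer potential of an $H^{1/2}$ density decays as $O(|x|^{-1})$ at infinity, so $\nabla\mathcal{D}_{\partial\Omega}[\phi]\in L^{2}(\Omega^\complement)$ on the unbounded component as well, which gives the bounded map $\mathcal{W}_{\partial\Omega}\colon H^{1/2}(\partial\Omega)\to H^{-1/2}(\partial\Omega)$.

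Next, for self-adjointness (and, as a byproduct, positive semi-definiteness), take $\phi,\psi\in H^{1/2}(\partial\Omega)$ and set $u=\mathcal{D}_{\partial\Omega}[\phi]$, $v=\mathcal{D}_{\partial\Omega}[\psi]$. Applying Green's first identity in $\Omega$ and separately in each component of $\Omega^\complement$ (keeping track of the fact that the outward normal on $\partial\Omega^\complement_{(j)}$ is $-n$, and checking that the contribution at infinity vanishes thanks to the decay of $u$ and $v$), then summing, I obtain
\[
\int_{\R^{2}\setminus\partial\Omega}\nabla u\cdot\nabla v\,dx
= \langle\gamma_{0}u-\gamma_{0}^\complement u,\,\gamma_{1}v\rangle
= -\langle\phi,\,\gamma_{1}v\rangle
= \langle\phi,\,\mathcal{W}_{\partial\Omega}\psi\rangle,
\]
using the continuity $\gamma_{1}v=\gamma_{1}^\complement v$ and the jump $\gamma_{0}^\complement u-\gamma_{0}u=\phi$. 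The symmetry of the left-hand side in $(\phi,\psi)$ yields self-adjointness, and choosing $\psi=\phi$ gives $\langle\mathcal{W}_{\partial\Omega}\phi,\phi\rangle=\int_{\R^{2}}|\nabla u|^{2}\ge 0$.

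For the kernel, if $\mathcal{W}_{\partial\Omega}\phi=0$ then $\int_{\R^{2}}|\nabla u|^{2}=0$, so $u$ is a constant $c_{\mathrm{int}}$ on $\Omega$ and a constant $c_{j}$ on each bounded component $\Omega^\complement_{(j)}$, $j=1,\dots,m$; on the unbounded component $\Omega^\complement_{(0)}$ the decay of $u$ at infinity forces $u\equiv 0$. The jump relation $\phi=\gamma_{0}^\complement u-\gamma_{0}u$ then shows that $\phi$ is locally constant, with value $-c_{\mathrm{int}}$ on $\partial\Omega^\complement_{(0)}$ and $c_{j}-c_{\mathrm{int}}$ on $\partial\Omega^\complement_{(j)}$ for $j\ge 1$. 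The $m+1$ parameters $c_{\mathrm{int}},c_{1},\dots,c_{m}$ can be prescribed freely, so $\ker\mathcal{W}_{\partial\Omega}$ is exactly the $(m+1)$-dimensional space of locally constant functions on $\partial\Omega$, a basis of which is $\{\chi_{\partial\Omega^\complement_{(j)}}\}_{j=0}^{m}$. The converse inclusion, that each $\chi_{\partial\Omega^\complement_{(j)}}$ lies in the kernel, follows directly from the fact that $\mathcal{D}_{\partial\Omega}[\chi_{\partial\Omega^\complement_{(j)}}]$ is piecewise constant, so its normal derivative vanishes.

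The main technical subtlety is the unbounded component $\Omega^\complement_{(0)}$: because in two dimensions the fundamental solution $E$ itself does not decay, one must check separately that the double layer potential $\mathcal{D}_{\partial\Omega}[\phi]$ decays at rate $O(|x|^{-1})$ and that the boundary contribution at infinity in Green's identity vanishes. Everything else is a routine application of jump relations and integration by parts on Lipschitz domains.
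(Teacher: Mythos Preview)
Your proof is correct. The paper does not give its own argument for this lemma but simply cites McLean \cite[Thm.~8.20]{Mc00}; your approach via the energy identity
\[
\big\langle \mathcal{W}_{\partial\Omega}\phi,\psi\big\rangle
=\int_{\R^{2}\setminus\partial\Omega}\nabla\mathcal{D}_{\partial\Omega}[\phi]\cdot\nabla\mathcal{D}_{\partial\Omega}[\psi]\,dx
\]
combined with the jump relations is precisely the standard route (and essentially what McLean does), so there is no substantive difference to report. One small remark on presentation: the phrase ``the $m+1$ parameters $c_{\mathrm{int}},c_1,\dots,c_m$ can be prescribed freely'' is not what you have actually shown at that point (you have only shown that elements of the kernel are locally constant); the equality of the kernel with the space of locally constant functions follows once you combine this inclusion with the converse inclusion that you establish immediately afterwards from the piecewise constancy of $\mathcal{D}_{\partial\Omega}[\chi_{\partial\Omega^\complement_{(j)}}]$. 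The decay check for the unbounded component is handled correctly.
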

For a bounded selfadjoint operator, the kernel determines the range, the latter being the orthogonal complement of the former. We thus obtain the following description of the range of the double layer potential operator.
\begin{corollary}\label{cor:ranD}
Let $\tau\in[-1/2,1/2]$. Let $u \in H^{1+\tau}(\Omega)$ be such that $\Delta u=0$ in $ \Omega$. Then there exists $\mu \in H^{1/2+\tau}(\partial\Omega)$ such that $u=\mathcal{D}_{\partial \Omega}[\mu]\on{\Omega}$ if and only if
\begin{equation}\label{eq:ranD}
 \big\langle \gamma_{1}u \,,\, \chi_{\partial \Omega^\complement_{(j)}} \big\rangle =0
 \qquad \forall j\in \{1,\dots,m\}\, .
\end{equation}
\end{corollary}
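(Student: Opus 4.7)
The plan is to prove the two implications separately using the Fredholm and duality structure of $\mathcal{W}_{\partial\Omega}$ from Lemma \ref{lem:kerW}. For the easy direction ($\Rightarrow$), I would start from the identity $\gamma_1\mathcal{D}_{\partial\Omega}[\mu]=-\mathcal{W}_{\partial\Omega}[\mu]$ recorded before the corollary. Pairing with $\chi_{\partial\Omega^\complement_{(j)}}$ in the $H^{-1/2}$--$H^{1/2}$ duality and using selfadjointness of $\mathcal{W}_{\partial\Omega}$, together with the fact that each characteristic function of a boundary component lies in $\ker\mathcal{W}_{\partial\Omega}$, immediately yields $\langle\gamma_1 u,\chi_{\partial\Omega^\complement_{(j)}}\rangle=0$.

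For the non-trivial direction ($\Leftarrow$), my plan is to construct a harmonic extension $u^\complement$ of $u$ to $\Omega^\complement$ matching the Neumann trace $\gamma_1 u$ on $\partial\Omega$, and then to glue: setting $w=u$ on $\Omega$ and $w=u^\complement$ on $\Omega^\complement$ produces a function harmonic off $\partial\Omega$, decaying at infinity, with continuous normal derivative across $\partial\Omega$ and trace jump $\mu:=\gamma_0^\complement u^\complement-\gamma_0 u \in H^{1/2+\tau}(\partial\Omega)$. The standard characterization of harmonic functions without normal derivative jump then identifies $w=\mathcal{D}_{\partial\Omega}[\mu]$ on $\R^2\setminus\partial\Omega$, and restricting to $\Omega$ gives the desired representation. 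The extension is built component by component: on each bounded component $\Omega^\complement_{(j)}$, $j\geq 1$, I solve the interior Neumann problem whose compatibility condition $\int_{\partial\Omega^\complement_{(j)}}\gamma_1 u\,ds=0$ is precisely the hypothesis; on the unbounded component $\Omega^\complement_{(0)}$, I solve the exterior Neumann problem with decay at infinity, whose compatibility condition is produced by integration by parts: $\langle\gamma_1 u,\chi_{\partial\Omega}\rangle=\int_\Omega\Delta u\,dx=0$, so subtracting the $m$ given orthogonality relations forces $\langle\gamma_1 u,\chi_{\partial\Omega^\complement_{(0)}}\rangle=0$ automatically.

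The main obstacle is the regularity of $u^\complement$ for all $\tau\in[-1/2,1/2]$: producing a Neumann extension in $H^{1+\tau}_{\mathrm{loc}}(\Omega^\complement)$ from data in $H^{-1/2+\tau}(\partial\Omega)$ on a Lipschitz boundary uses the sharp $L^2$ potential theory of Costabel \cite{Co88} and McLean \cite{Mc00}. As a cleaner alternative avoiding this step, one can invert $\mathcal{W}_{\partial\Omega}$ directly: Lemma \ref{lem:kerW} plus selfadjointness show that its range in $H^{-1/2+\tau}(\partial\Omega)$ is closed and equals the annihilator of the $(m{+}1)$-dimensional kernel $\mathrm{span}\{\chi_{\partial\Omega^\complement_{(j)}}\}_{j=0}^{m}$. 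Since, as above, $\langle\gamma_1 u,\chi_{\partial\Omega^\complement_{(0)}}\rangle=0$ follows for free from Green's identity, the equation $\mathcal{W}_{\partial\Omega}[\mu]=-\gamma_1 u$ admits a solution $\mu\in H^{1/2+\tau}(\partial\Omega)$; then $\mathcal{D}_{\partial\Omega}[\mu]|_\Omega$ and $u$ are harmonic in the connected domain $\Omega$ with identical Neumann data, so they differ by a constant, which can be absorbed by adding a suitable multiple of $\chi_{\partial\Omega}$ to $\mu$ (using $\mathcal{D}_{\partial\Omega}[\chi_{\partial\Omega}]|_\Omega=-1$ from the computation preceding Lemma \ref{lem:V-}).
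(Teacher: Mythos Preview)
Your proposal is correct. Your ``cleaner alternative'' is precisely the argument the paper has in mind: the sentence preceding the corollary (``For a bounded selfadjoint operator, the kernel determines the range\ldots'') is the whole proof, namely that $u=\mathcal{D}_{\partial\Omega}[\mu]|_\Omega$ forces $-\gamma_1u\in\operatorname{ran}\mathcal{W}_{\partial\Omega}=(\ker\mathcal{W}_{\partial\Omega})^\perp$, and conversely any $\mu$ with $\mathcal{W}_{\partial\Omega}[\mu]=-\gamma_1u$ gives $\mathcal{D}_{\partial\Omega}[\mu]|_\Omega=u+\text{const}$, which is fixed by shifting $\mu$ by a multiple of $\chi_{\partial\Omega}$. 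The paper does not spell out this last constant-correction step, so your writeup is in fact more complete than the paper's.

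Your first route via a Neumann extension $u^\complement$ and gluing is a genuinely different, more PDE-flavored argument. It has the advantage of being constructive and of making the role of the compatibility conditions transparent (they are exactly the solvability conditions for the interior Neumann problems on the $\Omega^\complement_{(j)}$), but, as you note yourself, it requires invoking sharp Lipschitz regularity for the Neumann problem across the whole scale $\tau\in[-1/2,1/2]$. The $\mathcal{W}_{\partial\Omega}$ route sidesteps this by staying on the boundary; the only point to watch is that Lemma~\ref{lem:kerW} is stated only for $H^{1/2}\to H^{-1/2}$, so for $\tau\ne0$ one tacitly uses that $\mathcal{W}_{\partial\Omega}$ is Fredholm of index zero on the full scale with $\tau$-independent kernel (standard from \cite{Co88,Mc00}, and consistent with how the paper handles the analogous issue for $\pm\tfrac12I+\mathcal{K}_{\partial\Omega}$ in Lemma~\ref{thm:Fred}).
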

Here we use the brackets $\big\langle\cdot,\cdot\big\rangle$ to denote the natural duality between a Sobolev space $H^{s}(\partial \Omega)$ and $H^{-s}(\partial \Omega)$. Thus the condition in \eqref{eq:ranD} means that the integral of the normal derivative 
\[
 \partial_n u = n\cdot\nabla u
\]
of $u$ over each component of $\partial\Omega$ vanishes. The condition for $j=0$ is implied by the others, because by Green's formula any harmonic function $u$ in $\Omega$ satisfies
\[
  \sum_{j=0}^m \big\langle \gamma_{1}u \,,\, 
       \chi_{\partial \Omega^\complement_{(j)}} \big\rangle =
  \int_{\partial\Omega} \partial_n u\, \mathrm{d}s = 0\,.
\]

\begin{remark}\label{rem:ranK}
The function $\mu$ represents $u$ as a double layer potential if and only if $\mu$ is a solution of the boundary integral equation
\begin{equation}\label{eq:BIE}
 -\tfrac12\mu + \mathcal{K}_{\partial \Omega}[\mu] = \gamma_{0} u \qquad\text{ on } \partial\Omega\,.
\end{equation}
This follows from the jump relation \eqref{eq:jump} and from the uniqueness of the solution of the Dirichlet problem. Thus Corollary~\ref{cor:ranD} can be seen as a statement on the solvability of the boundary integral equation~\eqref{eq:BIE}, and conditions~\eqref{eq:ranD} characterize the cokernel of the operator 
$-\frac12I + \mathcal{K}_{\partial \Omega}$.
\end{remark}

\subsection{The double layer potential for symmetric planar domains}\label{ss:prelsym}
We now come back to the geometric situation found at the end of Section~\ref{s:3}. This means that in this subsection $\Omega=\rB_\eta=\rB\setminus\eta\overline\rQ$, where $\rB$ is a simply connected bounded Lipschitz domain in $\R^2$ containing the origin, $\eta\in(0,\eta_0)$ is a small positive real number, and $\rQ$ is a finite union of bounded simply connected Lipschitz domains such that $\eta_0\rQ$ is contained in $\rB$. In addition, $\rB$ and $\rQ$ are symmetric with respect to reflection at the horizontal axis. From Subsection~\ref{Ss:Reflection} we recall the notation for the reflection and the corresponding pullback
\[
   \cR(x_1,x_2)\equiv (x_1,-x_2) \qquad \forall x=(x_1,x_2) \in \mathbb{R}^2\, 
   \qquad\text{and}\quad
   \cR^\ast[g]=g\circ\cR\,.
\]
Thus we assume
\[
  \rB = \cR(\rB) \qquad\text{and}\quad \rQ = \cR(\rQ)\,.
\]
The symmetry of $\rQ$ implies that there exist two natural numbers $m^{\times}$ and $m^{\#}$, such that  $\rQ$ has $m=m^{\times}+2m^{\#}>0$ connected components
\[
\rQ^{\times}_{1}, \dots, \rQ^{\times}_{m^{\times}}, \rQ^{+}_{1}, \dots, \rQ^{+}_{m^{\#}}, \rQ^{-}_{1}, \dots,  \rQ^{-}_{m^{\#}}\,,
\]
satisfying
\begin{equation}\label{eq:symass}
\begin{aligned}
& \rQ^{\times}_i = \cR( \rQ^{\times}_i) 
   \qquad &\forall i \in \{1,\dots,m^{\times}\}\,,\\ 
& \rQ^{+}_j = \cR(\rQ^{-}_j)\,\quad\text{and}\quad
   \overline{\rQ^{+}_{j}}\subset \rS_\pi \equiv \R\times\R_+ 
   \qquad &\forall j \in \{1,\dots,m^{\#}\}\,.
\end{aligned}
\end{equation}
See Figure~\ref{fig:3} for an example with $m^{\times}=2$ and $m^{\#}=1$.

We introduce the following definition.

\begin{definition}\label{def:odd}
Let $\tau\in[-1/2,1/2]$. If $\partial \Omega = \cR(\partial \Omega)$, then $ H^{1/2+\tau}_{\mathrm{odd}}(\partial\Omega)$ denotes the closed subspace of $H^{1/2+\tau}(\partial \Omega)$ defined by
\[
 H^{1/2+\tau}_{\mathrm{odd}}(\partial\Omega)\equiv\left\{g \in H^{1/2+\tau}(\partial \Omega)\,:\; g=-\cR^\ast[g] \quad \text{on $\partial \Omega$} \right\}\,.
\] 
\end{definition}

The mapping properties of the boundary integral operator $-\frac12I+\mathcal{K}_{\partial\rB_\eta}$ on odd functions can be summarized as follows.

\begin{lemma}\label{lem:Kodd}
Let $\tau\in[-1/2,1/2]$ and $\eta\in(0,\eta_0)$.
\begin{itemize}
\item[(i)]
 The operator $-\frac12I+\mathcal{K}_{\partial\rB_\eta}$ defines a Fredholm operator of index zero from $H^{1/2+\tau}_{\mathrm{odd}}(\partial\rB_\eta)$ to itself.
\item[(ii)]
If $m^{\#}=0$, this operator is an isomorphism. More generally,
its kernel has dimension $m^{\#}$ and is generated by the functions
$\{\chi_{\eta\partial\rQ^+_j}-\chi_{\eta\partial\rQ^-_j}\}_{j=1}^{m^{\#}}$, where
\[
  (\chi_{\eta\partial\rQ^+_j}-\chi_{\eta\partial\rQ^-_j})(x) =
  \left\{\begin{array}{cl}
  +1, &\qquad x\in\eta\ee\partial\rQ^+_j\,,\\
  -1, &\qquad x\in\eta \ee\partial\rQ^-_j\,,\\
  0,  &\qquad x\in \partial\rB_\eta\setminus(\eta \ee\partial\rQ^+_j\cup\eta \ee\partial\rQ^-_j)  \,.
  \end{array}\right. 
\]
\item[(iii)]
If $u\in H^{1+\tau}(\rB_\eta)$ is such that $\Delta u=0$ in $\rB_\eta$ and 
$\cR^\ast[u]=-u$ , then there exists $\mu \in H^{1/2+\tau}_{\mathrm{odd}}(\partial\rB_\eta)$ such that $u=\mathcal{D}_{\partial \rB_\eta}[\mu]\on{\rB_\eta}$ if and only if
\begin{equation}\label{eq:ranDodd}
 \big\langle \gamma_{1}u \,,\, \chi_{\eta\partial\rQ^+_j} \big\rangle =0
 \qquad \forall j\in \{1,\dots,m^{\#}\}\, .
\end{equation}
\item[(iv)]
If $u \in H^{1+\tau}_{\mathrm{loc}}(\rQ^\complement)$ is such that $\Delta u=0$ in $\rQ^\complement$, $u$ is harmonic at infinity, and $\cR^\ast[u]=-u$ ,
then there exists $\mu \in H^{1/2+\tau}_{\mathrm{odd}}(\partial\rQ)$ such that $u=\mathcal{D}_{\partial\rQ}[\mu]\on{\rQ^\complement}$ if and only if
\[
\big\langle \gamma_{1}u \,,\, \chi_{\partial\rQ^+_j} \big\rangle =0
 \qquad \forall j\in \{1,\dots,m^{\#}\}\, .
\]
\end{itemize}
\end{lemma}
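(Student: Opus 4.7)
The plan is to exploit the commutation relation $\cR^{\ast}\circ\mathcal{D}_{\partial\Omega}=\mathcal{D}_{\partial\Omega}\circ\cR^{\ast}$, valid for any $\cR$-symmetric Lipschitz domain $\Omega$, which follows from $E(\cR x-\cR y)=E(x-y)$ together with $n(\cR y)=\cR n(y)$ for a.e.\ $y\in\partial\Omega$. Taking traces, both $\mathcal{K}_{\partial\Omega}$ and $\mathcal{W}_{\partial\Omega}$ commute with $\cR^{\ast}$, so $\pm\tfrac12 I+\mathcal{K}_{\partial\Omega}$ preserves the orthogonal splitting $H^{1/2+\tau}(\partial\Omega)=H^{1/2+\tau}_{\mathrm{odd}}(\partial\Omega)\oplus H^{1/2+\tau}_{\mathrm{even}}(\partial\Omega)$. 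A parallel elementary computation shows that if $u$ is $\cR^{\ast}$-odd, then $\gamma_{1}u$ is also $\cR^{\ast}$-odd: $\partial_{x_{1}}u$ is odd, $\partial_{x_{2}}u$ is even, and $n\circ\cR=\cR n$, so the contributions combine to give $\gamma_{1}u\circ\cR=-\gamma_{1}u$.

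For (ii) I invoke Lemma~\ref{lem:V-}: the bounded components of $\R^{2}\setminus\overline{\rB_{\eta}}$ are exactly the scaled holes $\eta\rQ^{\times}_{i}$ and $\eta\rQ^{\pm}_{j}$, so $\mathfrak{V}_{-}$ is spanned by their characteristic functions. Each $\chi_{\eta\partial\rQ^{\times}_{i}}$ is $\cR^{\ast}$-invariant (even); the pair $\chi_{\eta\partial\rQ^{+}_{j}},\chi_{\eta\partial\rQ^{-}_{j}}$ is swapped by $\cR^{\ast}$ and yields one even combination $\chi_{\eta\partial\rQ^{+}_{j}}+\chi_{\eta\partial\rQ^{-}_{j}}$ and one odd combination $\chi_{\eta\partial\rQ^{+}_{j}}-\chi_{\eta\partial\rQ^{-}_{j}}$, so the odd part of $\mathfrak{V}_{-}$ is spanned by the $m^{\#}$ antisymmetric combinations. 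For (iii), Corollary~\ref{cor:ranD} requires $\langle\gamma_{1}u,\chi_{\eta\partial\rQ^{\times}_{i}}\rangle=0$ and $\langle\gamma_{1}u,\chi_{\eta\partial\rQ^{\pm}_{j}}\rangle=0$; oddness of $\gamma_{1}u$ together with $\cR$-invariance of $\eta\partial\rQ^{\times}_{i}$ makes the integrals over symmetric holes vanish automatically, and makes the two integrals on $\eta\partial\rQ^{\pm}_{j}$ opposite, so the solvability conditions reduce precisely to \eqref{eq:ranDodd}. Once a representing density $\mu$ exists, its odd part $\mu_{o}=\tfrac12(\mu-\cR^{\ast}\mu)$ still satisfies $\mathcal{D}_{\partial\rB_{\eta}}[\mu_{o}]\on{\rB_{\eta}}=u$, because $\mathcal{D}_{\partial\rB_{\eta}}$ commutes with $\cR^{\ast}$ and $u$ is odd. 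Item (i) then follows: the kernel on the odd subspace has dimension $m^{\#}$ by (ii), and by (iii) the range is cut out by the $m^{\#}$ continuous linear functionals $g\mapsto\langle\gamma_{1}u_{g},\chi_{\eta\partial\rQ^{+}_{j}}\rangle$ applied to the harmonic extension of the prescribed trace, so together with Lemma~\ref{thm:Fred} this gives Fredholm of index zero from the odd subspace to itself, and the isomorphism statement when $m^{\#}=0$.

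Part (iv) is the point I expect to require the most care, because of the 2D exterior setting and the role of the asymptotic behavior at infinity. The first observation is that $\cR^{\ast}$-oddness combined with harmonicity at infinity forces $u(x)\to 0$ as $|x|\to\infty$, since any finite limit $c$ would satisfy $c=-c$. Assuming the conditions $\langle\gamma_{1}u,\chi_{\partial\rQ^{+}_{j}}\rangle=0$, the oddness identities from (iii) yield $\int_{\partial\rQ_{i}}\gamma_{1}u\,ds=0$ on every connected component of $\rQ$, which is precisely the compatibility required to solve the interior Neumann problem on $\rQ_{i}$. On a symmetric component $\rQ^{\times}_{i}$ the odd Neumann data admits an odd solution, because the $\cR^{\ast}$-symmetric part of any Neumann solution is harmonic with zero Neumann data, hence constant, and can be subtracted off; on a paired component $\rQ^{+}_{j}$ one solves the Neumann problem freely and extends to $\rQ^{-}_{j}$ by $v(x)=-v(\cR x)$. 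Assembling these pieces gives an odd harmonic $v$ on $\rQ$ with $\gamma_{1}v=\gamma_{1}u$. The candidate density is $\mu=\gamma_{0}^{\complement}u-\gamma_{0}v\in H^{1/2+\tau}_{\mathrm{odd}}(\partial\rQ)$, and the function equal to $u$ on $\rQ^{\complement}$ and to $v$ on $\rQ$ differs from $\mathcal{D}_{\partial\rQ}[\mu]$ by a function with zero jump and zero normal-derivative jump across $\partial\rQ$; the difference is therefore harmonic on all of $\R^{2}$, vanishes at infinity, and hence is zero by Liouville. This proves $\mathcal{D}_{\partial\rQ}[\mu]\on{\rQ^{\complement}}=u$. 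Necessity is immediate from $\int_{\partial\rQ_{i}}\gamma_{1}\mathcal{D}_{\partial\rQ}[\mu]\,\mathrm{d}s=0$, which holds because $\mathcal{D}_{\partial\rQ}[\mu]$ is harmonic on each bounded $\rQ_{i}$ with continuous normal derivative across $\partial\rQ_{i}$.
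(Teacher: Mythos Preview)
Your proof of parts (i)--(iii) follows essentially the same route as the paper: commutation of $\mathcal{D}_{\partial\rB_\eta}$ (hence $\mathcal{K}_{\partial\rB_\eta}$) with $\cR^\ast$, identification of the odd part of $\mathfrak{V}_-$ via Lemma~\ref{lem:V-}, reduction of the $m$ solvability conditions of Corollary~\ref{cor:ranD} to $m^\#$ using the oddness of $\gamma_1 u$, replacement of a representing density by its odd part, and then reading off index zero by comparing kernel and cokernel dimensions (with Lemma~\ref{thm:Fred} supplying closedness of the range).

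For part (iv) you take a genuinely different route. The paper simply remarks that (iv) is proved ``in the same way as (iii)'' by invoking McLean's description of the cokernel of $\mathcal{W}_{\partial\rQ}$, i.e.\ the exterior analogue of Corollary~\ref{cor:ranD}. You instead give a direct construction: solve the interior Neumann problem on each component of $\rQ$ with data $\gamma_1 u$ (the hypothesis being exactly the compatibility condition), arrange the solution to be odd, take $\mu$ to be the trace jump, and identify the resulting piecewise harmonic function with $\mathcal{D}_{\partial\rQ}[\mu]$ via a transmission/Liouville argument (using that oddness forces $u\to0$ at infinity). This is correct and more self-contained; the price is that you implicitly need solvability and $H^{1+\tau}$ regularity of the interior Neumann problem on Lipschitz domains for $\tau\in[-1/2,1/2]$, which is of comparable depth to the McLean reference the paper cites.
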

\begin{proof}
By the rule of change of variables in integrals, we have 
$\mathcal{D}_{\partial\rB_\eta}\big[\cR^\ast[\psi]\big]=\cR^\ast\big[\mathcal{D}_{\partial\rB_\eta}[\psi]\big]$, hence
\begin{equation}\label{eq:reflK}
\quad 
\mathcal{K}_{\partial\rB_\eta}\big[\cR^\ast[\psi]\big]=\cR^\ast\big[\mathcal{K}_{\partial\rB_\eta}[\psi]\big]\quad \forall \psi \in H^{1/2+\tau}(\partial\rB_\eta) \, . 
\end{equation}
Thus $\mathcal{K}_{\partial\rB_\eta}$ maps odd functions to odd functions. 
By Lemma~\ref{lem:V-} the kernel consists of odd functions in $\mathfrak{V}_-$, that is, odd linear combinations of the characteristic functions $\chi_{\eta\partial\rQ^\times_i}$ and $\chi_{\eta\partial\rQ^\pm_j}$. This space is generated by 
$\chi_{\eta\partial\rQ^+_j}-\chi_{\eta\partial\rQ^-_j}$, $j=1,\dots,m^{\#}$. If $m^{\#}=0$, the operator is therefore injective.\\
Let us now show (iii). For an odd function $u$, the $m^{\#}$ conditions \eqref{eq:ranDodd} are equivalent to the whole set of $m^\times+2m^{\#}$ conditions \eqref{eq:ranD}, because the integrals over $\eta \ee\partial\rQ^\times_i$ vanish by symmetry, and the integrals over $\eta \ee\partial\rQ^-_j$ equal the negatives of the integrals over $\eta \ee\partial\rQ^+_j$. Thus \eqref{eq:ranDodd} is equivalent to the existence of $\mu\in H^{1/2+\tau}(\partial\rB_\eta)$ such that $u=\mathcal{D}_{\partial \rB_\eta}[\mu]\on{\rB_\eta}$. If $\mu$ is not yet odd, we can replace it by its odd part
\[
  \tilde\mu \equiv \tfrac12\big(\mu-\mathcal{R}^\ast[\mu]\big)\,,
\]
which will also represent $u$, that is $u=\mathcal{D}_{\partial \rB_\eta}[\tilde\mu]\on{\rB_\eta}$. Statement (iii) is proved.\\
To prove that $-\tfrac12I+\mathcal{K}_{\partial\rB_\eta}$ is a Fredholm operator of index zero, it remains to show that its range has codimension $m^{\#}$ (we recall that $\mathcal{K}_{\partial\rB_\eta}$ is not compact, in general, when $\partial\rB_\eta$ is a only required to be Lipschitz). We use the observation noted in Remark~\ref{rem:ranK}.
For $g\in H^{1/2+\tau}_{\mathrm{odd}}(\partial\rB_\eta)$, let $u\in H^{1+\tau}(\rB_\eta)$ be its harmonic extension, that is, the unique harmonic function in $\rB_\eta$ satisfying $\gamma_{0}u=g$. It is clear that $u$ is an odd function. The $m^{\#}$ conditions \eqref{eq:ranDodd} that guarantee the representability of $u$ as a double layer potential with a density $\mu \in H^{1/2+\tau}_{\mathrm{odd}}(\partial\rB_\eta)$  can be considered as $m^{\#}$ continuous linear functionals acting on $g$ and defining solvability conditions for the boundary integral equation
\[
 (-\tfrac12I+\mathcal{K}_{\partial\rB_\eta})[\mu]=g\,.
\] 
We have shown that the cokernel of $-\frac12I+\mathcal{K}_{\partial\rB_\eta}$ in $H^{1/2+\tau}_{\mathrm{odd}}(\partial\rB_\eta)$  has dimension $m^{\#}$, and thus the remaining statements of (i) and (ii) follow.\\
Statement (iv) is proved in the same way as (iii) by relying on the characterization of the cokernel of the operator $\mathcal{W}_{\partial\rQ}$ as given by McLean \cite[Thm.~8.20]{Mc00}. 
\end{proof}
In addition to the boundary integral operators, we will also need mapping properties of the double layer potential restricted to some subsets of the domain. 

The first result is global and concerns the entire interior of $\partial\rB$ or exterior of $\partial\rQ$. For this we use the weighted Sobolev spaces of Kondrat'ev type $K^s_\beta$ introduced in Section~\ref{ss:conf} and defined for integer regularity exponent $s=m\in\N$ in \eqref{eq:Kmb}. For non-integer $s$ there exist several equivalent ways to define this space (see \cite{Dauge88}), the shortest (and for us, easiest to use) is by Hilbert space interpolation: 
If $s=m+\tau$, $m\in\N$ and $\tau\in[0,1]$, then 
\[
  K^s_\beta(\Omega) = \big[K^m_\beta(\Omega),K^{m+1}_\beta(\Omega)\big]_\tau\,.
\]
It is clear that if $\Omega$ is a bounded domain with a positive distance to the origin, then the norm in $K^s_\beta(\Omega)$ is equivalent to the norm in the standard Sobolev space $H^s(\Omega)$, and the weight exponent $\beta$ influences only the behavior near the origin and at infinity. Thus let $\chi\in C^\infty_0(\R^2)$ be a cutoff function that is equal to $1$ on the ball $\sB(0,1/2)$ and $0$ outside of $\sB(0,1)$ and for $R>0$ define
\[
 \chi_R(x)=\chi(\tfrac xR)\,;
\]
let further $R_0,R_1$ be such that $0<2R_0\le R_1$ and let $\beta_0,\beta_1\in\R$ be two weight indices. Then if we define  
\begin{equation}
\label{eq:Ksb0b1}
 \DNorm{u}{K^s_{\beta_0\beta_1}(\Omega)} \equiv
 \DNorm{\chi_{R_0}u}{K^s_{\beta_0}(\Omega)} +
 \DNorm{(1-\chi_{R_0})\chi_{R_1}u}{H^s(\Omega)} +
 \DNorm{(1-\chi_{R_1})u}{K^s_{\beta_1}(\Omega)} \,,
\end{equation}
we see that for any Lipschitz domain $\Omega$ 
the norms $\DNorm{u}{K^s_\beta(\Omega)}$ and $\DNorm{u}{K^s_{\beta\beta}(\Omega)}$ are equivalent;
for any bounded $\Omega$  
the norm $\DNorm{u}{K^s_{\beta_0\beta_1}(\Omega)}$ is equivalent to
$\DNorm{u}{K^s_{\beta_0}(\Omega)}$, and
for any $\Omega$ that has a positive distance to the origin 
it is equivalent to
$\DNorm{u}{K^s_{\beta_1}(\Omega)}$.
With this preparation, we can now prove the boundedness of the double layer representation.

\begin{lemma}
\label{lem:Drep}
 For any $\tau\in[-1/2,1/2]$, $\beta_0>-2$ and $\beta_1<0$, the following operators are bounded:
 \begin{align}
 \label{eq:DdB}
 \cD_{\partial\rB} &: H^{1/2+\tau}_{\mathrm{odd}}(\partial\rB)
   \to K^{1+\tau}_{\beta_0}(\rB) \,, \\
 \label{eq:DdQ}
 \cD_{\partial\rQ} &: H^{1/2+\tau}_{\mathrm{odd}}(\partial\rQ)
   \to K^{1+\tau}_{\beta_0\beta_1}(\rQ^\complement) 
   \quad\text{ if }\quad 0\not\in\partial\rQ \,,\\
 \label{eq:DdQ0}
 \cD_{\partial\rQ} &: H^{1/2+\tau}_{\mathrm{odd}}(\partial\rQ)
   \to K^{1+\tau}_{\beta_0\beta_1}(\rQ^\complement) 
   \quad\text{ if }\quad 0\in\partial\rQ \;\,\text{ and }\;
     \beta_0\ge-1-\tau\,.
 \end{align}
\end{lemma}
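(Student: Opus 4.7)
The plan is to split each target domain into three regions via cutoffs akin to those in \eqref{eq:Ksb0b1}: an inner neighborhood $V_0$ of the origin, an intermediate annular region $V_m$ bounded away from $0$ and from $\infty$, and (in cases 2 and 3) an outer region $V_\infty$ near infinity. On $V_m$ the weighted norm is equivalent to the standard Sobolev norm, so the bound follows immediately from the classical continuity result \eqref{eq:dblreptau}. The substance of the proof lies in estimating $V_0$ and $V_\infty$.

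On $V_\infty$ (cases 2 and 3), I would Taylor-expand the double layer kernel $\nabla_x E(x-y)$ at $y=0$ for large $|x|$ to obtain, for each integer multi-index $\alpha$,
\[
 |\partial^\alpha \cD_{\partial\rQ}[\phi](x)| \le C\,(1+|x|)^{-1-|\alpha|}\,\DNorm{\phi}{H^{1/2+\tau}(\partial\rQ)}\,,
\]
with fractional $\alpha$ recovered by interpolation. Integrating $\rho^{2(\beta_1+|\alpha|)}|\partial^\alpha u|^2$ against $\rho\,\rd\rho\,\rd\theta$ converges at infinity precisely under $\beta_1<0$, which is sharp. Note that the oddness of $\phi$ eliminates only the $x_1$-component of the dipole moment on $\partial\rQ$, so the leading decay is $|u(x)|\sim|x_2|/|x|^2$, which is still only $O(|x|^{-1})$.

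On $V_0$ in cases 1 and 2 (where $\partial\rB$, resp.~$\partial\rQ$, stays at positive distance from the origin), the potential $u=\cD[\phi]$ is real-analytic on $V_0$ by interior elliptic regularity, with bounds on every derivative controlled by $\DNorm{\phi}{H^{1/2+\tau}}$. The key observation is that the oddness of $\phi$ together with the symmetry of the boundary forces $u$ to be odd in $x_2$, so $u$ vanishes on $\{x_2=0\}\cap V_0$ and Taylor's formula gives $u(x)=x_2\,\tilde u(x)$ with $\tilde u$ analytic. Thus $|u(x)|\le C\rho$ near the origin while $|\partial^\alpha u|$ stays bounded for $|\alpha|\ge 1$, and direct integration of $\rho^{2(\beta_0+|\alpha|)}|\partial^\alpha u|^2$ against $\rho\,\rd\rho$ near $0$ yields finiteness exactly when $\beta_0>-2$.

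The delicate step is case 3, where $0\in\partial\rQ$ and the smoothness argument above is no longer available: $u$ is merely in $H^{1+\tau}$ locally by \eqref{eq:dblreptau}. I would exploit that the oddness of $\phi$ combined with the symmetry of $\rQ$ forces the trace of $u$ to vanish on the segment of $\{x_2=0\}$ through $0$, and combine this with a Hardy-type inequality in the transverse variable $x_2$ to obtain the weighted embedding $H^{1+\tau}_{\mathrm{odd}}(V_0)\hookrightarrow K^{1+\tau}_{\beta_0}(V_0)$ for $\beta_0\ge -1-\tau$, with fractional $\tau$ again handled by interpolation. This Hardy step, which is in the spirit of weighted Sobolev embeddings in Kondrat'ev spaces as developed in \cite{Dauge88}, is the technical heart of the proof, and the restriction $\beta_0\ge -1-\tau$ is sharp because the oddness only forces vanishing along a one-dimensional symmetry segment rather than at an isolated point.
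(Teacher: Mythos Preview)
Your proposal is correct and complete. The structure---splitting into inner, intermediate, and outer regions and treating each separately---matches the paper, as does the treatment of the critical case \eqref{eq:DdQ0}: both you and the paper reduce it to the weighted embedding $H^{1+\tau}_{\mathrm{odd}}\hookrightarrow K^{1+\tau}_{\beta_0}$ for $\beta_0\ge-1-\tau$, proved via Hardy's inequality in the transverse variable (for $\tau=0$) and the Kondrat'ev-space embeddings of \cite{Dauge88} (for $\tau\ne0$).

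The genuine methodological difference lies in the ``nice'' regions $V_0$ (when $0\notin\partial\rQ$) and $V_\infty$. You argue by direct Taylor expansion: near the origin you write $u=x_2\tilde u$ with $\tilde u$ analytic and integrate by hand; near infinity you expand the kernel $\nabla E(x-y)$ in $y$ about $0$ to obtain pointwise decay $|\partial^\alpha u(x)|\lesssim|x|^{-1-|\alpha|}$. The paper instead uses a single technique for both ends: it takes the trace of $u$ on a circle $\partial\sB(0,R)$, expands in the Fourier series $\sum g_k(r/R)^{\pm k}\sin k\theta$ (the sine basis encoding oddness, the sign of the exponent distinguishing interior from exterior), and computes the $K^m_\beta$ norm explicitly as a weighted $\ell^2$ norm of the coefficients, then interpolates. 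Your approach is more elementary and makes the role of oddness (the factor $x_2$) very explicit; the paper's Fourier approach is more uniform and yields exact two-sided norm equivalences rather than one-sided bounds, which is a bit cleaner when interpolating to fractional $\tau$.
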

\begin{proof}
In \eqref{eq:dblreptau} we already quoted from  \cite[Thm.~1]{Co88} that for a bounded domain $\Omega$, $\cD_{\partial\Omega}$ maps $H^{1/2+\tau}(\partial\Omega)$ boundedly into $H^{1+\tau}(\Omega)$. This implies that
\[
  \cD_{\partial\rB} : H^{1/2+\tau}_{\mathrm{odd}}(\partial\rB)
   \to H^{1+\tau}(\rB) 
\]
is bounded. Let $R_0$ be such that $\sB(0,R_0)\subset\rB$ and let
$g$ be the trace of $u\equiv\cD_{\partial\rB}[\psi]$ on $\partial\sB(0,R_0)$. 
Since $u$ is harmonic in $\rB$ and odd, it can be expanded in a Fourier series of the form
\[
  u(x)= \sum_{k=1}^\infty g_k \Big(\frac r{R_0}\Big)^k \sin k\theta \,.
\]
Here $g_k$ are the Fourier coefficients of $g$, and the Sobolev norms of $g$ can equivalently be expressed by weighted norms of the sequence $g_k$.
\[
 \DNormc{g}{H^s(\partial\sB(0,R_0))} =  \sum_{k=1}^\infty k^{2s} |g_k|^2\,.
\]
 It can be verified by explicit computation that for $\beta>-2$ and any $m\in\N$
\[
 \DNormc{u}{K^m_{\beta}(\sB(0,R_0))} = 
   \sum_{k\ge1}c_{k,m}|g_k|^2
   \qquad\text{ with }\quad
   c\,k^{2m-1} \le c_{k,m} \le C\,k^{2m-1}\,.
\]
The constants here depend on $R_0$ and $\beta$, but not on $u$.
Thus $\DNorm{u}{K^m_{\beta}(\sB(0,R_0))}$ is equivalent to 
$\DNorm{g}{H^{m-1/2}(\partial\sB(0,R_0))}$. By interpolation it follows that this is also true for $m$ replaced by $1+\tau$. Thus 
\[
 \DNorm{u}{K^{1+\tau}_{\beta_0}(\sB(0,R_0))} \le
 C\, \DNorm{g}{H^{1/2+\tau}(\partial\sB(0,R_0))} \le
 C\, \DNorm{u}{H^{1+\tau}(\rB)}\,.
\]
Adding $\DNorm{u}{H^{1+\tau}(\rB)}$, we find 
\[
 \DNorm{u}{K^{1+\tau}_{\beta_0}(\rB)} \le
 C\, \DNorm{g}{H^{1/2+\tau}(\partial\rB)}\,,
\]
hence \eqref{eq:DdB}.

For the proof of \eqref{eq:DdQ}, we use a similar argument: 
Let $U=\cD_{\partial\rQ}[\Psi]$ in $\rQ^\complement$ and let $G$ be the trace of $U$ on some $\partial\sB(0,R_1)$ with $R_1$ chosen such that $\rQ\subset\sB(0,R_1)$. Then
\[
 \DNorm{G}{H^{1/2+\tau}(\partial\sB(0,R_1))} \le
 \DNorm{U}{H^{1+\tau}(\rQ^\complement\cap\sB(0,R_1))}\le
 C\, \DNorm{\Psi}{H^{1/2+\tau}(\partial\rQ)}\,. 
\]
Now we write $U$ in $\sB(0,R_1)^\complement$ as a Fourier series, using that it is harmonic in $\rQ^\complement$ and vanishes at infinity (for this we do not even need that $U$ is odd), and prove by explicit calculation of weighted Sobolev norms and interpolation that for any $\beta<0$ there is an estimate
\[
  \DNorm{U}{K^{1+\tau}_{\beta}(\sB(0,R_1)^\complement)}
  \le C\, \DNorm{G}{H^{1/2+\tau}(\partial\sB(0,R_1))} \,.
\]
If $0\in\rQ$, we do not need to estimate $U$ in a neighborhood of $0$. If $0\in\rQ^\complement$, we can get an estimate of $U$ in a neighborhood of $0$ as above for $u$.
Together, this implies \eqref{eq:DdQ}. 

For \eqref{eq:DdQ0}, we use the previous estimate outside of a neighborhood of the origin, but now we additionally have to estimate 
$\DNorm{U}{K^{1+\tau}_{\beta_0}(\rQ^\complement\cap\sB(0,R_0))}$
for some $R_0>0$. We cannot apply the same argument as for $u$ above, because $0$ is on the boundary of $\rQ$, and $U$ is not harmonic in a whole neighborhood of $0$. Instead we will use the fact that if $\beta_0\ge-1-\tau$ then there is a continuous inclusion
\[
 H^{1+\tau}_{\mathrm{odd}}(\rQ^\complement\cap\sB(0,R_0))
 \subset
 K^{1+\tau}_{\beta_0}(\rQ^\complement\cap\sB(0,R_0))\,.
\]
For $\tau\ne0$ this follows from Dauge \cite[Theorem (AA.7)]{Dauge88}. 
It is also true for $\tau=0$ as follows easily from the well known Hardy inequality
\[
 \DNorm{\frac{U(\cdot)}{x_2}}{L^2(\rQ^\complement)}
 \le 2\, \DNorm{\partial_{x_2}U}{L^2(\rQ^\complement)}
\]
for all $U\in H^1(\rQ^\complement)$ satisfying $U=0$ for $x_2=0$.
This inclusion together with the previous estimates that led to \eqref{eq:DdQ} proves \eqref{eq:DdQ0} and ends the proof of the lemma.
\end{proof}
\begin{remark}
\label{rem:DLinf}
If $\tau\in(0,1/2]$, then one also has bounded mappings
\begin{equation}
\label{eq:Dlinf}
 \cD_{\partial\rB} : H^{1/2+\tau}(\partial\rB)
   \to L^\infty(\rB) 
   \;\text{ and }\;
 \cD_{\partial\rQ} : H^{1/2+\tau}(\partial\rQ)
   \to L^\infty(\rQ^\complement) \,.
\end{equation}
This follows for $\cD_{\partial\rB}$ from the Sobolev inclusion
$
 H^{1+\tau}(\rB)\subset L^\infty(\rB)
$
and for $\cD_{\partial\rQ}$ from the Sobolev inclusion on
$\rQ^\complement\cap\sB(0,R_1)$ combined with Fourier series (or simply the maximum principle) on $\sB(0,R_1)^\complement$.
\end{remark}

The second class of results is local in nature and describes the analyticity of the double layer potential near the origin and near infinity in a form that is suitable for our situation of a symmetric domain with small perforations. The result can be considered simply to be a consequence of the analyticity of the fundamental solution $E$ on $\R^2\setminus\{0\}$, and it is similar to the subject studied in Lanza de Cristoforis and Musolino \cite{LaMu13}, but there are some particularities related to the symmetry and weak smoothness of the domains studied here.
\begin{lemma}
\label{lem:Danal}
Let $\tau\in[-1/2,1/2]$.
\begin{itemize}
\item[(i)]
Let $\Omega\subset\R^2$ be a bounded Lipschitz domain.
For positive $\eta$ sufficiently small so that $\eta\Omega\subset\rB$, we define the restriction $\cD_{\partial\rB,\Omega}(\eta)$ of the double layer potential $\cD_{\partial\rB}$ to $\eta\Omega$ written in ``fast'' variables
\[
 \cD_{\partial\rB,\Omega}(\eta)[\psi](X)
 \equiv \cD_{\partial\rB}[\psi]\on{\eta\Omega}(\eta X) 
 \quad (X\in\Omega) \qquad \forall \psi \in H^{1/2+\tau}_{\mathrm{odd}}(\partial\rB)\,.
\] 
Then there exists $\eta_1>0$ such that the function 
$\eta\mapsto\cD_{\partial\rB,\Omega}(\eta)$ has for any $s\in\R$ a continuation to $\eta\in(-\eta_1,\eta_1)$ as an analytic function with values in
$\sL\big(H^{1/2+\tau}_{\mathrm{odd}}(\partial\rB),\,H^s(\Omega)\big)$.
\item[(ii)]
Let $\Omega\subset\R^2$ be a bounded Lipschitz domain such that $0\not\in\overline\Omega$.
For positive $\eta$ sufficiently small so that $(1/\eta)\Omega\subset\rQ^\complement$, we define the restriction $\cD^\complement_{\partial\rQ,\Omega}(\eta)$ of the double layer potential $\cD_{\partial\rQ}$ to $(1/\eta)\Omega$ written in ``slow'' variables
\[
 \cD^\complement_{\partial\rQ,\Omega}(\eta)[\Psi](x)
  \equiv \cD_{\partial\rQ}[\Psi]\on{(1/\eta)\Omega}(\tfrac{x}{\eta}) 
 \quad (x\in\Omega) \qquad \forall \Psi \in H^{1/2+\tau}_{\mathrm{odd}}(\partial\rQ) \,.
\] 
Then there exists $\eta_1>0$ such that the function 
$\eta\mapsto\cD^\complement_{\partial\rQ,\Omega}(\eta)$ has for any $s\in\R$  a continuation to $\eta\in(-\eta_1,\eta_1)$ as an analytic function with values in
$\sL\big(H^{1/2+\tau}_{\mathrm{odd}}(\partial\rQ),\,H^s(\Omega)\big)$.
\item[(iii)]
In addition, $\cD_{\partial\rB,\Omega}(0)=0$ and $\cD^\complement_{\partial\rQ,\Omega}(0)=0$.
\end{itemize}
\end{lemma}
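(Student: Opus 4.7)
The plan is to exploit the real-analyticity of the fundamental solution $E(z)=-\frac{1}{2\pi}\log|z|$ on $\R^2\setminus\{0\}$, together with the Cauchy-type derivative bound $\|\nabla^{k}E(z)\|\le C\,k!\,|z|^{-k}$ for $k\ge 1$. In both parts (i) and (ii), the argument of $\nabla E$ in the integrand of the double layer potential stays bounded away from the singularity at $0$ for all sufficiently small $|\eta|$, so the kernel admits a Taylor expansion in $\eta$ that converges uniformly in the remaining variables and can be integrated term by term against the density.

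For part (i), set $r=\dist(0,\partial\rB)$ and $R=\sup_{X\in\overline\Omega}|X|$. For $|\eta|<r/R$ one has $|\eta X-y|\ge r-|\eta|R>0$ uniformly on $\overline\Omega\times\partial\rB$, and the series
\[
   \nabla E(\eta X-y)=\sum_{k=0}^{\infty}\frac{\eta^{k}}{k!}\,(X\cdot\nabla)^{k}\nabla E(-y)
\]
converges uniformly. Substituting into the definition of $\cD_{\partial\rB}$ and swapping sum and integral gives
\[
   \cD_{\partial\rB,\Omega}(\eta)[\psi](X)=\sum_{k=0}^{\infty}\eta^{k}\,T_{k}[\psi](X),\quad
   T_{k}[\psi](X)\equiv -\frac{1}{k!}\int_{\partial\rB}\psi(y)\,n(y)\cdot(X\cdot\nabla)^{k}\nabla E(-y)\,ds_{y}.
\]
Each $T_{k}[\psi]$ is a polynomial in $X$ of degree $k$; differentiating under the integral sign and combining $|X|\le R$, $|y|\ge r$, the Cauchy bound on $\nabla^{k+1}E(-y)$, and the continuous inclusion $H^{1/2+\tau}(\partial\rB)\hookrightarrow L^{2}(\partial\rB)$, yields the operator-norm estimate
\[
   \|T_{k}\|_{\sL(H^{1/2+\tau}(\partial\rB),\,H^{s}(\Omega))}\le C_{s}\,k^{s+1}\,(R/r)^{k}.
\]
The series therefore converges in operator norm for $|\eta|<r/R$, proving (i) with $\eta_{1}$ independent of $s$. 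Part (ii) is handled identically after using the homogeneity $\nabla E(z/\eta)=\eta\,\nabla E(z)$, which rewrites the operator as
\[
   \cD^{\complement}_{\partial\rQ,\Omega}(\eta)[\Psi](x)=-\eta\int_{\partial\rQ}\Psi(Y)\,n(Y)\cdot\nabla E(x-\eta Y)\,ds_{Y};
\]
a Taylor expansion of $\nabla E(x-\eta Y)$ around $\eta=0$, in which now $x\in\overline\Omega$ is bounded away from $0$ while $Y$ ranges over the bounded set $\partial\rQ$, gives an analogous series $\sum_{k\ge 0}\eta^{k+1}S_{k}[\Psi]$ converging for $|\eta|<\dist(0,\overline\Omega)/\sup_{Y\in\partial\rQ}|Y|$.

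Statement (iii) is then immediate: in part (ii), the explicit factor $\eta$ in the identity above forces $\cD^{\complement}_{\partial\rQ,\Omega}(0)=0$, independently of any symmetry. In part (i), $\cD_{\partial\rB,\Omega}(0)[\psi](X)=\cD_{\partial\rB}[\psi](0)$ is constant in $X$ and vanishes because $\cD_{\partial\rB}[\psi]$ is odd under $\cR$ whenever $\psi\in H^{1/2+\tau}_{\mathrm{odd}}(\partial\rB)$: the change of variables $y\mapsto\cR y$ in the integral defining $\cD_{\partial\rB}[\psi](\cR x)$, together with $\psi(\cR y)=-\psi(y)$, $n(\cR y)=\cR\,n(y)$, the orthogonality of $\cR$, and the identity $\nabla E(\cR z)=\cR\,\nabla E(z)$, gives $\cD_{\partial\rB}[\psi](\cR x)=-\cD_{\partial\rB}[\psi](x)$, which at the fixed point $x=0$ is zero. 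The one technical point requiring genuine care is the operator-norm bound on $T_{k}$ (and its analogue $S_{k}$) in $H^{s}(\Omega)$ for arbitrary $s$: since $T_{k}[\psi]$ is a polynomial of degree $k$ in $X$, each $X$-derivative reduces its degree by one and produces a combinatorial factor $\le k$, so the full Sobolev norm carries only a polynomial factor $k^{s+1}$, which is harmless against the geometric factor $(R/r)^{k}$; this is what allows $\eta_{1}$ to be chosen independent of $s$, as required.
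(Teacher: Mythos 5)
Your proof is correct, but it follows a genuinely different route from the paper's. The paper proves (ii) (and leaves (i), (iii) to the reader) by expanding the harmonic function $W=\cD_{\partial\rQ}[\Psi]$ in a Fourier series $\sum_{k\ge1} w_k (R/R_0)^{-k}\sin k\theta$ outside a disk containing $\rQ$ — using harmonicity, decay at infinity and oddness — and then reading off the powers $\eta^k$ after the substitution $X=x/\eta$, with $H^m$-norms of the profiles $p_k$ growing only polynomially in $k$ against a geometric factor $(R_0/R_1)^k$. You instead expand the kernel $\nabla E$ itself in a Taylor series in $\eta$ and integrate term by term, using the degree $-1$ homogeneity of $\nabla E$ to extract the explicit prefactor $\eta$ in (ii), and the reflection identity $\cD_{\partial\rB}[\psi]\circ\cR=-\cD_{\partial\rB}[\psi]$ for odd $\psi$ to get $\cD_{\partial\rB,\Omega}(0)=0$ in (iii); this matches the relation \eqref{eq:reflK} used elsewhere in the paper and is exactly the fact the authors allude to ("for an odd density it vanishes at the origin"), so your treatment of (i) and (iii) is actually more complete than the printed proof. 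What each approach buys: the Fourier expansion exploits harmonicity and gives the coefficient bounds almost for free from trace estimates on a circle, while your kernel expansion is more elementary, needs no separation of variables, and makes the structural origin of the vanishing at $\eta=0$ transparent (the homogeneity factor $\eta$, as in the rewriting of $\cM_{12}(\eta)$ in \eqref{eq:sys}). One minor quantitative caveat: your claimed radius $r/R$ and coefficient bound $(R/r)^k$ need the sharp complex-analytic estimate of the Taylor coefficients of $\nabla E$ (a crude multinomial bound introduces an extra factor such as $\sqrt2$ in the ratio); since the lemma only asserts the existence of some $\eta_1>0$, independent of $s$ because the $s$-dependence enters only through polynomial factors in $k$, this does not affect the validity of your argument.
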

\begin{proof}
The proofs for (i) and (ii) are similar. Both use the fact that the double layer potential is analytic outside of the boundary and vanishes at infinity, and for an odd density it vanishes at the origin. We give the proof of (ii) and leave the proof of (i) and (iii) to the reader. \\
Let $\Psi\in H^{1/2+\tau}_{\mathrm{odd}}(\partial\rQ)$ and define 
$W=\cD_{\partial\rQ}[\Psi]$ in $\rQ^\complement$. We can choose $R_0$ such that 
$\overline\rQ\subset\sB(0,R_0)$ and $R_1,R_2$ such that
$\Omega\subset\sB(0,R_2)\cap\sB(0,R_1)^\complement$.
For $|X|\ge R_0$, we can expand the harmonic and odd function $W$ in a Fourier series
\begin{equation}
\label{eq:Wexp}
  W(X)= \sum_{k=1}^\infty w_k \Big(\frac R{R_0}\Big)^{-k} \sin k\theta \,.
\end{equation}
Here $(R,\theta)$ denote polar coordinates for $X$, and from the fact that $\cD_{\partial\rQ}$ maps $H^{1/2+\tau}_{\mathrm{odd}}(\partial\rQ)$ to $H^{1+\tau}_{\mathrm{odd}}(\rQ^\complement\cap\sB(0,R_0))$ we deduce the (crude) estimate that the $w_k$ are bounded and satisfy an estimate
\[
 \sup_k |w_k| \le C\, \DNorm{\Psi}{H^{1/2+\tau}(\partial\rQ)}\,.
\]
If $\eta\in(0,R_1/R_0)$, then 
$X\in(1/\eta)\Omega \subset\sB(0,R_2/\eta)\cap\sB(0,R_1/\eta)^\complement$ implies 
$|X|>R_0$, so that we can use the expansion
\eqref{eq:Wexp} for the restriction of $W$ to $(1/\eta)\Omega$. Writing this in slow variables $x=\eta X$, or in polar coordinates with $|x|=r=\eta R$, we get
\[
 \cD^\complement_{\partial\rQ,\Omega}(\eta)[\Psi](x) = W(\frac x\eta) 
 = \sum_{k=1}^\infty \eta^{k}\, w_k\, p_k(x)
 \quad\text{ with }\;
 p_k(x)=\Big(\frac r{R_0}\Big)^{-k} \sin k\theta
\]
By explicit computation for any chosen $m\in\N$, we can estimate the $H^m$ norm of $p_k$
\[
 \DNorm{p_k}{H^m(\Omega)}\le \DNorm{p_k}{H^m(\sB(0,R_2)\cap\sB(0,R_1)^\complement)}
 \le C\, \big(\frac{R_0}{R_1}\big)^{k}k^{2m-1}\,,
\]
with $C$ independent of $k$.
We conclude that $\cD^\complement_{\partial\rQ,\Omega}(\eta)$ has a convergent  expansion
\begin{equation}
\label{eq:Dexp}
 \cD^\complement_{\partial\rQ,\Omega}(\eta) =
 \sum_{k=1}^\infty \eta^{k} \, D_k \,,
\end{equation}
where the $D_k$ are bounded linear operators from 
$H^{1/2+\tau}_{\mathrm{odd}}(\partial\rQ)$ to $H^m(\Omega)$ satisfying 
\[
 \DNorm{D_k}{\sL\big(H^{1/2+\tau}_{\mathrm{odd}}(\partial\rQ),H^m(\Omega)\big)}
 \le C\, \big(\frac{R_0}{R_1}\big)^{k} k^{2m-1}\,.
\]
It follows that the expansion \eqref{eq:Dexp} converges for $|\eta|<R_1/R_0$ and 
this proves the analyticity as claimed in (ii).
\end{proof}

\subsection{The Dirichlet problem in a symmetric perforated domain}\label{ss:Dir}

In this subsection we apply the double layer representation to the solution of the Dirichlet problem in our perforated symmetric domain $\rB_\eta$. 

Thus we assume that we are given odd functions 
$\psi\in H^{1/2+\tau}_{\mathrm{odd}}(\partial\rB)$ and
$\Psi\in H^{1/2+\tau}_{\mathrm{odd}}(\partial\rQ)$, and 
we denote by $u[\eta,\psi,\Psi]$ the unique solution in $H^{1+\tau}(\rB_\eta)$ of the boundary value problem
\begin{equation}\label{eq:symdir}
\left\{
\begin{array}{ll}
\Delta u=0&\text{ in }\;\rB_\eta\,,\\
\gamma_{0}u=\psi &\text{ on }\;\partial\rB\,,\\
\gamma_{0}u=\Psi(\cdot/\eta) &\text{ on } \;\eta \ee\partial\rQ\,.\\
\end{array}
\right.
\end{equation}
We would like to represent $u[\eta,\psi,\Psi]$ as a double layer potential.
It is clear that $u$ is an odd function. The conditions \eqref{eq:ranDodd} will, however, not be satisfied, in general, if the number $m^{\#}$ of ``paired holes'' is non-zero. As a remedy for this problem, we introduce harmonic functions $\Xi_1,\dots,\Xi_{m^{\#}}$ that span a complement of the range of the double layer potential operator. We define $\Xi_j$ as the 
 unique function in $H^{1+\tau}_{\mathrm{loc}}\big(\mathbb{R}^2 \setminus( \overline{\rQ^+_j}\cup\overline{\rQ^-_j})\big)$ such that
\begin{equation}\label{eq:Xi}
\left\{
\begin{array}{ll}
\Delta \Xi_j=0&\text{ in }\mathbb{R}^2 \setminus\big( \overline{\rQ^+_j}\cup\overline{\rQ^-_j}\big)\,,\\
\gamma_{0}\Xi_j=\pm 1&\text{ on }\partial\rQ_j^\pm\,,\\
 \|\Xi_j\|_\infty<+\infty\, .&
\end{array}
\right.
\end{equation}
A simple argument for the existence of such functions $\Xi_j$ is to use the Kelvin transformation with origin in $\rQ^+_j$ that reduces the exterior Dirichlet problem problem~\eqref{eq:Xi} to a Dirichlet problem on a bounded domain (see Folland \cite[Ch. 2.I]{Fo95}), and then invoke the existence and uniqueness of solution of the Dirichlet problem on a bounded domain.

The uniqueness of $\Xi_j$ implies in particular that $\Xi_j$ is odd,
\begin{equation}\label{eq:symphi}
\Xi_j(X)=-\cR^\ast[\Xi_j](X) \qquad \text{for $X \in\mathbb{R}^2 \setminus\big( \overline{\rQ^+_j}\cup\overline{\rQ^-_j}\big)$.}
\end{equation}
Then by \eqref{eq:symphi} and by the harmonicity in $\mathbb{R}^2 \setminus\big( \overline{\rQ^+_j}\cup\overline{\rQ^-_j}\big)$ and at infinity of $\Xi_j$ it follows that
\begin{equation}\label{eq:limphi}
\lim_{X\to\infty}\Xi_j(X)=0\,.
\end{equation}
Concerning the integrals of the normal derivative of $\Xi_j$ over the boundaries of the connected components of $\rQ$, it follows from the harmonicity that they vanish except for the components $\rQ_j^\pm$, in particular 
\begin{equation}\label{eq:phiQpm0}
 \big\langle \gamma_{1}\Xi_j\,,\, \chi_{\partial\rQ^\pm_k} \big\rangle =0
 \quad \forall  k\in \{1,\dots,m^{\#}\}\setminus\{j\}
 \, .
\end{equation}
From the harmonicity at infinity and \eqref{eq:limphi} follows that 
$\nabla \Xi_j \in L^2\Big(
 \mathbb{R}^2 \setminus\big( \overline{\rQ^+_j}\cup\overline{\rQ^-_j}\big)\Big)$ 
and that we can use the Divergence Theorem, which gives
\begin{multline}\label{eq:phiQpm}
0<\int_{\mathbb{R}^2 \setminus\big( \overline{\rQ^+_j}\cup\overline{\rQ^-_j}\big)}
 |\nabla\Xi_j(X)|^2\, \mathrm{d}X\\
 =-\int_{\partial \rQ^{+}_j} \partial_n \Xi_j\, \mathrm{d}s
  +\int_{\partial \rQ^{-}_j} \partial_n \Xi_j\, \mathrm{d}s
= -2\big\langle \gamma_{1}\Xi_j\,,\, \chi_{\partial\rQ^+_j} \big\rangle\,.
\end{multline}

We can now show the following augmented double layer representation for the solution $u[\eta,\psi,\Psi]$ of problem \eqref{eq:symdir}. 

\begin{lemma}\label{lem:equiv}
Let $\tau\in[-1/2,1/2]$. Let $\eta \in (0,\eta_0)$. Then the following statements hold.
\begin{itemize}
\item[(i)] If $m^{\#}=0$, then there exists a unique function $\mu\in  H^{1/2+\tau}_{\mathrm{odd}}(\partial \rB_\eta)$ such that
\begin{equation}\label{eq:equiv=}
u[\eta,\psi,\Psi]=\mathcal{D}_{\partial \rB_\eta}[\mu] \qquad \text{in $\rB_\eta$.}
\end{equation}
\item[(ii)] If $m^{\#}>0$, then there exists a unique 
$m^{\#}$-tuple $\boldsymbol{c}=(c_1,\dots,c_{m^{\#}})\in\mathbb{R}^{m^{\#}}$ and
a unique function $\mu\in  H^{1/2+\tau}_{\mathrm{odd}}(\partial \rB_\eta)$
satisfying
\begin{equation}\label{eq:equiv>}
\left\{
\begin{array}{ll}
u[\eta,\psi,\Psi]=\mathcal{D}_{\partial \rB_\eta}[\mu]+\sum_{j=1}^{m^{\#}}c_j\Xi_j(\cdot/\eta) & \text{in $\rB_\eta$}\\
\int_{\eta \partial \rQ^{+}_j}\mu\, \mathrm{d}s=0 & \forall j \in \{1,\dots,m^{\#}\}\, . 
\end{array}
\right.
\end{equation}
\end{itemize}
\end{lemma}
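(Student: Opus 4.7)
My overall strategy will be to invoke Lemma~\ref{lem:Kodd} to represent the solution $u\equiv u[\eta,\psi,\Psi]$, or a suitable correction of it, as a double layer potential with odd density. First, note that the Dirichlet trace of $u$ on $\partial\rB_\eta$ is odd thanks to the symmetry of $\rB$ and $\rQ$ and the oddness of $\psi$ and $\Psi$; by uniqueness of the Dirichlet problem, $u$ itself is then an odd harmonic function. In case (i), with $m^{\#}=0$, Lemma~\ref{lem:Kodd}(iii) applies with vacuous solvability conditions and produces $\mu\in H^{1/2+\tau}_{\mathrm{odd}}(\partial\rB_\eta)$ with $u=\mathcal{D}_{\partial\rB_\eta}[\mu]$ in $\rB_\eta$; uniqueness is immediate from Lemma~\ref{lem:Kodd}(ii), since the kernel of $-\tfrac12I+\mathcal{K}_{\partial\rB_\eta}$ is trivial and $\mathcal{D}_{\partial\rB_\eta}[\mu]\on{\rB_\eta}=0$ implies $(-\tfrac12 I+\mathcal{K}_{\partial\rB_\eta})[\mu]=0$ by the jump relation \eqref{eq:jump}.

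In case (ii), I will first pick $\boldsymbol{c}$ so that the corrected function $w\equiv u-\sum_{j=1}^{m^{\#}} c_j\,\Xi_j(\cdot/\eta)$ — which is still odd and harmonic — satisfies the solvability conditions \eqref{eq:ranDodd}, and then apply Lemma~\ref{lem:Kodd}(iii) to $w$. The condition $\langle\gamma_1 w,\chi_{\eta\partial\rQ_j^+}\rangle=0$ gives the linear system
\[
 \big\langle \gamma_1 u,\chi_{\eta\partial\rQ_j^+}\big\rangle
 = \sum_{k=1}^{m^{\#}} c_k \big\langle \gamma_1[\Xi_k(\cdot/\eta)],\chi_{\eta\partial\rQ_j^+}\big\rangle\qquad(j=1,\dots,m^{\#}),
\]
and the change of variables $X=x/\eta$ converts the right-hand side to $\sum_k c_k\langle\gamma_1\Xi_k,\chi_{\partial\rQ_j^+}\rangle$. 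By \eqref{eq:phiQpm0} these entries vanish off the diagonal, and by \eqref{eq:phiQpm} the diagonal entry equals $-\tfrac12\int|\nabla\Xi_j|^2<0$, so $\boldsymbol{c}$ is uniquely determined. Lemma~\ref{lem:Kodd}(iii) then yields some $\tilde\mu\in H^{1/2+\tau}_{\mathrm{odd}}(\partial\rB_\eta)$ with $w=\mathcal{D}_{\partial\rB_\eta}[\tilde\mu]$.

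To enforce the normalization, I will subtract from $\tilde\mu$ a unique element $\sum_j a_j(\chi_{\eta\partial\rQ_j^+}-\chi_{\eta\partial\rQ_j^-})$ of the kernel described in Lemma~\ref{lem:Kodd}(ii), with the $a_j$ chosen so that $\int_{\eta\partial\rQ_j^+}\mu\,\mathrm{d}s=0$; this is a diagonal system, uniquely solvable since each generator integrates to $|\eta\partial\rQ_j^+|\neq 0$ over the corresponding component. Since the subtracted piece lies in the kernel of $-\tfrac12I+\mathcal{K}_{\partial\rB_\eta}$, its double layer potential vanishes in $\rB_\eta$ and \eqref{eq:equiv>} is preserved.

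For the uniqueness of the pair $(\boldsymbol{c},\mu)$, I will subtract two candidate representations and apply Corollary~\ref{cor:ranD} to the resulting double-layer function $\mathcal{D}_{\partial\rB_\eta}[\mu-\mu']$: being a double layer potential, it automatically satisfies $\langle\gamma_1\mathcal{D}_{\partial\rB_\eta}[\mu-\mu'],\chi_{\eta\partial\rQ_j^+}\rangle=0$ for every $j$. Pairing the difference identity with $\chi_{\eta\partial\rQ_j^+}$ then reduces to the same diagonal system as above for $\boldsymbol{c}-\boldsymbol{c}'$, forcing $\boldsymbol{c}=\boldsymbol{c}'$; the normalization conditions finally force $\mu=\mu'$. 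The main obstacle I anticipate is the bookkeeping around the scaling invariance of the pairings $\langle\gamma_1\Xi_j(\cdot/\eta),\chi_{\eta\partial\rQ_j^+}\rangle$ and checking that $\Xi_j(\cdot/\eta)\in H^{1+\tau}(\rB_\eta)$; both reduce to a straightforward change of variable together with the local $H^{1+\tau}$ regularity of $\Xi_j$ near $\partial\rQ_j^\pm$ and the positive distance of $\partial\rB$ from $\eta\overline{\rQ_j^\pm}$.
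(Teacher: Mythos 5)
Your proposal is correct and follows essentially the same route as the paper's proof: determine $\boldsymbol{c}$ from the diagonal system given by \eqref{eq:phiQpm0}--\eqref{eq:phiQpm}, apply Lemma~\ref{lem:Kodd} to the corrected function, and then adjust by the kernel elements $\chi_{\eta\partial\rQ^+_j}-\chi_{\eta\partial\rQ^-_j}$ to meet the normalization. Your uniqueness argument for $(\boldsymbol{c},\mu)$ is just a slightly more explicit spelling-out of what the paper leaves implicit, and the scaling invariance of the pairings $\big\langle \gamma_{1}\Xi_j(\cdot/\eta),\chi_{\eta\partial\rQ^+_j}\big\rangle$ that you flag is indeed the only bookkeeping needed.
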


\begin{proof}
Statement (i) follows from Lemma \ref{lem:Kodd} (ii). We now consider statement (ii). We first note that by \eqref{eq:phiQpm} for each $j \in \{1,\dots,m^{\#}\}$ there exists a unique $c_j \in \mathbb{R}$ such that
\[
\big\langle \gamma_{1} u[\eta,\psi,\Psi]\,,\, \chi_{\eta\partial\rQ^+_j} \big\rangle
-c_j\, \big\langle \gamma_{1} \Xi_j(\cdot/\eta) \,,\, \chi_{\eta\partial\rQ^+_j} \big\rangle =0\,.
\] 
Using \eqref{eq:phiQpm0}, it follows that the function 
$u[\eta,\psi,\Psi]-\sum_{j=1}^{m^{\#}}c_j\Xi_j(\cdot/\eta)$ satisfies the conditions of Lemma~\ref{lem:Kodd} for the existence of a representation as a double layer potential.
As a consequence, there exists 
 $\tilde{\mu} \in H^{1/2+\tau}_{\mathrm{odd}}(\partial \rB_\eta)$ such that
\[
\mathcal{D}_{\partial \Omega_\eta}[\tilde{\mu}]=u[\eta,\psi,\Psi]-\sum_{j=1}^{m^{\#}}c_j\Xi_j(\cdot/\eta) \qquad \text{in $\rB_\eta$}\, .
\]
Recalling from Lemma~\ref{lem:Kodd}(ii) that the kernel 
$\mathfrak{V}_{-,\mathrm{odd}} \equiv \mathfrak{V}_{-}\cap H^{1/2+\tau}_\mathrm{odd}(\partial\rB_\eta)$ of the operator 
$-\frac{1}{2}I+\mathcal{K}_{\partial \rB_\eta}$ acting on odd functions is spanned by 
the functions
$\{\chi_{\eta\partial\rQ^+_j}-\chi_{\eta\partial\rQ^-_j}\}_{j=1}^{m^{\#}}$, we find that among the functions 
$\mu \in \tilde\mu + \mathfrak{V}_{-,\mathrm{odd}}$ that satisfy the first line of \eqref{eq:equiv>} there is exactly one satisfying the side conditions of the second line of \eqref{eq:equiv>}.
\end{proof}

With the help of the augmented double layer potential representation \eqref{eq:equiv>} we can now rewrite our Dirichlet problem \eqref{eq:symdir} as an equivalent boundary integral equation on $\partial\rB_\eta$. This is still a problem on an $\eta$-dependent domain, but it is possible to interpret it as a system of boundary integral equations in the function space $H^{1/2+\tau}_{\mathrm{odd}}(\partial \rB)\times  H^{1/2+\tau}_{\mathrm{odd}}(\partial \rQ)$ defined on the fixed domain $\partial\rB\times \partial\rQ$. Owing to the special form of the double layer kernel, this system has a simple form that makes it natural to study the dependence on $\eta$ in the limit $\eta\to0$ and even to extend it in an analytic way to a neighborhood of $\eta=0$.

The formulation \eqref{eq:symdir} of our Dirichlet problem already makes use of the identification of a function defined on $\partial\rB_\eta$ with a pair $\big(\psi,\Psi(\cdot/\eta)\big)$ of functions, the first one defined on $\partial\rB$ and depending on standard or ``slow'' variables $x$, the second one defined on $\partial\rQ$ and depending on ``fast'' variables $X=x/\eta$. Let $\cJ_\eta$ denote this mapping from $H^{1/2+\tau}_{\mathrm{odd}}(\partial \rB)\times  H^{1/2+\tau}_{\mathrm{odd}}(\partial \rQ)$ to $H^{1/2+\tau}_{\mathrm{odd}}(\partial \rB_\eta)$, which obviously is an isomorphism.
\begin{equation}\label{eq:cJ}
 \cJ_\eta[\phi,\Phi](x) \equiv \left\{
\begin{array}{ll}
  \phi(x) & \text{on $\partial \rB$}\, ,\\
  \Phi(x/\eta) & \text{on $\eta \partial  \rQ$}\, .
\end{array}
\right.
\end{equation}

The boundary integral equation for \eqref{eq:symdir} is obtained from the representation formula \eqref{eq:equiv>} by taking traces on $\partial\rB_\eta$. 
In order to treat simultaneously the case $m^{\#}=0$ and the case $m^{\#}>0$, from now on we will assume that the symbols $c_1,\dots,c_{m^{\#}}$ and $\sum_{j=1}^{m^{\#}}c_j\phi_j$ are omitted if $m^{\#}=0$.
In addition we find it convenient to set
\[
 H^{1/2+\tau}_{\mathrm{odd}}(\partial \rQ)_{\#}\equiv \Big\{\mu \in  H^{1/2+\tau}_{\mathrm{odd}}(\partial \rQ) \colon \int_{\partial \rQ^{+}_j}\mu\, \mathrm{d}s=0\;\ \forall j \in \{1,\dots,m^{\#}\}\Big\}\, .
\]
Clearly, if $m^{\#}=0$ then $H^{1/2+\tau}_{\mathrm{odd}}(\partial \rQ)_{\#}= H^{1/2+\tau}_{\mathrm{odd}}(\partial \rQ)$.
We can then write the trace of \eqref{eq:equiv>} as the problem of finding 
$\mu\in\cJ_\eta\left[H^{1/2+\tau}_{\mathrm{odd}}(\partial \rB)\times  H^{1/2+\tau}_{\mathrm{odd}}(\partial \rQ)_{\#}\right]$  and 
$\boldsymbol{c}\in\mathbb{R}^{m^{\#}}$
such that
\begin{equation}\label{eq:BIEdBeta}
 (-\tfrac12I + \mathcal{K}_{\partial\rB_\eta})[\mu]   
   +\sum_{j=1}^{m^{\#}}c_j\,\gamma_{0}\,\Xi_j(\cdot/\eta) = g 
   \quad\text{ on } \partial\rB_\eta
\end{equation}
where $g=\cJ_\eta[\psi,\Psi]$. With $\mu=\cJ_\eta[\phi,\Phi]$ we find a first form of the equivalent system of boundary integral equations on $\partial\rB\times \partial\rQ$.
\begin{equation}\label{eq:BIEdBdQ}
 \cJ_\eta^{-1}\circ\biggl[(-\tfrac12I + \mathcal{K}_{\partial\rB_\eta})
   \circ\cJ_\eta[\phi,\Phi] +\sum_{j=1}^{m^{\#}}c_j\,\gamma_{0}\,\Xi_j(\cdot/\eta)\biggr] = \big(\psi,\Psi\big).
\end{equation}
We will now describe this system in more detail.

Changing variables $y\mapsto\eta Y$ in the double layer integral and using the fact that 
\[
  \nabla E(x) = -\frac{x}{2\pi|x|^2}
\]
is a function homogeneous of degree $-1$, we can write
\[
\begin{split}
\mathcal{D}_{\partial \rB_\eta}\Big[\cJ_\eta[\phi,\Phi]\Big]&=
 \mathcal{D}_{\partial\rB}[\phi]
 -\int_{\eta\partial\rQ} \Phi(y/\eta) 
 \partial_{n(y)} E (\cdot-y)\, \mathrm{d}s_y\\
 &=\mathcal{D}_{\partial\rB}[\phi]
 +\eta\int_{\partial\rQ}\Phi(Y)\, n(Y)\cdot \nabla E(\cdot-\eta Y)\, \mathrm{d}s_Y\quad \text{in $\rB_\eta$}\, ,
\end{split}
\]
and then express the representation formula \eqref{eq:equiv>} both in ``slow'' variables:
\begin{equation}\label{eq:rep}
 u[\eta,\psi,\Psi]=\mathcal{D}_{\partial \rB}[\phi]
 +\eta\int_{\partial \rQ}\Phi(Y)\, n(Y)\cdot \nabla E (\cdot-\eta Y)\, \mathrm{d}s_Y
 +\sum_{j=1}^{m^{\#}}c_j\Xi_j(\cdot/\eta) \quad \text{in $\rB_\eta$}\, 
\end{equation}
and in ``fast'' variables:
\begin{equation}\label{eq:rep-bis}
\begin{split}
 u[\eta,\psi,\Psi](\eta X)&=\mathcal{D}_{\partial\rB}[\phi](\eta X)\\ &\quad
 +\eta\int_{\partial\rQ}\Phi(Y)\, n(Y)\cdot \nabla E(\eta(X-Y))\, \mathrm{d}s_Y
 +\sum_{j=1}^{m^{\#}}c_j\Xi_j(X) \\
 &=\mathcal{D}_{\partial\rB}[\phi](\eta X)\\ &\quad
 +\int_{\partial\rQ}\Phi(Y)\, n(Y)\cdot \nabla E(X-Y)\, \mathrm{d}s_Y
 +\sum_{j=1}^{m^{\#}}c_j\Xi_j(X)\\
 &=-\mathcal{D}_{\partial\rQ}[\Phi](X)+\mathcal{D}_{\partial\rB}[\phi](\eta X)
 +\sum_{j=1}^{m^{\#}}c_j\Xi_j(X)\, .
\end{split}
\end{equation}

We obtain the concrete form of the system \eqref{eq:BIEdBdQ} by taking traces of the 
equalities \eqref{eq:rep} and \eqref{eq:rep-bis} on $\partial \rB$ and on $\partial \rQ$, respectively. We deduce with \eqref{eq:symdir} that the unique element 
$(\phi,\Phi,\boldsymbol{c})$ of $H^{1/2+\tau}_{\mathrm{odd}}(\partial \rB)\times  H^{1/2+\tau}_{\mathrm{odd}}(\partial \rQ)_{\#}\times \mathbb{R}^{m^{\#}}$ such that  \eqref{eq:rep} holds is the (unique) solution in 
$ H^{1/2+\tau}_{\mathrm{odd}}(\partial \rB)\times  H^{1/2+\tau}_{\mathrm{odd}}(\partial \rQ)_{\#}\times \mathbb{R}^{m^{\#}}$ of
\begin{equation}\label{eq:sys}
\begin{split}
(-\tfrac{1}{2}I+\mathcal{K}_{\partial \rB})[\phi](x)
 +\eta\int_{\partial \rQ}\!\!\Phi(Y)\, n(Y)\cdot \nabla E(x-\eta Y)\, \mathrm{d}s_Y
 &\\
 +\sum_{j=1}^{m^{\#}}c_j\Xi_j(x/\eta)&=\psi(x)
 \, \text{ , $\,x \in \partial \rB$} ,\\
-(\tfrac{1}{2}I+\mathcal{K}_{\partial \rQ})[\Phi](X)
 +\mathcal{D}_{\partial \rB}[\phi](\eta X)
 +\sum_{j=1}^{m^{\#}}c_j\Xi_j(X)&=\Psi(X)  
 \, \text{, $X \in \partial \rQ$}\, .\\
\end{split}
\end{equation}
Problem \eqref{eq:symdir} is now converted into the equivalent system of boundary integral equations \eqref{eq:sys}, which we can write as an $\eta$-dependent family of problems 
\begin{equation}\label{eq:Meta}
  \cM(\eta)\begin{pmatrix}\phi\\\Phi\\\boldsymbol{c}\end{pmatrix}
  = \begin{pmatrix}\psi\\\Psi\end{pmatrix}
\end{equation}
with a block $(2\times3)$ operator
\[
 \cM(\eta)\equiv\
 \begin{pmatrix}
 \cM_{11}(\eta)&\cM_{12}(\eta)&\cM_{13}(\eta)\\
 \cM_{21}(\eta)&\cM_{22}(\eta)&\cM_{23}(\eta)
 \end{pmatrix}
\]
acting from 
$
 H^{1/2+\tau}_{\mathrm{odd}}(\partial \rB)\times  H^{1/2+\tau}_{\mathrm{odd}}(\partial \rQ)_{\#}\times \mathbb{R}^{m^{\#}}
$ to
$ H^{1/2+\tau}_{\mathrm{odd}}(\partial \rB)\times  H^{1/2+\tau}_{\mathrm{odd}}(\partial \rQ)$.
In this form \eqref{eq:sys}, it is now possible to extend the problem to $\eta=0$ and to analyze the analyticity of its dependence on $\eta$. The main result in this section is the following.
\begin{theorem}\label{thm:Meta=0}
 Let $\tau\in[-1/2,1/2]$ and let $\cM(\eta)$ be defined by \eqref{eq:sys}, \eqref{eq:Meta}. 
\begin{itemize}
\item[(i)]
 There exists $\eta_1\in(0,\eta_0)$ such that the operator valued function
 $\cM$ mapping $\eta$ to
 \[
 \cM(\eta)\in \sL\Big(
  H^{1/2+\tau}_{\mathrm{odd}}(\partial \rB)\times  H^{1/2+\tau}_{\mathrm{odd}}(\partial \rQ)_{\#}\times \mathbb{R}^{m^{\#}}
  ,\;
  H^{1/2+\tau}_{\mathrm{odd}}(\partial \rB)\times  H^{1/2+\tau}_{\mathrm{odd}}(\partial \rQ)
  \Big)
 \]
 admits a real analytic continuation to $(-\eta_1,\eta_1)$.
\item[(ii)]
 For each $\eta\in(-\eta_1,\eta_1)$, the operator $\cM(\eta)$ is an isomorphism.
\end{itemize}
\end{theorem}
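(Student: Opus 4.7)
The plan is to exploit the block structure of $\cM(\eta)$, use Lemma~\ref{lem:Danal} to establish analyticity, show that $\cM(0)$ is block-diagonal and invertible, and invoke openness of invertibility to propagate to a neighborhood of $\eta=0$. Reading the six entries of $\cM(\eta)$ from~\eqref{eq:sys}, three of them --- namely $\cM_{11}(\eta)=-\tfrac12 I+\cK_{\partial\rB}$, $\cM_{22}(\eta)=-(\tfrac12 I+\cK_{\partial\rQ})$, and $\cM_{23}(\eta)\boldsymbol c=\sum_j c_j\Xi_j|_{\partial\rQ}$ --- are independent of $\eta$, while $\cM_{12}(\eta)$, $\cM_{21}(\eta)$, and $\cM_{13}(\eta)$ all vanish at $\eta=0$; this is exactly what will decouple the slow and fast scales in the limit.

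For statement~(i), I would handle each non-constant block separately. The analyticity of $\cM_{21}(\eta)[\phi](X)=\cD_{\partial\rB}[\phi](\eta X)$, composed with the trace on $\partial\rQ$, is the content of Lemma~\ref{lem:Danal}(i) and~(iii). For $\cM_{12}(\eta)[\Phi](x)=\eta\int_{\partial\rQ}\Phi(Y)\,n(Y)\cdot\nabla E(x-\eta Y)\,\mathrm{d}s_Y$, $x\in\partial\rB$, the kernel $n(Y)\cdot\nabla E(x-\eta Y)$ is jointly real-analytic on $\partial\rB\times\partial\rQ\times(-\eta_1,\eta_1)$ because $|x-\eta Y|$ stays bounded below for small $|\eta|$, and the prefactor $\eta$ yields $\cM_{12}(0)=0$. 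For $\cM_{13}(\eta)\boldsymbol c(x)=\sum_j c_j\Xi_j(x/\eta)$ I would use that each $\Xi_j$ is harmonic and odd on $\R^2\setminus\overline{\rQ_j^+\cup\rQ_j^-}$ with $\Xi_j(\infty)=0$, hence admits a convergent expansion at infinity in $\{r^{-k}\sin k\theta,\,r^{-k}\cos k\theta\}_{k\ge 1}$; substituting $r=|x|/\eta$ converts this into a power series in $\eta$ that converges uniformly for $x\in\partial\rB$ and $|\eta|$ small, giving an analytic extension with $\cM_{13}(0)=0$.

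For statement~(ii), once $\cM(0)$ is shown to be an isomorphism, the continuity from~(i) together with openness of the set of invertible operators produces $\eta_1>0$ as required. At $\eta=0$ one has
\[
\cM(0)=\begin{pmatrix}-\tfrac12 I+\cK_{\partial\rB}&0&0\\[0.3em] 0&-(\tfrac12 I+\cK_{\partial\rQ})&\cM_{23}(0)\end{pmatrix},
\]
and invertibility splits into two independent statements. The first block is an isomorphism on $H^{1/2+\tau}_{\mathrm{odd}}(\partial\rB)$: since $\rB$ is simply connected the space $\mathfrak{V}_-$ of Lemma~\ref{lem:V-} is trivial, so $-\tfrac12 I+\cK_{\partial\rB}$ is injective on $H^{1/2+\tau}(\partial\rB)$, and Lemma~\ref{thm:Fred} upgrades injectivity to bijectivity. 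For the second block row I would proceed in three steps: first, identify the kernel of $-(\tfrac12 I+\cK_{\partial\rQ})$ on $H^{1/2+\tau}_{\mathrm{odd}}(\partial\rQ)$ as $\mathrm{span}\{\chi_{\partial\rQ_j^+}-\chi_{\partial\rQ_j^-}\}_{j=1}^{m^\#}$, the $\mathfrak{V}_+$-analogue of Lemma~\ref{lem:Kodd}(ii), which follows by direct computation using that $\cD_{\partial\rQ}[\chi_{\partial\rQ_j^\pm}]$ vanishes outside $\rQ_j^\pm$; second, observe that this kernel has zero intersection with $H^{1/2+\tau}_{\mathrm{odd}}(\partial\rQ)_\#$ because $\int_{\partial\rQ_j^+}(\chi_{\partial\rQ_j^+}-\chi_{\partial\rQ_j^-})\,\mathrm{d}s\ne 0$, giving injectivity of $-(\tfrac12 I+\cK_{\partial\rQ})$ on $H^{1/2+\tau}_{\mathrm{odd}}(\partial\rQ)_\#$ and, by Fredholm index zero, a range of codimension $m^\#$; third, show that $\{\Xi_j|_{\partial\rQ}\}_{j=1}^{m^\#}$ spans a complement of this range, which follows from the fact that the pairing matrix $(\langle\gamma_1\Xi_j,\chi_{\partial\rQ_k^+}\rangle)_{j,k}$ is diagonal with nonzero diagonal by~\eqref{eq:phiQpm0}--\eqref{eq:phiQpm}, combined with the characterization of the cokernel of $-(\tfrac12 I+\cK_{\partial\rQ})$ on odd functions by such pairings (the exterior-problem analogue of Corollary~\ref{cor:ranD}, obtained via Lemma~\ref{lem:kerW}).

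The hard part is precisely this dimension bookkeeping on the second block row: the $m^\#$ side conditions defining $H^{1/2+\tau}_{\mathrm{odd}}(\partial\rQ)_\#$ must exactly kill the $m^\#$-dimensional kernel while the $m^\#$ auxiliary functions $\Xi_j|_{\partial\rQ}$ must exactly span a complement of the $m^\#$-codimensional range, and it is the diagonal structure of the pairing matrix built from \eqref{eq:phiQpm0}--\eqref{eq:phiQpm} that makes both matchings work simultaneously.
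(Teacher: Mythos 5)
Your proposal is correct and follows the paper's strategy in its essential structure: analyticity of the six blocks with the off-diagonal ones vanishing at $\eta=0$, the block-triangular form of $\cM(0)$, invertibility of its two diagonal pieces, and openness of invertibility to conclude. Where you differ is in two block arguments, and your versions are legitimate and arguably more direct. For $\cM_{13}$ the paper notes that $\Xi_j$ cannot be written as a double layer potential on $\partial\rQ$ with odd density and therefore represents it on an auxiliary symmetric domain $\Omega_\#\supset\rQ$ in order to reuse Lemma~\ref{lem:Danal}(ii); your Fourier expansion of $\Xi_j$ at infinity (only sine terms, by oddness) simply reproduces the proof of that lemma and avoids the detour. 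For the row $(\cM_{22},\cM_{23})$ the paper invokes Lemma~\ref{lem:BIEdQ}, deduced from solvability of the exterior Dirichlet problem via Lemma~\ref{lem:Kodd}(iv); your kernel/cokernel bookkeeping with the diagonal nondegenerate pairing matrix from \eqref{eq:phiQpm0}--\eqref{eq:phiQpm} uses the same ingredients, organized as linear algebra on the integral equation rather than through the PDE. Two points to make explicit: the ``direct computation'' only shows that the functions $\chi_{\partial\rQ_j^+}-\chi_{\partial\rQ_j^-}$ lie in the kernel of $\tfrac12 I+\cK_{\partial\rQ}$, and to see the kernel is no larger you still need uniqueness of the exterior Dirichlet problem together with Lemma~\ref{lem:kerW}; and bijectivity of $-\tfrac12 I+\cK_{\partial\rB}$ on the odd subspace needs the equivariance \eqref{eq:reflK} plus uniqueness to see that preimages of odd data are odd. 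Both are available from the paper's Lemmas~\ref{lem:V-}--\ref{lem:Kodd}, so these are glosses, not gaps.
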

\begin{proof}
We will show first that the matrix elements $\cM_{mn}(\eta)$ are operator functions of $\eta$ that can be extended as real analytic functions in a neighborhood of $\eta=0$, and then that for $\eta=0$ the operator $\cM(0)$ is an isomorphism. From this it will follow that $\cM(\eta)$ is an isomorphism for $\eta$ in a neighborhood of $0$. Note that our previous arguments leading to \eqref{eq:sys} already showed that $\cM(\eta)$ is an isomorphism for $\eta\in(0,\eta_0)$.

We will begin by analyzing the dependence of the matrix elements $\cM_{mn}(\eta)$ on $\eta$, in particular at $\eta=0$. Of these, $\cM_{11}$, $\cM_{22}$ and $\cM_{23}$ are independent of $\eta$. 

The matrix elements 
\[
 \cM_{12}(\eta): \left\{ 
 \begin{array}{l}
   H^{1/2+\tau}_{\mathrm{odd}}(\partial \rQ)_{\#} \to
   H^{1/2+\tau}_{\mathrm{odd}}(\partial \rB)\\  
   \Phi\mapsto 
   \eta\int_{\partial \rQ}\Phi(Y)\, n(Y)\cdot \nabla E(\cdot-\eta Y)\, \mathrm{d}s_Y
  \end{array} \right.
\]
and
\[
 \cM_{21}(\eta): \left\{ 
 \begin{array}{l}
   H^{1/2+\tau}_{\mathrm{odd}}(\partial \rB)\to
   H^{1/2+\tau}_{\mathrm{odd}}(\partial \rQ) 
   \\  
   \phi\mapsto 
   \mathcal{D}_{\partial \rB}[\phi](\eta\,\cdot\,)
  \end{array} \right.
\]
depend analytically on $\eta$ as long as $\partial\rB$ and $\eta \ee\partial\rQ$ do not intersect, and both vanish for $\eta=0$. This follows from Lemma~\ref{lem:Danal} by taking traces. 

For the remaining matrix element
\[
 \cM_{13}(\eta): \left\{ 
 \begin{array}{l}
   \mathbb{R}^{m^{\#}} \to
   H^{1/2+\tau}_{\mathrm{odd}}(\partial \rB)\\  
   \boldsymbol{c}\mapsto 
   \sum_{j=1}^{m^{\#}}c_j\Xi_j(\cdot/\eta)
  \end{array} \right.
\]
the analyticity and the vanishing at $\eta=0$ follow from the analyticity of $\Xi_j$ at infinity. In fact, we could simply make the same argument as for $\cM_{12}$, based on Lemma~\ref{lem:Danal}(ii), if we could represent $\Xi_j$ as a double layer potential 
$\mathcal{D}_{\partial\rQ}[\mu]$
with odd density $\mu$. But the $\Xi_j$ were precisely constructed to be in the complement of the range of $\mathcal{D}_{\partial\rQ}$, so that this is impossible. One can, however, choose a connected smooth domain $\Omega_\#$ that satisfies $\rQ\subset\Omega_\#$ and $\eta_0\Omega_\#\subset\rB$ and is symmetric with respect to the reflection $\cR$. Then $\Xi_j$ will be representable as a double layer potential 
$\mathcal{D}_{\partial \Omega_\#}[\mu_j]$ on $\Omega_\#^\complement$, because 
$\int_{\partial\Omega_\#}\partial_n \Xi_j\,\mathrm{d}s=0$ by symmetry and we can  apply Lemma~\ref{lem:Kodd}(iv).

We now turn to the proof that $\cM(0)$ is an isomorphism. From the description of the solvability of the exterior Dirichlet problem with data on $\partial\rQ$ implied by Lemma~\ref{lem:Kodd}(iv) one can deduce, by taking traces on $\partial\rQ$, the unique solvability of the corresponding augmented boundary integral equation. This follows from the same arguments that led to the unique solvability of the augmented boundary integral equation \eqref{eq:BIEdBeta} on $\partial\rB_\eta$ associated with the interior Dirichlet problem in $\rB_\eta$. Taking into account that the normal vector on $\partial\rQ$ is by our convention exterior to $\rQ$ and therefore interior to $\rQ^\complement$, we find a change in the sign of the operator $\cK_{\partial\rQ}$ and can state the following lemma that provides the remaining argument for the completion of the proof of Theorem~\ref{thm:Meta=0}.
\begin{lemma}\label{lem:BIEdQ}
 Let $\tau\in[-1/2,1/2]$. For any $\Psi\in H^{1/2+\tau}_{\mathrm{odd}}(\partial \rQ) $ there exist unique $\Phi\in H^{1/2+\tau}_{\mathrm{odd}}(\partial \rQ)_{\#}$, $\boldsymbol{c}\in \mathbb{R}^{m^{\#}}$ such that
\begin{equation}\label{eq:BIEdQ}
  (-\tfrac12I - \mathcal{K}_{\partial\rQ})[\Phi]   
   +\sum_{j=1}^{m^{\#}}c_j\,\gamma_{0}\,\Xi_j = \Psi 
   \quad\text{ on } \partial\rQ \, .
\end{equation}
\end{lemma}
Now if we use the values at $\eta=0$ of the matrix elements of $\cM(\eta)$, we find
\[
 \cM(0)=
 \begin{pmatrix}
 \cM_{11}&0&0\\
 0&\cM_{22}&\cM_{23}
 \end{pmatrix} \,.
\]
Here $\cM_{11}=-\tfrac12I + \mathcal{K}_{\partial\rB}$ is an isomorphism from
$H^{1/2+\tau}_{\mathrm{odd}}(\partial \rB)$ to itself according to Lemma~\ref{lem:Kodd}(ii). And the fact that the operator
$(\cM_{22},\,\cM_{23})$ is an isomorphism from 
$H^{1/2+\tau}_{\mathrm{odd}}(\partial \rQ)_{\#}\times \mathbb{R}^{m^{\#}}$ to 
$H^{1/2+\tau}_{\mathrm{odd}}(\partial \rQ)$ is precisely the statement of Lemma~\ref{lem:BIEdQ}. Together this shows that $\cM(0)$ is an isomorphism as claimed, and the proof of the theorem is complete.
\end{proof}

\section{The convergent expansion}
\label{s:convexp}

As a consequence of Theorem~\ref{thm:Meta=0}, we obtain that the unique solution of the boundary integral system \eqref{eq:Meta} depends analytically on $\eta\in(-\eta_1,\eta_1)$. Namely,
\begin{equation}
\label{eq:solEqInt}
 \begin{pmatrix}\phi\\\Phi\\\boldsymbol{c}\end{pmatrix} =
 \cM(\eta)^{-1}\, \begin{pmatrix}\psi\\\Psi\end{pmatrix} \, ,
\end{equation}
and the operator function $\eta\mapsto\cM(\eta)^{-1}$ is real analytic for $\eta\in(-\eta_1,\eta_1)$.
Inserting this form of the solution of \eqref{eq:Meta} into the representation formula \eqref{eq:equiv>}, we find that the solution $u$ of the Dirichlet problem \eqref{eq:symdir} depends analytically on $\eta\in(-\eta_1,\eta_1)$, too, and therefore has a convergent expansion in powers of $\eta$ in a neighborhood of $\eta=0$. 
From this, by comparing \eqref{eq:symdir} with the form \eqref{eq:petagen} of the Dirichlet data in the residual problem found in Section~\ref{ss:residual}, we will obtain a convergent double series for the solution $v_\eta$ of the residual problem. In this way we will then be able to complete the construction of a convergent expansion of the solution of the original problem \eqref{eq:poisson}.

\subsection{Analytic parameter dependence for the auxiliary Dirichlet problem \protect\eqref{eq:symdir}}
\label{ss:4.1}
In this subsection, we consider the Dirichlet problem \eqref{eq:symdir} on the perforated domain $\rB_\eta$, where the Dirichlet data are given by $(\psi,\Psi)$ independent of $\eta$.

Let us first note that the auxiliary functions $\Xi_j$ introduced in \eqref{eq:Xi} can be considered in the same weighted Sobolev spaces that appear in Lemma~\ref{lem:Drep}.
\begin{equation}
\label{eq:XiK}
 \Xi_j \in K^{1+\tau}_{\beta_0\beta_1}(\rQ^\complement) 
 \quad \text{ for all } \tau\in[-1/2,1/2]\,,\;
 \beta_0>-2, \beta_1<0\,.
\end{equation}
We can now prove the first result about the solution of the boundary value problem \eqref{eq:symdir}. It is a global decomposition of the solution into two terms that are analytic with respect to $\eta$ if the first one is written in slow coordinates and the second one in fast variables.
\begin{theorem}
\label{thm:u=w+W}
Let $\tau\in[-1/2,1/2]$,
$\beta_0>-2$ with $\beta_0\ge-1-\tau$ if $0\in\partial\rQ$, 
$\beta_1<0$. Then there exists a sequence of bounded linear operators
\[
 \cL_n \,:\;
 H^{1/2+\tau}_{\mathrm{odd}}(\partial\rB) \times 
   H^{1/2+\tau}_{\mathrm{odd}}(\partial\rQ)
   \,\to\;
 K^{1+\tau}_{\beta_0}(\rB) \times K^{1+\tau}_{\beta_0\beta_1}(\rQ^\complement)
 \,,\quad j\in\N
\]
such that the solution $u[\eta,\psi,\Psi]$ of the Dirichlet problem \eqref{eq:symdir} in $\rB_\eta$ has the following form
\[
  u[\eta,\psi,\Psi](x) = w(x) + W(x/\eta)
\] 
with $w\in K^{1+\tau}_{\beta_0}(\rB)$ and $W\in K^{1+\tau}_{\beta_0\beta_1}(\rQ^\complement)$ given by the convergent series
\begin{equation}
\label{eq:wWsum}
  \binom wW = \sum_{n=0}^\infty \eta^n\,  \cL_n \binom\psi\Psi \,.
\end{equation}
There exists $\eta_1>0$ such that for any $\eta\in(-\eta_1,\eta_1)$ the series 
\[
 \cL(\eta) \equiv \sum_{n=0}^\infty \eta^n\,  \cL_n
\]
converges in the operator norm of
$\sL\big(
  H^{1/2+\tau}_{\mathrm{odd}}(\partial\rB) \times 
   H^{1/2+\tau}_{\mathrm{odd}}(\partial\rQ)
   \,,\;
 K^{1+\tau}_{\beta_0}(\rB) \times K^{1+\tau}_{\beta_0\beta_1}(\rQ^\complement)
 \big)\,.
$
\end{theorem}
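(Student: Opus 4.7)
The plan is to combine Theorem~\ref{thm:Meta=0} on the real analyticity of $\eta\mapsto\cM(\eta)^{-1}$ with a reformulation of the representation formula \eqref{eq:rep} whose coefficients are operators that do not depend on $\eta$, together with the mapping properties of the harmonic double layer potentials in weighted Sobolev spaces stated in Lemma~\ref{lem:Drep}, complemented by property \eqref{eq:XiK} of the auxiliary functions $\Xi_j$.

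First I would rewrite the integral term in \eqref{eq:rep} using the homogeneity of degree $-1$ of $\nabla E$: for $\eta>0$ one has $\nabla E(z/\eta)=\eta\,\nabla E(z)$, hence by change of variables
\[
 \eta\int_{\partial\rQ}\Phi(Y)\,n(Y)\cdot\nabla E(x-\eta Y)\,\mathrm{d}s_Y = -\cD_{\partial\rQ}[\Phi](x/\eta).
\]
Substituting this in \eqref{eq:rep} and setting $(\phi,\Phi,\boldsymbol{c})=\cM(\eta)^{-1}(\psi,\Psi)$, I obtain the clean decomposition
\[
 u[\eta,\psi,\Psi](x) = \cD_{\partial\rB}[\phi](x) + \Bigl(-\cD_{\partial\rQ}[\Phi] + \sum_{j=1}^{m^{\#}}c_j\,\Xi_j\Bigr)(x/\eta),
\]
which suggests defining the candidate slow and fast parts by $w\equiv\cD_{\partial\rB}[\phi]$ on $\rB$ and $W\equiv -\cD_{\partial\rQ}[\Phi] + \sum_j c_j\Xi_j$ on $\rQ^\complement$, so that $u[\eta,\psi,\Psi](x) = w(x) + W(x/\eta)$ for $x\in\rB_\eta$.

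Second, I would observe that the fixed linear map $\cS\colon(\phi,\Phi,\boldsymbol{c})\mapsto(w,W)$ is \emph{independent of $\eta$}. By Lemma~\ref{lem:Drep}, cases \eqref{eq:DdB} and \eqref{eq:DdQ}--\eqref{eq:DdQ0}, combined with \eqref{eq:XiK}, this map is bounded from $H^{1/2+\tau}_{\mathrm{odd}}(\partial\rB)\times H^{1/2+\tau}_{\mathrm{odd}}(\partial\rQ)_{\#}\times\R^{m^{\#}}$ into $K^{1+\tau}_{\beta_0}(\rB)\times K^{1+\tau}_{\beta_0\beta_1}(\rQ^\complement)$, precisely under the weight hypotheses stated in the theorem; the restriction $\beta_0\geq-1-\tau$ when $0\in\partial\rQ$ is needed exactly to invoke \eqref{eq:DdQ0}, while $\beta_0>-2$ corresponds to \eqref{eq:DdB} and $\beta_1<0$ to the decay of $\cD_{\partial\rQ}[\Phi]$ and $\Xi_j$ at infinity.

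Third, by Theorem~\ref{thm:Meta=0} the operator $\cM(\eta)^{-1}$ admits a convergent Taylor expansion $\sum_{n\ge0}\eta^n\widehat\cL_n$ in the relevant operator norm on $(-\eta_1,\eta_1)$. Setting $\cL_n\equiv\cS\circ\widehat\cL_n$, composition with the bounded operator $\cS$ preserves norm convergence, so
\[
 \cL(\eta) = \cS\circ\cM(\eta)^{-1} = \sum_{n=0}^\infty \eta^n\,\cL_n
\]
converges in the operator norm claimed in the theorem for $|\eta|<\eta_1$, giving both the decomposition \eqref{eq:wWsum} and the final convergence statement. The main delicate point, already settled in Lemma~\ref{lem:Drep}, is the behavior of $\cD_{\partial\rQ}$ in a neighborhood of the origin when $0\in\partial\rQ$: this is what forces the additional weight restriction. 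Beyond this, the argument is a soft composition of an analytic operator-valued family with a single fixed bounded linear map.
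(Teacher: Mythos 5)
Your proposal is correct and follows essentially the same route as the paper: the paper's proof likewise reads off the splitting $u=w(x)+W(x/\eta)$ from the representation formula \eqref{eq:rep} (equivalently \eqref{eq:rep-bis}), composes the $\eta$-independent map $(\phi,\Phi,\boldsymbol{c})\mapsto(w,W)$, bounded into the weighted spaces by Lemma~\ref{lem:Drep} and \eqref{eq:XiK}, with the analytic resolvent $\cM(\eta)^{-1}=\sum_n\eta^n\cM_n$ from Theorem~\ref{thm:Meta=0}, and sets $\cL_n=\cD\,\cM_n$. Your explicit rescaling of the kernel via the homogeneity of $\nabla E$ merely re-derives what the paper had already recorded in \eqref{eq:rep-bis}, so there is no substantive difference.
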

\begin{proof}
According to the representation formula \eqref{eq:equiv>} in the form \eqref{eq:rep},
we have  
\[
 u[\eta,\psi,\Psi](x) = w(x) + W(x/\eta) \text{ with }
 w=\mathcal{D}_{\partial \rB}[\phi] \text{ , }
 W=-\mathcal{D}_{\partial\rQ}[\Phi]+\sum_{j=1}^{m^{\#}}c_j\Xi_j\,,
\]
where $(\phi,\Phi,\boldsymbol{c})$ is the solution of our system of boundary integral equations \eqref{eq:Meta}.
Now we use the solution formula \eqref{eq:solEqInt} of this system, which involves the analytic resolvent
\[
 \cM(\eta)^{-1} = \sum_{n=0}^\infty \eta^n \cM_n\,,
\]
where $\cM_n$ is a sequence of operators such that, after possibly shrinking $\eta_1>0$, this series converges for
$\eta\in(-\eta_1,\eta_1)$ in the operator norm of
$\sL\big(
  H^{1/2+\tau}_{\mathrm{odd}}(\partial\rB) \times 
   H^{1/2+\tau}_{\mathrm{odd}}(\partial\rQ)
   \,,\;
 H^{1/2+\tau}_{\mathrm{odd}}(\partial \rB)\times  H^{1/2+\tau}_{\mathrm{odd}}(\partial \rQ)_{\#}\times \mathbb{R}^{m^{\#}}
  \big)\,.
$
We combine this with the mapping
\[
\cD:(\phi,\Phi,\boldsymbol{c})\mapsto(w,W)
 =\big(\mathcal{D}_{\partial \rB}[\phi],-\mathcal{D}_{\partial\rQ}[\Phi]+\sum c_j\Xi_j\big),
 \] 
which thanks to Lemma~\ref{lem:Drep} and \eqref{eq:XiK} is known to be continuous from
$H^{1/2+\tau}_{\mathrm{odd}}(\partial \rB)\times H^{1/2+\tau}_{\mathrm{odd}}(\partial \rQ)\times \mathbb{R}^{m^{\#}}$ to
$K^{1+\tau}_{\beta_0}(\rB) \times K^{1+\tau}_{\beta_0\beta_1}(\rQ^\complement)$. 
This gives the desired representation \eqref{eq:wWsum} as a convergent series where we set
$\cL_n = \cD\,\cM_n$.
\end{proof}

\begin{remark}
\label{rem:wWLinf}
Replacing Lemma \ref{lem:Drep} by Remark \ref{rem:DLinf} in this proof, we conclude that the series expansions \eqref{eq:wWsum} for the functions $w$ and $W$ converge also uniformly, the series for $w$ in $L^\infty(\rB)$ and the series for $W$ in $L^\infty(\rQ^\complement)$.
\end{remark}

The second result gives a convergent series expansion for the whole solution  $u[\eta,\psi,\Psi]$ when written in slow variables and thus shows that it is an analytic function of $\eta$ in a neighborhood of $0$.  It is valid outside of a  neighborhood of the origin (``outer expansion'').
\begin{theorem}
\label{thm:uexpslow}
Let $\tau\in[-1/2,1/2]$ and $s\in\R$.  Let $\Omega$ be a  Lipschitz subdomain of  $\rB$ such that $0\not\in\overline\Omega$. If $\overline\Omega\subset\rB$, then $s$ can be any real number. If, however, $\partial\Omega\cap\partial\rB\ne\emptyset$, then we assume $s\le1+\tau$.
Let $\eta_\Omega>0$  be such that $\overline{\Omega}\cap \eta\overline{\rQ}=\emptyset$ for all $\eta\in(0,\eta_\Omega)$. 
Then there exist $\eta_1 \in (0,\eta_\Omega)$ and a real analytic map ${\cU}_\rS$ from $(-\eta_1,\eta_1)$ to
$\sL\big(H^{1/2+\tau}_{\mathrm{odd}}(\partial\rB)\times  H^{1/2+\tau}_{\mathrm{odd}}(\partial \rQ),\, H^s(\Omega)\big)$
such that
\begin{equation}
\label{eq:rep:1}
u[\eta,\psi,\Psi]\on{\Omega}=\cU_\rS(\eta)\binom\psi\Psi
\quad\forall (\eta,\psi,\Psi) \in (0,\eta_1)\times  H^{1/2+\tau}_{\mathrm{odd}}(\partial \rB)\times  H^{1/2+\tau}_{\mathrm{odd}}(\partial \rQ)\,.
\end{equation}
Moreover,
\begin{equation}\label{eq:rep:2}
\cU_\rS(0)\binom\psi\Psi=w_{0,\psi}\on{|\Omega}\qquad \forall(\psi,\Psi)\in  H^{1/2+\tau}_{\mathrm{odd}}(\partial \rB)\times  H^{1/2+\tau}_{\mathrm{odd}}(\partial \rQ)\, ,
\end{equation}
where $w_{0,\psi}$ is the unique solution in $H^{1+\tau}(\rB)$ of the Dirichlet problem
\begin{equation}\label{eq:rep:3}
\left\{
\begin{array}{ll}
\Delta w_{0,\psi}=0&\text{ in }\rB\,,\\
\gamma_{0}w_{0,\psi}=\psi&\text{ on }\partial\rB\,.
\end{array}
\right.
\end{equation}
\end{theorem}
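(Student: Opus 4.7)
The plan is to start from the slow-variable representation formula \eqref{eq:rep},
\[
 u[\eta,\psi,\Psi](x) = \mathcal{D}_{\partial\rB}[\phi](x) - \cD_{\partial\rQ}[\Phi](x/\eta) + \sum_{j=1}^{m^{\#}} c_j\,\Xi_j(x/\eta),
\]
where the middle term arises from the integral in \eqref{eq:rep} after the change of variables $y=\eta Y$ and using that $\nabla E$ is homogeneous of degree $-1$, and then to combine it with the analytic solvability of the boundary integral system. Since Theorem~\ref{thm:Meta=0} ensures that $(\phi,\Phi,\boldsymbol c)=\cM(\eta)^{-1}(\psi,\Psi)$ is real analytic in $\eta\in(-\eta_1,\eta_1)$ as an operator-valued function, it suffices to check that each of the three contributions, restricted to $\Omega$, defines an analytic operator-valued function of $\eta$ into $H^s(\Omega)$, and then compose.

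For the first term, $\mathcal{D}_{\partial\rB}$ is bounded from $H^{1/2+\tau}_{\mathrm{odd}}(\partial\rB)$ into $H^{1+\tau}(\rB)$ by \eqref{eq:dblreptau}; restricted to $\Omega$ this gives $H^{1+\tau}(\Omega)$-continuity when $\partial\Omega\cap\partial\rB\ne\emptyset$ (whence the restriction $s\le 1+\tau$), and by interior analyticity of harmonic functions $H^s(\Omega)$-continuity for every $s\in\R$ when $\overline\Omega\subset\rB$. This factor is moreover independent of $\eta$ and is composed only through $\phi=\phi(\eta)$. For the second term, the hypothesis on $\eta_\Omega$ ensures $(1/\eta)\Omega\subset\rQ^\complement$ for $|\eta|<\eta_\Omega$, so the restriction of $\cD_{\partial\rQ}[\Phi](\cdot/\eta)$ to $\Omega$ is, up to sign, precisely the operator $\cD^\complement_{\partial\rQ,\Omega}(\eta)[\Phi]$ of Lemma~\ref{lem:Danal}(ii). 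That lemma then provides analyticity in $\eta$ with values in $\sL(H^{1/2+\tau}_{\mathrm{odd}}(\partial\rQ),H^s(\Omega))$ for every $s$, and value $0$ at $\eta=0$.

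The main obstacle is the third term $\sum c_j\,\Xi_j(\cdot/\eta)$, since by construction the $\Xi_j$ generate exactly the complement of the range of $\cD_{\partial\rQ}$ on odd densities, so Lemma~\ref{lem:Danal}(ii) cannot be applied directly to them. I would circumvent this exactly as in the proof of Theorem~\ref{thm:Meta=0}: choose an $\cR$-symmetric smooth auxiliary domain $\Omega_\#$ with $\rQ\subset\Omega_\#$ and $\eta_0\,\overline{\Omega_\#}\subset\rB$; since $\int_{\partial\Omega_\#}\partial_n\Xi_j\,\mathrm{d}s=0$ by symmetry of $\Xi_j$, Lemma~\ref{lem:Kodd}(iv) yields an odd density $\mu_j\in H^{1/2+\tau}_{\mathrm{odd}}(\partial\Omega_\#)$ with $\Xi_j=\mathcal{D}_{\partial\Omega_\#}[\mu_j]$ on $\Omega_\#^\complement$. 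Applying Lemma~\ref{lem:Danal}(ii) with $\rQ$ replaced by $\Omega_\#$ then provides the required analyticity of $\eta\mapsto\Xi_j(\cdot/\eta)\on{\Omega}$ into $H^s(\Omega)$, with value $0$ at $\eta=0$, and composing with the analytic coefficients $\boldsymbol c(\eta)$ gives the analytic dependence of the third term.

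Adding the three analytic contributions produces the operator $\cU_\rS(\eta)$ and establishes \eqref{eq:rep:1}. At $\eta=0$ only the first term survives, and the block-diagonal form
\[
 \cM(0)=\begin{pmatrix}\cM_{11}&0&0\\0&\cM_{22}&\cM_{23}\end{pmatrix}
\]
established in the proof of Theorem~\ref{thm:Meta=0} gives $\phi\big|_{\eta=0}=(-\tfrac12 I+\mathcal{K}_{\partial\rB})^{-1}\psi$. By Remark~\ref{rem:ranK} applied to the Dirichlet problem in $\rB$, the function $\mathcal{D}_{\partial\rB}[\phi]\big|_{\eta=0}$ coincides with the unique harmonic extension $w_{0,\psi}\in H^{1+\tau}(\rB)$ of $\psi$ into $\rB$, which yields \eqref{eq:rep:2}.
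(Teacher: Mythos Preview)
Your proof is correct and follows essentially the same approach as the paper: splitting the representation into the $\cD_{\partial\rB}[\phi]$ term (analytic in $\eta$ via $\phi(\eta)$, with the $H^s$-regularity distinction according to whether $\overline\Omega\subset\rB$), the $\cD^\complement_{\partial\rQ,\Omega}(\eta)$ term handled by Lemma~\ref{lem:Danal}(ii), and the $\Xi_j(\cdot/\eta)$ term treated via the auxiliary domain $\Omega_\#$ exactly as in the analyticity argument for $\cM_{13}$ in the proof of Theorem~\ref{thm:Meta=0}. Your identification of $\cU_\rS(0)$ via the block-diagonal structure of $\cM(0)$ and Remark~\ref{rem:ranK} is more explicit than the paper's proof, which leaves this point implicit.
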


\begin{proof}
We write $u[\eta,\psi,\Psi](x) = w(x) + W(x/\eta)$ as in Theorem~\ref{thm:u=w+W} and use the analytic dependency on $\eta$ of 
$\cM(\eta)^{-1}: (\psi,\Psi)\mapsto (\phi,\Phi,\boldsymbol{c})$ as in the proof of that theorem. The map
$(\phi,\Phi,\boldsymbol{c})\mapsto w\on{\Omega}$ being independent of $\eta$, only its range is of interest. This is contained in $H^{1+\tau}(\Omega)$ for any $\Omega\subset\rB$, and since $w=\cD_{\partial\rB}[\phi]$ is harmonic in $\rB$, it is contained in $C^\infty(\overline\Omega)\subset H^s(\Omega)$ for any $s$ if $\overline\Omega\subset\rB$.\\
For the map $(\Phi,\boldsymbol{c})\mapsto 
W(\cdot/\eta)\on{\Omega} = -\cD^\complement_{\partial\rQ,\Omega}(\eta)[\Phi]
 + \sum_{j=1}^{m^{\#}}c_j\Xi_j (\cdot/\eta)$
we invoke Lemma~\ref{lem:Danal} (ii) to get the desired analyticity (see also the argument for the analyticity of $\cM_{13}$ in the proof of Theorem \ref{thm:Meta=0} ).
\end{proof}

The third result shows the analytic dependence on $\eta$ for the solution $u[\eta,\psi,\Psi]$ when written in fast variables. It concerns the solution on a subdomain of size $\eta$ (``inner expansion''). The proof is similar to the proof of Theorem~\ref{thm:uexpslow}, but simpler, because it is based on the formula
\[
 u[\eta,\psi,\Psi](\eta X)=w(\eta X)+W(X)
\]
and it therefore simply invokes the
harmonicity, hence analyticity of $w=\cD_{\partial\rB}[\phi]$ near the origin, see Lemma~\ref{lem:Danal}(i).

\begin{theorem}
\label{thm:uexpfast}
Let $\tau\in[-1/2,1/2]$ and $s\in\R$.  Let $\Omega\subset\rQ^{\complement}=\R^2\setminus\overline\rQ$ be a bounded Lipschitz domain. If $\overline\Omega\subset\rQ^{\complement}$, then $s$ can be any real number. If $\partial\Omega\cap\partial\rQ\ne\emptyset$, then we assume $s\le1+\tau$.
Let $\tilde\eta_\Omega>0$  be such that $\eta\Omega\subset\rB$ for all $\eta\in(0,\tilde\eta_\Omega)$. 
Then there exist $\eta_1 \in (0,\tilde\eta_\Omega)$ and a real analytic map ${\cU}_\rF$ from $(-\eta_1,\eta_1)$ to
$\sL\big(H^{1/2+\tau}_{\mathrm{odd}}(\partial\rB)\times  H^{1/2+\tau}_{\mathrm{odd}}(\partial \rQ),\, H^s(\Omega)\big)$ such that
\begin{multline}
\label{eq:repF:1}
u[\eta,\psi,\Psi](\eta\,\cdot\,)\on{\Omega}=\cU_\rF(\eta)\binom\psi\Psi
\\
\quad\forall (\eta,\psi,\Psi) \in (0,\eta_1)\times  H^{1/2+\tau}_{\mathrm{odd}}(\partial \rB)\times  H^{1/2+\tau}_{\mathrm{odd}}(\partial \rQ)\,.
\end{multline}
Moreover,
\begin{equation}\label{eq:repF:2}
\cU_\rF(0)\binom\psi\Psi=W_{0,\Psi}\on{\Omega}\qquad \forall(\psi,\Psi)\in  H^{1/2+\tau}_{\mathrm{odd}}(\partial \rB)\times  H^{1/2+\tau}_{\mathrm{odd}}(\partial \rQ)\, ,
\end{equation}
where $W_{0,\Psi}$ is the unique solution in $K^{1+\tau}_{\beta_0\beta_1}(\rQ^\complement)$ for $\beta_0\in(-2,0)$, $\beta_1\in(-1,0)$ of the exterior Dirichlet problem
\begin{equation}\label{eq:repF:3}
\left\{
\begin{array}{ll}
\Delta W_{0,\Psi}=0&\text{ in }\rQ^\complement\,,\\
\gamma_{0}W_{0,\Psi}=\Psi&\text{ on }\partial\rQ\,.
\end{array}
\right.
\end{equation}
\end{theorem}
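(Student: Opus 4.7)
The plan is to reuse the decomposition provided by Theorem~\ref{thm:u=w+W} together with the analyticity statements of Lemma~\ref{lem:Danal}(i) and the characterization of $\cM(0)$ given in the proof of Theorem~\ref{thm:Meta=0}. Writing $u[\eta,\psi,\Psi](x)=w(x)+W(x/\eta)$ with $w=\cD_{\partial\rB}[\phi]$ and $W=-\cD_{\partial\rQ}[\Phi]+\sum_{j=1}^{m^{\#}}c_j\Xi_j$, the change to fast variables $x=\eta X$ on the domain $\eta\Omega\subset\rB$ simply gives
\[
 u[\eta,\psi,\Psi](\eta X)=w(\eta X)+W(X),\qquad X\in\Omega.
\]
The second summand $W\vert_\Omega$ is independent of $\eta$; I would bound it in the desired $H^s(\Omega)$ norm using Lemma~\ref{lem:Drep} together with \eqref{eq:XiK} when $\partial\Omega\cap\partial\rQ\neq\emptyset$, and using interior elliptic regularity (harmonicity of $W$ on a neighborhood of $\overline\Omega$) when $\overline\Omega\subset\rQ^\complement$.

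For the first summand I would invoke Lemma~\ref{lem:Danal}(i), which provides, for $\eta$ in a neighborhood $(-\eta_1,\eta_1)$ of zero, an analytic continuation of $\eta\mapsto\cD_{\partial\rB,\Omega}(\eta)\colon\phi\mapsto w(\eta\,\cdot\,)\vert_\Omega$ with values in $\sL\bigl(H^{1/2+\tau}_{\mathrm{odd}}(\partial\rB),H^s(\Omega)\bigr)$ for any prescribed $s$. After possibly shrinking $\eta_1$, Theorem~\ref{thm:Meta=0} furnishes the analytic resolvent $\cM(\eta)^{-1}\colon(\psi,\Psi)\mapsto(\phi,\Phi,\boldsymbol c)$. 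Composing these analytic factors with the ($\eta$-independent) map $(\Phi,\boldsymbol c)\mapsto W\vert_\Omega$ yields the required operator $\cU_\rF(\eta)$ and its real-analytic dependence on $\eta\in(-\eta_1,\eta_1)$, proving \eqref{eq:repF:1}.

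It remains to identify $\cU_\rF(0)$. By Lemma~\ref{lem:Danal}(iii) we have $\cD_{\partial\rB,\Omega}(0)=0$, so $w(\eta X)\vert_\Omega$ vanishes at $\eta=0$ and $\cU_\rF(0)\binom\psi\Psi=W\vert_\Omega$, where $(\phi,\Phi,\boldsymbol c)=\cM(0)^{-1}\binom\psi\Psi$. The block-diagonal form
\[
 \cM(0)=\begin{pmatrix}-\tfrac12 I+\cK_{\partial\rB}&0&0\\ 0&\cM_{22}&\cM_{23}\end{pmatrix}
\]
decouples the system, and by Lemma~\ref{lem:BIEdQ} the pair $(\Phi,\boldsymbol c)\in H^{1/2+\tau}_{\mathrm{odd}}(\partial\rQ)_{\#}\times\R^{m^{\#}}$ is precisely the unique solution of the augmented boundary integral equation on $\partial\rQ$ associated with the exterior Dirichlet problem with datum $\Psi$. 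Consequently $W=-\cD_{\partial\rQ}[\Phi]+\sum c_j\Xi_j$ coincides in $\rQ^\complement$ with the unique solution $W_{0,\Psi}\in K^{1+\tau}_{\beta_0\beta_1}(\rQ^\complement)$ of \eqref{eq:repF:3}; this regularity class is exactly what is given by Lemma~\ref{lem:Drep} and \eqref{eq:XiK} for the chosen weights $\beta_0\in(-2,0)$, $\beta_1\in(-1,0)$. This gives \eqref{eq:repF:2} and completes the proof.

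The only delicate point I anticipate is the handling of the regularity of $W\vert_\Omega$ when $\partial\Omega$ touches $\partial\rQ$: there one cannot exceed the $H^{1+\tau}$ ceiling produced by the double layer potential on a Lipschitz boundary, which is the reason for the restriction $s\le 1+\tau$ in that case. In the strictly interior case $\overline\Omega\subset\rQ^\complement$ the harmonicity of $W$ on a neighborhood of $\overline\Omega$ upgrades the regularity to $C^\infty(\overline\Omega)\subset H^s(\Omega)$ for every $s\in\R$, so no obstacle arises.
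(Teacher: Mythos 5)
Your proposal is correct and follows essentially the same route as the paper: the fast-variable formula $u[\eta,\psi,\Psi](\eta X)=w(\eta X)+W(X)$, analyticity of $\eta\mapsto w(\eta\,\cdot\,)\on{\Omega}$ from Lemma~\ref{lem:Danal}(i) composed with the analytic resolvent $\cM(\eta)^{-1}$ of Theorem~\ref{thm:Meta=0}, and identification of the value at $\eta=0$ via $\cD_{\partial\rB,\Omega}(0)=0$, the block form of $\cM(0)$ and Lemma~\ref{lem:BIEdQ}. Your added details (the $s\le1+\tau$ ceiling when $\partial\Omega$ meets $\partial\rQ$, the interior smoothness upgrade, and the explicit identification of $\cU_\rF(0)$ with the exterior Dirichlet solution $W_{0,\Psi}$) are all consistent with the paper's argument; just note that $W$ itself does depend on $\eta$ through $(\Phi,\boldsymbol{c})=\cM(\eta)^{-1}(\psi,\Psi)$ — it is only the map $(\Phi,\boldsymbol{c})\mapsto W\on{\Omega}$ that is $\eta$-independent, as you in fact use.
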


\subsection{Convergent expansion of the solution of the original problem}
\label{ss:4.2}
In this subsection, we first insert into the expansion of the solution $u[\eta,\psi,\Psi]$ of the Dirichlet problem on the perforated domain $\rB_\eta$ obtained in the preceding subsection the knowledge about the Dirichlet data from Theorem~\ref{th:Tu0}, namely $\psi=0$ and 
$\Psi=-\cT^*[u_0]$. The latter is given by a convergent series in \eqref{eq:Tu0}.
We then have to write the resulting double series as a series in $\varepsilon$ by using $\eta=\varepsilon^{\pi/\omega}$ and we have to interpret the result as a series that converges in function spaces defined on the original domain $\rA_\varepsilon$, by 
undoing the conformal map $\cG^*_{\pi/\omega}$. This will give a convergent expansion for the solution $\tilde u_\varepsilon$ of the residual problem \eqref{eq:pepsgen}.
The final step is to add the function $u_0$ as described in Theorem~\ref{th:tu}, in order to find a convergent expansion for the solution $u_\varepsilon$ of the original problem \eqref{eq:poisson}. 

Corresponding to the three results about the Dirichlet problem in the perforated domain $\rB_\eta$, Theorems~\ref{thm:u=w+W}, \ref{thm:uexpslow} and \ref{thm:uexpfast}, we prove three different results about the solution of the original problem \eqref{eq:poisson}. 
For the notation describing the convergent series in powers of $\varepsilon$, we refer to Sections~\ref{s:2} and \ref{s:3}, in particular to Notation~\ref{not:1} for the definition of the index set $\gA$ and to Notation~\ref{not:2} for the powers and divided differences of powers of $\varepsilon$ abbreviated by the symbol $\sE_\gamma(\varepsilon)$. 

In view of Theorem \ref{th:Tu0} and Remark \ref{rem:tau}, we introduce a maximal regularity index
\begin{equation}
\label{eq:tau0}
 \tau_{0} = \tfrac12 \;\mbox{ if } 0\not\in\partial\rP\,,\quad
 \tau_{0} = \min\{\tfrac12,\tfrac{2\omega}\pi\} \;\mbox{ if } 0\in\partial\rP\,.
\end{equation}

The first result is a globally valid two-scale splitting of the solution $u_{\varepsilon}$, where the slow-variable part and the fast-variable part have separate convergent expansions, when written in their respective variables.

\begin{theorem}
\label{thm:ueps}
There exist $\varepsilon_1>0$ such that the solution $u_\varepsilon$ of Problem \eqref{eq:poisson} has the following structure.
\begin{equation}
\label{eq:u=u+U}
 u_\varepsilon(t) = u_0(t) + u(\varepsilon)(t) + U(\varepsilon)(\tfrac t\varepsilon)
 \qquad \forall\, t\in \rA_\varepsilon, \; \varepsilon\in(0,\varepsilon_1)\,.
\end{equation}
Here $u_0$ is the solution of the limit problem \eqref{eq:u0} on the unperforated corner domain $\rA$. Its singular behavior near the corner is described by the convergent series \eqref{eq:tu} in Theorem~\ref{th:tu}.\\
The functions $u(\varepsilon)(t)$ and $U(\varepsilon)(T)$ are defined for $t\in\rA$ and $T\in\rP^\complement$, respectively, and have a convergent series expansion of the following form. 
\begin{equation}
\label{eq:U=sumV}
 \binom uU 
  = \sum_{(n,\gamma)\in\N\times\gA} \varepsilon^{n\pi/\omega}\sE_\gamma(\varepsilon)
   \binom{v_{n\gamma}}{V_{n\gamma}}\,.
\end{equation}
Let $\tau\in(0,\tau_{0})$ and $\beta_0>-1-\pi/\omega$ with $\beta_0>-1-\tau\pi/\omega$ if\/ $0\in\partial\rP$, and let $\beta_1<-1+\pi/\omega$. The series converges in the weighted Sobolev spaces
$
 K^{1+\tau}_{\beta_0}(\rA)\times K^{1+\tau}_{\beta_0\beta_1}(\rP^\complement)\,,
$
and there exist constants $C$ and $M$ such that
\[
   \DNorm{v_{n\gamma}}{K^{1+\tau}_{\beta_0}(\rA)}+ 
   \DNorm{V_{n\gamma}}{K^{1+\tau}_{\beta_0\beta_1}(\rP^\complement)} 
   \le C M^{n+|\gamma|},\quad (n,\gamma)\in\N\times\gA\,.
\]
The series converge also uniformly, in
$L^\infty(\rA)\times L^\infty(\rP^\complement)$.
\end{theorem}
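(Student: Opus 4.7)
The proof unfolds by chaining together the three principal results already established in the paper: the convergent expansion of the transformed Dirichlet data from Theorem~\ref{th:Tu0}, the analytic resolvent of the auxiliary problem on $\rB_\eta$ from Theorem~\ref{thm:u=w+W}, and the inverse of the transformation $\cT^*$ to come back to $\rA_\varepsilon$. The main obstacle I anticipate is bookkeeping: combining two convergent series (one in $\eta$, one in $\sE_\gamma(\varepsilon)$) into a single absolutely convergent double series, while tracking the weighted Sobolev norms across the conformal map.

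First, I set $\tilde u_\varepsilon = u_\varepsilon-u_0$, so that $v_\eta = \cT^*[\tilde u_\varepsilon]$ solves the transformed problem \eqref{eq:petagen} on $\rB_\eta$ with $\eta=\varepsilon^{\pi/\omega}$: zero data on $\partial\rB$ and data $-\cT^*[u_0]\on{\eta\partial\rQ}$ on the scaled holes. For any fixed $\tau\in(0,\tau_{0})$ with $\tau_0$ as in \eqref{eq:tau0}, Theorem~\ref{th:Tu0} provides a convergent expansion
\begin{equation*}
 -\cT^*[u_0](\eta X)\ =\ -\sum_{\gamma\in\gA}\sE_\gamma(\varepsilon)\,\Psi_\gamma(X),\qquad X\in\partial\rQ,
\end{equation*}
with $\DNorm{\Psi_\gamma}{H^{1/2+\tau}(\partial\rQ)}\le C\,M^{|\gamma|}$.

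Next, I plug $\psi=0$ and $\Psi=-\sum_\gamma\sE_\gamma(\varepsilon)\Psi_\gamma$ into the analytic resolvent $\cL(\eta)=\sum_{n\ge 0}\eta^n\cL_n$ of Theorem~\ref{thm:u=w+W}. The linearity and boundedness of each $\cL_n$, together with Cauchy-type estimates $\DNorm{\cL_n}{}\le C_1 R^{-n}$ (for some $R>0$ extracted from the analyticity radius $\eta_1$), justify interchanging the two sums by absolute convergence:
\begin{equation*}
 v_\eta(x)\ =\ w(x)+W(x/\eta),\qquad \binom{w}{W}\ =\ -\sum_{n\in\N}\sum_{\gamma\in\gA}\eta^n\sE_\gamma(\varepsilon)\,\cL_n\binom{0}{\Psi_\gamma},
\end{equation*}
the double series converging in $K^{1+\tau}_{\beta_0}(\rB)\times K^{1+\tau}_{\beta_0\beta_1}(\rQ^\complement)$ for admissible weights as soon as $\eta<R$ and $\varepsilon M<1$.

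Finally, I pull back to the original variables. Since the densities in the boundary integral equations \eqref{eq:sys} are odd by construction, both $w$ and $W$ are odd functions of $x_2$, so their restrictions to $\rS_\pi$ carry all the information, and composing with $\cG_{\pi/\omega}$ yields functions on $\rA$ and on $\rP^\complement$ respectively. Writing $\eta^n = \varepsilon^{n\pi/\omega}$ and using the identity $\cG_{\pi/\omega}(t)/\eta = \cG_{\pi/\omega}(t/\varepsilon)$, I obtain the decomposition \eqref{eq:u=u+U} with the series \eqref{eq:U=sumV}, in which $v_{n\gamma}$ and $V_{n\gamma}$ are the pullbacks by $\cG_{\pi/\omega}$ of the two components of $-\cL_n(0,\Psi_\gamma)$. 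Lemma~\ref{lem:Km}, extended to the non-integer regularity $s=1+\tau$ by Hilbert interpolation, converts the admissibility condition $\beta_0>-2$ on $\rB$ into $\beta_0>-1-\pi/\omega$ on $\rA$ (and the auxiliary case $0\in\partial\rP$ into $\beta_0>-1-\tau\pi/\omega$), and $\beta_1<0$ on $\rQ^\complement$ into $\beta_1<-1+\pi/\omega$ on $\rP^\complement$, matching the stated weight ranges. The combined estimate takes the form $\DNorm{v_{n\gamma}}{}+\DNorm{V_{n\gamma}}{}\le C\,\tilde M^{n+|\gamma|}$ with $\tilde M=\max\{M,R^{-1}\}$; the logarithmic or divided-difference factors hidden in $\sE_\gamma$ for $\gamma\in\gA_0$ are absorbed thanks to the threshold \eqref{eq:thresh} by a further mild enlargement of $\tilde M$, giving the claimed geometric estimate. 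Uniform convergence in $L^\infty(\rA)\times L^\infty(\rP^\complement)$ follows along identical lines by invoking Remark~\ref{rem:wWLinf} in place of the weighted mapping properties of the double layer.
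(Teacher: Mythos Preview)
Your proof is correct and follows essentially the same route as the paper's own argument: reduce to the residual problem, identify $\tilde u_\varepsilon = u[\eta,0,\Psi]\circ\cG_{\pi/\omega}$ with $\Psi=-\cT^*[u_0](\eta\,\cdot)$, combine the convergent expansion \eqref{eq:Tu0} of $\Psi$ with the analytic solution operator $\cL(\eta)=\sum_n\eta^n\cL_n$ from Theorem~\ref{thm:u=w+W}, and pull back the weights via Lemma~\ref{lem:Km}. Your added detail (explicit Cauchy bounds on $\cL_n$, the homogeneity identity $\cG_{\pi/\omega}(t)/\eta=\cG_{\pi/\omega}(t/\varepsilon)$, and the remark on absorbing the $\sE_\gamma$ factors) is welcome but does not constitute a different approach.
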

\begin{proof}
We have $u_\varepsilon=u_0+\tilde u_\varepsilon$, where $\tilde u_\varepsilon$ solves the residual problem \eqref{eq:pepsgen}. After applying the conformal mapping $\cG_{\pi/\omega}$ and the odd reflection, this was rewritten in Theorem~\ref{th:Tu0} as the problem \eqref{eq:petagen}, a special case of the boundary value problem \eqref{eq:symdir}. Thus we have the identification
$\tilde u_\varepsilon = u[\eta,\psi,\Psi]\circ \cG_{\pi/\omega}$, where $\psi=0$ and $\Psi=-\cT^*[u_0](\eta \cdot)$.
Combining the convergent expansion \eqref{eq:Tu0} for $\cT^*[u_0]$ with  the power series \eqref{eq:wWsum} for the solution operator of problem \eqref{eq:symdir}, we thus find a convergent expansion that has the form \eqref{eq:U=sumV}
\[
 \binom uU 
 =  \sum_{n=0}^\infty \sum_{\gamma\in\gA} \varepsilon^{n\pi/\omega}\sE_\gamma(\varepsilon)
  \cL_n \binom{0}{\Psi_\gamma} \circ \cG_{\pi/\omega}\,.
\]
The right choice of weighted Sobolev spaces for the convergence follows from Theorem~\ref{thm:u=w+W} with the transformation rule of Lemma~\ref{lem:Km}. Note that this transformation rule motivates the use of weighted Sobolev spaces instead of non-weighted spaces. For the uniform convergence finally, we notice that $L^\infty$ remains invariant under the conformal mappings $\cG^*_\kappa$.
\end{proof}

The second result is a convergent expansion of the whole solution $u_{\varepsilon}$ written in slow (macroscopic) variables. It is valid in any fixed subdomain of $\rA_{\varepsilon}$ that has a positive distance to the corner and thus is free of holes for sufficiently small $\varepsilon$. This corresponds to the outer expansion in the method of matched asymptotic expansions, compare \cite[Section 5]{DaToVi10}.  

\begin{theorem}
\label{thm:outer}
Let $\Omega$ be a  Lipschitz subdomain of  $\rA$ such that $0\not\in\overline\Omega$.
Let $\varepsilon_\Omega>0$  be such that $\overline{\Omega}\cap \varepsilon\overline{\rP}=\emptyset$ for all $\varepsilon\in(0,\varepsilon_\Omega)$.
Then there exists $\varepsilon_1 \in (0,\varepsilon_\Omega)$ such that for $\varepsilon\in(0,\varepsilon_{1})$ the solution $u_{\varepsilon}$ of Problem \eqref{eq:poisson} has the following expansion in $\Omega$:
\begin{equation}
\label{eq:outer}
 u_{\varepsilon}(t) = u_{0}(t) + 
    \sum_{(n,\gamma)\in\N_{*}\times\gA} \varepsilon^{n\pi/\omega}\sE_\gamma(\varepsilon) u^{\rS}_{n\gamma}(t)\,,
    \qquad t\in\Omega\,.
\end{equation}
Let $\tau<\tau_0$. Then the series converges for $|\varepsilon|<\varepsilon_{1}$ in $H^{1+\tau}(\Omega)$, and there exist constants $C$ and $M$ such that
\[
   \DNorm{u^{\rS}_{n\gamma}}{H^{1+\tau}(\Omega)}
   \le C M^{n+|\gamma|},\quad (n,\gamma)\in\N_{*}\times\gA\,.
\]
The series converges also uniformly in $\Omega$.
\end{theorem}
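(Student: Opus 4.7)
The strategy combines three ingredients from earlier: the convergent trace expansion \eqref{eq:Tu0}--\eqref{eq:Tu0est} for $\cT^*[u_0]$ (Theorem~\ref{th:Tu0}), the real analyticity in $\eta$ of the slow-variable solution operator $\cU_\rS$ for the auxiliary Dirichlet problem \eqref{eq:symdir} (Theorem~\ref{thm:uexpslow}), and the fact that $\cG_{\pi/\omega}$ is a $C^\infty$-diffeomorphism away from the corner. I would first transport the problem to the symmetric framework by setting $\Omega'\equiv\cG_{\pi/\omega}(\Omega)$, which lies at positive distance both from $0$ and from $\eta\overline\rQ$ for small enough $\eta$, and thus fulfills the assumptions of Theorem~\ref{thm:uexpslow}. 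The identification $\tilde u_\varepsilon = v_\eta\circ\cG_{\pi/\omega}$ from Section~\ref{ss:residual} gives $v_\eta = u[\eta,0,\Psi^{(\eta)}]$ with $\Psi^{(\eta)}(X)=-\cT^*[u_0](\eta X)$; by Theorem~\ref{th:Tu0} the decomposition $\Psi^{(\eta)}=-\sum_{\gamma\in\gA}\sE_\gamma(\varepsilon)\Psi_\gamma$ converges in $H^{1/2+\tau}_{\mathrm{odd}}(\partial\rQ)$ for any $\tau\in(0,\tau_0)$, and continuity of $\cU_\rS(\eta)$ yields
\[
 v_\eta\big|_{\Omega'}=-\sum_{\gamma\in\gA}\sE_\gamma(\varepsilon)\,\cU_\rS(\eta)\binom{0}{\Psi_\gamma}.
\]

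Next I would insert the analytic expansion $\cU_\rS(\eta)=\sum_{n\ge 0}\eta^n\,U_n$ from Theorem~\ref{thm:uexpslow}. The term $U_0\binom{0}{\Psi_\gamma}=\cU_\rS(0)\binom{0}{\Psi_\gamma}$ is the restriction of the harmonic extension of $0$ from $\partial\rB$, hence vanishes, so the sum effectively runs over $n\in\N_*$. With $\eta=\varepsilon^{\pi/\omega}$ and $u^\rS_{n\gamma}\equiv-\bigl(U_n\binom{0}{\Psi_\gamma}\bigr)\circ\cG_{\pi/\omega}$, adding $u_0$ on both sides produces exactly the expansion \eqref{eq:outer}. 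The bounds $\|U_n\|\le C M_1^n$ from the analyticity of $\cU_\rS$, the bounds $\|\Psi_\gamma\|_{H^{1/2+\tau}(\partial\rQ)}\le C M_2^{|\gamma|}$ from \eqref{eq:Tu0est}, together with the fact that composition with $\cG_{\pi/\omega}$ is an isomorphism of $H^{1+\tau}$ on neighborhoods of $\overline{\Omega'}$ and $\overline\Omega$ (since $\cG_{\pi/\omega}$ is $C^\infty$ away from $0$), give $\|u^\rS_{n\gamma}\|_{H^{1+\tau}(\Omega)}\le C M^{n+|\gamma|}$ with $M=\max(M_1,M_2)$. Absolute unconditional convergence of the double series for $\varepsilon$ small enough follows by separating variables in the majorant
\[
 \Bigl(\sum_{n\ge 1}(\varepsilon^{\pi/\omega}M_1)^n\Bigr)\Bigl(\sum_{\gamma\in\gA}|\sE_\gamma(\varepsilon)|\,M_2^{|\gamma|}\Bigr);
\]
the first factor converges by analyticity of $\cU_\rS$ as soon as $\varepsilon^{\pi/\omega}M_1<1$, and the second is precisely the majorant of the trace series in Theorem~\ref{th:Tu0}, finite for $\varepsilon$ in a right neighborhood of $0$.

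The main delicate point, and the reason for the restriction $\tau<\tau_0$, is matching the regularity index of the trace space in which the analytic bound \eqref{eq:Tu0est} holds (governed by Remark~\ref{rem:tau} and \eqref{eq:tau0}) with the admissible input for $\cU_\rS(\eta)$ coming from Theorem~\ref{thm:uexpslow}. Uniform convergence in $\Omega$ finally follows from the two-dimensional Sobolev embedding $H^{1+\tau}(\Omega)\hookrightarrow L^\infty(\Omega)$, valid for every $\tau>0$.
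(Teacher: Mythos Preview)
Your proof is correct and follows essentially the same route as the paper's own argument. The only organizational difference is that you invoke Theorem~\ref{thm:uexpslow} directly as a black box on the transported domain $\Omega'=\cG_{\pi/\omega}(\Omega)$, whereas the paper unwinds the two-scale decomposition $u_\varepsilon=u_0+u(\varepsilon)+U(\varepsilon)(\cdot/\varepsilon)$ from Theorem~\ref{thm:ueps} and re-argues the analyticity of the rapid part $U(\varepsilon)(\cdot/\varepsilon)$ in slow variables via Lemma~\ref{lem:Danal}; both paths rely on the same ingredients (Theorem~\ref{th:Tu0}, the analytic resolvent of $\cM(\eta)$, and the smoothness of $\cG_{\pi/\omega}$ away from the origin) and yield the same coefficient functions and bounds.
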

\begin{proof}
In the multiscale decomposition \eqref{eq:u=u+U} 
$u_\varepsilon=u_0+u(\varepsilon)+U(\varepsilon)\big(\tfrac\cdot\varepsilon\big)$, the term $u(\varepsilon)$ has the required expansion according to Theorem~\ref{thm:ueps}. For $U(\varepsilon)$ we write it as 
\[
 U(\varepsilon)=W\circ \cG_{\pi/\omega}\,,
\]
where $W$ is the function defined in Theorem~\ref{thm:u=w+W} in the special case where $\psi=0$ and $\Psi=-\cT^*[u_0](\eta \cdot)$. The analyticity of $W(\cdot/\eta)$ with respect to $\eta$ at $\eta=0$ in the case of $\eta$-independent $\Psi$ has been deduced in the proof of Theorem~\ref{thm:uexpslow} from Lemma~\ref{lem:Danal}. We have to combine this, as in the proof of Theorem~\ref{thm:ueps}, with the expansion 
\eqref{eq:Tu0} for $\cT^*[u_0]$ and set $\eta=\varepsilon^{\pi/\omega}$, ending up with the expansion required for \eqref{eq:outer}. The coefficient functions $u^{\rS}_{n\gamma}$ are the sum of the corresponding terms of the expansion of $u(\varepsilon)$ and of $U(\varepsilon)(\cdot/\varepsilon)$. For $n=0$ both of these terms vanish, because they correspond to  $u[\eta,\psi,\Psi]$ in \eqref{eq:rep:1} at $\eta=0$ and $\psi=0$, and according to \eqref{eq:rep:2}--\eqref{eq:rep:3}, this is zero. Therefore the sum over $n$ in \eqref{eq:outer} starts with $n\ge1$.
\end{proof}

The third result is a convergent expansion of the whole solution $u_{\varepsilon}$ written in fast (microscopic) variables. It is valid outside of the holes in a scaled family $\varepsilon\Omega$ of subdomains of $\rA_{\varepsilon}$. This corresponds to the inner expansion in the method of matched asymptotic expansions, compare \cite[Section 5]{DaToVi10}.

\begin{theorem}
\label{thm:inner}
Let $\Omega\subset\rS_{\omega}\setminus\overline\rP$ be a bounded Lipschitz domain. 
Let $\tilde\varepsilon_\Omega>0$  be such that $\varepsilon\Omega\subset\rA$ for all $\varepsilon\in(0,\tilde\varepsilon_\Omega)$. 
Then there exists $\varepsilon_1 \in (0,\tilde\varepsilon_\Omega)$ such that for $\varepsilon\in(0,\tilde\varepsilon_{1})$ the solution $u_{\varepsilon}$ of Problem \eqref{eq:poisson} has the following expansion in $\varepsilon\Omega$:
\begin{equation}
\label{eq:inner}
 u_{\varepsilon}(\varepsilon T) =  
    \sum_{(n,\gamma)\in\N\times\gA} \varepsilon^{n\pi/\omega}\sE_\gamma(\varepsilon) U^{\rF}_{n\gamma}(T)\,,
    \qquad T\in\Omega\,.
\end{equation}
Let $\tau<\tau_0$.
Then the series converges for $|\varepsilon|<\tilde\varepsilon_{1}$ in $H^{1+\tau}(\Omega)$, and there exist constants $C$ and $M$ such that
\[
   \DNorm{U^{\rF}_{n\gamma}}{H^{1+\tau}(\Omega)}
   \le C M^{n+|\gamma|},\quad (n,\gamma)\in\N_{*}\times\gA\,.
\]
The series converges also uniformly in $\Omega$.

\end{theorem}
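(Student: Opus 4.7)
My plan is to mirror the proof of Theorem~\ref{thm:outer} just above, substituting Theorem~\ref{thm:uexpfast} for Theorem~\ref{thm:uexpslow} and tracking the effect of the scaling $t\mapsto\varepsilon t$ on the conformal side. The key identity is the homogeneity of $\cT=\cE\circ\cG_{\pi/\omega}$: namely $\cT(\varepsilon T)=\eta\,\cT(T)$ with $\eta=\varepsilon^{\pi/\omega}$. I split $u_\varepsilon(\varepsilon T)=u_0(\varepsilon T)+\tilde u_\varepsilon(\varepsilon T)$ and expand each piece separately, then add and collect.

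For $u_0(\varepsilon T)$ I apply Corollary~\ref{cor:tu} directly, which gives the convergent expansion $u_0(\varepsilon T)=\sum_{\gamma\in\gA}\sE_\gamma(\varepsilon)\Phi_\gamma(T)$ with $\Phi_\gamma$ as in~\eqref{eq:Phi}. For $\tilde u_\varepsilon(\varepsilon T)$ I use the identification $\tilde u_\varepsilon(\varepsilon T)=v_\eta(\eta\,\cT(T))$, where $v_\eta$ is the solution of~\eqref{eq:petagen} with Dirichlet data $-\cT^*[u_0]$ on $\eta\partial\rQ$. Setting $\Omega':=\cT(\Omega)\subset\rQ^\complement$ (bounded and Lipschitz by Lemmas~\ref{lem:LipGk} and~\ref{lem:LipE}, at least away from the vertex), I apply Theorem~\ref{thm:uexpfast} to $\Omega'$. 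Inserting the convergent expansion \eqref{eq:Tu0} of the data, $\cT^*[u_0](\eta X)=\sum_\gamma\sE_\gamma(\varepsilon)\Psi_\gamma(X)$, and expanding $\cU_\rF(\eta)=\sum_n\eta^n\cU^\rF_n$, linearity yields the convergent double series
\[
\tilde u_\varepsilon(\varepsilon T)=-\sum_{(n,\gamma)\in\N\times\gA}\varepsilon^{n\pi/\omega}\,\sE_\gamma(\varepsilon)\,\bigl(\cU^\rF_n[(0,\Psi_\gamma)]\bigr)(\cT(T)).
\]
Adding the $u_0$ contribution (which affects only the $n=0$ slice) and collecting like terms gives~\eqref{eq:inner}. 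The analytic-type norm estimates follow by combining the bounds \eqref{eq:Tu0est} on $\Psi_\gamma$ with the Cauchy-type estimates on $\cU^\rF_n$ inherited from the analyticity of $\cM(\eta)^{-1}$ established in Theorem~\ref{thm:Meta=0}. Uniform convergence in $\Omega$ comes from the $L^\infty$ mapping property of the double layer recorded in Remark~\ref{rem:DLinf}, propagated through $\cU_\rF$.

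The main obstacle is the transfer of Sobolev regularity through the pullback $\cT^{-1}$ near the vertex in the case $0\in\overline\Omega$, where the conformal map $\cG_{\pi/\omega}$ fails to be smooth and $\Omega'$ need not be a standard Lipschitz domain at the origin. As in the proof of Theorem~\ref{th:Tu0}, I would take the detour through the weighted Kondrat'ev spaces $K^{1+\tau}_\beta$, using Lemma~\ref{lem:Km} to translate estimates from $\Omega'$ into $K$-space estimates on $\Omega$, and then invoking the continuous embedding~\eqref{eq:KmHs} back into the unweighted $H^{1+\tau}(\Omega)$. This is exactly the mechanism that constrains $\tau$ below the threshold $\tau_0$ from~\eqref{eq:tau0}, in agreement with the statement of Theorem~\ref{thm:inner}.
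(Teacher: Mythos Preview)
Your proposal is correct and follows essentially the same approach as the paper: split $u_\varepsilon(\varepsilon T)=u_0(\varepsilon T)+\tilde u_\varepsilon(\varepsilon T)$, handle the first term via the convergent corner expansion $\sum_\gamma\sE_\gamma(\varepsilon)\Phi_\gamma$ from Corollary~\ref{cor:tu} (the $n=0$ slice), and treat the second via Theorem~\ref{thm:uexpfast} applied on the transformed domain combined with the data expansion~\eqref{eq:Tu0}. One small caveat: since $v_\eta=\tilde u_\varepsilon\circ\cG_{\pi/\omega}^{-1}$ the pointwise identity is $\tilde u_\varepsilon(\varepsilon T)=v_\eta(\eta\,\cG_{\pi/\omega}(T))$ rather than $v_\eta(\eta\,\cT(T))$ (the odd extension $\cE$ acts on domains/functions, not points), and the restriction $\tau<\tau_0$ enters already through the data regularity in Theorem~\ref{th:Tu0} and Remark~\ref{rem:tau}, not only through the pullback step.
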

\begin{proof}
As in the proof of Theorem~\ref{thm:ueps} we use the identity 
$u_\varepsilon=u_0+\tilde u_\varepsilon\equiv u_0+u[\eta,\psi,\Psi]\circ \cG_{\pi/\omega}$, where $\psi=0$ and $\Psi=-\cT^*[u_0](\eta \cdot)$. Together with Theorem~\ref{thm:uexpfast} for $u[\eta,\psi,\Psi](\eta\,\cdot\,)$, this gives the desired form \eqref{eq:inner} of the expansion for the second term $\tilde u_\varepsilon(\varepsilon\cdot)$. Here, as in the outer expansion \eqref{eq:outer}, the sum over $n$ lacks the term $n=0$.
It remains to analyze the first term $u_0(\varepsilon\cdot)$. 
Here we need the asymptotic behavior (expansion into corner singular functions) of $u_0$ that was described in \eqref{eq:tup} and used for expanding $u_0(\varepsilon T)$ into a convergent series in \eqref{eq:tupe}. With the notation introduced in \eqref{eq:Phi} in the proof of Theorem~\ref{th:Tu0}, this series can be written as
\[
 u_0(\varepsilon T) = \sum_{\gamma\in\gA} \sE_\gamma(\varepsilon)\Phi_\gamma(T)\,.
\]
This is a series of the form \eqref{eq:inner} with $n=0$. Explicitly estimating norms of the functions $\Phi_\gamma$ or relying on the estimate \eqref{eq:PhiNorm}, we see that the series converges in $H^{1+\tau}(\Omega)$.
\end{proof}

The fact that the series expansions in the last three theorems 
are only stepwise convergent, that is convergent when pairs of powers of $\varepsilon$ are grouped together into the terms $\sE_\gamma(\varepsilon)$ from Notation~\ref{not:2}(3), is caused entirely by the corresponding fact for the expansion of $u_0$ studied in Section~\ref{s:2}, see in particular Remarks~\ref{rem:pi}--\ref{rem:other}. Thus if we assume that one of the conditions mentioned in these Remarks is satisfied, we find convergent power series, and it is then possible to reformulate the statements of Theorems~\ref{thm:ueps}--\ref{thm:inner} in terms of analytic functions of $\varepsilon$ and $\varepsilon^{\pi/\omega}$.

\begin{corollary}
\label{cor:f=0}
Suppose that the right hand side $f$ vanishes in a neighborhood of the corner $0$. Denote by $u_\varepsilon$ the solution of Problem \eqref{eq:poisson}.\\  
\emph{(i)}
Let the parameters $\tau$, $\beta_0$ and $\beta_1$ be chosen as in Theorem~\ref{thm:ueps}. Then there exists $\eta_1>0$ and a real analytic function
\[
 (-\eta_1,\eta_1) \ni \eta\mapsto 
  \mathcal{V}[\eta]=\binom{v[\eta]}{V[\eta]} \in 
  K^{1+\tau}_{\beta_0}(\rA)\times K^{1+\tau}_{\beta_0\beta_1}(\rP^\complement)
\]
such that in the two-scale decomposition \eqref{eq:u=u+U}
$u_\varepsilon=u_0+u(\varepsilon)+U(\varepsilon)(\tfrac\cdot\varepsilon)$ we have
\[
  u(\varepsilon) = v[\varepsilon^{\pi/\omega}]\,,\qquad
  U(\varepsilon) = V[\varepsilon^{\pi/\omega}]
    \qquad \forall\, \varepsilon\in(0,\eta_1^{\omega/\pi})\,.
\]
\emph{(ii)}
Let $\Omega$ be a  Lipschitz subdomain of  $\rA$ such that $0\not\in\overline\Omega$ and let $\tau$ be chosen as in Theorem~\ref{thm:outer}. Then there exists $\eta_1>0$ and a real analytic function
\[
 (-\eta_1,\eta_1) \ni \eta\mapsto 
  u_\rS[\eta] \in H^{1+\tau}(\Omega)
\]
such that we have
$u_{\rS}[0] = u_0$ and
\[
 u_\varepsilon = u_\rS[\varepsilon^{\pi/\omega}] 
 \qquad \mbox{ in }\Omega\,,\quad\forall\, \varepsilon\in(0,\eta_1^{\omega/\pi})\,.
\]
\emph{(iii)}
Let $\Omega\subset\rS_{\omega}\setminus\overline\rP$ be a bounded Lipschitz domain and let $\tau$ be chosen as in Theorem~\ref{thm:inner}. Then there exists $\eta_1>0$ and a real analytic function
\[
 (-\eta_1,\eta_1) \ni \eta\mapsto 
  U_\rF[\eta] \in H^{1+\tau}(\Omega)
\]
such that we have
\[
 u_\varepsilon(\varepsilon T) = U_\rF[\varepsilon^{\pi/\omega}](T) 
 \qquad \forall \,T\in\Omega\,,\quad \varepsilon\in(0,\eta_1^{\omega/\pi})\,.
\]
\end{corollary}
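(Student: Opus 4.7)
The plan is to exploit Remark~\ref{rem:f=0}: when $f$ vanishes in a neighborhood of $0$, the terms $u_{f}$ and $u_{\partial}$ in Theorem~\ref{th:tu} are identically zero, so $u_{0}=u_{\sf rm}$ is represented near the corner by the pure power series
\[
 u_{0}(t) = \sum_{k\in\N_{*}} c_{k\pi/\omega}\,\Im\zeta^{k\pi/\omega},
\]
without any exceptional index set $\gA_{0}$ intervening. Consequently the expansion \eqref{eq:Tu0} of the Dirichlet trace on $\eta\partial\rQ$ reduces to a genuine power series in $\eta$,
\[
 \Psi_{\bullet}(\eta)(X) \equiv -\cT^{*}[u_{0}](\eta X) = -\sum_{k=1}^{\infty} \eta^{k}\,\Psi_{k\pi/\omega}(X),
\]
which by estimate \eqref{eq:Tu0est} defines a real analytic function $\eta\mapsto\Psi_{\bullet}(\eta)$ from a neighborhood of $0$ into $H^{1/2+\tau}_{\mathrm{odd}}(\partial\rQ)$, with $\Psi_{\bullet}(0)=0$. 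This single observation is the engine of the whole corollary.

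For part~(i), I invoke Theorem~\ref{thm:u=w+W}: the solution operator $\cL(\eta)=\sum_{n}\eta^{n}\cL_{n}$ of the auxiliary problem \eqref{eq:symdir} is analytic in $\eta$, so the composition $\eta\mapsto\cL(\eta)\binom{0}{\Psi_{\bullet}(\eta)}$ is analytic into $K^{1+\tau}_{\beta_{0}'}(\rB)\times K^{1+\tau}_{\beta_{0}'\beta_{1}'}(\rQ^{\complement})$. Since the conformal pullback $\cG^{*}_{\pi/\omega}$ is $\eta$-independent and, by Lemma~\ref{lem:Km}, is an isomorphism between the relevant weighted Kondrat'ev spaces under the correspondence $1+\beta'=(1+\beta)\pi/\omega$, transporting back to $\rA$ and $\rP^{\complement}$ defines $\mathcal{V}[\eta]=\binom{v[\eta]}{V[\eta]}$. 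The equalities $u(\varepsilon)=v[\varepsilon^{\pi/\omega}]$ and $U(\varepsilon)=V[\varepsilon^{\pi/\omega}]$ then follow from the identification $\tilde u_{\varepsilon}=u[\eta,0,\Psi_{\bullet}(\eta)]\circ\cG_{\pi/\omega}$ used in the proof of Theorem~\ref{thm:ueps}.

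Parts~(ii) and (iii) follow the same scheme. For (ii), Theorem~\ref{thm:uexpslow} supplies an analytic operator function $\cU_{\rS}(\eta)$; composing with $\binom{0}{\Psi_{\bullet}(\eta)}$ gives an analytic $H^{1+\tau}(\Omega)$-valued function $\tilde u_{\rS}[\eta]$, and $u_{\rS}[\eta]\equiv u_{0}+\tilde u_{\rS}[\eta]$ is analytic in $\eta$. The value at $\eta=0$ is $u_{0}$ because $\Psi_{\bullet}(0)=0$ and $\cU_{\rS}(0)\binom{0}{0}=0$ by \eqref{eq:rep:2}--\eqref{eq:rep:3}. For (iii), Theorem~\ref{thm:uexpfast} yields an analytic function $\tilde U_{\rF}[\eta]$ on $\Omega$ that represents $\tilde u_{\varepsilon}(\varepsilon\,\cdot\,)$; the additional slow-variable contribution $u_{0}(\varepsilon T)$ is handled directly by substituting $\zeta=\varepsilon(T_{1}+iT_{2})$ in the series for $u_{0}$:
\[
 u_{0}(\varepsilon T) = \sum_{k=1}^{\infty} \eta^{k}\, c_{k\pi/\omega}\,\Im(T_{1}+iT_{2})^{k\pi/\omega},
\]
which by \eqref{eq:estcg} converges in $H^{1+\tau}(\Omega)$ for $|\eta|$ small enough. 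Summing the two analytic pieces gives $U_{\rF}[\eta]$.

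The only real bookkeeping step is the check in part~(i) that the weight exponents of the corollary transform under Lemma~\ref{lem:Km} into the allowed ranges of Theorem~\ref{thm:u=w+W}: applying $\beta'=(1+\beta)\pi/\omega-1$ to $\beta_{0}>-1-\pi/\omega$ yields $\beta_{0}'>-2$, to $\beta_{1}<-1+\pi/\omega$ yields $\beta_{1}'<0$, and the sharpened condition $\beta_{0}>-1-\tau\pi/\omega$ valid when $0\in\partial\rP$ translates into $\beta_{0}'\ge-1-\tau$; thus the weighted-space hypotheses of Theorem~\ref{thm:u=w+W} are met. Once this matching is verified, each of the three statements is an immediate consequence of the composition of two analytic functions being analytic, and the proof reduces to assembling the pieces already established.
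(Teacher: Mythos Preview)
Your argument is correct and follows essentially the same route as the paper: both proofs rest on Remark~\ref{rem:f=0}, namely that when $f$ vanishes near the corner the expansion of $u_0$ contains only the exponents $k\pi/\omega$, so that everything downstream becomes a genuine power series in $\eta=\varepsilon^{\pi/\omega}$. The paper states this at the level of the final expansions \eqref{eq:U=sumV}, \eqref{eq:outer}, \eqref{eq:inner}, while you re-derive it by explicitly composing the analytic data map $\eta\mapsto\Psi_\bullet(\eta)$ with the analytic solution operators $\cL(\eta)$, $\cU_\rS(\eta)$, $\cU_\rF(\eta)$; this is the same idea, just unpacked one level further.

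One small slip: in your weight-matching paragraph the transformation rule from Lemma~\ref{lem:Km} with $\kappa=\pi/\omega$ reads $\beta'=(1+\beta)\,\omega/\pi-1$ (equivalently $1+\beta=(1+\beta')\,\pi/\omega$), not $\beta'=(1+\beta)\,\pi/\omega-1$ as you wrote. With the correct factor $\omega/\pi$, the verifications you claim do go through: $\beta_0>-1-\pi/\omega$ gives $\beta_0'>-2$, $\beta_1<-1+\pi/\omega$ gives $\beta_1'<0$, and $\beta_0>-1-\tau\pi/\omega$ gives $\beta_0'>-1-\tau$. So your conclusions are right, only the displayed formula needs the exponent ratio inverted.
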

\begin{proof}
As we have seen in Remark~\ref{rem:f=0}, if $f$ vanishes in a neighborhood of the corner, then in the series expansion of $u_0$ there appear only exponents that are of the form $k\pi/\omega$ with integer $k$, and the series is unconditionally convergent. In the resolution of the residual problem in Section~\ref{ss:Dir},  integer powers of $\eta=\varepsilon^{\pi/\omega}$ were incorporated, so that the final convergent series expansions \eqref{eq:U=sumV}, \eqref{eq:outer} and \eqref{eq:inner} also contain only exponents that are integer multiples of $\pi/\omega$. It follows that these series are convergent power series, hence analytic functions, in the variable $\eta=\varepsilon^{\pi/\omega}$. 
\end{proof}

Let now $\omega$ be  a rational multiple of $\pi$, i.e. $\pi/\omega=p/q$, where $p$ and $q$ are relatively prime positive integers. In this case, all the exponents of $\varepsilon$ appearing in the 
convergent series expansions \eqref{eq:U=sumV}, \eqref{eq:outer} and \eqref{eq:inner} can be seen to be integer multiples of $1/q$. The expressions $\sE_\gamma(\varepsilon)$ as defined in Notation~\ref{not:2} are now either integer powers of $\delta=\varepsilon^{1/q}$ or of the form $\varepsilon^{\ell}\log \varepsilon$ with integer $\ell$. They can therefore be expressed via two real analytic functions of one variable. We formulate this observation for the two-scale  decomposition \eqref{eq:u=u+U} of Theorem~\ref{thm:ueps} and its convergent series expansion \eqref{eq:U=sumV} and leave the corresponding reformulations of Theorems~\ref{thm:outer} and \ref{thm:inner} to the reader.

\begin{corollary}
\label{cor:omegaQ}
Let $\pi/\omega=p/q$. With the notations of Theorem~\ref{thm:ueps}, there exist $\delta_1>0$ and two real analytic functions (we set $\varepsilon_1\equiv(\delta_1)^q$)
\[
\begin{aligned}
 (-\delta_1,\delta_1) \ni \delta &\mapsto 
  \mathcal{V}_0[\delta] \in 
  K^{1+\tau}_{\beta_0}(\rA)\times K^{1+\tau}_{\beta_0\beta_1}(\rP^\complement)\\
(-\varepsilon_1,\varepsilon_1) \ni \varepsilon &\mapsto 
  \mathcal{V}_1[\varepsilon] \in 
  K^{1+\tau}_{\beta_0}(\rA)\times K^{1+\tau}_{\beta_0\beta_1}(\rP^\complement)\\
\end{aligned}
\]
such that 
\[
  \binom{u(\varepsilon)}{U(\varepsilon)} =
   \mathcal{V}_0[\varepsilon^{1/q}] + \mathcal{V}_1[\varepsilon^{p}]\log\varepsilon
   \qquad \forall\, \varepsilon\in(0,\varepsilon_1)\,.
\]
\end{corollary}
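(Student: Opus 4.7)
The plan is to specialize the convergent series expansion \eqref{eq:U=sumV} from Theorem~\ref{thm:ueps} to the rational case $\pi/\omega=p/q$ with $\gcd(p,q)=1$, and to reorganize it into two convergent power series in the appropriate single variables.

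First, I would invoke Remark~\ref{rem:other} and choose $\delta_\omega$ small enough so that the exceptional set $\gA_0$ consists of exactly the pairs $(\ell,0)\in\N^2_*$ with $\ell\omega=k\pi$ holding as an equality; in the rational case this forces $\ell=mp$ and $k=mq$ for some $m\in\N_*$. Then by inspection of Notation~\ref{not:2} every $\sE_\gamma(\varepsilon)$ is either an integer power of $\delta\equiv\varepsilon^{1/q}$ (namely $\delta^{jp}$ for $\gamma=jp/q\in\tfrac p q\N_*$ and $\delta^{q|\gamma|}$ for $\gamma\in\N^2_*\setminus\gA_0$) or of the form $\varepsilon^{mp}\log\varepsilon=\delta^{mpq}\log\varepsilon$ for $\gamma=(mp,0)\in\gA_0$. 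Likewise $\varepsilon^{n\pi/\omega}=\delta^{np}$ is an integer power of $\delta$. Consequently each term of \eqref{eq:U=sumV} is either a pure monomial in $\delta$ with positive integer exponent, or the product of $\log\varepsilon$ and a monomial whose effective $\varepsilon$-exponent is an integer multiple of $p$.

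Next, I would split the sum \eqref{eq:U=sumV} into its log-free and log-containing contributions and reindex each by the relevant effective exponent. This produces
\[
 \mathcal{V}_0[\delta]\equiv \sum_{N\ge 1}\delta^N\binom{\hat v_N}{\hat V_N}
 \qquad\text{and}\qquad
 \mathcal{V}_1[\varepsilon^p]\log\varepsilon\equiv \log\varepsilon\,\sum_{k\ge 1}\varepsilon^{kp}\binom{\hat v^{\log}_k}{\hat V^{\log}_k}\,,
\]
where each $\hat v_N,\hat V_N$ (resp.\ $\hat v^{\log}_k, \hat V^{\log}_k$) is a finite sum of coefficient functions $v_{n\gamma}, V_{n\gamma}$ from Theorem~\ref{thm:ueps} indexed by the pairs $(n,\gamma)$ whose associated exponent equals $N$ (resp.\ whose log-exponent equals $kp$).

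Finally, I would deduce convergence and analyticity from the analytic-type bound $\DNorm{v_{n\gamma}}{K^{1+\tau}_{\beta_0}(\rA)}+\DNorm{V_{n\gamma}}{K^{1+\tau}_{\beta_0\beta_1}(\rP^\complement)}\le CM^{n+|\gamma|}$ of Theorem~\ref{thm:ueps}: each inner finite sum is geometrically bounded in its index, so choosing $\delta_1>0$ with $\delta_1^q M'<1$ for a suitable $M'$ makes both reorganized series converge in the operator norm on $(-\delta_1,\delta_1)$ and $(-\varepsilon_1,\varepsilon_1)=(-\delta_1^q,\delta_1^q)$, respectively, and define the claimed real analytic maps. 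The main obstacle is Step two: verifying that the log-containing subseries truly reorganizes as a power series in $\varepsilon^p$ (rather than in some smaller power of $\varepsilon$) and that the grouping is compatible with the $\eta$-analytic structure $\cL(\eta)=\sum\eta^n\cL_n$ imported through the expansion of $\cT^*[u_0]$ in Theorem~\ref{th:Tu0}; the rest is bookkeeping against the geometric estimate.
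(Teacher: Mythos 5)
Your route is the same as the paper's: Corollary~\ref{cor:omegaQ} is supported there only by the short paragraph preceding it, which observes that for $\pi/\omega=p/q$ all exponents in \eqref{eq:U=sumV} are integer multiples of $1/q$ and each $\sE_\gamma(\varepsilon)$ is either an integer power of $\delta=\varepsilon^{1/q}$ or of the form $\varepsilon^{mp}\log\varepsilon$, and then regroups the series. Your preliminary normalization of $\gA_0$ via a small threshold $\delta_\omega$ (harmless by Remark~\ref{rem:other}, and not even necessary, since the divided differences of Notation~\ref{not:2} are themselves combinations of integer powers of $\delta$), the regrouping of the finitely many terms contributing to each power of $\delta$, and the use of the geometric bounds $CM^{n+|\gamma|}$ from Theorem~\ref{thm:ueps} to obtain a positive radius of convergence are exactly the bookkeeping the paper leaves implicit; that part of your plan is sound.

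The point you flag as the main obstacle, however, is a genuine one, and your Step two is incorrect as stated when $q>1$. A log-carrying term of \eqref{eq:U=sumV} is
\[
 \varepsilon^{n\pi/\omega}\,\sE_{(mp,0)}(\varepsilon)
 \;=\;\varepsilon^{np/q+mp}\log\varepsilon
 \;=\;\bigl(\varepsilon^{p/q}\bigr)^{\,n+mq}\log\varepsilon ,
\]
and the prefactor $\varepsilon^{n\pi/\omega}=\delta^{np}$ occurs for \emph{every} $n\in\N$, because the expansion $\cL(\eta)=\sum_n\eta^n\cL_n$ of the solution operator is applied to every data term $\Psi_\gamma$ of \eqref{eq:Tu0}, including the resonant ones $\gamma=(mp,0)\in\gA_0$. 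Hence the effective $\varepsilon$-exponent of a log term is $p(n+mq)/q$: always an integer multiple of $p/q$, but a multiple of $p$ only when $q\mid n$, and there is no mechanism forcing the coefficients with $q\nmid n$ to vanish. What the regrouping actually yields is $\log\varepsilon$ times a convergent power series whose $\delta$-exponents are multiples of $p$, i.e.\ a real analytic function of $\delta^{p}=\varepsilon^{p/q}=\varepsilon^{\pi/\omega}$; this coincides with the printed argument $\varepsilon^{p}$ precisely when $q=1$. So do not attempt to verify the literal $\varepsilon^{p}$ form — it does not follow from \eqref{eq:U=sumV} and fails for generic data when $q>1$. Correct your Step two to ``the $\delta$-exponents of the log-carrying terms are multiples of $p$'' and conclude with $\mathcal{V}_1$ evaluated at $\delta^{p}$ (equivalently at $\eta$), with the radius adjusted accordingly; with that modification the rest of your argument goes through and reproduces what the paper's informal proof actually establishes.
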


The third case where we find absolutely convergent expansions in powers of $\varepsilon$ is when $\omega$ is not a rational multiple of $\pi$ but is such that we can choose $\gA_0=\emptyset$. According to the discussion in Section~\ref{ss:1.2} and in Appenix~\ref{app:Liouville}, this is the case if and only if 
$\frac\pi\omega$ is not a super-exponential Liouville number. In this case we do not need the divided differences of Notation~\ref{not:2}(3), and the terms in the convergent expansions \eqref{eq:U=sumV}, \eqref{eq:outer} and \eqref{eq:inner} are simply monomials in the two variables $\varepsilon$ and $\varepsilon^{\pi/\omega}$, and the series therefore define real analytic functions of two variables. We formulate again the corresponding result for the two-scale expansion of Theorem~\ref{thm:ueps} and leave the reformulations of Theorems~\ref{thm:outer} and \ref{thm:inner} to the reader. 
\begin{corollary}
\label{cor:irrat}
Suppose that $\pi/\omega$ is irrational and not a super-exponential Liouville number in the sense of Definition~\ref{def:Liouville}. Then there exist $\varepsilon_1>0$ and a real analytic function of two variables (we set $\eta_1=\varepsilon_1^{\pi/\omega}$)
\[
 (-\varepsilon_1,\varepsilon_1)\times(-\eta_1,\eta_1)
  \ni (\varepsilon,\eta) \mapsto 
  \mathcal{V}[\varepsilon,\eta] \in 
  K^{1+\tau}_{\beta_0}(\rA)\times K^{1+\tau}_{\beta_0\beta_1}(\rP^\complement)
\]
such that 
\[
  \binom{u(\varepsilon)}{U(\varepsilon)} =
   \mathcal{V}[\varepsilon, \varepsilon^{\pi/\omega}]
   \qquad \forall\, \varepsilon\in(0,\varepsilon_1)\,.
\]
\end{corollary}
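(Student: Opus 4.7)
The plan is to apply Theorem~\ref{thm:ueps} with the exceptional set $\gA_0$ chosen to be empty, and then to recognize the resulting series as a genuine bivariate power series in $(\varepsilon,\eta)$. By Remark~\ref{rem:other} together with the Diophantine analysis in Appendix~\ref{app:Liouville}, the hypothesis that $\pi/\omega$ is not a super-exponential Liouville number is precisely what ensures that the construction of $u_\partial$ in Section~\ref{ss:1.2} goes through without grouping paired terms, while still retaining the analytic-type bounds on the coefficients. With $\gA_0=\emptyset$, Notation~\ref{not:2} collapses: $\sE_\gamma(\varepsilon)=\varepsilon^{k\pi/\omega}$ for $\gamma=k\pi/\omega\in\frac\pi\omega\N_*$ and $\sE_\gamma(\varepsilon)=\varepsilon^{|\gamma|}$ for $\gamma\in\N^2_*$, so no divided differences or logarithms appear in~\eqref{eq:U=sumV}.

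Substituting $\eta=\varepsilon^{\pi/\omega}$ term by term, each summand $\varepsilon^{n\pi/\omega}\sE_\gamma(\varepsilon)\binom{v_{n\gamma}}{V_{n\gamma}}$ in~\eqref{eq:U=sumV} becomes either $\eta^{n+k}\binom{v_{n\gamma}}{V_{n\gamma}}$ (when $\gamma=k\pi/\omega$) or $\eta^{n}\varepsilon^{|\gamma|}\binom{v_{n\gamma}}{V_{n\gamma}}$ (when $\gamma\in\N^2_*$). Collecting all contributions to the monomial $\eta^{j}\varepsilon^{\ell}$, I set
\[
   \mathcal{V}[\varepsilon,\eta] \;=\; \sum_{(j,\ell)\in\N^2} b_{j,\ell}\,\eta^{j}\varepsilon^{\ell}\,,
\]
where $b_{j,0}$ gathers the contributions from $\gamma=k\pi/\omega$ with $n+k=j$, $k\ge1$, and $b_{j,\ell}$ for $\ell\ge1$ gathers those from $\gamma\in\N^2_*$ with $|\gamma|=\ell$ and $n=j$. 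The number of pairs $(n,\gamma)$ contributing to any $b_{j,\ell}$ grows only linearly in $j+\ell$, so the geometric bound $\|v_{n\gamma}\|+\|V_{n\gamma}\|\le CM^{n+|\gamma|}$ of Theorem~\ref{thm:ueps} yields $\|b_{j,\ell}\|\le C'(M')^{j+\ell}$ for some $M'\ge\max(M,M^{\pi/\omega})$. The bivariate series therefore converges absolutely in the bidisc $\{|\varepsilon|<1/M',\;|\eta|<1/M'\}$ and defines a real analytic function with values in $K^{1+\tau}_{\beta_0}(\rA)\times K^{1+\tau}_{\beta_0\beta_1}(\rP^\complement)$. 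Choosing $\varepsilon_1>0$ small enough that both $\varepsilon_1<1/M'$ and $\eta_1=\varepsilon_1^{\pi/\omega}<1/M'$ hold, the identity $\binom{u(\varepsilon)}{U(\varepsilon)}=\mathcal{V}[\varepsilon,\varepsilon^{\pi/\omega}]$ holds on $(0,\varepsilon_1)$ by construction.

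The entire substantive content is thus concentrated in the first step, namely the applicability of Theorem~\ref{thm:ueps} with $\gA_0=\emptyset$ under the Diophantine hypothesis, and this is what I expect to be the main obstacle to check carefully. Without such a hypothesis, the unpacked coefficients $a_\ell$ in~\eqref{eq:solel} inherit factors $1/\sin\ell\omega$ that can fail to be dominated by any exponential $M^\ell$, which is exactly the pathology Appendix~\ref{app:Liouville} exhibits for super-exponential Liouville $\pi/\omega$. Ruling this out, as our hypothesis does, keeps the estimates geometric through the conformal and reflection transformations of Section~\ref{s:3} and through the analytic boundary integral machinery of Section~\ref{s:symperf}, so that the convergent series~\eqref{eq:U=sumV} is already a bivariate power series in disguise, awaiting only the rearrangement carried out above.
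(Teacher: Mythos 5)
Your proposal is correct and follows essentially the same route as the paper: invoke the Diophantine hypothesis (via Remark~\ref{rem:other} and Appendix~\ref{app:Liouville}) to take $\gA_0=\emptyset$ so that all $\sE_\gamma(\varepsilon)$ are plain powers, and then read the convergent expansion \eqref{eq:U=sumV} as a bivariate power series in $(\varepsilon,\eta)$ with $\eta=\varepsilon^{\pi/\omega}$. Your explicit regrouping into coefficients $b_{j,\ell}$ with geometric bounds merely spells out what the paper leaves as a remark, and it is sound.
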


\appendix
\section{Symmetric extension of Lipschitz domains}\label{app:symextlip}
In this section we use the objects defined in Section~\ref{Ss:Reflection}, in particular the upper half-plane $\rS_\pi$ and the operation $\cE$ of symmetric extension of a subset of $\rS_\pi$ by reflection at the horizontal axis.
In general, the symmetric extension of a Lipschitz domain is not Lipschitz, and therefore the following result is not entirely obvious and merits a complete proof.   
\begin{lemma}
\label{lem:LipE}
Assume that $\Omega$ is a bounded subdomain of\/ $\rS_\pi$ and that $\Omega$ and $\rS_\pi\setminus\overline\Omega$ have Lipschitz boundaries. Then 
$\cE(\Omega)$ has a Lipschitz boundary.
\end{lemma}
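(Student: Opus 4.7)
I would verify the Lipschitz graph property of $\partial\cE(\Omega)$ at each of its points, case by case. For $p_0 \in \partial\cE(\Omega)$ lying off the axis $\partial\rS_\pi$, the ball $B(p_0,r)$ is disjoint from $\partial\rS_\pi$ for $r$ small enough, and $\cE(\Omega)$ coincides locally with $\Omega$ or with $\cR(\Omega)$; the Lipschitz chart is inherited from the chart of $\Omega$ (respectively its reflection). Points in $\Gamma$ are by construction interior to $\cE(\Omega)$. The only case needing work is therefore $p_0 \in \partial\Omega \cap \partial\rS_\pi \setminus \Gamma$.

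For such a $p_0$ the plan is to combine both Lipschitz hypotheses. First, the Lipschitz hypothesis on $\Omega$ at $p_0$ yields a rotation $R$, a radius $r>0$ and a Lipschitz function $\phi$ with $\phi(0)=0$ such that, in the rotated coordinates $(y_1,y_2)=R(x-p_0)$,
\[
 \Omega \cap B(p_0,r) = \{(y_1,y_2)\in B(0,r):\, y_2 > \phi(y_1)\}.
\]
In these coordinates the axis $\partial\rS_\pi$ becomes a straight line $L$ through the origin, and the inclusion $\Omega\subset\rS_\pi$ forces the epigraph of $\phi$ to lie on the ``positive'' side of $L$. The crucial step is to use the Lipschitz hypothesis on $\rS_\pi\setminus\overline\Omega$ to show that $L$ is quantitatively transverse to the graph of $\phi$ at the origin: if $L$ were tangent to the graph of $\phi$ at $0$ in the Lipschitz sense, then the sliver $\rS_\pi\setminus\overline\Omega$ trapped between $\partial\Omega$ and $\partial\rS_\pi$ would be cuspidal at $p_0$, contradicting its Lipschitz regularity.

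With transversality in hand, a further rotation sends $L$ to the horizontal axis while keeping $\partial\Omega$ a Lipschitz graph $y_2=\tilde\phi(y_1)$, with $\tilde\phi(0)=0$ and $\tilde\phi\ge 0$. The reflection $\cR$ of the original plane then coincides with $(y_1,y_2)\mapsto(y_1,-y_2)$ in the new coordinates, and one reads off
\[
 \cE(\Omega)\cap B(0,r') = \{(y_1,y_2)\in B(0,r'):\, |y_2|>\tilde\phi(y_1)\},
\]
up to the interior axis points coming from $\Gamma$. The boundary $\{|y_2|=\tilde\phi(y_1)\}$ is the union of the two Lipschitz graphs $y_2=\pm\tilde\phi(y_1)$, both contained in the double cone $|y_2|\le L\,|y_1|$. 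Away from $(0,0)$ each graph provides a Lipschitz chart by itself; at the origin the two graphs meet in a non-degenerate corner whose opening exceeds $\pi-2\arctan L$, so a final $\pi/2$ coordinate swap expresses the combined boundary as a single Lipschitz graph with $y_1$ as the dependent variable, confirming that $\cE(\Omega)$ is Lipschitz at $p_0$.

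The main obstacle is the quantitative transversality claim in the second paragraph: a precise argument that the Lipschitz hypothesis on $\rS_\pi\setminus\overline\Omega$ at $p_0$ forces a uniform angular separation between $\partial\Omega$ and $\partial\rS_\pi$. The essential geometric input is that a wedge caught between two Lipschitz graphs that are tangent at a common point is never a Lipschitz domain; this rules out the cuspidal configuration and yields the required transversality bound.
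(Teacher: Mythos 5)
Your overall case analysis (only points of $\partial\Omega\cap\partial\rS_\pi$ outside $\Gamma$ need work) and the idea that the Lipschitz hypothesis on $\rS_\pi\setminus\overline\Omega$ is what forces transversality between $\partial\Omega$ and the axis are in the spirit of the paper's proof. But the decisive step is unjustified and in general false: after rotating the symmetry axis to the horizontal you assert that $\partial\Omega$ near $p_0$ is a Lipschitz graph $y_2=\tilde\phi(y_1)$ \emph{over the axis direction}. Transversality at the single point $p_0$ (control of secants emanating from $p_0$) does not give this. For example, take $\Omega$ locally of the form $\{(y_1,y_2):\,y_2>0,\ y_1<\phi(y_2)\}$ with $\phi$ Lipschitz of constant $\tfrac12$, $\phi(0)=0$, oscillating in sign: both $\Omega$ and $\rS_\pi\setminus\overline\Omega$ satisfy cone conditions of opening less than $\pi$ at the origin, so the hypotheses hold, yet lines perpendicular to the axis meet $\partial\Omega$ many times and no graph representation over the axis direction exists. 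Hence the identity $\cE(\Omega)\cap B=\{|y_2|>\tilde\phi(y_1)\}$ and everything built on it is unfounded. The correct local statement — and the one the paper proves — is that the piece $\Gamma_0$ of $\partial\Omega$ lying strictly above the axis is a Lipschitz graph over the direction \emph{perpendicular} to the axis. This is obtained by intersecting two cone conditions valid for \emph{every} pair of points of $\Gamma_0$: one coming from the Lipschitz property of $\Omega$, whose local boundary contains the contact segment on one side of $p_0$, and one from the Lipschitz property of $\rS_\pi\setminus\overline\Omega$, whose local boundary contains the opposite axis segment; the intersection cone lies in the open upper half-plane, so $\Gamma_0$ is monotone in the vertical variable, and the even extension of its graph function is the Lipschitz chart for $\cE(\Omega)$.

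The final ``$\pi/2$ coordinate swap'' also fails on its own terms and hides a second missing ingredient. Even granting $y_2=\tilde\phi(y_1)$, the union of the graphs $y_2=\pm\tilde\phi(y_1)$ is a graph over the $y_2$-axis only if $\tilde\phi$ vanishes identically on one side of $0$ and is injective on the other; a corner of opening exceeding $\pi-2\arctan L$ gives neither (a Lipschitz $\tilde\phi$ need not be monotone). In particular your argument does not distinguish the admissible configuration, where the contact set $\partial\Omega\cap\partial\rS_\pi$ is an interval ending at $p_0$, from an isolated contact point, where $\tilde\phi>0$ on both sides and $\cE(\Omega)$ would consist locally of two cones meeting at a point, hence not Lipschitz. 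Excluding isolated points of $\partial\Omega\cap\partial\rS_\pi$ is precisely the first step of the paper's proof, and it again uses the hypothesis on the complement: at such a point the boundary of $\rS_\pi\setminus\overline\Omega$ is locally the union of two curves crossing at an interior point, hence not a simple arc. Your ``cuspidal sliver'' heuristic points in the right direction, but to close the proof it must be turned into a cone condition along all of $\Gamma_0$ (not just at $p_0$) and supplemented by the no-isolated-contact-point argument.
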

\begin{proof}
As a characterization of a Lipschitz boundary we use the property that it is locally congruent to the graph of a Lipschitz continuous function.
A simple consequence of this property is that in 2 dimensions, each point of the boundary has a 2-dimensional neighborhood in which the boundary is a simple curve, in particular it is homeomorphic to an interval. 

Let us now first show that $\partial\Omega\cap\partial\rS_\pi$ has no isolated points. 
Suppose there were such a point $\boldsymbol{x}_0=(x_0,0)$. We show that then $\rS_\pi\setminus\overline\Omega$ cannot be a Lipschitz domain, contrary to the hypothesis. Since $\Omega$ is Lipschitz, there is a neighborhood $\cU$ of $\boldsymbol{x}_0$ in which $\partial\Omega$ coincides with a simple curve $\Gamma_0$  and such that $\cU\cap\partial\Omega\cap\partial\rS_\pi=\{\boldsymbol{x}_0\}$. This neighborhood can be chosen such that $\cU\cap \partial\rS_\pi$ is an interval $\Gamma_1$. Since
$(\partial\Omega\cup \partial\rS_\pi)\setminus(\partial\Omega\cap \partial\rS_\pi)$
is contained in the boundary of $\rS_\pi\setminus\overline\Omega$, the latter coincides in $\cU$ with the union of the two curves $\Gamma_0$ and 
$\Gamma_1$ that intersect in the interior point $\boldsymbol{x}_0$. Such a union is clearly not homeomorphic to an interval.

We will now use the following equivalent reformulation of the above definition of a Lipschitz boundary $\partial\Omega$ in two dimensions: To each of its points there is a neighborhood $\cU$ and a convex cone $\cC_{\alpha\beta}$ with the following property: 
If the curve $\Gamma_0=\partial\Omega\cap\cU$ is parametrized by an interval,
\[
  \gamma: (t_0,t_1)\to \Gamma_0\subset \cU \,,
\]
then for $\boldsymbol{x}=\gamma(s)$, $\boldsymbol{y}=\gamma(t)$ with 
$s<t$ (we say ``$\boldsymbol{x}$ precedes $\boldsymbol{y}$'' or $\boldsymbol{x}\prec\boldsymbol{y}$) we have 
$\boldsymbol{y}\in \boldsymbol{x} + \cC_{\alpha\beta}$.
Here the cone $\cC_{\alpha\beta}$ is defined by two angles $\alpha$, $\beta$ with $\alpha<\beta<\alpha+\pi$,
\[
 \cC_{\alpha\beta} = 
  \{(\rho\cos\theta, \rho\sin\theta) \colon 0<\rho<\infty, \alpha<\theta<\beta \}\,.
\]
One can observe that the rotation angles $\omega$ (modulo $2\pi$) of coordinate axes that allow the representation of $\Gamma_0$ as a graph are given by the complement of $\cC_{\alpha\beta}$, the condition being 
\[
\omega-\tfrac\pi2\in(\beta-\pi,\alpha)\cup(\beta,\alpha+\pi)\,.
\]

Let now $\cU$ be such a neighborhood of a point of $\partial\Omega$. If $\partial\Omega\cap\cU$ is entirely contained either in the upper half-plane $\rS_\pi$ or in the axis of symmetry $\partial\rS_\pi$, then there is nothing to prove, because in this case (after possibly choosing a smaller neighborhood), the set 
$\cU\cup\cR(\cU)$ will be a suitable neighborhood for the boundary of $\cE(\Omega)$. 

The nontrivial case is when $\cU$ is a neighborhood of a point 
$\boldsymbol{x}_0\in\partial\Omega\cap\partial\rS_\pi$ and both 
$\cU\cap\partial\Omega\cap\partial\rS_\pi$ and 
$\cU\cap\partial\Omega\cap\rS_\pi$ are non-empty.
Since, as we have seen, $\boldsymbol{x}_0$ is not an isolated point of $\partial\Omega\cap\partial\rS_\pi$, the structure of $\partial\Omega\cap\cU$ is (after possibly choosing a smaller neighborhood) the following:
\[
 \partial\Omega\cap\cU = \Gamma_1\cup\Gamma_0\,,
\]
where $\Gamma_1$ is an interval $I_1\times\{0\}\subset\partial\rS_\pi$, and $\Gamma_0$ is a Lipschitz curve contained in $\rS_\pi$. Locally, the boundary of the complement has the form
\[
 \partial(\rS_\pi\setminus\overline\Omega)\cap\cU = \Gamma_1'\cup\Gamma_0\,,
\]
where $\Gamma_1'$ is another interval $I_1'\times\{0\}\subset\partial\rS_\pi$. 
The intervals have one point in common, which we can assume to be $\boldsymbol{x}_0$
\[
 \Gamma_1\cap\Gamma_1' = \{\boldsymbol{x}_0\} = \overline\Gamma_0\cap\partial\rS_\pi\,.
\]
Since now $\Omega$ and $\rS_\pi\setminus\Omega$ play symmetric roles, it is no restriction to assume that $I_1=[x_0-\delta,x_0]$ and $I_1'=[x_0,x_0+\delta]$ with some $\delta>0$. We can also assume that the two parametrizations of $\partial\Omega\cap\cU$ and of $\partial(\rS_\pi\setminus\overline\Omega)\cap\cU$ are oriented such that in both cases the segment $\Gamma_1$ or $\Gamma_1'$, respectively, precedes the curve $\Gamma_0$.

Now from our definition of a Lipschitz boundary, we get a cone $\cC_{\alpha\beta}$ that satisfies
\[
 \boldsymbol{x},\boldsymbol{y}\in\partial\Omega\cap\cU
 \:\text{ and }\: \boldsymbol{x}\prec\boldsymbol{y}
 \quad\Longrightarrow\quad
 \boldsymbol{y}-\boldsymbol{x}\in \cC_{\alpha\beta} \,.
\]
In particular, this holds for $\boldsymbol{x},\boldsymbol{y}\in \Gamma_1$, and this implies that we have $-\pi<\alpha<0$ and $0<\beta<\alpha+\pi$.

Likewise, there is a cone $\cC_{\alpha'\beta'}$ that satisfies
\[
 \boldsymbol{x},\boldsymbol{y}\in\partial(\rS_\pi\setminus\overline\Omega)\cap\cU
 \:\text{ and }\: \boldsymbol{x}\prec\boldsymbol{y}
 \quad\Longrightarrow\quad
 \boldsymbol{y}-\boldsymbol{x}\in \cC_{\alpha'\beta'} \,.
\]
Since this holds for $\boldsymbol{x},\boldsymbol{y}\in \Gamma_1'$, we must have $0<\alpha'<\pi$ and $\pi<\beta'<\alpha'+\pi$.

For the curve $\Gamma_0$ we have both conditions,
\[
 \boldsymbol{x},\boldsymbol{y}\in\Gamma_0
 \:\text{ and }\: \boldsymbol{x}\prec\boldsymbol{y}
 \quad\Longrightarrow\quad
 \boldsymbol{y}-\boldsymbol{x}\in 
   \cC_{\alpha\beta}\cap \cC_{\alpha'\beta'} = \cC_{\alpha'\beta}\,.
\]
The latter cone $\cC_{\alpha'\beta}$ is contained in the upper half-plane $\rS_\pi$, and this implies that the curve $\Gamma_0$ can be represented as a graph in a coordinate system rotated by a right angle $\omega=\pi/2$. This means that there is a Lipschitz continuous function $\phi:(0,y_0)\to \R$ such that
\[
 \Gamma_0=\partial\Omega\cap\cU\cap\rS_\pi=
 \{(x,y)\in\R^2 \colon x=\phi(y), 0<y<y_0 \} \,.
\]  
Now we can execute our symmetric extension and find that the point
$\boldsymbol{x}_0\in\partial(\cE(\Omega))$ has $\cE(\cU)$ as a neighborhood in which the boundary
\[
  \partial(\cE(\Omega))\cap\cE(\cU) = 
  \Gamma_0\cup\{\boldsymbol{x}_0\}\cup\cR(\Gamma_0)
\]
is represented as the graph $\{x=\tilde\phi(y)\}$ of a Lipschitz continuous function $\tilde\phi$, namely the even extension of $\phi$, $\tilde\phi(y)=\phi(|y|)\}$, $-y_0 < y < y_0$, completed by the choice $\phi(0)=x_0$.
\end{proof}

\section{Convergence of the corner expansion for the  Dirichlet problem and Diophantine approximation}\label{app:Liouville}
In this section, we use the notation of Section~\ref{ss:1.2}. We find conditions on the opening angle $\omega$ for the convergence of the series of particular solutions constructed according to \eqref{eq:solel}
\begin{equation}
\label{eq:sumupartial}
 u_\partial(t) = \sum_{\ell\in\N_*} w_\ell(t)
  = \sum_{\ell\in\N_*}
   \big(\frac{g^\omega_\ell - g^0_\ell\,\cos\ell\omega}{\sin \ell\omega}
     \,\Im\zeta^\ell
   + g^0_\ell \, \Re\zeta^\ell
   \big)\,,
\end{equation}
provided the two power series with coefficients $g^0_\ell$ and $g^\omega_\ell$ have a nonzero convergence radius as in \eqref{eq:estG}. 
We will assume here that the number
$\kappa=\tfrac\pi\omega$ is irrational, so that the coefficients in the sum \eqref{eq:sumupartial} are well defined. 
As was observed already in \cite{BraDau82,Dauge84}, for certain angles $\omega$ for which $\kappa$ is irrational the small denominators $\sin\ell\omega$ pose a problem for the convergence of the series \eqref{eq:sumupartial}, and a procedure for reestablishing the convergence was found. The convergence of the sum depends on the rate of approximability of $\kappa$ by rational numbers, a question that has been a classical subject of number theory for a long time, see for example \cite[Chapter XI]{HardyWright2008}. A classical theorem by Liouville states that irrationals that can be fast approximated by rationals in a certain way are transcendental, and it was shown by Greenfield and Wallach in 1972 \cite{GreenfieldWallach72} that these Liouville numbers play a role in the study of global hypoellipticity of differential operators on manifolds. 
More recently, Himonas \cite{Himonas2001} and Bergamasco \cite{Bergamasco1999} introduced a subset of Liouville numbers, the exponential Liouville numbers, in the context of questions of global analytic hypoellipticity. 
For the situation in our present paper, it turns out that we need to consider an even smaller subset of irrationals that have a fast approximation by rationals. We call them super-exponential Liouville numbers.
\begin{definition}
\label{def:Liouville} Let $a\in\R\setminus\Q$. Then $a$ is said to be\\
(i) a \emph{Liouville number} if for every $n\in\N_*$, there exist $p\in\Z$ and $q\in\N_*$ such that
\[
    0<\left|a- \tfrac {p}{q} \right| < \tfrac {1}{q^{n}} \,,    
\]
(ii) an \emph{exponential Liouville number} if  there exists $c\in\R$, $c>0$, and infinitely many $p\in\Z$ and $q\in\N_*$ such that
\[
    0<\left|a- \tfrac {p}{q} \right| < e^{-cq} \,,    
\]
(iii) a \emph{super-exponential Liouville number} if for any $c\in\R$, $c>0$, there exist $p\in\Z$ and $q\in\N_*$ such that
\[
    0<\left|a- \tfrac {p}{q} \right| < e^{-cq} \,.    
\]
We denote the sets of Liouville, exponential Liouville and super-exponential 
Liouville numbers by $\Lambda$, $\Lambda_\re$ and $\Lambda_\rs$, respectively.
\end{definition}  
It is clear that $\Lambda_\rs\subset\Lambda_\re\subset\Lambda$. It is known that $\Lambda$ is dense in $\R$, uncountable and of measure zero \cite[Theorem 198]{HardyWright2008}. Using the same arguments, one can see that these properties are valid for $\Lambda_\re$ and $\Lambda_\rs$, too. Finally, it is worth noting that each of these sets is invariant with respect to taking inverses, addition of rational numbers and multiplication by nonzero rational numbers.

\begin{proposition}
\label{pro:Liouville}
Let $\kappa=\pi/\omega$ be irrational. 
Let the lateral boundary data $g^0$ and $g^\omega$ be given by series
\[
 g^0(\rho) = \sum_{\ell\in\N_*} g^0_\ell \rho^\ell,\quad
   g^\omega(\rho) = \sum_{\ell\in\N_*} g^\omega_\ell \rho^\ell
\]
that converge for $|\rho|<\rho_0$. 
Then the following two statements are equivalent:\\
\emph{(i)} For any such $g^0$ and $g^\omega$, the series \eqref{eq:sumupartial} for the particular solution $u_\partial$ of the Dirichlet problem in the sector converges for $|\zeta|<\rho_0$.\\
\emph{(ii)} $\kappa$ is not an exponential Liouville number.\\
Likewise, the following two statements are equivalent:\\
\emph{(iii)} There exists $\rho_1>0$ such that for any such $g^0$ and $g^\omega$, the series \eqref{eq:sumupartial} for the particular solution $u_\partial$ of the Dirichlet problem in the sector converges for $|\zeta|<\rho_1$.\\
\emph{(iv)} $\kappa$ is not a super-exponential Liouville number.
\end{proposition}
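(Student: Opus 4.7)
\noindent\emph{Proof strategy.}
The plan is to reduce both equivalences to Diophantine estimates on the small denominators $\sin\ell\omega$. The cornerstone is the elementary asymptotic
\[
 |\sin\ell\omega|=|\sin\pi(\ell/\kappa-k)|\asymp |\ell-k\kappa|=k\,|\kappa-\ell/k|\qquad (\ell\to\infty),
\]
where $k=\round{\ell/\kappa}$ denotes the nearest integer to $\ell/\kappa$. This dictionary converts the approximability of $\kappa$ by rationals $\ell/k$ into the size of $|\sin\ell\omega|$, and thus directly governs the general term of \eqref{eq:sumupartial}. Using the Cauchy--Hadamard bound $|g^0_\ell|+|g^\omega_\ell|\le C_{\rho'}(\rho')^{-\ell}$ valid for every $\rho'<\rho_0$, the crude majorant on the disk $\{|\zeta|\le\rho\}$ reads
\[
 |w_\ell(\zeta)|\ \le\ C_{\rho'}(\rho/\rho')^\ell\bigl(1+|\sin\ell\omega|^{-1}\bigr),
\]
and this single majorant drives the whole argument.

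\smallskip\noindent\textbf{Positive implications.}
If $\kappa$ is not exponential Liouville then for every $c>0$ there exists $N_c$ with $|p-q\kappa|\ge q\,e^{-cq}$ for $q\ge N_c$; taking $q=\round{\ell/\kappa}$ yields $|\sin\ell\omega|\ge C_c\,e^{-c\ell/\kappa}$ for $\ell$ large. Fixing $\rho<\rho_0$, I would choose $\rho'\in(\rho,\rho_0)$ and then $c>0$ small enough that $(\rho/\rho')e^{c/\kappa}<1$: the above majorant is geometric and sums. This gives (locally uniform) convergence on $\{|\zeta|<\rho_0\}$, hence (ii)$\Rightarrow$(i). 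The argument for (iv)$\Rightarrow$(iii) is the same with the quantifiers swapped: the failure of the super-exponential Liouville property yields a \emph{single} fixed $c>0$ with $|\sin\ell\omega|\ge C\,e^{-c\ell/\kappa}$ for all large $\ell$, and I would then pick any $\rho_1\in(0,\rho_0 e^{-c/\kappa})$ to obtain convergence on $|\zeta|<\rho_1$.

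\smallskip\noindent\textbf{Negative implications.}
For the converses I would proceed by contraposition, exhibiting divergent data. If $\kappa$ is exponential Liouville with some constant $c_0>0$, there exist infinitely many pairs $(p_n,q_n)$ yielding $|\sin p_n\omega|\le C\,p_n\,e^{-c_0 p_n/\kappa}$. Taking $g^0\equiv 0$ and $g^\omega_\ell=\rho_0^{-\ell}$ when $\ell\in\{p_n\}$, zero otherwise, gives admissible data whose only non-zero terms in \eqref{eq:sumupartial} satisfy, at $\zeta=\rho\,e^{i\vartheta}$,
\[
 |w_{p_n}(\zeta)|\ \gtrsim\ \frac{(\rho/\rho_0)^{p_n}\,e^{c_0 p_n/\kappa}}{p_n}\,|\sin p_n\vartheta|.
\]
Picking $\rho<\rho_0$ with $(\rho/\rho_0)e^{c_0/\kappa}>1$ and a $\vartheta\in(0,\omega)$ for which $|\sin p_n\vartheta|$ does not decay too rapidly makes $w_{p_n}(\zeta)\not\to 0$, contradicting convergence and disproving (i). The same mechanism disproves (iii) under the super-exponential Liouville hypothesis: for any prescribed $\rho_1>0$ I would take $c_0>2\kappa\log(\rho_0/\rho_1)$ and apply the super-exponential Liouville property with this $c_0$ (which indeed produces infinitely many usable $(p_n,q_n)$, since the approximations extracted for any $c>c_0$ automatically satisfy the bound for $c_0$), thereby producing divergence at some $\rho\in(\rho_0 e^{-c_0/(2\kappa)},\rho_1)$.

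\smallskip\noindent\textbf{Main obstacle.}
The translation between Diophantine estimates and convergence radii is essentially bookkeeping; the only genuinely delicate point is the divergence construction, where the catastrophic growth of $|\sin p_n\omega|^{-1}$ might be accidentally cancelled by the companion factor $|\sin p_n\vartheta|$ on the chosen ray. I would dispose of this by a Borel--Cantelli argument: since $\bigl|\{\vartheta\in(0,\omega):|\sin p_n\vartheta|<\delta_n\}\bigr|=O(\delta_n)$, choosing $\delta_n=e^{-(c_0/(2\kappa))p_n}$ yields a summable sequence, so that almost every $\vartheta\in(0,\omega)$ avoids the exceptional sets for all but finitely many $n$. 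At any such $\vartheta$ the terms $|w_{p_n}(\zeta)|$ are bounded below by $\bigl((\rho/\rho_0)e^{c_0/(2\kappa)}\bigr)^{p_n}/p_n$, which is unbounded as soon as $(\rho/\rho_0)e^{c_0/(2\kappa)}>1$, completing the divergence argument.
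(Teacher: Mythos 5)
Your argument is correct, and its backbone is the same as the paper's: the dictionary $|\sin\ell\omega|\simeq k\,|\ell/k-\kappa|$ with $k$ the nearest integer to $\ell/\kappa$, plus the extremal data $g^0\equiv0$, $g^\omega_\ell=\rho_0^{-\ell}$ (the paper's ``$b_\ell=\rho_b^{-\ell}$'' example). The packaging differs, though, in a way worth noting. The paper factors the statement through Lemma~\ref{lem:Liouville}: by Cauchy--Hadamard, the radius $\rho_s$ of $\sum_\ell x^\ell/\sin\ell\omega$ equals $1$ (resp.\ is positive) exactly when $\kappa$ is not exponential (resp.\ super-exponential) Liouville, and the proposition is then read off from the product-of-coefficients observation; this implicitly identifies convergence of the function series \eqref{eq:sumupartial} with the radius of its coefficient series, leaving unaddressed the possibility that the angular factor $\sin\ell\vartheta$ could salvage pointwise convergence at every point of the sector even when $|a_\ell|\rho^\ell$ blows up along a subsequence. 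You work directly with \eqref{eq:sumupartial}: geometric majorants give the positive implications (equivalent to the Hadamard bookkeeping), and in the negative direction your Borel--Cantelli selection of $\vartheta$, with $\delta_n=e^{-(c_0/(2\kappa))p_n}$, is precisely the Cantor--Lebesgue-type step that closes this small gap, at the cost of halving the exponential rate (which is harmless, since any exponential rate suffices to push the divergence radius below $\rho_0$, respectively below the prescribed $\rho_1$ after choosing $c_0>2\kappa\log(\rho_0/\rho_1)$). Your parenthetical justification that a super-exponential Liouville $\kappa$ furnishes infinitely many pairs for a fixed $c_0$ --- no single pair $(p,q)$ can satisfy $0<|\kappa-p/q|<e^{-cq}$ for arbitrarily large $c$, and boundedly many $q$ cannot carry infinitely many such pairs --- is exactly the point that needs saying and is correct. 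So: same Diophantine core as the paper, a slightly more hands-on route, with the added benefit of making the passage from coefficient blow-up to pointwise divergence explicit.
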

For the proof, we use the following elementary observation about power series: Let the series 
$\sum_{\ell\ge1} a_\ell\,x^\ell$ and $\sum_{\ell\ge1} b_\ell\,x^\ell$ have convergence radii $\rho_a$ and $\rho_b$, respectively. Then the series
$\sum_{\ell\ge1} a_\ell\,b_\ell\,x^\ell$ has convergence radius $\rho_a\rho_b$ or greater, with equality if, for example, $b_\ell=\rho_b^{-\ell}$ for all $\ell$. Applying this with $a_\ell=1/\sin\ell\omega$, we see that the proof of the proposition is achieved if we prove the following lemma.
\begin{lemma}
\label{lem:Liouville}
Let $\pi/\omega$ be irrational and let $\rho_s$ be the convergence radius of the power series
$$
 \sum_{\ell\in\N_*} \frac{x^\ell}{\sin\ell\omega}\,.
$$ 
Then $\rho_s=1$ if and only if $\pi/\omega$ is not an exponential Liouville number, and $\rho_s>0$ if and only if $\pi/\omega$ is not a super-exponential Liouville number.
\end{lemma}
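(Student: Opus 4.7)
The plan is to apply the Cauchy--Hadamard formula to reduce both equivalences to asymptotic bounds on $|\sin\ell\omega|$, and then translate those bounds into Diophantine statements about $\kappa=\pi/\omega$ by using that the best rational approximations of $\kappa$ are exactly the convergents of its continued fraction.

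By Cauchy--Hadamard, $\rho_s^{-1}=\limsup_{\ell\to\infty}|\sin\ell\omega|^{-1/\ell}$. Since $|\sin\ell\omega|\le 1$, this limsup is always at least $1$. Hence $\rho_s=1$ is equivalent to the statement ``for every $c>0$, $|\sin\ell\omega|\ge e^{-c\ell}$ for all sufficiently large $\ell$'', while $\rho_s>0$ is equivalent to ``there exists $c>0$ for which the same eventual bound holds''. It therefore suffices to understand, in Diophantine terms, when super-exponentially (resp.\ exponentially) small values of $|\sin\ell\omega|$ occur for arbitrarily large (resp.\ infinitely many) $\ell$.

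For each $\ell\in\N_*$ let $k(\ell)\in\N_*$ be the integer nearest to $\ell\omega/\pi=\ell/\kappa$. Using $\tfrac{2}{\pi}|\theta|\le|\sin\theta|\le|\theta|$ on $[-\pi/2,\pi/2]$, one obtains for large $\ell$ the two-sided estimate
\[
 \tfrac{2\omega}{\pi}\,k(\ell)\,\Bigl|\kappa-\tfrac{\ell}{k(\ell)}\Bigr|
 \;\le\;|\sin\ell\omega|\;\le\;\omega\,k(\ell)\,\Bigl|\kappa-\tfrac{\ell}{k(\ell)}\Bigr|,
\]
with $k(\ell)=\ell/\kappa+O(1)$, so that $\ell$ and $k(\ell)$ are comparable up to the multiplicative constant $\kappa$. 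Now suppose $\kappa\in\Lambda_\rs$ (resp.\ $\kappa\in\Lambda_\re$). Then for every $c>0$ (resp.\ for some fixed $c>0$ and infinitely many pairs) there exists $(p,q)\in\Z\times\N_*$ with $|\kappa-p/q|<e^{-cq}$; taking $q$ large enough makes $|\kappa-p/q|<1/(2q^2)$, so by Legendre's theorem $p/q$ is a convergent of $\kappa$ and in particular $k(p)=q$. The right-hand inequality above then gives $|\sin p\omega|\le\omega\,q\,e^{-cq}\le e^{-c' p}$ with $c'$ proportional to $c$, forcing $\rho_s=0$ (resp.\ $\rho_s<1$). Conversely, the left-hand inequality shows that whenever $|\sin\ell\omega|\le e^{-c\ell}$, the pair $(p,q)=(\ell,k(\ell))$ automatically produces a rational approximation with $|\kappa-p/q|\le e^{-c''q}$ for $c''$ proportional to $c$, which, applied respectively for arbitrarily large $c$ or a fixed $c$ with infinitely many such $\ell$, yields the membership $\kappa\in\Lambda_\rs$ or $\kappa\in\Lambda_\re$.

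The main technical point is the bookkeeping required to pass between the three indexings -- by $\ell$ (the numerator), by $k(\ell)$ (the denominator), and by the arbitrary pairs $(p,q)$ of Definition~\ref{def:Liouville} -- and in particular the invocation of Legendre's theorem, which guarantees that every sufficiently good rational approximation $p/q$ of $\kappa$ is automatically of the form $\ell/k(\ell)$ with $\ell=p$. Without this step, one could only show that small values of $|\sin\ell\omega|$ produce good Diophantine approximations to $\kappa$, not the reverse implication that is needed to close the equivalences.
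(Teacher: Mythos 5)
Your proof is correct and follows essentially the same route as the paper: Cauchy--Hadamard plus the two-sided comparison of $|\sin\ell\omega|$ with $k(\ell)\,\bigl|\kappa-\ell/k(\ell)\bigr|$, followed by a rescaling of the constant $c$ to pass between exponential smallness in $\ell$ and in the denominator $q$. The only cosmetic difference is your appeal to Legendre's theorem to force $k(p)=q$; this step is not actually needed, since $|\sin p\omega|=|\sin(p\omega-q\pi)|\le \omega\,q\,|\kappa-p/q|$ holds for every integer $q$ (or, more directly, $|\kappa-p/q|<e^{-cq}$ already gives $|p/\kappa-q|<\tfrac12$ for large $q$, so $q=k(p)$), which is the bookkeeping the paper handles implicitly.
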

\begin{proof}
We use Hadamard's characterization
\[
 \rho_s^{-1} = \limsup_{\ell\to\infty}|\sin\ell\omega|^{-1/\ell},
\]
and we freely use that  
\[
 \limsup_{\ell\to\infty}(c\,\ell^d)^{1/\ell}=1 \;\mbox{ for any $c>0$, $d\in\R$. }
\]
Rational approximations of $\kappa=\pi/\omega$ appear because for all $k\in\N$: $|\sin\ell\omega|=|\sin(\ell\omega-k\pi)|$, and we can choose $k$ such that the difference is minimal:
$$
  k=k(\ell)\equiv\round{\tfrac{\ell\omega}\pi}\in (\tfrac{\ell\omega}\pi-\tfrac12,\tfrac{\ell\omega}\pi+\tfrac12]
  \quad\Longrightarrow\quad
  \ell\omega- k\pi \in [-\tfrac\pi2,\tfrac\pi2)\,.
$$
Then, using $\tfrac2\pi\le\tfrac{\sin x}x\le1$ for $|x|\le\tfrac\pi2$, we get with the $k$ chosen as above,
$$
  \tfrac2\pi|\ell\omega- k\pi| \le |\sin\ell\omega| \le |\ell\omega- k\pi|\,.
$$ 
Thus $|\sin\ell\omega|\simeq |\ell\omega- k\pi| 
 = k\omega |\tfrac\ell k - \tfrac\pi\omega| 
  \simeq k\, |\tfrac\ell k - \tfrac\pi\omega|$, implying
 \[
 \limsup_{\ell\to\infty}|\sin\ell\omega|^{-1/\ell} 
 = \limsup_{\ell\to\infty}|\tfrac\ell{k(\ell)} - \kappa|^{-1/\ell}\,.
 \]
Therefore the condition $\rho_s=1$ is equivalent to (note that $\rho_s\le1$ in any case)
$$
\begin{aligned}
 &\qquad \quad \forall M>1\; \exists \ell_M \;:\;
  \ell\ge\ell_M\Rightarrow |\tfrac\ell{k(\ell)} - \kappa|^{-1/\ell}\le M\\
 &\Longleftrightarrow\quad
  \forall M>1\; \exists \ell_M \;:\;
  \ell\ge\ell_M\Rightarrow |\tfrac\ell{k(\ell)} - \kappa|\ge M^{-\ell}\\
 \quad&\Longleftrightarrow\quad 
 \forall c>0\; \mbox{ the inequality } |\tfrac\ell{k(\ell)} - \kappa|< e^{-c\ell}\\
 &\qquad\qquad\qquad
 \mbox{ has only finitely many solutions }\ell\in\N_*\\
 \quad&\Longleftrightarrow\quad 
 \forall c>0\; \mbox{ the inequality }  |\tfrac\ell k - \tfrac\pi\omega|< e^{-ck}\\
 &\qquad\qquad\qquad
 \mbox{ has only finitely many solutions }k,\ell\in\N_* \\
\end{aligned}
$$
The last condition means, according to Definition~\ref{def:Liouville}, that $\kappa$ is not an exponential Liouville number.

Likewise, $\rho_s>0$ is equivalent to 
$$
\begin{aligned}
 \limsup_{\ell\to\infty}|\tfrac\ell{k(\ell)} - \kappa|^{-1/\ell}<\infty
 \quad&\Longleftrightarrow\quad 
   \sup_{\ell}|\tfrac\ell{k(\ell)} - \kappa|^{-1/\ell}<\infty\\
 \quad&\Longleftrightarrow\quad  
  \exists c>0\;:\; 
  \forall \ell \;:\; |\tfrac\ell{k(\ell)} - \kappa|\ge e^{-c\ell}\\
 \quad&\Longleftrightarrow\quad  
  \exists c>0\;:\; \forall k,\ell\in\N_* \;:\; |\tfrac\ell k - \kappa|\ge e^{-ck}\,.
\end{aligned}
$$
Again comparing the negation of the last condition with Definition~\ref{def:Liouville}, we see that this is equivalent to the fact that that $\kappa$ is not a super-exponential Liouville number.
\end{proof}
Let us finally note that if $\kappa$ is a super-exponential Liouville number, one can give explicit examples for the right hand side $f$ such that the series \eqref{eq:sumupartial} for $u_\partial$ {diverges for almost all} $t\ne0$. One such example is 
$f(t)=1/(\rho_0-t_1)$.


\section*{Acknowledgement}\label{app:ack}

The four authors were partially supported by `INdAM GNAMPA Project 2015 - Un approccio funzionale analitico per problemi di perturbazione singolare e di omogeneizzazione'. M. Dalla Riva  acknowledges the support of HORIZON 2020 MSC EF project FAANon (grant agreement MSCA-IF-2014-EF - 654795) at the University of Aberystwyth, UK. P.~Musolino acknowledges the support of an INdAM Fellowship. P. Musolino is a S\^er CYMRU II COFUND fellow, also supported by the `S\^er Cymru National Research Network for Low Carbon, Energy and Environment'.


\end{document}